\tikzset{
 	arr/.style={
		dashed,-{Latex[length=2mm]}
	},
	tl/.style={
  		ultra thick
	},
	remo/.style={
  		line width=0
	}
}
\theoremstyle{plain}
\newtheorem{thm}{Theorem}[section]
\newtheorem{lem}[thm]{Lemma}
\newtheorem{prop}[thm]{Proposition}
\newtheorem{cor}[thm]{Corollary}
\theoremstyle{definition}
\newtheorem*{ack}{Acknowledgement}
\newtheorem{example}[thm]{Example}
\theoremstyle{remark}
\newtheorem*{rmk}{Remark}
\numberwithin{equation}{section}
\newcommand{\bN}{\mathbb{N}}
\newcommand{\bQ}{\mathbb{Q}}
\newcommand{\bR}{\mathbb{R}}
\newcommand{\bZ}{\mathbb{Z}}
\newcommand{\cC}{\mathcal{C}}
\newcommand{\cH}{\mathcal{H}}
\newcommand{\cO}{\mathcal{O}}
\newcommand{\cP}{\mathcal{P}}
\newcommand{\cT}{\mathcal{T}}
\newcommand{\fA}{\mathfrak{A}}
\newcommand{\fT}{\mathfrak{T}}
\newcommand{\fF}{\mathfrak{F}}
\newcommand{\fI}{\mathfrak{P}}
\newcommand{\sA}{\mathscr{A}}
\newcommand{\Sym}{\mathcal{S}}
\newcommand{\affSn}{\overline{\Sym_n}}
\newcommand{\affI}{\overline{I}}
\newcommand{\affS}[1]{\overline{\Sym_{#1}}}
\newcommand{\extSn}{\widetilde{\Sym_n}}
\newcommand{\shift}{\bm{\omega}}
\newcommand{\tsc}{{\underline{c}}}
\newcommand{\Tc}{{T}^{\textnormal{can}}}
\newcommand{\Tas}{{T}^{\textnormal{as}}}
\newcommand{\fw}{\textnormal{fw}}
\newcommand{\RSYT}{\textnormal{RSYT}}
\newcommand{\SSYT}{\textnormal{SSYT}}
\newcommand{\SYT}{\textnormal{SYT}}
\newcommand{\fsh}{\textnormal{FinSh}}
\newcommand{\RSK}{\textnormal{RSK}}
\newcommand{\awg}{\overline{\Gamma}}
\newcommand{\fdeg}{\mathcal{D}}
\newcommand{\adeg}{\overline{\fdeg}}
\newcommand{\des}{\textnormal{des}}
\newcommand{\ades}{\overline{\des}}
\newcommand{\pwg}{\overline{\Gamma}^\textnormal{per}}
\newcommand{\qwg}{\overline{\Gamma}^\textnormal{quot}}
\newcommand{\fund}{\mathfrak{D}}
\newcommand{\dt}[1]{\textbf{#1}}
\DeclareMathOperator{\sh}{\textnormal{Sh}}
\DeclareMathOperator{\Aut}{\textnormal{Aut}}
\DeclareMathOperator{\im}{\textnormal{im}}
\DeclareSymbolFont{extraup}{U}{zavm}{m}{n}
\DeclareMathSymbol{\varheart}{\mathalpha}{extraup}{86}
\DeclareMathSymbol{\vardiamond}{\mathalpha}{extraup}{87}
\DeclareMathOperator{\intch}{\mathrel{\text{$\nwarrow$\llap{$\searrow$}}}}
\DeclareMathOperator{\rintch}{\mathrel{\text{$\nearrow$\llap{$\swarrow$}}}}
\newcommand{\ceil}[1]{\left\lceil{#1}\right\rceil}
\newcommand{\mo}[1]{{\overline{#1}}}
\newcommand{\pint}[1]{\ulcorner{#1}\lrcorner}
\newcommand{\pres}[1]{\mathord{\downarrow}_{#1}}
\newcommand{\Nf}[4]{N^{#1\mathord{\intch}#2,#3\mathord{\intch}#4}}
\newcommand{\ra}{\triangleright}
\title{Two-row $W$-graphs in affine type $A$}
\author{Dongkwan Kim} 
\author{Pavlo Pylyavskyy}
\address{School of Mathematics\\
  University of Minnesota Twin Cities\\
  Minneapolis, MN 55455\\
  U.S.A.}
\email{kim00657@umn.edu}
\email{ppylyavs@umn.edu}
\date{\today}							
\begin{document}
\begin{abstract} 
For affine symmetric groups we construct finite $W$-graphs 
corresponding to two-row shapes, 
and prove their uniqueness. This gives the first non-trivial family of purely combinatorial constructions of finite $W$-graphs in an affine type. We compare our construction with quotients of periodic $W$-graphs defined by Lusztig. Under certain positivity assumption on the latter the two are shown to be isomorphic. 
\end{abstract}

\setcounter{tocdepth}{1}
\maketitle

\renewcommand\contentsname{}
\tableofcontents

\ytableausetup{boxsize=1em, notabloids}
\begin{figure}[h] 
\begin{center}
\begin{tikzpicture}[scale=1.5]

	\node[draw, tl] (o1) at (90+144:3) {\ytableaushort{12{\dt{3}},45}};
	\node[draw] (o2) at (18+144:3) {\ytableaushort{23{\dt{4}},15}};
	\node[draw] (o3) at (306+144:3) {\ytableaushort{34{\dt{5}},12}};
	\node[draw] (o4) at (234+144:3) {\ytableaushort{{\dt{1}}45,23}};
	\node[draw, tl] (o5) at (162+144:3) {\ytableaushort{1{\dt{2}}5,34}};
	
	\node[draw, tl] (i1) at (90+144:1.5) {\ytableaushort{1{\dt{2}}{\dt{4}},35}};
	\node[draw] (i2) at (18+144:1.5) {\ytableaushort{2{\dt{3}}{\dt{5}},14}};
	\node[draw, tl] (i3) at (306+144:1.5) {\ytableaushort{{\dt{1}}3{\dt{4}},25}};
	\node[draw] (i4) at (234+144:1.5) {\ytableaushort{{\dt{2}}4{\dt{5}},13}};
	\node[draw, tl] (i5) at (162+144:1.5) {\ytableaushort{{\dt{1}}{\dt{3}}5,24}};
	
	\draw[tl] (o1) -- (i1);\draw[] (o2) -- (i2);\draw[] (o3) -- (i3);
	\draw[] (o4) -- (i4);\draw[tl] (o5) -- (i5);
	
	\draw[tl] (i1) -- (i3);\draw[] (i2) -- (i4);\draw[tl] (i3) -- (i5);
	\draw[] (i4) -- (i1);\draw[] (i5) -- (i2);
	
	\draw[arr] (i1) -- (o2);	\draw[arr, tl] (i1) -- (o5);
	\draw[arr] (i2) -- (o3);	\draw[arr] (i2) -- (o1);
	\draw[arr] (i3) -- (o4);	\draw[arr] (i3) -- (o2);
	\draw[arr] (i4) -- (o5);	\draw[arr] (i4) -- (o3);
	\draw[arr, tl] (i5) -- (o1);	\draw[arr] (i5) -- (o4);	

\end{tikzpicture}
\end{center}
\end{figure}

\newpage
\setlength{\parskip}{5pt} 		
\section{Introduction}

In their groundbreaking paper \cite{kl79} Kazhdan and Lusztig laid the groundwork for an approach to the representation theory of Hecke algebras. Since then this approach has been significantly developed, and is called \emph{Kazhdan-Lusztig theory}. 
Of special importance are the {\it {$W$-graphs}} that encode representations of Hecke algebras in a combinatorial way. Those are certain directed graphs with additional data given at vertices and edges. 
Certain $W$-graphs arise from {\it {Kazhdan-Lusztig cells}} in a canonical way, to which we refer as {\it {Kazhdan-Lusztig $W$-graphs}}. Stembridge \cite{ste08} has introduced a class of $W$-graphs called {\it {admissible}}, they include, but are not limited to, Kazhdan-Lusztig $W$-graphs. Giving an explicit elementary description of $W$-graphs in a conceptual way is an excruciatingly hard task, and constitutes one of the major problems in algebraic combinatorics and representation theory.  


There are two kinds of edges in $W$-graphs: undirected and directed. It is easier to understand the undirected edges, one could say that this problem is tame. For example, in type $A$ the undirected edges of Kazhdan-Lustig $W$-graphs are given by {\it {Knuth moves}} \cite{knu70} on permutations. If one restricts the information contained in a Kazhdan-Lusztig $W$-graph in type $A$ to undirected edges, one obtaines a {\it {dual equivalence graph}} of Haiman \cite{hai92}. The latter are well-understood, notably Assaf \cite{assaf07} has given a local characterization of dual equivalence graphs, similar to that of $W$-graphs by Stembridge \cite{ste08}. 

On the other hand, understanding the directed edges appears to be a wild problem. Chmutov \cite{chm15} has shown that in an admissible $W$-graph of an irreducible representation of type $A$ Hecke algebra the undirected edges must form one of the dual equivalence graphs, i.e. coincide with undirected edges of one of the Kazhdan-Lusztig $W$-graphs. Nguyen \cite{ngu18} further strengthened this to say that the directed edges must coincide with those of a Kazhdan-Lusztig $W$-graph as well, i.e. that in type $A$ all irreducible $W$-graphs are Kazhdan-Lusztig. This was originally a conjecture by Stembridge, see \cite{ste08}. Despite these strong results, in finite type $A$ an explicit construction of $W$-graphs is known only for hook shapes \cite{fung03} and two-row shapes, see \cite{wes95} where it is attributed to Lascoux-Schutzenberger. In Section \ref{sec:fin} we give an equivalent formulation of the latter construction in terms of tableaux, as opposed to strand diagrams. 

As one passes to affine type $A$, a lot less is known. In this case Kazhdan-Lusztig cells are labelled by tabloids, as opposed to standard Young tableaux in finite type $A$. One can still restrict Kazhdan-Lustig $W$-graphs to undirected edges, connected components of the resulting graph being {\it {Kazhdan-Lusztig molecules}} in Stembridge's terminology \cite{stem08}. A comprehensive description of those was recently given by Chmutov, Yudovina, Lewis and the second coauthor \cite{clp17, cpy18}. The majority of Kazhdan-Lusztig molecules are infinite, and the majority of Kazhdan-Lusztig cells contain infinitely many  molecules. This is in sharp contrast to type $A$, where each Kazhdan-Lusztig cell is unimolecular and finite. 

{\it {Affine dual equivalence graphs}} were introduced in \cite{cpy18} as natural quotients of affine Kazhdan-Lusztig molecules (not to be confused with a similar notion introduced by Assaf and Billey in an unrelated context \cite{assbill12}). Unlike molecules, affine dual equivalence graphs are always finite. A natural question arises of whether affine dual equivalence graphs can be enriched by directed edges to obtain genuine $W$-graphs, and whether such enrichment is unique. 

The first goal of this paper is to answer this affirmatively for two-row shapes. In Section \ref{sec:defawg} we give a concrete combinatorial rule for construction of the $W$-graphs the undirected part of which coincides with two-row affine dual equivalence graphs of \cite{cpy18}.  This constitutes the first non-trivial family of purely combinatorial constructions of finite $W$-graphs in an affine type. (Other examples, based on representation theory, can be obtained from taking certain quotients of Lusztig's periodic $W$-graphs \cite{lus80,lus97}; see Section \ref{sec:periodicWgraph} for more details.) The resulting $W$-graphs have the property that when restricted to finite Hecke algebra, one obtaines modules whose Frobenius character (in the sense of Ram \cite{ram91}) is a Hall-Littlewood symmetric function. 

The $W$-graphs constructed in this paper are manifestly non-bipartite. This is a strong indication that the bipartiteness condition often imposed in literature on $W$-graphs is not essential, and can be ignored. For example, it can be dropped from Stembridge's definition of admissible $W$-graphs, leaving the majority of the results unchanged. Similarly, recent impressive results of Nguyen \cite{ngu18} remain true if the bipartiteness requirement is omitted. In all relevant cases the original proofs carry through essentially verbatim, see Section \ref{sec:nb}.

The second goal of the paper is to show uniqueness of our construction assuming admissibility (except bipartiteness) of $W$-graphs. In Section \ref{sec:uni} we show that our construction of $W$-graphs is unique for shapes $(a,b)$, $a \not = b$, and is almost unique for shapes $(a,a)$. 

Finally, we invoke the notion of {\it {periodic $W$-graphs}} introduced by Lusztig in \cite{lus97} and how they are related to our construction. In general, periodic $W$-graphs are different from the usual Kazhdan-Lusztig $W$-graphs attached to cells and have periodicity as their name indicates. Under a certain finiteness assumption, which is proved by Varagnolo \cite{var04} for type $A$, one can take their quotients using this periodicity and obtain finite $W$-graphs of affine type. In this paper, we prove that our construction is isomorphic to such quotients of periodic $W$-graphs if we assume positivity of edge weights on the latter. We conjecture this to be true for all shapes, not just two-row ones. 

We believe that our construction of $W$-graphs provides important and useful examples in terms of representation theory. Here we discuss some possible applications to Springer theory. Firstly, Fung \cite{fun03} studied the connection between the components of Springer fibers and $W$-graphs for two-row and hook shapes (in which case the description of a $W$-graph is explicitly known). Likewise, one can consider the components of an affine Springer fiber, originally defined by Kazhdan-Lusztig \cite{kl88}, which is currently one of the central objects in geometric Langlands program. It is very interesting to ask if an analogous statement to Fung's is valid for affine Springer fibers and affine $W$-graphs.

Furthermore, it is known that periodic $W$-graphs provide a certain ``canonical basis'' of the (equivariant) $K$-theory of Springer fibers \cite{lus99}, which is in deep connection with modular representation of reductive Lie algebras and noncommutative Springer resolution \cite{bm13}. Even though the equivalence of our construction and the quotient of periodic $W$-graphs relies on the positivity conjecture which is still open as of now, we hope that the examples constructed in this paper are useful in practice when investigating such topics.

This paper is organized as follows. In Section 2, we introduce definitions and notations which are frequently used in this paper. In Section 3, we recall the notion of $W$-graphs and discuss their properties. In Section 4, we construct a graph $\awg_\lambda$ for a two-row partition $\lambda$ and study its properties. Here, we also state one of our main results that $\awg_\lambda$ is actually a $W$-graph of affine type $A$, whose proof is completed in Section 5 and 6. In Section 7, we discuss the restriction of $\awg_\lambda$ to the finite symmetric group. In Section 8 and 9, we prove that $\awg_\lambda$ satisfies certain uniqueness statement. In Section 10, we recollect the notion of Lusztig's periodic $W$-graphs and show how our construction of $\awg_\lambda$ is related to his graph under certain positivity assumption.

\begin{ack} We thank George Lusztig for his helpful comments on periodic $W$-graphs.
\end{ack}

\section{Definitions and Notations}
\subsection{Symmetric groups}
Throughout this paper we let $n \geq 3$ be a given natural number. Define $\Sym_n$ to be the symmetric group permuting $\{1, 2, \ldots, n\}$. We often regard it as a Coxeter group with the set of simple reflections $I=\{s_1, s_2, \ldots, s_{n-1}\}$ where $s_i$ is defined to be the transposition swapping $i$ and $i+1$. We define $\affSn$ and $\extSn$ to be the affine symmetric group and extended affine symmetric group, respectively. They are usually realized as
\begin{align*}
\extSn &\colonequals \{w \in \Aut(\bZ) \mid w(i+n) = w(i) \textup{ for any } i\in \bZ\},
\\\affSn &\colonequals \{w \in \extSn \mid \sum_{i=1}^n w(i) = n(n+1)/2\}.
\end{align*}
(Note that $\Sym_n$ is naturally a subgroup of both $\affSn$ and $\extSn$.)
For $w\in \extSn$, its window notation is given by $[w(1), w(2), \ldots, w(n)]$. It is clear that the window notation completely determines the element $w$. We also write $w=[w(1), w(2), \ldots, w(n)]$ to describe the element $w$. For example, we have $id = [1,2,\ldots, n]$. Also, note that $\affSn$ is a Coxeter group with the set of simple reflections $\affI=\{s_0=s_n, s_1, s_2, \ldots, s_{n-1}\}$ where $s_i=[1,2, \ldots, i-1, i+1,i,i+2, \ldots, n]$ for $1\leq i \leq n-1$ and $s_0 =s_n= [0, 2, \ldots, n-1, n+1]$. Define $\shift \in \extSn$ to be $\shift=[2, 3, \ldots, n, n+1]$, called the cyclic shift element. Then conjugation by $\shift$ defines an outer automorphism on $\affSn$ and we have $\extSn = \affSn \rtimes \langle \shift \rangle$.

\subsection{Partitions}
We say that $\lambda$ is a partition of $n$ if $\lambda$ is a finite sequence of integers, i.e. $\lambda=(\lambda_1, \lambda_2, \ldots, \lambda_l)$ where $\lambda_1, \ldots, \lambda_l \in \bZ$, which satisfies that $\lambda_1\geq \lambda_2 \geq \cdots \lambda_l >0$ and $\sum_{i=1}^l \lambda_i = n$. In this situation we also write $\lambda \vdash n$ and $|\lambda|=n$, and say that the size of $\lambda$ is $n$. The length of $\lambda$, denoted $l(\lambda)$, is its length considered as a sequence of (positive) integers. We usually identify a partition with its corresponding Young diagram (in terms of English convention) and thus its parts are often called rows.

\subsection{Young tableaux}
Let $\RSYT(n)$ (resp. $\SYT(n)$, $\SSYT(n)$) be the set of row-standard (resp. standard, semistandard) Young tableaux of size $n$. Here we require that each element of $\RSYT(n)$ consists of entries in $\{1, 2, \ldots, n\}$ and regard $\SYT(n)$ naturally as a subset of $\RSYT(n)$. For a partition $\lambda \vdash n$, we also let $\RSYT(\lambda) \subset \RSYT(n)$ (resp. $\SYT(\lambda) \subset \SYT(n)$, $\SSYT(\lambda) \subset \SSYT(n)$) be the subset of such tableaux of shape $\lambda$. In addition, for a sequence of positive integers $\mu=(\mu_1, \ldots, \mu_l)$ such that $\sum_{i=1}^l\mu_i = n$ (which is not necessarily a partition), we set $\SSYT(\lambda, \mu) \subset \SSYT(\lambda)$ to be the set of semistandard Young tableaux of shape $\lambda$ and content $\mu$. 

For a tableau $T$, define $\sh(T)$ to be the shape of $T$. We often regard a tableau $T$ as a sequence of integer sequences $(T^1, T^2, \ldots, T^{l(\lambda)})$ where each $T^a$ is an $a$-th row of $T$. For such $T$, we define the reading word of $T$ to be the concatenation $T^{l(\lambda)}\cdots T^{2}T^{1}$ (from bottom to top), considered either as a word or a sequence. Finally, for a tableau $T$ we set $T\pres{[1,i]}$ to be another tableau obtained from $T$ by removing boxes containing entries not in $\{1, 2, \ldots, i\}$.

\ytableausetup{centertableaux,notabloids}

\subsection{Robinson-Schensted-Knuth map on row-standard Young tableaux}
We define the Robinson-Schensted-Knuth map on $\RSYT(n)$ as follows. For $T\in \RSYT(n)$, consider the two-line array whose second row is the reading word of $T$ and whose first row records $l(\sh(T))+1-$(the row number) of corresponding entries. For example, the two-line array corresponding to 
$$T=\ytableaushort{2457,369,18} \quad \textup{ is given by } \quad \begin{pmatrix} 1 &1 & 2 & 2 & 2 & 3 & 3&3&3 \\ 1&8&3&6&9&2&4&5&7 \end{pmatrix}.$$
We define $\RSK(T) \colonequals (P(T), Q(T))$ to be the image of this two-line array under the usual Robinson-Schensted-Knuth correspondence, see \cite[Section 3]{knu70}. Thus in particular $P(T)$ is a standard Young tableaux and $Q(T)$ is a semistandard Young tableaux of content $\lambda^{op}$, where $\lambda^{op}$ is obtained from reversing the sequence $\lambda$. For example, if $T$ is as above then we have
$$P(T)=\ytableaushort{12457,369,8}, \qquad Q(T)= \ytableaushort{11223,233,3}.$$
We define $\fsh(T)$ to be the shape of $P(T)$. Note that $\fsh(T) \geq \sh(T)$ with respect to dominance order, and $\fsh(T)=\sh(T)$ if and only if $T$ is standard.
\ytableausetup{tabloids}

\subsection{Residues and intervals}
For $k \in \bZ$, we let $\mo{k}$ be the unique element in $\{1, 2, \ldots, n\}$ congruent to $k$ modulo $n$. For example, we have $\mo{-1} = n-1, \mo{0}=n,$ etc. For $a, b \in \bZ$, we define $[a,b] \colonequals \{x \in \bZ \mid a\leq x \leq b\}$. Similarly, for $a, b \in \{1, 2, \ldots, n\}$, we define $\pint{a,b} \subset \{1, 2, \ldots, n\}$ to be $\emptyset$ if $\mo{a}=\mo{b+1}$ and $\{\mo{a+x} \mid 0\leq x \leq \mo{b-a}\}$ otherwise.
For example, if $n=5$ then $\pint{1,5}=\pint{2,1} = \emptyset$, $\pint{3,3}=\{3\}$, and $\pint{4,2} = \{4, 5, 1, 2\}$. (Note in particular that $\emptyset=\pint{1,5} \neq[1,5]=\{1,2,3,4,5\}$. The reason for adopting such a convention for $\pint{\  ,\  }$ will be apparent in \ref{sec:defawg}.)

\subsection{Descents and Knuth moves} \label{sec:desknuth}
For $T \in \RSYT(\lambda)$ we define the (affine) descent set of $T$ to be $\ades(T) \colonequals \{i\in [1,n]\mid \mo{i} \textup{ lies in a strictly higher row of $T$ than } \mo{i+1}\}$ following \cite[Definition 3.4]{clp17}. Similarly, for $T\in \SYT(\lambda)$ we define the (finite) descent set of $T$ to be $\des(T) \colonequals \ades(T) -\{n\}$. For $T, T' \in \RSYT(\lambda)$, we say that $T'$ is obtained from $T$ by a Knuth move or $T$ and $T'$ are connected by a (single) Knuth move if $\ades(T)$ and $\ades(T')$ are not comparable and $T'$ is obtained from $T$ by interchanging $\mo{i}$ and $\mo{i+1}$ for some $\mo{i} \in [1,n]$ (and reordering entries in each row if necessary). 

\begin{rmk}
If $T, T' \in \SYT(\lambda)$, one may tempt to define a finite analogue, i.e. if $\des(T)$ and $\des(T')$ are not comparable and $T'$ is obtained from $T$ by interchanging $i$ and $i+1$ for some $i \in [1,n-1]$ (without reordering rows after). However, one can easily check that if $T, T' \in \SYT(\lambda)$ then these two notions are in fact equivalent, thus there is no need to differentiate affine and finite Knuth moves.
\end{rmk}

\section{$W$-graphs}
Here we recall the notion of $W$-graphs. Basic references are \cite{kl79} and \cite{ste08}.

\subsection{$I$-labeled graphs} Suppose for now that $W$ is a Coxeter group with the set of simple reflections $I$. We say that $\Gamma=(V, m, \tau)$ is an $I$-labeled graphs if
\begin{enumerate}
\item $m$ is a map $m: V\times V \rightarrow \bZ[q^{\pm\frac{1}{2}}]$.
\item $\tau$ is a map $\tau : V \rightarrow \cP(I)$, where $\cP(I)$ is the power set of $I$.
\item For each $v \in V$, $\{w \in V \mid m(v, w)\neq 0 \textup{ or } m(w,v)\neq 0\}$ is a finite set.
\end{enumerate}
Moreover, we say that $\Gamma$ is finite if $|V|<\infty$. Conventionally, if $m(u, v)\neq 0$ (resp. $m(u, v)=0$) for $u, v \in V$ then we say that there is an (directed) edge from $u$ to $v$ of weight $m(u, v)$ (resp. there is no edge from $u$ to $v$). In order to avoid confusion, we also write $m(u\ra v)$ instead of $m(u, v)$.

We say that $\Gamma'=(V', m', \tau')$ is a $I$-labeled subgraph (or simply subgraph) of $\Gamma$ if $V' \subset V$, $\tau'(v) = \tau(v)$ for $v\in V'$, and $m'(u\ra v)\in\{m(u\ra v),0\}$ for $u,v \in V'$. Furthermore if $m'(u\ra v) = m(u\ra v)$ for all $u,v \in V'$, then we say that $\Gamma'$ is a full subgraph of $\Gamma$.
\begin{rmk}
Note that our definition is weaker than that of \cite[p.347]{ste08} as we allow (locally finite but) infinite $W$-graphs. Indeed, a (locally finite) $W$-graph with infinite vertices will naturally appear in our paper when we discuss periodic $W$-graphs.
\end{rmk}

\subsection{$W$-graphs}\label{sec:defwgraph} Let $\cH_W$ be the Iwahori-Hecke algebra of $W$ over $\bZ[q^{\pm\frac{1}{2}}]$, which is a quotient of the braid group of $W$ with generators $T_i$ for $i\in I$ by quadratic relations $(T_i-q)(T_i+1)=0$. An $I$-labeled graph $\Gamma=(V,m,\tau)$ is called a $W$-graph if the formula
$$T_i(u) = \left\{
\begin{aligned} 
&qu& \textup{ if } i \not\in \tau(u)
\\&-u+q^{\frac{1}{2}}\sum_{v: i \not\in \tau(v)} m(u\ra v)v & \textup{ if } i \in \tau(u)
\end{aligned}\right.
$$
gives rise to a $\cH_W$-module structure on $\bigoplus_{v\in V}\bZ[q^{\pm\frac{1}{2}}]v$. Note that this is a transposed form compared to the original definition in \cite{kl79} and coincides with the one in \cite{ste08}. (Also see \cite[A.3]{lus97} for similar definition.)

\subsection{Reduced $I$-labeled graphs}
Suppose that $\Gamma = (V, m, \tau)$ is an $I$-labeled graph. We say that $\Gamma$ is reduced if $m(u\ra v)=0$ whenever $\tau(u) \subset \tau(v)$. This notion is motivated from the fact that the values $m(u\ra v)$ when $\tau(u) \subset \tau(v)$ do not appear in the above formula for $\Gamma$ being a $W$-graph. In this paper we only deal with reduced $I$-labeled graphs.

\subsection{Parabolic restriction of $I$-labeled graphs}\label{sec:parabres} For a subset $J\subset I$, the parabolic restriction of an $I$-labeled graph $\Gamma=(V, m, \tau)$, denoted $\Gamma\pres{J}=(V', m', \tau')$, is a $J$-labeled graph such that $V'=  V$, $\tau'(v)= \tau(v) \cap J$, and $m'(u\ra v)= m(u\ra v)$ if $\tau'(u) \not \subset \tau'(v)$ and $m'(u\ra v)=0$ otherwise.
Then $\Gamma'$ is clearly a $J$-labeled graph. Furthermore, if $\Gamma$ is a (reduced) $W$-graph, then it is easy to show that $\Gamma'$ is a (reduced) $W_J$-graph where $W_J \subset W$ is the parabolic subgroup generated by $J$. (cf. \cite[1.A]{ste08})

\subsection{(nb-)Admissible $I$-labeled graphs} 
For a $I$-labeled graph $\Gamma=(V, m, \tau)$, we say that $\Gamma$ is admissible if $\im m \subset \bN$, $m(u\ra v)=m(v\ra u)$ if $\tau(u)$ and $\tau(v)$ are not comparable, and $\Gamma$ is bipartite.
However, in our case it is crucial to consider $W$-graphs which are not necessarily bipartite. We say that $\Gamma$ is nb-admissible if it is admissible but possibly not bipartite. Later we will see that dropping this assumption does not cause any problem for our argument.

\subsection{Simple underlying graph} For an $I$-labeled graph $\Gamma=(V, m, \tau)$, we define its simple underlying graph $U(\Gamma)=(V', m', \tau')$ to be an $I$-labeled graph such that $V'=V, \tau'=\tau$, and $m'(u\ra v) =m'(v\ra u)=1$ if $m(u\ra v)= m(v\ra u)=1$ and $m'(u\ra v) =m'(v\ra u)=0$ otherwise. Note that $U(\Gamma)$ is canonically a subgraph of $\Gamma$ obtained by removing ``directed'' and ``non-simple'' edges. Furthermore, $U(\Gamma)$ is always a simple ($I$-labeled) graph.

\subsection{(nb-)Admissible $W$-graphs and Stembridge's theorem}
For simplicity, from now on we assume that $W$ is simply-laced. For a $W$-graph $\Gamma=(V, m, \tau)$, we introduce four combinatorial rules that it should satisfy.
\begin{enumerate}[label=\arabic*., leftmargin=*]
\item The Compatibility Rule. If $m(u\ra v)\neq 0$ for $u, v \in V$, then any $i\in \tau(u)-\tau(v)$ and any $j\in \tau(v)-\tau(u)$ are adjacent in the Dynkin diagram of $W$.
\item The Simplicity Rule. If $m(u\ra v) \neq 0$ for $u,v \in V$, then either [$\tau(u)\supset \tau(v)$ and $m(v\ra w)=0$] or [$\tau(u)$ and $\tau(v)$ are not comparable, and $m(u\ra v)=m(v\ra u)=1$.]
\item The Bonding Rule. For any $i,j \in I$ adjacent in the Dynkin diagram of $W$, if $u\in V$ satisfies $i \in \tau(u)$ and $j\not\in \tau(u)$ then there exists a unique $v\in V$ such that $i\not\in \tau(v), j\in \tau(v)$, $m(u\ra v)\neq0$, and $m(v\ra u)\neq 0$.
\item The Polygon Rule. For $i,j \in I$, we define $V_{i/j}=\{v \in V \mid i \in \tau(v), j \not\in \tau(v)\}$. For $u, v \in V$ such that $i,j \in \tau(u)$ and $i,j \not \in \tau(v)$, set
\begin{align*}
N^2_{ij}(\Gamma; u,v)&=\sum_{w \in V_{i/j}} m(u\ra w)m(w\ra v)
\\N^3_{ij}(\Gamma; u,v)&=\sum_{w_1 \in V_{i/j}, w_2 \in V_{j/i}} m(u\ra w_1)m(w_1\ra w_2)m(w_2\ra v)
\end{align*}
(These sums are well-defined due to local finiteness assumption.)
Then we have $N^r_{ij}(\Gamma;u,v)=N^r_{ji}(\Gamma;u,v)$ for such $u,v \in V$ and $i,j \in J$. Here $r=2$ or $r=3$, and the latter case is only considered when $i$ and $j$ are adjacent in the Dynkin diagram of $W$.
\end{enumerate}
The main theorem of \cite{ste08} is that these rules characterize the combinatorial properties of admissible $I$-labeled graphs being a $W$-graph. Here we generalize his theorem slightly as follows.
\begin{thm}[See {\cite[Theorem 4.9]{ste08}}]  \label{thm:combrules} Let $\Gamma$ be an nb-admissible (reduced) $I$-labeled graph. Then $\Gamma$ is a $W$-graph if and only if it satisfies the four combinatorial rules above.
\end{thm}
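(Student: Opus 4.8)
The plan is to reduce Theorem~\ref{thm:combrules} to the original result \cite[Theorem 4.9]{ste08} by a bipartite-doubling trick, so that the only new content is checking that the bipartiteness hypothesis was never essential. First I would recall that Stembridge's theorem, as stated in \cite{ste08}, asserts the equivalence between ``$\Gamma$ is a $W$-graph'' and ``$\Gamma$ satisfies the Compatibility, Simplicity, Bonding, and Polygon Rules'' \emph{under} the standing assumption that $\Gamma$ is admissible, i.e. among other things bipartite. So for the forward direction (a $W$-graph satisfies the four rules) nothing at all changes: Stembridge's proof that a $W$-graph satisfies each of the four combinatorial rules never invokes bipartiteness --- each rule is derived directly from the Hecke relations $(T_i-q)(T_i+1)=0$ and the braid relations, by the same computations with $T_i$ acting on pairs and triples of vertices. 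I would go through \cite[\S3--4]{ste08} and note, rule by rule, that bipartiteness is not used; this is the ``carry through essentially verbatim'' remark promised in the introduction.

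For the reverse direction (the four rules force $\Gamma$ to be a $W$-graph) I would argue as follows. Given an nb-admissible reduced $I$-labeled graph $\Gamma=(V,m,\tau)$ satisfying the four rules, one must verify that the displayed formulas for $T_i$ define an $\cH_W$-module, i.e. that the quadratic relations and the braid relations hold on the free module $M=\bigoplus_{v\in V}\bZ[q^{\pm 1/2}]v$. The quadratic relation $(T_i-q)(T_i+1)=0$ is purely local: fixing $i$, it only involves the action of $T_i$, and the verification that $(T_i-q)(T_i+1)$ kills each basis vector is a short computation using the Simplicity Rule (to see that $m(u\ra v)m(v\ra u)$ behaves correctly when $i\in\tau(u)$, $i\notin\tau(v)$) --- and this computation is insensitive to any global $2$-coloring. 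The braid relations come in two flavours: the commuting case $T_iT_j=T_jT_i$ for $i,j$ non-adjacent, which follows from the Compatibility Rule plus the $r=2$ Polygon Rule; and the length-$3$ case $T_iT_jT_i=T_jT_iT_j$ for $i,j$ adjacent, which follows from the Bonding Rule and the $r=2,3$ Polygon Rules. In \cite{ste08} these implications are proved by expanding both sides on the basis and matching coefficients of each $v$; each coefficient is a polynomial in the $m$-values along short paths $u\ra w$ or $u\ra w_1\ra w_2$, and the matching is exactly what the Polygon Rule provides. None of these coefficient computations references a bipartition of $V$.

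The cleanest way to make the ``verbatim'' claim rigorous, rather than re-auditing every line of \cite{ste08}, is the doubling construction. From $\Gamma$ form $\Gamma^{(2)}$ with vertex set $V\times\{0,1\}$, labels $\tau(v,\epsilon)=\tau(v)$, and weights $m^{(2)}((u,\epsilon)\ra(v,\epsilon'))=m(u\ra v)$ if $\epsilon\neq\epsilon'$ and $0$ if $\epsilon=\epsilon'$. Then $\Gamma^{(2)}$ is bipartite with parts $V\times\{0\}$ and $V\times\{1\}$, it is still reduced, and it satisfies $\im m^{(2)}\subset\bN$ and the symmetry condition, hence is admissible in Stembridge's original sense. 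One checks directly from the definitions that $\Gamma^{(2)}$ satisfies the four combinatorial rules if and only if $\Gamma$ does --- the rules only constrain paths of length $\leq 3$, and every such path in $\Gamma$ lifts (in two ways) to one in $\Gamma^{(2)}$ and conversely every path in $\Gamma^{(2)}$ projects to one in $\Gamma$, with the weights and $\tau$-sets matching. By the original \cite[Theorem 4.9]{ste08}, $\Gamma^{(2)}$ is then a $W$-graph. Finally one observes that the $\cH_W$-module attached to $\Gamma^{(2)}$ is isomorphic to $M\otimes\bZ[q^{\pm 1/2}]^2$ with $T_i$ acting diagonally by the $\Gamma$-formula; concretely, the subspace spanned by $\{(v,0)+(v,1)\}$ and the one spanned by $\{(v,0)-(v,1)\}$ are each $T_i$-stable, and on the first the action is exactly the claimed $\Gamma$-action (on the second it is a sign-twisted variant), so the $\Gamma$-formula defines an $\cH_W$-module and $\Gamma$ is a $W$-graph.

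The main obstacle is the bookkeeping in verifying that ``four rules for $\Gamma$'' $\Leftrightarrow$ ``four rules for $\Gamma^{(2)}$'', and especially that the $\cH_W$-module of $\Gamma^{(2)}$ genuinely splits as a direct sum of two copies of the would-be $\Gamma$-module: one has to be slightly careful because the odd/even splitting of $\Gamma^{(2)}$ gives a \emph{sign-twisted} action on one summand (the map $v\mapsto (-1)^{?}v$ is not available without a $2$-coloring of $V$), so the correct statement is that $\Gamma$ is a $W$-graph for $\cH_W$ iff $\Gamma^{(2)}$ is, using that the category of $\cH_W$-modules is closed under direct summands and that the two eigen-subspaces of the involution $(v,0)\leftrightarrow(v,1)$ are submodules. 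Once this is set up, everything else is routine, and the theorem follows. I would remark that the same doubling argument shows the uniqueness and classification results invoked later (e.g. Nguyen's) are unaffected by dropping bipartiteness, as claimed in the introduction.
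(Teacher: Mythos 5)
Your proposal takes a genuinely different route from the paper. The paper's entire proof of Theorem~\ref{thm:combrules} is the observation that Stembridge's argument for \cite[Theorem 4.9]{ste08} never invokes the bipartiteness hypothesis, so it carries over as written. You mention this for the forward direction, but for the harder implication (the four rules $\Rightarrow$ $W$-graph) you replace inspection of Stembridge's proof with a bipartite-doubling reduction: form the double cover $\Gamma^{(2)}$ on $V\times\{0,1\}$ with $\tau(v,\epsilon)=\tau(v)$ and edges only between the two parities, verify that $\Gamma^{(2)}$ is admissible in Stembridge's original sense and that it satisfies the four rules iff $\Gamma$ does, invoke \cite[Theorem 4.9]{ste08} as a black box to conclude $\Gamma^{(2)}$ is a $W$-graph, and then descend to $\Gamma$ via the $T_i$-stable subspace $M^+$ spanned by $(v,0)+(v,1)$. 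This is a clean and correct argument. It buys you the advantage of not having to re-audit Stembridge's (fairly long) proof; the cost is the bookkeeping you flag, which you resolve correctly. One small imprecision: $M^+$ is \emph{not} a direct summand of $M^{(2)}$ over $\bZ[q^{\pm 1/2}]$, since $M^+\oplus M^-$ misses the odd part ($2$ is not a unit), and $M^{(2)}$ is not literally $M\otimes \bZ[q^{\pm 1/2}]^2$ with diagonal action; but neither claim is needed. What you actually need --- and correctly identify as the crux --- is only that $M^+$ is a $T_i$-stable free submodule on which the action is given by the $\Gamma$-formula, since the Hecke and braid relations, being operator identities on $M^{(2)}$, then hold on $M^+$ by restriction.

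I would, however, flag your closing remark that ``the same doubling argument shows the uniqueness and classification results invoked later (e.g.\ Nguyen's) are unaffected by dropping bipartiteness.'' That does not follow by the same trick. Nguyen's theorem (Theorem~\ref{thm:admkl} in this paper) is a statement about \emph{cells}, i.e.\ strongly connected components, and the cell structure of $\Gamma^{(2)}$ does not reduce transparently to that of $\Gamma$: if a cell $C$ of $\Gamma$ contains an odd directed cycle, then in $\Gamma^{(2)}$ both copies $(v,0)$ and $(v,1)$ of each $v\in C$ lie in a single cell of size $2|C|$, and knowing that this larger cell is some $\Gamma_\mu$ does not immediately tell you that $C$ itself is a Kazhdan--Lusztig $W$-graph. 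For those downstream results the paper's actual strategy --- inspecting the proofs in \cite{chm15} and \cite{ngu18} and checking that bipartiteness is never used --- seems unavoidable, and it is what the paper does in the proofs of Theorems~\ref{thm:admklund}, \ref{thm:admkl}, and~\ref{thm:admord}. So the doubling trick is a nice self-contained proof of this one theorem but should not be advertised as a uniform replacement for the ``carries through verbatim'' claims elsewhere in the paper.
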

\begin{proof} Indeed, the original proof of Stembridge does not use the bipartition assumption, thus his proof is directly applied to our case.
\end{proof}

\subsection{Cells and simple components} \label{sec:cellsimple} For an $I$-labeled graph $\Gamma=(V,m,\tau)$, we define its cells to be its strongly connected components, which is naturally a full subgraph of $\Gamma$. Also, its simple component is defined to be a full subgraph of $\Gamma$ whose simple underlying graph is connected. Note that these two notions do not coincide for  $I$-labeled graphs; each simple component is a subgraph of a cell but not vice versa in general.

\section{$\affSn$-graph $\awg_\lambda$ for two-row partitions}
\subsection{Definition of $\awg_\lambda$} \label{sec:defawg}
From now on, we set $W=\affSn$ and $I=\{s_0=s_n, s_1, \ldots, s_{n-1}\}$. We also identify $I$-labeled graphs with $[1, n]$-labeled graphs in an obvious manner. Define the $[1,n]$-labeled graph $\awg_\lambda=(V, m, \tau)$ for the two-row partition $\lambda$ as follows. Set $V=\RSYT(\lambda)$ and $\tau=\ades$ (see \ref{sec:desknuth} for the definition of $\ades$). For any $s, t\in \RSYT(\lambda)$, $m(s\ra t)$ is equal to either 0 and 1. If $m(s\ra t)=1$, then we say that there is a move from (the source) $s$ to (the target) $t$, which falls into one of the following cases.
\begin{enumerate}[label=\arabic*., leftmargin=*]
\item (Move of the first kind) $t$ is obtained from $s$ by interchanging $\mo{i}$ and $\mo{i+1}$ when $\mo{i} \in s^1$ and $\mo{i+1} \in s^2$, i.e.
$$s=\ytableaushort{\cdots \ {\mo{i}} \ \cdots, \cdots \  {\mo{i+1}} \ \cdots} \quad \rightarrow \quad t=\ytableaushort{\cdots \ {\mo{i+1}} \ \cdots, \cdots \  {\mo{i}} \ \cdots}.$$
This move is denoted by $\mo{i}\intch \mo{i+1}$ or $\mo{i+1} \rintch \mo{i}$.
\item (Move of the second kind) $t$ is obtained by interchanging $\mo{i}$ and $\mo{j}$ when $\mo{i} \in v^2, \mo{j} \in v^1$, and $\mo{i}\neq \mo{j+1}$ (it becomes a move of the first kind if $\mo{i}=\mo{j+1}$), i.e.
$$s=\ytableaushort{\cdots {\mo{j}}  \cdots, \cdots  {\mo{i}} \cdots} \quad \rightarrow \quad t=\ytableaushort{\cdots {\mo{i}} \cdots, \cdots {\mo{j}} \cdots}.$$
This move occurs if and only if the following conditions are satisfied:
\begin{enumerate}[label=(\alph*)]
\item $\mo{j-i}$ is odd. 
\item $\mo{i+1} \in s^1$ and $\mo{j-1}\in s^2$.
\item Either $\mo{i-1}\in s^1$ or $\mo{j+1}\in s^2$.
\item $\#\{\alpha\in s^2 \mid \alpha \in \pint{\mo{j-1-2k}, \mo{j-2}}\} \geq k$ for $k\in \{1, 2, \ldots, \frac{\mo{j-i}-3}{2}\}$.
\item $\#\{\alpha \in s^2 \mid \alpha \in \pint{\mo{i+2}, \mo{j-2}}\} =\frac{\mo{j-i}-3}{2}$ when $\mo{j}\neq \mo{i+1}$.
\end{enumerate}
This move is denoted by $\mo{j}\intch \mo{i}$ or $\mo{i} \rintch \mo{j}$.
\end{enumerate}
\begin{rmk} When $\mo{j-i}=3$, then $\pint{\mo{i+2}, \mo{j-2}} = \pint{\mo{i+2}, \mo{i+1}}$ which is $\emptyset$ rather than $[1,n]$ in our convention. In such a case the condition (e) is trivially satisfied. This is the reason to set $\pint{\mo{a+1}, \mo{a}}=\emptyset$ for any $a\in [1,n]$; otherwise $\mo{j-i}=3$ case should be handled in a separate manner.
\end{rmk}

\ytableausetup{boxsize=1em, notabloids}
\begin{figure}[htbp] 
\centering
\begin{tikzpicture}[scale=1.5]

	\node[draw, tl] (o1) at (90+144:3) {\ytableaushort{12{\dt{3}},45}};
	\node[draw] (o2) at (18+144:3) {\ytableaushort{23{\dt{4}},15}};
	\node[draw] (o3) at (306+144:3) {\ytableaushort{34{\dt{5}},12}};
	\node[draw] (o4) at (234+144:3) {\ytableaushort{{\dt{1}}45,23}};
	\node[draw, tl] (o5) at (162+144:3) {\ytableaushort{1{\dt{2}}5,34}};
	
	\node[draw, tl] (i1) at (90+144:1.5) {\ytableaushort{1{\dt{2}}{\dt{4}},35}};
	\node[draw] (i2) at (18+144:1.5) {\ytableaushort{2{\dt{3}}{\dt{5}},14}};
	\node[draw, tl] (i3) at (306+144:1.5) {\ytableaushort{{\dt{1}}3{\dt{4}},25}};
	\node[draw] (i4) at (234+144:1.5) {\ytableaushort{{\dt{2}}4{\dt{5}},13}};
	\node[draw, tl] (i5) at (162+144:1.5) {\ytableaushort{{\dt{1}}{\dt{3}}5,24}};
	
	\draw[tl] (o1) -- (i1);\draw[] (o2) -- (i2);\draw[] (o3) -- (i3);
	\draw[] (o4) -- (i4);\draw[tl] (o5) -- (i5);
	
	\draw[tl] (i1) -- (i3);\draw[] (i2) -- (i4);\draw[tl] (i3) -- (i5);
	\draw[] (i4) -- (i1);\draw[] (i5) -- (i2);
	
	\draw[arr] (i1) -- (o2);	\draw[arr, tl] (i1) -- (o5);
	\draw[arr] (i2) -- (o3);	\draw[arr] (i2) -- (o1);
	\draw[arr] (i3) -- (o4);	\draw[arr] (i3) -- (o2);
	\draw[arr] (i4) -- (o5);	\draw[arr] (i4) -- (o3);
	\draw[arr, tl] (i5) -- (o1);	\draw[arr] (i5) -- (o4);	

\end{tikzpicture}
\caption{$\affS{5}$-graph $\awg_{(3,2)}$}
\label{fig:32}
\end{figure}

\begin{example}  \label{ex:32}
Figure \ref{fig:32} illustrates the $\affS{5}$-graph $\awg_{(3,2)}$. Here, $\ades(v)$ for each $v\in \awg_{(3,2)}$ is given by bold numbers on the first row of $v$. Also bold bordered vertices and bold edges are the ``standard part'' $\Gamma_{(3,2)}$ which we define later. For example consider its vertex $s=\ytableaushort{{{2}}4{{5}},13}$.  Then by applying a move of the first kind for $i=2$ we obtain an arrow pointing to vertex $\ytableaushort{345,12}$. We can also apply a move of the second kind for $i=1$, $j=4$. Indeed, $\mo{4-1} = 3$ is odd, $\mo 3 \in s^2$, $\mo 2 \in s^1$, $\mo{1-1} = \mo 5 \in s^1$, and the last two conditions are trivially true because $\mo{j-i}=3$. As a result, we get an arrow from $s$ to the vertex $\ytableaushort{{{1}}2{{5}},34}$.
\end{example}
\ytableausetup{boxsize=normal, tabloids}

\ytableausetup{boxsize=1em, notabloids}
\begin{figure}[htbp]
\centering
\begin{tikzpicture}[scale=1.5]

	\node[draw, tl] (t1) at (120+90:4) {\ytableaushort{12{\dt{3}}{\dt{5}},46}};
	\node[draw, tl] (t2) at (180+90:4) {\ytableaushort{1{\dt{2}}{\dt{4}}6,35}};
	\node[draw, tl] (t3) at (240+90:4) {\ytableaushort{{\dt{1}}{\dt{3}}56,24}};
	\node[draw] (t4) at (300+90:4) {\ytableaushort{{\dt{2}}45{\dt{6}},13}};
	\node[draw, tl] (t5) at (0+90:4) {\ytableaushort{{\dt{1}}34{\dt{5}},26}};
	\node[draw] (t6) at (60+90:4) {\ytableaushort{23{\dt{4}}{\dt{6}},15}};
	
	\node[draw, tl] (o1) at (90+90:4) {\ytableaushort{123{\dt{4}},56}};
	\node[draw, tl] (o2) at (150+90:4) {\ytableaushort{12{\dt{3}}6,45}};
	\node[draw, tl] (o3) at (210+90:4) {\ytableaushort{1{\dt{2}}56,34}};
	\node[draw] (o4) at (270+90:4)  {\ytableaushort{{\dt{1}}456,23}};
	\node[draw] (o5) at (330+90:4) {\ytableaushort{345{\dt{6}},12}};
	\node[draw] (o6) at (30+90:4) {\ytableaushort{234{\dt{5}},16}};
	
	\node[draw, tl] (t7) at (55+90:2.2) {\ytableaushort{{\dt{1}}3{\dt{4}}6,25}};
	\node[draw] (t8) at (180+90:1.8) {\ytableaushort{2{\dt{3}}5{\dt{6}},14}};
	\node[draw, tl] (t9) at (305+90:2.2) {\ytableaushort{1{\dt{2}}4{\dt{5}},36}};
	
%
	\draw[tl] (o1) -- (t1);\draw[tl] (o2) -- (t2);\draw[tl] (o3) -- (t3);
	\draw[] (o4) -- (t4);\draw[] (o5) -- (t5);\draw[] (o6) -- (t6);

	\draw[tl] (t7) -- (t2);\draw[tl] (t7) -- (t3);\draw[tl] (t7) -- (t5); \draw[] (t7) -- (t6);
	\draw[] (t8) -- (t1);\draw[] (t8) -- (t3);\draw[] (t8) -- (t4); \draw[] (t8) -- (t6);
	\draw[tl] (t9) -- (t1);\draw[tl] (t9) -- (t2);\draw[] (t9) -- (t4); \draw[tl] (t9) -- (t5);
	
	\draw[arr,tl] (t1) -- (o2);	\draw[arr] (t1) -- (o6);
	\draw[arr,tl] (t2) -- (o3);	\draw[arr,tl] (t2) -- (o1);
	\draw[arr] (t3) -- (o4);	\draw[arr,tl] (t3) -- (o2);
	\draw[arr] (t4) -- (o5);	\draw[arr] (t4) -- (o3);
	\draw[arr] (t5) -- (o6);	\draw[arr] (t5) -- (o4);
	\draw[arr] (t6) -- (o1);	\draw[arr] (t6) -- (o5);		

\end{tikzpicture}
\caption{$\affS{6}$-graph $\awg_{(4,2)}$}
\label{fig:42}
\end{figure}

\begin{example}  \label{ex:42}
Figure \ref{fig:42} illustrates the $\affS{6}$-graph $\awg_{(4,2)}$, similarly to the previous example.
\end{example}
\ytableausetup{boxsize=normal, tabloids}

\ytableausetup{boxsize=1em, notabloids}
\begin{figure}[htbp]
\centering
\begin{tikzpicture}[scale=1.2]

	\node[draw, tl] (1o1) at (0,3) {\ytableaushort{12{\dt{3}},456}};
	\node[draw] (1o2) at (3^.5/2*3,-1/2*3)  {\ytableaushort{34{\dt{5}},126}};
	\node[draw] (1o3) at (-3^.5/2*3,-1/2*3)  {\ytableaushort{{\dt{1}}56,234}};
	
	\node[draw, tl] (1i1) at (0,1.5) {\ytableaushort{1{\dt{2}}{\dt{4}},356}};
	\node[draw] (1i2) at (3^.5/2*1.5,-1/2*1.5)  {\ytableaushort{3{\dt{4}}{\dt{6}},125}};
	\node[draw] (1i3) at (-3^.5/2*1.5,-1/2*1.5)  {\ytableaushort{{\dt{2}}5{\dt{6}},134}};	

	\node[draw, tl] (1m1) at (3^.5/2*2,1/2*2){\ytableaushort{{\dt{1}}3{\dt{4}},256}};
	\node[draw] (1m2) at (0,-2){\ytableaushort{{\dt{3}}5{\dt{6}},124}};
	\node[draw, tl] (1m3) at (-3^.5/2*2,1/2*2){\ytableaushort{1{\dt{2}}{\dt{5}},346}};	
	
	\node[draw, tl] (1c) at (0,0) {\ytableaushort{{\dt{1}}{\dt{3}}{\dt{5}},246}};	
	
	\node[draw] (2o1) at (0+6,3-3) {\ytableaushort{23{\dt{4}},156}};
	\node[draw] (2o2) at (3^.5/2*3+6,-1/2*3-3)  {\ytableaushort{45{\dt{6}},123}};
	\node[draw] (2o3) at (-3^.5/2*3+6,-1/2*3-3)  {\ytableaushort{1{\dt{2}}6,345}};
	
	\node[draw] (2i1) at (0+6,1.5-3) {\ytableaushort{2{\dt{3}}{\dt{5}},146}};
	\node[draw] (2i2) at (3^.5/2*1.5+6,-1/2*1.5-3)  {\ytableaushort{{\dt{1}}4{\dt{5}},236}};
	\node[draw] (2i3) at (-3^.5/2*1.5+6,-1/2*1.5-3)  {\ytableaushort{{\dt{1}}{\dt{3}}6,245}};	

	\node[draw] (2m1) at (3^.5/2*2+6,1/2*2-3){\ytableaushort{{\dt{2}}4{\dt{5}},136}};
	\node[draw] (2m2) at (0+6,-2-3){\ytableaushort{{\dt{1}}{\dt{4}}6,235}};
	\node[draw] (2m3) at (-3^.5/2*2+6,1/2*2-3){\ytableaushort{2{\dt{3}}{\dt{6}},145}};	
	
	\node[draw] (2c) at (0+6,0-3) {\ytableaushort{{\dt{2}}{\dt{4}}{\dt{6}},135}};	
	
	\draw[tl] (1o1) -- (1i1);  \draw[] (1o2) -- (1i2); \draw[] (1o3) -- (1i3);
	\draw[tl] (1i1) -- (1m1);  \draw[] (1i2) -- (1m2); \draw[] (1i3) -- (1m3);
	\draw[tl] (1i1) -- (1m3);  \draw[] (1i2) -- (1m1); \draw[] (1i3) -- (1m2);
	\draw[tl] (1c) -- (1m1);  \draw[] (1c) -- (1m2); \draw[tl] (1c) -- (1m3);	
	
	\draw[] (2o1) -- (2i1);  \draw[] (2o2) -- (2i2); \draw[] (2o3) -- (2i3);
	\draw[] (2i1) -- (2m1);  \draw[] (2i2) -- (2m2); \draw[] (2i3) -- (2m3);
	\draw[] (2i1) -- (2m3);  \draw[] (2i2) -- (2m1); \draw[] (2i3) -- (2m2);
	\draw[] (2c) -- (2m1);  \draw[] (2c) -- (2m2); \draw[] (2c) -- (2m3);	
	
	\draw[arr] (1m3) to [out=250, in=180] (2o3); \draw[arr] (1m1) to [out=0, in=180] (2o1); \draw[arr] (1m2) to [out=0, in=170]  (2o2);
	\draw[arr] (2m3) to [out=90, in=0] (1o1); \draw[arr] (2m1) to [out=130, in=20] (1o2); \draw[arr] (2m2) to [out=180, in=270] (1o3);
	\draw[arr,tl] (1c) to [out=50,in=-50] (1o1); \draw[arr] (1c) to [out=50-120,in=-50-120]  (1o2); \draw[arr] (1c) to [out=50-240,in=-50-240]  (1o3); 
    	\draw[arr] (1c) to [out=0,in=180] (2i1); \draw[arr] (1c) to [out=0,in=90]  (2i2); \draw[arr] (1c) to [out=-70,in=150]  (2i3); 
	\draw[arr] (2c) to [out=50,in=-50] (2o1); \draw[arr] (2c) to [out=50-120,in=-50-120]  (2o2); \draw[arr] (2c) to [out=50-240,in=-50-240]  (2o3); 
    	\draw[arr] (2c) to [out=120,in=-50] (1i1); \draw[arr] (2c) to [out=170,in=-90]  (1i2); \draw[arr] (2c) to [out=170,in=-90]  (1i3); 
	
%
%
%

\end{tikzpicture}
\caption{$\affS{6}$-graph $\awg_{(3,3)}$}
\label{fig:33}
\end{figure}	

\begin{example} \label{ex:33}
Figure \ref{fig:33} illustrates the $\affS{6}$-graph $\awg_{(3,3)}$, similarly to the previous examples. Note that it is strongly connected (or consists of a single cell) but it contains two simple components. (cf. \ref{sec:cellsimple})
\end{example}
\ytableausetup{boxsize=normal, tabloids}

\subsection{Properties of $\awg_\lambda$}
Let us describe some properties of $\awg_\lambda$. First it is helpful to understand how moves change $\tau$-values in each case as described in the lemma below.
\begin{lem}
\begin{enumerate}
\item If $v\xrightarrow{\mo{i}\intch \mo{i+1}} w$ is a move of the first kind, then $\ades(v) - \ades(w)=\{\mo{i}\}$ and $\ades(w) - \ades(v) \subset \{\mo{i-1}, \mo{i+1}\}$.
\item If $v\xrightarrow{\mo{i}\intch \mo{j}} w$ is a move of the second kind, then 
\begin{enumerate}
\item $\ades(v) - \ades(w)$ is equal to one of $\{\mo{i-1}\}, \{\mo{j}\},$ or $\{\mo{i-1}, \mo{j}\}$.
\item if $\mo{j}=\mo{i+1}$, then $\ades(w)-\ades(v) = \{\mo{i}=\mo{j-1}\}$.
\item if $\mo{j}\neq\mo{i+1}$, then $\ades(w)-\ades(v)=\emptyset$.
\end{enumerate}
\end{enumerate}
\end{lem}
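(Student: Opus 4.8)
The plan is to prove the lemma by a direct, case-by-case analysis of how a single move alters the row-positions of the affected entries, and then tracking which residues $\mo{k}$ have their "descent status" (whether $\mo{k}$ sits strictly above $\mo{k+1}$) flipped. Recall that $\ades(v)$ is determined entirely by the set of consecutive residue pairs $(\mo{k},\mo{k+1})$ for which $\mo{k}$ lies in a strictly higher row than $\mo{k+1}$; since $\lambda$ has two rows, "strictly higher" just means "row $1$ versus row $2$". So the whole computation reduces to: for a move $v \to w$, determine for which $k$ the relative vertical position of $\mo{k}$ and $\mo{k+1}$ changes, using only the fact that exactly one or two entries move between rows.

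For part (1), a move of the first kind $\mo{i}\intch\mo{i+1}$ swaps $\mo{i}$ (from row $1$) and $\mo{i+1}$ (from row $2$); every other entry stays in its row. Thus the only pairs $(\mo{k},\mo{k+1})$ whose vertical configuration can change are those involving $\mo{i}$ or $\mo{i+1}$, i.e. $k \in \{\mo{i-1}, \mo{i}, \mo{i+1}\}$. For $k=\mo{i}$: before, $\mo{i}$ is above $\mo{i+1}$ (so $\mo{i}\in\ades(v)$); after, $\mo{i+1}$ is above $\mo{i}$ (so $\mo{i}\notin\ades(w)$) — hence $\mo{i}\in\ades(v)-\ades(w)$. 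One then checks, using the hypothesis that $\ades(v)$ and $\ades(w)$ are incomparable (built into the definition of a Knuth move, which a move of the first kind is), that $\mo{i}$ is the \emph{only} element of $\ades(v)-\ades(w)$; the remaining changes concern $k=\mo{i-1}$ (the pair $(\mo{i-1},\mo{i})$) and $k=\mo{i+1}$ (the pair $(\mo{i+1},\mo{i+2})$), and each of these can only \emph{gain} a descent (it could not have been a descent before since $\mo{i},\mo{i+1}$ were on "wrong" sides), giving $\ades(w)-\ades(v)\subset\{\mo{i-1},\mo{i+1}\}$.

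For part (2), a move of the second kind $\mo{i}\intch\mo{j}$ with $\mo{j}\neq\mo{i+1}$ swaps $\mo{i}$ (row $2 \to$ row $1$) and $\mo{j}$ (row $1 \to$ row $2$); again all other entries are fixed. The relevant consecutive pairs are those touching $\mo{i}$ or $\mo{j}$: namely $k\in\{\mo{i-1},\mo{i},\mo{j-1},\mo{j}\}$ (note $\mo{i}\ne\mo{j}$ and, since $\mo{j}\ne\mo{i+1}$, these four values are distinct except possibly degenerate wrap-arounds which the $\pint{\,,\,}$ conventions handle). Using condition (b) — $\mo{i+1}\in s^1$ and $\mo{j-1}\in s^2$ — the pair $(\mo{i},\mo{i+1})$: before, $\mo{i}\in s^2$, $\mo{i+1}\in s^1$, not a descent; after, $\mo{i}\in w^1$, $\mo{i+1}\in w^1$ (wait: both now in row $1$), still not a descent — so $k=\mo{i}$ contributes nothing to either direction. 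The pair $(\mo{j-1},\mo{j})$: before $\mo{j-1}\in s^2$, $\mo{j}\in s^1$, not a descent; after $\mo{j-1}\in w^2$, $\mo{j}\in w^2$, not a descent — again no contribution. The pair $(\mo{i-1},\mo{i})$: if $\mo{i-1}\in s^1$ then before it was a descent (since $\mo{i}\in s^2$) and after $\mo{i}\in w^1$ so it is not — contributing $\mo{i-1}$ to $\ades(v)-\ades(w)$; if $\mo{i-1}\in s^2$ it is never a descent. The pair $(\mo{j},\mo{j+1})$: if $\mo{j+1}\in s^2$ then before not a descent ($\mo{j}\in s^1$) and after $\mo{j}\in w^2$, still not — wait, need $\mo{j}$ above $\mo{j+1}$, both row $2$, no; if $\mo{j+1}\in s^1$ then before $\mo{j}\in s^1$ not a descent, after $\mo{j}\in w^2$, $\mo{j+1}\in w^1$, not a descent. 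Hmm — so the $\mo{j}$ contribution must instead come from the pair $(\mo{j},\mo{j+1})$ only when $\mo{j+1}\in s^1$? Let me recheck: actually $\mo{j}$ leaving row $1$ for row $2$ affects $(\mo{j-1},\mo{j})$ and $(\mo{j},\mo{j+1})$; for the latter, a descent at $\mo{j}$ means $\mo{j}$ strictly above $\mo{j+1}$, which needs $\mo{j}\in$ row $1$, true before, false after, \emph{provided} $\mo{j+1}\in s^2$ — then it \emph{was} a descent, now isn't, contributing $\mo{j}$ to $\ades(v)-\ades(w)$. So: $\mo{i-1}$ is lost iff $\mo{i-1}\in s^1$; $\mo{j}$ is lost iff $\mo{j+1}\in s^2$; by condition (c) at least one of these holds, giving the three stated possibilities $\{\mo{i-1}\},\{\mo{j}\},\{\mo{i-1},\mo{j}\}$ for $\ades(v)-\ades(w)$, and no residue is \emph{gained} (each relevant pair either stays a non-descent or was already a descent), so $\ades(w)-\ades(v)=\emptyset$. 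For the subcase $\mo{j}=\mo{i+1}$, the move is literally $\mo{i}\in s^2,\mo{j}=\mo{i+1}\in s^1$ swapping to $\mo{i}\in w^1$, $\mo{i+1}\in w^2$, which is a move of the first kind read backwards; applying the analysis of part (1) with source $s$ gives that $\mo{i}=\mo{j-1}$ is gained, and the only possible loss is among $\{\mo{i-1},\mo{i+1}=\mo{j}\}$, matching (a) and (b).

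The main obstacle is purely bookkeeping: making sure the "relevant pairs" list is complete and that the degenerate cases — where the residues $\mo{i-1},\mo{i},\mo{j-1},\mo{j}$ collide modulo $n$ (possible for small $n$ or $\mo{j-i}$ small) — are correctly absorbed by the $\pint{\,,\,}$ and $\mo{\,\cdot\,}$ conventions, and that conditions (a)–(e) of the move of the second kind are invoked exactly where needed (we only genuinely need (b) and (c) here; (a), (d), (e) are used elsewhere). A clean way to organize this is a small table listing each of the (at most four) consecutive residue pairs that can change, its descent status before and after expressed in terms of the row-memberships in $s$, and the resulting net change; the lemma then reads off immediately.
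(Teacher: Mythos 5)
Your proof is correct and is precisely the elaboration the paper implicitly intends when it declares the lemma ``clear from the definition of moves'': a direct tracking, for each of the at most four consecutive residue pairs $(\mo{k},\mo{k+1})$ touched by the swapped entries, of how the row-membership change flips the descent status, invoking conditions (b) and (c) of the move definition exactly where needed. The one slip worth flagging is the parenthetical claim that incomparability of $\ades(v)$ and $\ades(w)$ is ``built into the definition of a Knuth move, which a move of the first kind is'': a move of the first kind in the sense of \ref{sec:defawg} does \emph{not} carry the incomparability requirement (e.g.\ if $\mo{i-1}\in s^2$ and $\mo{i+2}\in s^1$ one gets $\ades(w)\subsetneq\ades(v)$), so such a move need not be a Knuth move. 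This does not break your argument, since you never actually use that assumption---the direct check you give immediately afterward (that the pairs at $\mo{i-1}$ and $\mo{i+1}$ can only go from non-descent to descent) already yields $\ades(v)-\ades(w)=\{\mo{i}\}$; you should simply delete the appeal to Knuth moves.
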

\begin{proof} It is clear from the definition of moves.
\end{proof}

\begin{lem} $\awg_\lambda=(\RSYT(\lambda), m, \ades)$ is reduced and nb-admissible.
\end{lem}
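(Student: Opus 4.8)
The plan is to verify, in turn, the three conditions in the definition of a reduced nb-admissible $I$-labeled graph. Two are immediate. First, $m$ takes values in $\{0,1\}\subset\bN$ by construction. Second, $V=\RSYT(\lambda)$ is finite --- a row-standard tableau of a two-row shape is determined by the subset of $\{1,\dots,n\}$ placed in its second row --- so the local finiteness requirement is automatic. For reducedness, I would simply invoke the preceding lemma: if $m(s\ra t)=1$ then $s\ra t$ is a move of the first or of the second kind, and in both cases that lemma shows $\ades(s)\setminus\ades(t)\neq\emptyset$; hence $\tau(s)=\ades(s)\not\subseteq\ades(t)=\tau(t)$, which is exactly the required implication. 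The one condition with genuine content is the symmetry $m(s\ra t)=m(t\ra s)$ whenever $\ades(s)$ and $\ades(t)$ are incomparable.

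Here my plan is to use the preceding lemma once more to reduce to a manageable case analysis. By that lemma, a move of the second kind with $\mo{j}\neq\mo{i+1}$ has $\ades(t)\subsetneq\ades(s)$, so its two endpoints carry comparable $\tau$-values; such moves are therefore irrelevant to the symmetry condition. Consequently, if $\ades(s)$ and $\ades(t)$ are incomparable and $m(s\ra t)=1$, then $s\ra t$ is either a move of the first kind or a ``gap-one'' move of the second kind, meaning one with $\mo{j}=\mo{i+1}$. Since $m$ is $\{0,1\}$-valued and incomparability is symmetric in $s,t$, it then suffices to show that, in each of these two cases, the reverse move $t\ra s$ exists as well.

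The first case is easy: if $t$ is obtained from $s$ by a gap-one move of the second kind involving $\mo{i}$ and $\mo{i+1}$, then in $t$ the entry $\mo{i}$ lies in the top row and $\mo{i+1}$ in the bottom row, so reversing the swap is a move of the first kind $t\ra s$, and moves of the first kind carry no side conditions. For the second case, suppose $t$ is obtained from $s$ by a move of the first kind interchanging $\mo{i}\in s^1$ and $\mo{i+1}\in s^2$; then $\mo{i+1}\in t^1$ and $\mo{i}\in t^2$, so reversing the swap is a gap-one move of the second kind from $t$. I would then check that its defining conditions (a), (b), (d), (e) all hold trivially or vacuously (the index range in condition (d) is empty, and condition (e) does not apply in the gap-one case), so that the only thing left to verify is condition (c), which here reads $\mo{i-1}\in t^1$ or $\mo{i+2}\in t^2$. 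This is exactly where the incomparability hypothesis enters: the preceding lemma gives $\emptyset\neq\ades(t)\setminus\ades(s)\subseteq\{\mo{i-1},\mo{i+1}\}$, and, using $\mo{i}\in t^2$ and $\mo{i+1}\in t^1$, one checks that $\mo{i-1}\in\ades(t)\iff\mo{i-1}\in t^1$ and $\mo{i+1}\in\ades(t)\iff\mo{i+2}\in t^2$; hence condition (c) holds and $m(t\ra s)=1$.

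The step I expect to be the main obstacle is this last verification: correctly recognizing the reverse of a move of the first kind as a gap-one move of the second kind, and confirming that the whole list (a)--(e) of side conditions for moves of the second kind degenerates to condition (c), which is then supplied by the incomparability hypothesis through the preceding lemma. One should also double-check the degenerate small cases (for instance $n=3$, $\lambda=(2,1)$, where $\mo{i-1}$ and $\mo{i+2}$ coincide), but there condition (c) becomes automatic, so no separate treatment is needed.
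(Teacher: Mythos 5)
Your proof is correct and follows essentially the same route as the paper's: reducedness via the preceding lemma, reduction of the symmetry check to first-kind moves and ``gap-one'' second-kind moves, and verification that the inverse of a first-kind move is a gap-one second-kind move whose only nontrivial side condition (c) is supplied by incomparability. You merely spell out in more detail the point that the paper compresses into ``one can easily check,'' namely that (a), (b) are automatic and (d), (e) are vacuous in the gap-one case.
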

\begin{proof} Suppose first that $m(u\ra v)\neq 0$ for some $u,v \in \RSYT(\lambda)$, i.e. there is a move form $u$ to $v$. If it is of the first kind, then $i \in \ades(u) -\ades(v)$. Otherwise, either $\mo{i-1} \in \ades(u)-\ades(v)$ or $\mo{j} \in \ades(u)-\ades(v)$. In either case, we have $\ades(u) \not\subset \ades(v)$. This proves that $\awg_\lambda$ is reduced.

On the other hand, it is clear that $\im m \in \{0,1\}$. Now suppose that $\ades(u)$ and $\ades(v)$ are incomparable for some $u,v \in \RSYT(\lambda)$. If there is no move from either from $u$ to $v$ or from $v$ to $u$, then clearly $m(u\ra v)=m(v\ra u)=0$. Otherwise, without loss of generality we may assume that there is a move from $u$ to $v$. If this is of the first kind, say $\mo{i} \intch\mo{i+1}$, then one can easily check that there is a move of the second kind $\mo{i} \rintch\mo{i+1}$ from $v$ to $u$. (The only nontrivial condition is that either $\mo{i-1} \in v^1$ or $\mo{i+2}\in v^2$, which is true since $\ades(v) \not\subset \ades(u)$.) If the move from $u$ ot $v$ is of the second kind, then $\ades(u) \not\supset \ades(v)$ if and only if it is $\mo{i} \rintch \mo{i+1}$ for some $i \in [1,n]$. Thus there is a move of the first kind from $v$ to $u$ as well. In sum, we have $m(u\ra v)=1$ if and only if $m(v\ra u)=1$. This proves that $\awg_\lambda$ is nb-admissible.
\end{proof}
\begin{rmk} The graph $\awg_\lambda$ is not in general bipartite. For example, the $\affS{5}$-graph $\awg_{(3,2)}$ (see Figure \ref{fig:32}) cannot be bipartite even after removing all directed edges because a cycle of length 5 (a ``star'' in the figure) is embedded into $\awg_{(3,2)}$. 
\end{rmk}

Recall the cyclic shift element $\shift \in \extSn$. We consider the action of $\shift$ on $\RSYT(\lambda)$ by replacing each $i$ with $\mo{i+1}$ and reordering entries of each row if necessary. 
\begin{lem} The action of $\shift$ on $\RSYT(\lambda)$ induces that on $\awg_\lambda$.
\end{lem}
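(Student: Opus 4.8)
The plan is to show that the vertex bijection $T\mapsto\shift\cdot T$ of $\RSYT(\lambda)$ is an isomorphism of the $[1,n]$-labelled graph $\awg_\lambda$ with itself, where the label set $[1,n]$ is \emph{simultaneously} permuted by the cyclic shift $\mo{i}\mapsto\mo{i+1}$. This is the correct sense of ``$\shift$ acts on $\awg_\lambda$'': since conjugation by $\shift$ induces the diagram automorphism $s_i\mapsto s_{i+1}$ of $\affSn$, the statement is that $\awg_\lambda$ is isomorphic to its own twist by this automorphism rather than that $\shift$ is a literal graph automorphism. Concretely, three things have to be checked: (i) $\shift\cdot T$ is again a well-defined element of $\RSYT(\lambda)$; (ii) $\ades(\shift\cdot T)=\{\mo{i+1}\mid i\in\ades(T)\}$; and (iii) $m(\shift\cdot s\ra\shift\cdot t)=m(s\ra t)$ for all $s,t\in\RSYT(\lambda)$.

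Point (i) is immediate, since $k\mapsto\mo{k+1}$ is a bijection of $\{1,\dots,n\}$ and reordering the entries within each row produces a row-standard tableau of the same shape $\lambda$. Point (ii) follows straight from the definition of $\ades$ in \ref{sec:desknuth}: the box of $T$ holding $\mo{i}$ is exactly the box of $\shift\cdot T$ holding $\mo{i+1}$, and the box holding $\mo{i+1}$ in $T$ is the box holding $\mo{i+2}$ in $\shift\cdot T$; hence $\mo{i}$ lies strictly above $\mo{i+1}$ in $T$ iff $\mo{i+1}$ lies strictly above $\mo{i+2}$ in $\shift\cdot T$, which is the assertion (keeping in mind $\mo{n+1}=1$).

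For (iii) I would treat the two kinds of moves separately. A move of the first kind $s\xrightarrow{\mo{i}\intch\mo{i+1}}t$ is carried by $\shift$ to the interchange of the entries $\mo{i+1}$ and $\mo{i+2}$ of $\shift\cdot s$, which by the box correspondence above is precisely the move of the first kind $\shift\cdot s\xrightarrow{\mo{i+1}\intch\mo{i+2}}\shift\cdot t$. For a move of the second kind $s\xrightarrow{\mo{i}\intch\mo{j}}t$, one checks that every condition (a)--(e) in \ref{sec:defawg} is invariant under replacing $(s,i,j)$ by $(\shift\cdot s,\,\mo{i+1},\,\mo{j+1})$: the quantity $\mo{j-i}$ is unchanged because $\mo{(j+1)-(i+1)}=\mo{j-i}$, so (a) and the parities/bounds in (d),(e) match; each membership statement such as ``$\mo{i+1}\in s^1$'' becomes the corresponding ``$\mo{i+2}\in(\shift\cdot s)^1$''; and each residue interval $\pint{\mo{a},\mo{b}}$ appearing in (d),(e) is carried to $\pint{\mo{a+1},\mo{b+1}}$ while the set $s^2$ it is intersected with is carried to $(\shift\cdot s)^2$, so all the cardinalities in (d),(e) are preserved. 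Thus a move of the second kind maps to a move of the second kind with the same weight, and applying the same reasoning to $\shift^{-1}$ gives the converse, so $m$ is preserved.

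I expect no conceptual obstacle here: the entire argument is bookkeeping with the residue notation $\mo{\cdot}$ and the convention for $\pint{\cdot,\cdot}$, exploiting that every ingredient in the definition of $\awg_\lambda$ is ``translation-invariant mod $n$''. The only point genuinely deserving care is to keep the $\mo{\cdot}$-wrapping consistent throughout, especially around the index $n$ where $\mo{n+1}=1$, and to state clearly that the label set is shifted along with the vertices, as already emphasized above.
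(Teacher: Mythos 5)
Your proposal is correct and follows essentially the same approach as the paper's (very terse) proof: observe that $\ades(\shift(v)) = \shift(\ades(v))$ and that all the conditions defining moves of both kinds are invariant under simultaneous cyclic shift of entries and labels. Your write-up just makes explicit the bookkeeping (in particular that $\pint{a,b}$ is carried to $\pint{\mo{a+1},\mo{b+1}}$ and $\mo{j-i}$ is unchanged) that the paper declares ``easy to check.''
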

\begin{proof} It is clear that $\ades(\shift(v)) = \shift(\ades(v))$. Furthermore, it is easy to check that the description of moves on $\awg_\lambda$ is also ``invariant under $\shift$'', i.e. we have $m(u\ra v)=m(\shift(u)\ra \shift(v))$.
\end{proof}
\begin{example} In Figure \ref{fig:32} $\shift$ acts as a (clockwise) rotation by $72^\circ$. Similarly, in Figure \ref{fig:42} $\shift$ acts a s (clockwise) rotation by $60^\circ$ on the outer part and by $120^\circ$ on the inner part. On the other hand, in Figure \ref{fig:33} $\shift$ swaps two simple components and $\shift^2$ rotates each component by $120^\circ$.
\end{example}
\begin{rmk} It can be proved that $\awg_\lambda$ is also invariant under the affine evacuation defined in \cite{cfkly}, but this fact will not be used in this paper.
\end{rmk}

It is desirable to understand $U(\awg_\lambda)$ in terms of combinatorics of Young tableaux. Let $\adeg_\lambda=(V', m',\tau')$ be the Kazhdan-Lusztig affine dual equivalence graph of shape $\lambda$ as in \cite[Definition 3.21]{clp17}. (Here we use the adjective ``affine'' to differentiate it from the ``finite'' one $\fdeg_\lambda$ defined later.) It is defined as $V'=\RSYT(\lambda)$, $\tau'=\ades$, and for $u,v \in \RSYT(\lambda)$, $m'(u\ra v)=m'(v\ra u)=1$ if there exists a Knuth move connecting $u$ and $v$ and $m'(u\ra v)=m'(v\ra u)=0$ otherwise. (See \ref{sec:desknuth} for the definition of Knuth moves.)  Then we have
\begin{prop} $U(\awg_\lambda)=\adeg_\lambda$ as $[1,n]$-labeled graphs.
\end{prop}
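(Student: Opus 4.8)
The plan is to show that the two $[1,n]$-labeled graphs agree vertex-by-vertex, $\tau$-by-$\tau$, and edge-by-edge. The vertex sets coincide by definition, both being $\RSYT(\lambda)$, and $\tau=\ades=\tau'$ by construction, so the only content is the claim that for $u,v\in\RSYT(\lambda)$ one has a simple edge $m(u\ra v)=m(v\ra u)=1$ in $U(\awg_\lambda)$ precisely when $u$ and $v$ are connected by a single Knuth move. Recall from the definition of $U(\Gamma)$ that a simple edge in $U(\awg_\lambda)$ is exactly a pair $u,v$ with $m(u\ra v)=m(v\ra u)=1$; and from the proof that $\awg_\lambda$ is nb-admissible we already know $m(u\ra v)=1\iff m(v\ra u)=1$. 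So the task reduces to: \emph{a move of the first or second kind from $u$ to $v$ together with the reverse move from $v$ to $u$ occurs if and only if $u,v$ are related by a single Knuth move.}

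First I would dispose of the case of a move of the \textbf{first} kind $\mo i\intch\mo{i+1}$, with $\mo i\in u^1$, $\mo{i+1}\in u^2$. By the $\tau$-change lemma this only produces a reverse edge (of the second kind $\mo i\rintch\mo{i+1}$) when $\ades(u),\ades(v)$ are incomparable, and in the nb-admissibility proof it was checked this happens iff additionally $\mo{i-1}\in v^1$ or $\mo{i+2}\in v^2$. I would translate these conditions into the Knuth-move condition of \ref{sec:desknuth}: swapping $\mo i$ and $\mo{i+1}$ between a higher and a lower row, with $\ades(u)$ and $\ades(v)$ incomparable, is by definition precisely a Knuth move. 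The reordering caveat is harmless in two rows, and the $\mo{i\pm1}$ side condition is automatic once incomparability holds. Conversely every Knuth move that swaps $\mo i$ and $\mo{i+1}$ with $\mo i$ strictly above $\mo{i+1}$ is a move of the first kind with incomparable descent sets, hence yields a simple edge.

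The substantive case is the move of the \textbf{second} kind $\mo j\intch\mo i$ with $\mo i\in u^2,\ \mo j\in u^1,\ \mo i\neq\mo{j+1}$. Here $v$ is obtained by swapping two entries that are \emph{not} consecutive (as residues), which is never literally a single Knuth move of the form ``swap $\mo k$ and $\mo{k+1}$''. However, I need the reverse edge to exist for this to count as a simple edge of $U(\awg_\lambda)$; when $\mo j\neq\mo{i+1}$ the lemma gives $\ades(v)-\ades(u)=\emptyset$, so $\ades(v)\subset\ades(u)$, which means the pair is \emph{not} a simple edge (the reverse move, if of the first kind, would be from a larger to a smaller $\tau$, and in a reduced graph such weights are $0$ — more precisely the nb-admissibility argument shows the reverse move exists only in the $\mo j\rintch\mo{i+1}$ subcase). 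So I would argue that a move of the second kind contributes a simple edge \emph{only} in the degenerate situation $\mo j=\mo{i+1}$, i.e.\ when it is actually a move of the first kind in disguise (as the definition explicitly notes), and in that case we are back to the first-kind analysis. Thus no ``genuinely directed'' second-kind move ever shows up in $U(\awg_\lambda)$, and $U(\awg_\lambda)$ has exactly the Knuth-move edges, which is $\adeg_\lambda$.

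The main obstacle is the bookkeeping in the second-kind case: one must be careful that the conditions (a)--(e) defining such a move, combined with the $\tau$-change lemma, really force $\ades(v)\subsetneq\ades(u)$ whenever $\mo j\neq\mo{i+1}$ (so that it is filtered out of $U$), and that no such move can simultaneously be reversible by another second-kind move. I expect this to follow cleanly by inspecting which of $\{\mo{i-1}\},\{\mo j\},\{\mo{i-1},\mo j\}$ is $\ades(u)-\ades(v)$ against the empty set $\ades(v)-\ades(u)$, but it is the step that needs the definitions unpacked most carefully. The converse direction — every Knuth move arises as a (reversible) move of the first kind — is routine once one notes that for two-row shapes a Knuth swap of $\mo i,\mo{i+1}$ necessarily moves one of them up and the other down.
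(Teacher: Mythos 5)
Your proof is correct and follows essentially the same route as the paper's: both reduce the claim to the observation that bidirectional (simple) edges of $\awg_\lambda$ arise precisely from adjacent-residue swaps $\mo i\leftrightarrow\mo{i+1}$, which are exactly the Knuth moves. (Two minor slips worth fixing, though neither affects your conclusions: the blanket statement ``$m(u\ra v)=1\iff m(v\ra u)=1$'' holds only when $\ades(u)$ and $\ades(v)$ are incomparable, and the subcase $\mo j=\mo{i+1}$ of a second-kind move is not the case the definition folds into the first kind --- that is $\mo i=\mo{j+1}$ --- but rather a genuine second-kind move that reverses a first-kind one.)
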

\begin{proof} It is enough to show that $m'(u\ra v)=m(u\ra v)$ if $\ades(u) \not\supset\ades(v)$. First suppose that there is a move from $u$ to $v$, i.e. $m(u\ra v)=1$. As $\ades(u) \not\supset\ades(v)$, this move should be either $\mo{i} \intch\mo{i+1}$ or $\mo{i} \rintch\mo{i+1}$ for some $i\in [1,n]$. In any case, one may check that this is a Knuth move thus $m'(u\ra v)=1$ as well. The other direction is proved similarly.
\end{proof}

\subsection{$\awg_\lambda$ is a $\affSn$-graph}
We are ready to state the first main theorem of this paper.
\begin{thm} $\awg_\lambda$ is a $\affSn$-graph.
\end{thm}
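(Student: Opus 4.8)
The plan is to verify the four combinatorial rules from Theorem~\ref{thm:combrules}, since $\awg_\lambda$ has already been shown to be reduced and nb-admissible. By that theorem, checking those rules (Compatibility, Simplicity, Bonding, Polygon) suffices to conclude that $\awg_\lambda$ is a $\affSn$-graph. Moreover, since the description of moves is $\shift$-invariant (as established in the excerpt), one may normalize many case analyses via the cyclic shift, reducing the verification of each rule to a manageable family of local configurations around a fixed residue.

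First I would treat the two easy rules. For the \textbf{Compatibility Rule}: given a move $u \to v$, one reads off from the $\tau$-change lemma that $\tau(u) - \tau(v) \subset \{\mo{i-1}, \mo{i}, \mo{j}\}$ and $\tau(v) - \tau(u)$ is likewise a small explicit set; I would check in each of the (few) combinatorial types of moves that any index in the first difference is Dynkin-adjacent to any index in the second. For the \textbf{Simplicity Rule}: if $m(u\to v)\neq 0$ and $\tau(u) \not\supset \tau(v)$, the proof of nb-admissibility already shows the reverse move exists and both weights are $1$; the remaining content is the case $\tau(u)\supset\tau(v)$, where one must check $m(v\to w)=0$ for all $w$ — but $\awg_\lambda$ is reduced, so $m(u\to v)\neq 0$ forces $\tau(u)\not\subset\tau(v)$, and combined with $\tau(u)\supset\tau(v)$ this forces $\tau(v)\subsetneq\tau(u)$; then I would argue directly from the move definitions that a vertex $v$ whose descent set is strictly contained in that of its source can have no outgoing move, or handle it as the definition dictates.

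The substantive work is the \textbf{Bonding Rule} and the \textbf{Polygon Rule}. For Bonding, fix adjacent $i,j$ (so $j = i\pm 1$ in the affine Dynkin diagram) and $u$ with $i\in\tau(u)$, $j\notin\tau(u)$; I would show that exactly one $v$ with $j\in\tau(v)$, $i\notin\tau(v)$ is joined to $u$ by an edge in both directions. The natural candidate is a move of the first kind swapping $\mo{i}$ and $\mo{i+1}$ (adjusting indices since the relevant simple reflections are $s_{\mo i}$ and $s_{\mo{i\pm 1}}$); existence and uniqueness should follow because such a swap is forced once one specifies which entry moves between rows, and the reverse move of the second kind with $\mo j = \mo{i+1}$ is automatically available. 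For the Polygon Rule, which is the main obstacle, one must show $N^2_{ij} = N^2_{ji}$ and (when $i,j$ adjacent) $N^3_{ij} = N^3_{ji}$ for all admissible source/sink pairs $u,v$. I expect the $r=2$ case to reduce to a short computation since $V_{i/j}$ is small for two-row shapes. The $r=3$ case is where the delicate conditions (a)--(e) defining moves of the second kind earn their keep: one must trace length-three directed paths $u \to w_1 \to w_2 \to v$ with $w_1\in V_{i/j}$, $w_2\in V_{j/i}$, and verify the count matches the $i\leftrightarrow j$ swapped count. I would organize this by the relative cyclic positions of the entries $\mo i, \mo{i+1}, \mo j$ in the rows of $u$ and $v$, using $\shift$-invariance to fix, say, $\mo i$, and then checking the polygon identity in each resulting configuration — likely invoking the auxiliary conditions (d) and (e) precisely to make the two sides of $N^3$ agree. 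This case analysis is lengthy and is where I would expect the bulk of Sections 5 and 6 to be spent, so in this proof proposal I would only set up the reduction and indicate that the detailed verification is deferred.
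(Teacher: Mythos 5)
There is a genuine gap in your Polygon Rule plan, and it points to a missed structural observation. You claim that the $r=2$ case "reduce[s] to a short computation" and that "the $r=3$ case is where the delicate conditions (a)--(e) \ldots earn their keep," expecting that to be the bulk of the work. In fact the opposite is true. For a two-row shape, no vertex $v\in\RSYT(\lambda)$ can have $\mo{i},\mo{i+1}\in\ades(v)$ simultaneously: $\mo{i}\in\ades(v)$ forces $\mo{i+1}\in v^2$, while $\mo{i+1}\in\ades(v)$ forces $\mo{i+1}\in v^1$. Since the Polygon Rule is only tested at pairs $u,v$ with $i,j\in\tau(u)$, and the $r=3$ identity is only required when $i,j$ are Dynkin-adjacent, the $r=3$ case is entirely vacuous here. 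Conversely, $N^2_{ij}=N^2_{ji}$ for non-adjacent $i,j$ is the real content, and it is not short: the paper devotes all of Sections 5 and 6 to it, organized by whether the endpoints $v$ and $w$ differ in two or four entries (each single move swaps one entry of $v^1$ with one of $v^2$, so a two-step path changes two or four entries). Your plan would spend the bulk of its effort on a case that never arises and seriously underestimate the case that does.

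The rest of your plan is sound and matches the paper in spirit. Your treatment of the Compatibility and Simplicity Rules is the same as the paper's remark that they "follow directly from the description of moves." For the Bonding Rule the paper takes a shorter route than you propose: since bonding only constrains two-way edges of weight $1$, it is equivalent to the corresponding property of the simple underlying graph $U(\awg_\lambda)=\adeg_\lambda$, which is already established in the cited literature on affine dual equivalence graphs; your direct argument would also work but would duplicate that effort.
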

To this end, we use Theorem \ref{thm:combrules}; our proof is purely combinatorial. Firstly, three out of four combinatorial rules of Stembridge are proved easily.
\begin{lem} $\awg_\lambda$ satisfies the Compatibility Rule, the Simplicity Rule, and the Bonding Rule.
\end{lem}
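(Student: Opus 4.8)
The plan is to verify each of the three rules directly from the explicit combinatorial definition of moves in \ref{sec:defawg}, using the preceding lemma on how moves change $\ades$-values as the main technical input.

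\textbf{Compatibility Rule.} Suppose $m(u \ra v) \neq 0$, so there is a move from $u$ to $v$. I would split into the two kinds of moves. If it is a move of the first kind $\mo{i} \intch \mo{i+1}$, the previous lemma gives $\tau(u) - \tau(v) = \{\mo{i}\}$ and $\tau(v) - \tau(u) \subset \{\mo{i-1}, \mo{i+1}\}$; since $s_{\mo{i}}$ is adjacent in the affine Dynkin diagram to both $s_{\mo{i-1}}$ and $s_{\mo{i+1}}$, the rule holds. If it is a move of the second kind $\mo{i} \rintch \mo{j}$ (which is the relevant orientation when $\tau(u) \not\supset \tau(v)$), then by the lemma $\tau(u) - \tau(v) \subset \{\mo{i-1}, \mo{j}\}$ and $\tau(v) - \tau(u) \subset \{\mo{i}\}$ (with $\mo{j} = \mo{i+1}$ forced when the latter is nonempty). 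One then checks that $s_{\mo{i}}$ is adjacent to both $s_{\mo{i-1}}$ and $s_{\mo{j}}$: adjacency to $s_{\mo{i-1}}$ is automatic, and adjacency to $s_{\mo{j}}$ holds precisely because in this degenerate situation $\mo{j} = \mo{i+1}$. The case $\tau(u) \supset \tau(v)$ is vacuous for the rule.

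\textbf{Simplicity Rule.} This follows almost immediately from the structure of $\awg_\lambda$: $\im m \subset \{0,1\}$, and the previous lemma (establishing nb-admissibility) shows that whenever $\tau(u)$ and $\tau(v)$ are incomparable and $m(u \ra v) = 1$ there is also a reverse move, so $m(v \ra u) = 1$; hence $m(u\ra v) = m(v \ra u) = 1$. When $\tau(u) \supsetneq \tau(v)$ we must check $m(v \ra w) = 0$ — but this is exactly reducedness applied with the roles of source and target: I would argue that if there were a move from $u$ to $v$ with $\tau(u) \supsetneq \tau(v)$, inspecting the two kinds of moves shows the source always has a descent not in the target's descent set, contradicting $\tau(u) \supset \tau(v)$; so in fact this case does not occur, and the rule holds vacuously. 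The case $\tau(u) = \tau(v)$ likewise cannot occur by the same descent-bookkeeping.

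\textbf{Bonding Rule.} This is the substantive one and I expect it to be the main obstacle. Given $i, j$ adjacent in the affine Dynkin diagram (so $j = \mo{i+1}$ or $j = \mo{i-1}$, working cyclically) and $u$ with $i \in \tau(u)$, $j \notin \tau(u)$, I must produce a unique $v$ with $j \in \tau(v)$, $i \notin \tau(v)$, and $m(u \ra v) = m(v \ra u) = 1$. By the symmetry under $\shift$ (the lemma that $\shift$ acts on $\awg_\lambda$) and possibly transposing rows, I would reduce to a normalized configuration, say $j = \mo{i+1}$, and analyze where $\mo{i}$ and $\mo{i+1}$ sit in $u$: the hypotheses $i \in \ades(u)$, $i+1 \notin \ades(u)$ pin down their rows and relative positions. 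The candidate $v$ is obtained by the appropriate move (first kind if $\mo{i} \in u^1, \mo{i+1} \in u^2$; a move of the second kind otherwise), and I would verify both that such a move is legal — checking conditions (a)--(e) of the second-kind move hold in this constrained setting, which is where the bulk of the casework lies — and that no other target satisfies all four requirements, giving uniqueness. The delicate point is that conditions (d) and (e) involve counting entries of $u^2$ in cyclic intervals, so establishing existence of the second-kind move and ruling out competing ones requires a careful combinatorial argument about the interleaving of entries between rows; I would organize this by the parity/length data $\mo{j-i}$ and handle the boundary case $\mo{j-i} = 3$ (where (d),(e) are vacuous, per the remark) separately.
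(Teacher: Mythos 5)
Your treatment of the Compatibility Rule is correct and is essentially what the paper means by the rules following ``directly from the description of moves.'' For the Simplicity Rule, your reasoning about the case of incomparable $\tau$-values is fine, but the passage handling $\tau(u) \supsetneq \tau(v)$ is garbled: you first appeal (correctly) to reducedness in the reverse direction to get $m(v\ra u)=0$, but then claim that the case $\tau(u)\supsetneq\tau(v)$ with $m(u\ra v)\neq 0$ ``does not occur.'' That is false --- this case is exactly where the directed edges of $\awg_\lambda$ live --- and it is also unnecessary, since the reducedness argument applied to $(v,u)$ already gives what you need.

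The real gap is in the Bonding Rule, where you miss a decisive reduction and consequently never complete the argument. The Bonding Rule quantifies only over pairs $u,v$ with $m(u\ra v)\neq 0$ and $m(v\ra u)\neq 0$, and in a reduced nb-admissible $I$-labeled graph such a pair necessarily has incomparable $\tau$-values and $m(u\ra v)=m(v\ra u)=1$; that is, the Bonding Rule depends only on the simple underlying graph $U(\awg_\lambda)$. The paper has already shown $U(\awg_\lambda)=\adeg_\lambda$ (the preceding proposition), and $\adeg_\lambda$ satisfies the Bonding Rule by \cite[Proposition 3.5]{cfkly}, so the rule follows with no casework. Your proposal instead outlines a direct combinatorial verification --- normalizing by $\shift$, analyzing conditions (a)--(e) for second-kind moves, counting entries in cyclic intervals --- but you explicitly acknowledge you have not carried out this ``bulk of the casework.'' In effect you are proposing to reprove from scratch a nontrivial result about $\adeg_\lambda$ that the paper imports as a black box. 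Even if that route could in principle succeed (it should, since the conclusion is true), the proposal as written leaves the Bonding Rule unproven, and the missing observation --- that the rule only sees the undirected part of the graph --- is what makes the lemma short rather than a major undertaking.
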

\begin{proof} The first two rules follow directly from the description of moves. Also $\awg_\lambda$ satisfies the Bonding Rule if and only if $U(\awg_\lambda)=\adeg_\lambda$ does, which follows from \cite[Proposition 3.5]{cfkly}.
\end{proof}
Thus it remains to show that $\awg_\lambda$ satisfies the Polygon Rule, which is the most technical part of our proof. First note that it is not possible to have $\mo{i}, \mo{i+1} \in \ades(v)$ for any $i \in [1,n]$ and any $v\in \RSYT(\lambda)$ since $\lambda$ is assumed to be a two-row partition. Thus we only need to show that $N^r_{i,j}(\awg_\lambda; v,w) = N^r_{j,i}(\awg_\lambda; v,w)$ where $i$ and $j$ are not adjacent in the Dynkin diagram of $\affSn$ (i.e. $\mo{i} \not\in \{\mo{j-1}, \mo{j}, \mo{j+1}\}$) and $r=2$. In such cases, we usually omit $\awg_\lambda$ and the superscript $r=2$ from the notations, and simply write $N_{i,j}(v,w)$ and $N_{j,i}(v,w)$ instead.

Furthermore, if there is a move from $s$ to $t$ then it swaps an entry in $s^1$ and another in $s^2$, which means $s$ and $t$ differ by two elements. Since if $N_{i,j}(v,w)\neq 0$ then $w$ is obtained from $v$ by two sequential moves, it follows that we only need to check the Polygon Rule when $v$ and $w$ differ by either two or four elements. In the next two sections we verify the Polygon Rule $N_{i,j}(v,w) = N_{j,i}(v,w)$ for such $v$ and $w$ case-by-case.

\section{Case 1: $v$ and $w$ differ by two elements}
Here, we check the polygon rule $N_{i,j}(v,w) = N_{j,i}(v,w)$ for a fixed $v=(v^1, v^2)$ $w=(w^1, w^2)$ and various $i,j \in [1, n]$ where $v$ and $w$ only differ by two elements. Let us denote by $a\in[1,n]$ the unique element in $v^1-w^1=w^2-v^2$ and by $b\in[1,n]$ the unique element in $v^2-w^2=w^1-v^1$.
In other words, we are in the following situation:
$$v=\ytableaushort{\cdots a \cdots,\cdots b \cdots} \quad \rightsquigarrow \ u \ \rightsquigarrow \quad w=\ytableaushort{\cdots b \cdots,\cdots a \cdots} $$

Note that $N_{i,j}(v,w) \neq 0$ only when $i,j \in [1,n]$ satisfy $i,j \in \ades(v)$ and $i,j \not\in \ades(w)$. However, it is only possible when $a\neq \mo{b-1}$ and $\{i,j\} = \{a,\mo{b-1}\}$, which we assume from now on. Moreover, it also requires that $\mo{a+1} \in v^2$ and $\mo{b-1} \in v^1$. Therefore, it suffices only to consider the following case:
$$v=\ytableaushort{\cdots a {\scriptstyle \mo{b-1}} \cdots,\cdots {\scriptstyle \mo{a+1}} b \cdots}\quad \rightsquigarrow \ u \ \rightsquigarrow \quad  w=\ytableaushort{\cdots {\scriptstyle \mo{b-1}}  b \cdots,\cdots a  {\scriptstyle \mo{a+1}} \cdots} $$
We have following two possibilities to obtain $w$ from $v$ in two steps.
\begin{enumerate}[label=$\bullet$]
\item For some element $x\in v^1-\{a\}$, interchange $x$ and $b$ and then $x$ and $a$, i.e. $x\intch b$ and $a\intch x$.
\item For some element $y \in v^2-\{b\}$, interchange $y$ and $a$ and then $y$ and $b$, i.e. $a\intch y$ and $y\intch b$.
\end{enumerate}

From now on we show that the polygon rule holds for $(v,w)$, i.e. $N_{i,j}(v,w) = N_{j,i}(v,w)$ where $\{i,j\} = \{a,\mo{b-1}\}$. We divide it into two cases, depending on whether $b=\mo{a-1}$ (\ref{b=a-1case}) or not (\ref{b!=a-1case}).

\subsection{$b=\mo{a-1}$ case}\label{b=a-1case} If $b=\mo{a-1}$, then $\{i,j\} = \{\mo{a-2}, a\}$ and we are in the following situation:
$$v=\ytableaushort{\cdots {\scriptstyle \mo{a-2}} a  \cdots,\cdots {\scriptstyle \mo{a-1}} {\scriptstyle \mo{a+1}}\cdots} \quad \rightsquigarrow \ u \ \rightsquigarrow \quad w=\ytableaushort{\cdots {\scriptstyle \mo{a-2}} {\scriptstyle \mo{a-1}} \cdots,\cdots a  {\scriptstyle \mo{a+1}} \cdots} $$
By applying cyclic shift $\shift$, we may assume that $a=3$. Thus we have:
$$v=\ytableaushort{\cdots 1 3  \cdots,\cdots 2 4 \cdots} \quad \rightsquigarrow \ u \ \rightsquigarrow  \quad w=\ytableaushort{\cdots 1 2 \cdots,\cdots 3 4 \cdots} $$
where $a=3, b=2$, and $\{i,j\}=\{1,3\}$. Here we consider two possibilities of two-step moves mentioned above, i.e. $x\intch 2$ and $3\intch x$ for some $x\in v^1-\{3\}$ or $3\intch y$ and $y\intch 2$ for some $y \in v^2-\{2\}$.

First, we consider the two-step move which performs $x\intch2$ and then $3 \intch x$ for some $x \in v^1- \{3\}$. If $x=1$, then we have:
$$v=\ytableaushort{\cdots 1 3 \cdots,\cdots 2 4 \cdots} \quad \rightsquigarrow \quad u=\ytableaushort{\cdots 2 3 \cdots,\cdots 1 4 \cdots} \quad \rightsquigarrow \quad w=\ytableaushort{\cdots 1 2 \cdots,\cdots 3 4 \cdots} $$
Otherwise, we have:
$$v=\ytableaushort{\cdots 1 3 x\cdots,\cdots 2 4 \cdots\cdots} \quad \rightsquigarrow \quad u=\ytableaushort{\cdots 1 2 3 \cdots,\cdots 4 x \cdots \cdots} \quad \rightsquigarrow \quad w=\ytableaushort{\cdots 1 2 x \cdots,\cdots 3 4 \cdots\cdots} $$
However, the second move $3 \intch x$ violates the condition (b) since $2\in u^1$.

This time we consider the move which performs $3\intch y$ and then $y\intch2$ for some $y \in v^2 - \{2\}$. If $y=4$, then we have:
$$v=\ytableaushort{\cdots 1 3 \cdots,\cdots 2 4 \cdots} \quad \rightsquigarrow \quad u=\ytableaushort{\cdots 1 4 \cdots,\cdots 2 3 \cdots} \quad \rightsquigarrow \quad w=\ytableaushort{\cdots 1 2 \cdots,\cdots 3 4 \cdots} $$
Otherwise, we have:
$$v=\ytableaushort{\cdots 1 3 \cdots \cdots,\cdots 2 4 y\cdots} \quad \rightsquigarrow \quad u=\ytableaushort{\cdots 1 y \cdots \cdots,\cdots 2 3 4 \cdots } \quad \rightsquigarrow \quad w=\ytableaushort{\cdots 1 2 \cdots \cdots,\cdots 3 4 y \cdots} $$
However, the second move $y\intch2$ violates the condition (b) since $3 \in u^2$.

Therefore, we conclude that $N_{i,j}(v,w) = N_{j,i}(v,w)=0$ and the polygon rule is trivially satisfied for $(v,w)$.

\subsection{$b\neq \mo{a-1}$ case} \label{b!=a-1case} Now let us assume that $b\neq \mo{a-1}$. By applying cyclic shift $\shift$ if necessary, we may assume that $a=1$, which implies that $4\leq b<n$. Thus, $\{i,j\} = \{1, b-1\}$ and we are in the following situation:
$$v=\ytableaushort{\cdots 1 {\scriptstyle b-1} \cdots,\cdots 2 b \cdots} \quad \rightsquigarrow \ u \ \rightsquigarrow \quad w=\ytableaushort{\cdots {\scriptstyle b-1}  b \cdots,\cdots 1 2 \cdots} $$
From now on we divide all the possibilities of two-step moves into the following four cases:
\begin{enumerate}[label=$\bullet$]
\item $(b-1)\intch b$ and $1\intch (b-1)$ (\ref{x=b-1case})
\item $a\intch 2$ and $2\intch b$ (\ref{y=2case})
\item $x\intch b$ and $1\intch x$ for some $x\neq b-1$ (\ref{x!=b-1case})
\item $a\intch y$ and $y\intch b$ for some $y\neq 2$ (\ref{y!=2case})
\end{enumerate}

\subsubsection{$(b-1)\intch b$ and $1\intch (b-1)$ case} \label{x=b-1case} First consider the case when we perform $(b-1)\intch b$ first and then $1\intch(b-1)$. It looks like:
$$v=\ytableaushort{\cdots 1 {\scriptstyle b-1} \cdots,\cdots 2 b \cdots} \quad \rightsquigarrow \quad u=\ytableaushort{\cdots 1 b \cdots,\cdots 2 {\scriptstyle b-1} \cdots} \quad \rightsquigarrow \quad w=\ytableaushort{\cdots {\scriptstyle b-1}  b \cdots,\cdots 1  2 \cdots} $$
Note that $b-1\neq 2$ since they are in different rows in $v$. Thus the move $1\intch(b-1)$ is of the second kind and the following conditions in \ref{sec:defawg} are imposed:
\begin{enumerate}[label=(\alph*)]
\item $\mo{1-(b-1)} = \mo{2-b}$ is odd, i.e. $n-b$ is odd
\item ($b \in u^1$ and) $n\in u^2$, i.e. $n \in v^2$ (note that $n \neq \mo{b-1}, b$)
\item (the third condition is satisfied since $2\in u^2$)
\item $\#(u^2 \cap [n-2k, n-1]) \geq k$ for $k\in \{1, 2, \ldots, \frac{n-b-1}{2}\}$, or equivalently
 $\#(v^2 \cap [n-2k, n-1]) \geq k$ for $k\in \{1, 2, \ldots, \frac{n-b-1}{2}\}$
\item $\#(u^2 \cap [b+1, n-1]) =\frac{n-b-1}{2}$ (note that $b = \mo{(b-1)+1} \neq 1$), or equivalently $\#(v^2 \cap [b+1, n-1]) =\frac{n-b-1}{2}$
\end{enumerate}
By part (b), we have:
$$v=\ytableaushort{\cdots 1 {\scriptstyle b-1} \cdots \cdots,\cdots 2 b n\cdots} \quad \rightsquigarrow \quad u=\ytableaushort{\cdots 1 b \cdots \cdots ,\cdots 2 {\scriptstyle b-1} n \cdots} \quad \rightsquigarrow \quad w=\ytableaushort{\cdots {\scriptstyle b-1}  b \cdots \cdots,\cdots 1  2 n \cdots} $$
Note that this path contributes $1$ to $N_{1, b-1}(v,w)$.

For later use, we set $\fI =\#(v^2\cap [b+1, n])$. By (e) combined with the fact that $n \in v^2$, it follows that $\fI = \frac{n-b-1}{2}+1=\frac{n-b+1}{2}$.

\subsubsection{$y=2$ case} \label{y=2case} Now we consider the move consisting of $1\intch 2$ and then $2 \intch b$, i.e.
$$v=\ytableaushort{\cdots 1 {\scriptstyle b-1} \cdots,\cdots 2 b \cdots} \quad \rightsquigarrow \quad u=\ytableaushort{\cdots 2  {\scriptstyle b-1} \cdots,\cdots 1 b \cdots} \quad \rightsquigarrow \quad w=\ytableaushort{\cdots {\scriptstyle b-1}  b \cdots,\cdots 1  2 \cdots} $$
Note that $b\neq 3$ since $b-1$ and $2$ are in different rows of $v$. Thus the move $2 \intch b$ is of the second kind and the following conditions in \ref{sec:defawg} are imposed:
\begin{enumerate}[label=(\alph*)]
\item $\mo{2-b}$ is odd, i.e. $n-b$ is odd
\item $b+1\in u^1$ (and $1\in u^2$), i.e. $b+1\in v^1$ (note that $\mo{b+1}\neq 1,2$)
\item (the third condition is satisfied since $b-1 \in u^1$)
\item $\#(u^2\cap [n-2k+1, n]) \geq k$ for $k\in \{1, 2, \ldots, \frac{n-b-1}{2}\}$, or equivalently $\#(v^2\cap [n-2k+1, n]) \geq k$ for $k\in \{1, 2, \ldots, \frac{n-b-1}{2}\}$ 
\item $\#(u^2\cap [b+2, n])=\frac{n-b-1}{2}$ (note that $\mo{b+1}\neq 2$), or equivalently $\#(v^2\cap [b+2, n])=\frac{n-b-1}{2}$
\end{enumerate}
By part (b), we have:
$$v=\ytableaushort{\cdots 1 {\scriptstyle b-1} {\scriptstyle b+1} \cdots,\cdots 2 b \cdots\cdots} \quad \rightsquigarrow \quad u=\ytableaushort{\cdots 2  {\scriptstyle b-1} {\scriptstyle b+1} \cdots,\cdots 1 b \cdots\cdots} \quad \rightsquigarrow \quad w=\ytableaushort{\cdots {\scriptstyle b-1}  b {\scriptstyle b+1} \cdots,\cdots 1  2 \cdots\cdots} $$
Note that this path contributes 1 to $N_{b-1,1}(v,w)$

As before, we set $\fI=\#(v^2 \cap [b+1, n])$. Then by (e) combined with the fact that $b+1 \in v^1$, we have $\fI =\frac{n-b-1}{2}.$

\subsubsection{$x\neq b-1$ case} \label{x!=b-1case} Let us now consider the case when we perform $x \intch b$ and then $1 \intch x$ for some $x\neq b-1$. Thus we have:
$$v=\ytableaushort{\cdots 1 {\scriptstyle b-1} x \cdots,\cdots 2 b \cdots\cdots} \quad \rightsquigarrow \quad u=\ytableaushort{\cdots 1 {\scriptstyle b-1} b \cdots,\cdots 2 x \cdots\cdots} \quad \rightsquigarrow \quad w=\ytableaushort{\cdots {\scriptstyle b-1}  b x \cdots,\cdots 1  2 \cdots\cdots } $$
As $x$ is neither equal to $b-1$ nor $2$, these two moves are both of the second kind. Thus the following conditions in \ref{sec:defawg} are required:
\begin{enumerate}[label=(\alph*)]
\item $x-b$ and $\mo{1-x}=n+1-x$ are both odd, thus in particular $n-b$ is odd
\item $b+1 \in v^1$, $\mo{x+1}\in u^1$, $x-1\in v^2$, $n\in u^2$, which means:
\begin{enumerate}[label=$\bullet$]
\item If $x=b+1=n$, then ($1\in v^1, b=n-1\in v^2$, and) $b+1=n\in v^1$
\item If $x=b+1\neq n$, then ($b\in v^2$ and) $b+1\in v^1, b+2\in v^1,$ and $n\in v^2$
\item If $x=n\neq b+1$, then ($1\in v^1$ and) $b+1\in v^1, n-1\in v^2, $ and $n \in v^1$
\item Otherwise, $b+1 \in v^1$, $x+1\in v^1$, $x-1\in v^2$, $n\in v^2$
\end{enumerate}
\item (the third condition is satisfied since $b-1\in v^1$ and $2\in u^2$)
\item $\#(v^2\cap [x-1-2k, x-2])\geq k$ for $k\in \{1, 2, \ldots, \frac{x-b-3}{2}\}$, and 
$\#(u^2 \cap [n-2k, n-1]) \geq k$ for $k\in \{1, 2, \ldots, \frac{n-x-2}{2}\}$ which is equivalent to $\#(v^2 \cap [n-2k, n-1]) \geq k$ for $k\in \{1, 2, \ldots, \frac{n-x-2}{2}\}$ 
\item $\#(v^2 \cap [b+2, x-2]) =\frac{x-b-3}{2}$ if $b+1\neq x$, and $\#( u^2 \cap [x+2, n-1]) =\frac{n-x-2}{2}$ if $\mo{x+1}\neq 1$, i.e. $x\neq n$ which is equivalent to  $\#(v^2 \cap [x+2, n-1]) =\frac{n-x-2}{2}$ if $x\neq n$
\end{enumerate}
We divide all the possibilities into the four cases below. By part (b), we are in the following situation in each case.
\begin{enumerate}[label=$\bullet$]
\item If $x=b+1=n$, then
$$
v=\ytableaushort{
\cdots 1 {\scriptstyle n-2} n \cdots,
\cdots 2 {\scriptstyle n-1}  \cdots\cdots
}
\quad \rightsquigarrow \quad
u=\ytableaushort{
\cdots 1 {\scriptstyle n-2} {\scriptstyle n-1} \cdots,
\cdots 2 n  \cdots\cdots
}
\quad \rightsquigarrow \quad
w=\ytableaushort{
\cdots {\scriptstyle n-2} {\scriptstyle n-1} n \cdots,
\cdots 1 2  \cdots\cdots
}$$
\item If $x=b+1\neq n$, then 
$$
v=\ytableaushort{
\cdots 1 {\scriptstyle b-1}{\scriptstyle b+1} {\scriptstyle b+2} \cdots,
\cdots 2 b n \cdots\cdots
}
\quad \rightsquigarrow \quad
u=\ytableaushort{
\cdots 1 {\scriptstyle b-1} b {\scriptstyle b+2} \cdots,
\cdots 2 {\scriptstyle b+1} n \cdots\cdots
}
\quad \rightsquigarrow \quad
w=\ytableaushort{
\cdots {\scriptstyle b-1} b {\scriptstyle b+1} {\scriptstyle b+2} \cdots,
\cdots 1 2  n \cdots\cdots
}$$
\item If $x=n\neq b+1$, then 
$$
v=\ytableaushort{
\cdots 1 {\scriptstyle b-1}{\scriptstyle b+1} n  \cdots,
\cdots 2 b {\scriptstyle n-1} \cdots\cdots
}
\quad \rightsquigarrow \quad
u=\ytableaushort{
\cdots 1 {\scriptstyle b-1} b {\scriptstyle b+1} \cdots,
\cdots 2 {\scriptstyle n-1} n \cdots\cdots
}
\quad \rightsquigarrow \quad
w=\ytableaushort{
\cdots  {\scriptstyle b-1} b {\scriptstyle b+1} n \cdots,
\cdots 1 2 {\scriptstyle n-1}  \cdots\cdots
}$$
\item Otherwise,
$$
v=\ytableaushort{
\cdots 1 {\scriptstyle b-1}{\scriptstyle b+1} x{\scriptstyle x+1} \cdots,
\cdots 2 b {\scriptstyle x-1} n \cdots\cdots
}
\quad \rightsquigarrow \quad
u=\ytableaushort{
\cdots 1 {\scriptstyle b-1} b {\scriptstyle b+1} {\scriptstyle x+1} \cdots,
\cdots 2 {\scriptstyle x-1} x n \cdots\cdots
}
\quad \rightsquigarrow \quad
w=\ytableaushort{
\cdots  {\scriptstyle b-1} b {\scriptstyle b+1} x{\scriptstyle x+1} \cdots,
\cdots 1 2 {\scriptstyle x-1}  n \cdots\cdots
}$$
\end{enumerate}
Also note that this path contributes 1 to $N_{1,b-1}(v,w)$.

As before we set $\fI\colonequals \#(v^2 \cap [b+1, n])$ and prove that $\fI=\frac{n-b-1}{2}$. Here our argument relies on part (e) and the description of each case above.
\begin{enumerate}[label=$\bullet$]
\item If $x=b+1=n$, then obviously $\fI=0=\frac{n-b-1}{2}$ as $n \in v^1$.
\item If $x=b+1\neq n$, then since $\#( u^2 \cap [b+3, n-1]) =\frac{n-b-3}{2}$ we have $\fI = \frac{n-b-3}{2}+1=\frac{n-b-1}{2}.$
\item If $x=n\neq b+1$, then since $\#(v^2 \cap [b+2, n-2])=\frac{n-b-3}{2}$ we have $\fI = \frac{n-b-3}{2}+1 = \frac{n-b-1}{2}.$
\item Otherwise, since $\#(v^2 \cap [b+2, n-1]) =\frac{x-b-3}{2}+\frac{n-x-2}{2}+1=\frac{n-b-3}{2}$ we have $\fI = \frac{n-b-3}{2}+1 = \frac{n-b-1}{2}$.
\end{enumerate}

\subsubsection{$y\neq 2$ case} \label{y!=2case} Here we consider the remaining possibility, which is to perform $1\intch y$ and then $y \intch b$ for some $y\neq 2$. Thus $b<y\leq n$ and we have:
$$v=\ytableaushort{\cdots 1 {\scriptstyle b-1} \cdots \cdots,\cdots 2 b y \cdots} \quad \rightsquigarrow \quad u=\ytableaushort{\cdots  {\scriptstyle b-1} y \cdots \cdots,\cdots 1 2 b \cdots} \quad \rightsquigarrow \quad w=\ytableaushort{\cdots  {\scriptstyle b-1} b\cdots \cdots,\cdots 1 2 y \cdots}$$
As $y$ is neither $2$ nor $b-1$, these two moves are both of the second kind. Thus the following conditions in \ref{sec:defawg} are imposed:
\begin{enumerate}[label=(\alph*)]
\item $\mo{1-y} = n+1-y$ and $y-b$ are both odd, thus in particular $n-b$ is odd
\item $\mo{y+1}\in v^1, b+1 \in u^1, n \in v^2,$ and $y-1 \in u^2$, that is:
\begin{enumerate}[label=$\bullet$]
\item If $y=b+1=n$, then ($1\in v^1, b=n-1\in v^2$ and) $b+1=n\in v^2$
\item if $y=b+1\neq n$, then ($b\in v^2$ and) $b+1 \in v^2$, $b+2 \in v^1$, and $n\in v^2$
\item If $y=n\neq b+1$, then ($1\in v^1$ and) $b+1 \in v^1$, $n-1\in v^2$, and $n\in v^2$
\item Otherwise, $y+1\in v^1, b+1 \in v^1, n \in v^2,$ and $y-1 \in v^2$
\end{enumerate}
\item (the third condition is satisfied since $2 \in v^2$ and $b-1 \in u^1$)
\item $\#(v^2 \cap [n-2k, n-1]) \geq k$ for $k\in \{1, 2, \ldots, \frac{n-y-2}{2}\}$, and $\#( u^2 \cap [y-1-2k, y-2])\geq k$ for $k\in \{1, 2, \ldots, \frac{y-b-3}{2}\}$ which is equivalent to $\#( v^2 \cap [y-1-2k, y-2])\geq k$ for $k\in \{1, 2, \ldots, \frac{y-b-3}{2}\}$
\item $\#( v^2\cap [y+2, n-1])= \frac{n-y-2}{2}$ if $y\neq n$, and $\#( u^2 \cap [b+2, y-2]) = \frac{y-b-3}{2}$ if $y\neq b+1$ which is equivalent to $\#( v^2 \cap [b+2, y-2]) = \frac{y-b-3}{2}$ if $y\neq b+1$.
\end{enumerate}
We divide all the possibilities into the four cases below. By part (b), we are in the following situation in each case.
\begin{enumerate}[label=$\bullet$]
\item If $y=b+1=n$, then
$$
v=\ytableaushort{
\cdots 1 {\scriptstyle n-2}\cdots \cdots ,
\cdots 2 {\scriptstyle n-1} n \cdots
}
\quad \rightsquigarrow \quad
u=\ytableaushort{
\cdots {\scriptstyle n-2} n\cdots \cdots ,
\cdots 1 2 {\scriptstyle n-1} \cdots
}
\quad \rightsquigarrow \quad
w=\ytableaushort{
\cdots {\scriptstyle n-2} {\scriptstyle n-1}\cdots \cdots ,
\cdots 1 2 n \cdots
}
$$
\item if $y=b+1\neq n$, then
$$
v=\ytableaushort{
\cdots 1 {\scriptstyle b-1} {\scriptstyle b+2}\cdots \cdots ,
\cdots 2 b  {\scriptstyle b+1} n \cdots
}
\quad \rightsquigarrow \quad
u=\ytableaushort{
\cdots {\scriptstyle b-1}{\scriptstyle b+1} {\scriptstyle b+2}\cdots \cdots ,
\cdots 1 2 b   n \cdots
}
\quad \rightsquigarrow \quad
w=\ytableaushort{
\cdots {\scriptstyle b-1}b {\scriptstyle b+2}\cdots \cdots ,
\cdots 1 2 {\scriptstyle b+1}   n \cdots
}
$$
\item If $y=n\neq b+1$, then
$$
v=\ytableaushort{
\cdots 1 {\scriptstyle b-1} {\scriptstyle b+1}\cdots \cdots ,
\cdots 2 b {\scriptstyle n-1} n \cdots
}
\quad \rightsquigarrow \quad
u=\ytableaushort{
\cdots {\scriptstyle b-1} {\scriptstyle b+1} n\cdots \cdots ,
\cdots 1 2 b {\scriptstyle n-1} \cdots
}
\quad \rightsquigarrow \quad
w=\ytableaushort{
\cdots {\scriptstyle b-1} b {\scriptstyle b+1} \cdots \cdots ,
\cdots 1 2 {\scriptstyle n-1} n\cdots
}
$$
\item Otherwise,
$$
v=\ytableaushort{
\cdots 1 {\scriptstyle b-1} {\scriptstyle b+1} {\scriptstyle y+1} \cdots \cdots ,
\cdots 2 b {\scriptstyle y-1} y n \cdots
}
\quad \rightsquigarrow \quad
u=\ytableaushort{
\cdots {\scriptstyle b-1} {\scriptstyle b+1} y {\scriptstyle y+1} \cdots \cdots ,
\cdots 1 2 b {\scriptstyle y-1} n \cdots
}
\quad \rightsquigarrow \quad
w=\ytableaushort{
\cdots {\scriptstyle b-1} b {\scriptstyle b+1} {\scriptstyle y+1} \cdots \cdots ,
\cdots 1 2 {\scriptstyle y-1} y n \cdots
}
$$
\end{enumerate}
Also note that this path contributes 1 to $N_{b-1,1}(v,w)$.

As before we set $\fI\colonequals \#(v^2 \cap [b+1,n])$ and prove that $\fI=\frac{n-b+1}{2}$. Here, our argument relies on part (e) and description for each case above.
\begin{enumerate}[label=$\bullet$]
\item If $y=b+1=n$, then obviously $\fI=1=\frac{n-b+1}{2}$ as $n \in v^2$.
\item If $y=b+1\neq n$, then since $\#( v^2 \cap [b+3, n-1]) =\frac{n-b-3}{2}$ we have $\fI = \frac{n-b-3}{2}+2=\frac{n-b+1}{2}.$
\item If $y=n\neq b+1$, then since $\#(v^2 \cap [b+2, n-2]) =\frac{n-b-3}{2}$ we have $\fI = \frac{n-b-3}{2}+2 = \frac{n-b+1}{2}.$
\item Otherwise, since $\#(v^2 \cap [b+2, n-1]) =\frac{y-b-3}{2}+\frac{n-y-2}{2}+2=\frac{n-b-1}{2}$ we have $\fI = \frac{n-b-1}{2}+1 = \frac{n-b+1}{2}$.
\end{enumerate}

\subsection{$b\neq \mo{a-1}$ case: verification of the polygon rule} \label{b!=a-1calc} 
Now we summarize the discussion in \ref{b!=a-1case} and verify that $N_{i,j}(v,w) = N_{j,i}(v,w)$ for $\{i,j\} = \{a, \mo{b-1}\}$. As before it suffices to consider the case when $a=1$, and thus we may assume that $i=1$ and $j=b-1$. First note that if $n-b$ is even then the polygon rule is trivially satisfied since $N_{1,b-1}(v,w) = N_{b-1,1}(v,w) =0$. (See the condition (a) in each case.) Thus from now on we assume that $n-b$ is odd.  Also from the argument above if the value $\fI= \#( v^2 \cap [b+1,n])$ is not equal to $\frac{n-b\pm1}{2}$ then again we have $N_{1,b-1}(v,w) = N_{b-1,1}(v,w) =0$. Now we consider the case $\fI = \frac{n-b+1}{2}$ and $\fI = \frac{n-b-1}{2}$ separately.

\subsubsection{$\fI = \frac{n-b+1}{2}$ case} It suffices only to consider \ref{x=b-1case} and \ref{y!=2case}. Then either $N_{1,b-1}(v,w)$ or $N_{b-1,1}(v,w)$ is not zero only when $n \in v^2$ (see condition (b)), thus we suppose that this is true. Here, $N_{1,b-1}(v,w)$ is easier to calculate; it equals 1 if ($1, b-1 \in v^1$, $2, b\in v^2$ and) $\#(v^2 \cap [n-2k, n-1])\geq k$ for $k\in \{1, 2, \ldots, \frac{n-b-1}{2}\}$ and 0 otherwise.

On the other hand, we first show that $N_{b-1,1}(v,w)\leq 1$. For the sake of contradiction, suppose that we have $y,y'\in v^2$, $b<y'<y\leq n$ which correspond to the two-step moves described in \ref{y!=2case}. First note that $y'\neq b+1$, since otherwise $b+1 \in v^2$ which forces $y=b+1$ by the condition (b) in \ref{y!=2case}, which is impossible.

We consider the case when $y\neq n$. Then from the conditions in \ref{y!=2case} we may derive that 
\begin{gather*}
\#(v^2\cap [y+2, n-1]) =\frac{n-y-2}{2},\quad \#( v^2 \cap [y'+2, n-1]) =\frac{n-y'-2}{2},
\\\#(v^2 \cap [y'+1, y-2]) \geq \frac{y-y'-2}{2},
\end{gather*}
(note that $y-y'$ is even) from which it also follows that $\#(v^2 \cap [y'+2, y+1]) =\frac{y-y'}{2}.$
However, as $y'+1, y+1 \in v^1$ and $y-1, y\in v^2$ from the description, it implies that $\#(v^2 \cap [y'+1, y-2]) =\frac{y-y'}{2}-2<\frac{y-y'-2}{2},$
which is contradiction. Now we suppose that $y=n$. We still have
\begin{gather*}
\#(v^2 \cap [y'+2, n-1]) =\frac{n-y'-2}{2}, \quad \#(v^2 \cap [y'+1, n-2]) \geq \frac{n-y'-2}{2},
\end{gather*}
but this is impossible since $y'+1 \in v^1$ and $n-1\in v^2$. This proves that $N_{b-1,1}(v,w)\leq 1$.

We are ready to prove that $N_{1,b-1}(v,w) = N_{b-1,1}(v,w)$. First suppose that $N_{1,b-1}(v,w)=1$, thus in particular 
\begin{equation}\label{ineq1b-1}
\#(v^2 \cap [n-2k, n-1]) \geq k \textup{ for }k\in \left\{1, 2, \ldots, \frac{n-b-1}{2}\right\}.\tag{$\bigstar$}
\end{equation}
Then as we proved that $N_{b-1,1}(v,w)\leq 1$, it suffices to show the existence of a two-step move corresponding to \ref{y!=2case}. First assume that $b+1 \in v^2$. Then we claim that there exists a two-step move consisting of $1\intch (b+1)$ and $(b+1) \intch b$. To this end, we check that the conditions in \ref{y!=2case} are valid as follows.
\begin{enumerate}[label=$-$, leftmargin=*]
\item If $b+1=n$, then the only nontrivial part is (b), which holds since $b+1=n\in v^2$.
\item Otherwise, we still have $b+1,n \in v^2$. We also have that $b+2 \in v^1$ and thus part (b) holds; otherwise $\{b+1,b+2, n\}\subset v^2$, which implies that $\#(v^2 \cap [b+3, n-1])=\fI-3 =  \frac{n-b-5}{2}$, contradicting (\ref{ineq1b-1}) for $k= \frac{n-b-3}{2}$. For part (d), we should have $\#(v^2 \cap [n-2k, n-1]) \geq k$ for $k\in \{1, 2, \ldots, \frac{n-b-3}{2}\}$, which follows from (\ref{ineq1b-1}). For part (e), we should have $\#(v^2 \cap [b+3, n-1]) = \frac{n-b-3}{2}$, but it follows from the fact that $\fI =  \frac{n-b+1}{2}$ together with part (b).
\end{enumerate}
It remains to consider the case when $b+1 \in v^1$. Here we first set
$$\fT= \left\{z \in v^2 \mid b+3\leq z\leq n, n-z \textup{ is even, } z-1 \in v^2, \#(v^2 \cap[b+2, z-2]) = \frac{z-b-3}{2} \right\}.$$
We claim that $\fT \neq \emptyset$; otherwise, inductive argument shows that $n-1 \in v^1, n-2\in v^2, n-3\in v^1, \ldots, b+4\in v^1, b+3 \in v^2, b+2 \in v^1$ which follows from (\ref{ineq1b-1}) and the assumption $\fI=\frac{n-b+1}{2}$, but this contradicts that $b+2\in v^2$. Now we set $y=\min \fT$. (Note that $y\neq b+1$.) We claim that there exists a two-step move consisting of $1\intch y$ and $y \intch b$. To this end, again we check that the conditions in \ref{y!=2case} hold as follows.
\begin{enumerate}[label=$-$, leftmargin=*]
\item If $y=n$, then part (b) holds since $b+1 \in v^1, n \in v^2,$ and $n-1 \in v^2$ by definition of $\fT$. For part (d), we should have $\#(v^2 \cap [n-1-2k, n-2]) \geq k$ for $k\in \{1, 2, \ldots, \frac{n-b-3}{2}\}$, thus suppose otherwise for contradiction and choose $k\in \{1, 2, \ldots, \frac{n-b-3}{2}\}$ to be maximum which satisfies $\#(v^2 \cap [n-1-2k, n-2]) < k$. By definition of $\fT$, $k< \frac{n-b-3}{2}$ and we have $\#(v^2 \cap [n-3-2k, n-2]) \geq k+1$ by maximality of $k$. This is only possible when $\#(v^2 \cap [n-1-2k, n-2]) =k-1$ and $n-2-2k, n-3-2k \in v^2$. However, it means that 
\begin{align*}
\#(v^2 \cap [b+2, n-4-2k]) &= \fI - \#(v^2 \cap [n-1-2k, n-2])-4
\\&=\frac{n-b+1}{2}-(k-1)-4=\frac{n-2k-b-5}{2},
\end{align*}
which means that $n-2-2k\in \fT$. It contradicts the assumption that $n=\min\fT$, thus we conclude that part (d) holds. For part (e), we need to check that $\#(v^2 \cap [b+2, n-2]) = \frac{n-b-3}{2}$ which follows from the assumption $\fI=\frac{n-b+1}{2}$ together with part (b).
\item Otherwise, $b+1 \in v^1, n \in v^2,$ and $y-1 \in v^2$ by definition of $\fT$, thus part (b) holds if $y+1 \in v^1$. However, if $y+1 \in v^2$ then by (\ref{ineq1b-1}) we have
$$\fI = \#(v^2 \cap [b+1, y-2])+\#(v^2 \cap [y+2, n-1])+4\geq \frac{y-b-3}{2}+\frac{n-y-2}{2}+4=\frac{n-b+3}{2}$$
which is a contradiction. Thus $y+1 \in v^1$ and part (b) holds. Now we prove part (e), i.e. $\#( v^2\cap [y+2, n-1])= \frac{n-y-2}{2}$ and $\#(v^2 \cap [b+2, y-2]) = \frac{y-b-3}{2}$. However the second equality follows from definition of $\fT$ and the first one also follows since
\begin{align*}
\#( v^2\cap [y+2, n-1])&=\fI - \#(v^2 \cap [b+2, y-2])-3=\frac{n-b+1}{2}-\frac{y-b-3}{2}-3=\frac{n-y-2}{2}.
\end{align*}
It remains to prove part (d). We should have $\#(v^2 \cap [n-2k, n-1]) \geq k$ for $k\in \{1, 2, \ldots, \frac{n-y-2}{2}\}$ and $\#(v^2 \cap [y-1-2k, y-2]) \geq k$ for $k\in \{1, 2, \ldots, \frac{y-b-3}{2}\}$. The first inequality follow directly from (\ref{ineq1b-1}), thus suppose that the second inequality does not hold and choose $k\in \{1, 2, \ldots, \frac{y-b-3}{2}\}$ to be maximum which satisfies $\#(v^2 \cap [y-1-2k, y-2]) < k$. By definition of $\fT$, $k< \frac{y-b-3}{2}$ and we have $\#(v^2 \cap [y-3-2k, y-2]) \geq k+1$ by maximality of $k$. This is only possible when $\#(v^2 \cap [y-1-2k, y-2]) =k-1$ and $y-2-2k, y-3-2k \in v^2$. However, it means that 
\begin{align*}
\#(v^2 \cap [b+2, y-4-2k]) &= \fI - \#(u^2 \cap [y-1-2k, y-2])- \#(u^2 \cap [y+2, n-1])-5
\\&=\frac{n-b+1}{2}-(k-1)-\frac{n-y-2}{2}-5=\frac{y-2k-b-5}{2},
\end{align*}
which means that $y-2-2k\in \fT$. It contradicts the assumption that $y=\min\fT$, thus we conclude that part (d) holds. 
\end{enumerate}
We cover all the possible cases and we conclude that $N_{1,b-1}(v,w) = N_{b-1,1}(v,w)=1$.

Therefore, in order to prove that $N_{1,b-1}(v,w) = N_{b-1,1}(v,w)$ it remains to show that $N_{1,b-1}(v,w)=1$ when there exists $y$ such that the two-step move $1\intch y$ and then $y \intch b$ is valid. If such $y$ exists, then the conditions $\#(v^2 \cap [n-2k, n-1]) \geq k$ for $k\in \{1, 2, \ldots, \frac{n-y-2}{2}\}$ and $\#(v^2 \cap [y-1-2k, y-2]) \geq k$ for $k\in \{1, 2, \ldots, \frac{y-b-3}{2}\}$ imply that $\#(v^2 \cap [n-2k, n-1]) \geq k$ for $k\in \{1, 2, \ldots, \frac{n-b-1}{2}\}$ as $y-1, y \in v^2$. Thus we see that $N_{1,b-1}(v,w)=1$ and again $N_{1,b-1}(v,w) = N_{b-1,1}(v,w)=1$.

As a result, the Polygon Rule holds for $(v,w)$ when $\fI = \frac{n-b+1}{2}$.

\subsubsection{$\fI = \frac{n-b-1}{2}$ case} This case is totally analogous to the previous one. It suffices only to consider \ref{y=2case} and \ref{x!=b-1case}. Then either $N_{1,b-1}(v,w)$ or $N_{b-1,1}(v,w)$ is not zero only when $b+1 \in v^1$, thus we suppose that this is true. Here, $N_{b-1,1}(v,w)$ is easier to calculate; it equals 1 if ($1, b-1 \in v^1$, $2, b\in v^2$ and) $\#(v^2\cap [n-2k+1, n]) \geq k$ for $k\in \{1, 2, \ldots, \frac{n-b-1}{2}\}$ and 0 otherwise.

On the other hand, we first show that $N_{1,b-1}(v,w)\leq 1$. For the sake of contradiction, suppose that we have $x,x'\in v^1$, $b<x'<x\leq n$ which correspond to the two-step moves described in \ref{x!=b-1case}. Note that $x\neq n$ (and thus $n\in v^2$), since otherwise $n\in v^1$ and thus $x'=n$ by the description of $v$ in \ref{x!=b-1case}. But it contradicts that $x'<x$. Now from the conditions in \ref{x!=b-1case} we may derive that 
\begin{gather*}
\#(v^2\cap [x,n-1]) \geq \frac{n-x}{2},\quad \#(v^2\cap [x+2, n-1]) =\frac{n-x-2}{2}
\end{gather*}
where the first condition comes from part (d) with respect to $x'$ (note that $x'<x$). But this is impossible since $x,x+1 \in v^1$. This proves that $N_{1,b-1}(v,w)\leq 1$.

We are ready to prove that $N_{1,b-1}(v,w) = N_{b-1,1}(v,w)$. First suppose that $N_{b-1,1}(v,w)=1$, thus in particular 
\begin{equation}\label{ineqb-11}
\#(v^2\cap [n-2k+1, n]) \geq k \textup{ for } k\in \left\{1, 2, \ldots, \frac{n-b-1}{2}\right\}. \tag{$\varheart$}
\end{equation}
Then as we proved that $N_{1,b-1}(v,w)\leq 1$, it suffices to show the existence of a two-step move corresponding to \ref{x!=b-1case}. First assume that $n \in v^1$. Then we claim that there exists a two-step move consisting of $n\intch b$ and $1 \intch n$. To this end, we check that the conditions in \ref{x!=b-1case}. are valid as follows.
\begin{enumerate}[label=$-$, leftmargin=*]
\item If $b+1=n$, then the only nontrivial part is (b), which holds since $b+1=n\in v^1$
\item Otherwise, we still have $b+1\in v^1$ and $n\in v^1$. Also, $\#(v^2\cap[n-1,n])\geq 1$ by (\ref{ineqb-11}), which forces that $n-1\in v^2$, thus part (b) holds. For part (d), we should have $\#(v^2\cap [n-1-2k, n-2])\geq k$ for $k\in \{1, 2, \ldots, \frac{n-b-3}{2}\}$, which is true by (\ref{ineqb-11}) together with part (b). For part (e), we require $\#(v^2 \cap [b+2, n-2]) =\frac{n-b-3}{2}$, which follows from the assumption that $\fI = \frac{n-b-1}{2}$ together with part (b).
\end{enumerate}
It remains to consider the case when $n \in v^2$. First note that $b+1 \in v^1$ and $\#(v^2\cap [b+2,n])=\frac{n-b-1}{2}$ because of the conditions $\fI = \frac{n-b-1}{2}$ and (\ref{ineqb-11}) for $k=\frac{n-b-1}{2}$. Now we set
$$\fT = \left\{z\in v^1\mid b+1\leq z<n, n-z \textup{ is even, } z+1\in v^1, \#(v^2 \cap [z+2, n-1]) =\frac{n-z-2}{2}\right\}.$$
We claim that $\fT \neq \emptyset$; otherwise, inductive argument shows that $b+2\in v^2, b+3\in v^1, b+4\in v^2, \ldots, n-2\in v^1, n-1\in v^2$ which follows from (\ref{ineqb-11}) and the equation  $\#(v^2\cap [b+2,n])=\frac{n-b-1}{2}$, but it contradicts the fact that $\fI = \frac{n-b-1}{2}$. Now we set $x=\max \fT$. (Note that $x\neq n$.) We claim that there exists a two-step move consisting of $x\intch b$ and $1\intch x$. To this end, again we check that the conditions in \ref{x!=b-1case} hold as follows.

\begin{enumerate}[label=$-$, leftmargin=*]
\item If $x=b+1$, then part (b) holds since $b+1 \in v^1, n \in v^2,$ and $b+2 \in v^1$ by definition of $\fT$. For part (d), we should have $\#(v^2 \cap [n-2k, n-1]) \geq k$ for $k\in \{1, 2, \ldots, \frac{n-b-3}{2}\}$ , thus suppose otherwise for contradiction and choose  $k\in \{1, 2, \ldots, \frac{n-b-3}{2}\}$ to be minimum which satisfies $\#(v^2 \cap [n-2k, n-1]) < k$. (Note that this only happens when $b+1<n-2$.) If $k=1$, then the inequality says that $n-2, n-1 \in v^1$ which implies $n-2\in \fT$, but this is impossible by the maximality of $x=b+1$ in $\fT$. Thus $k>1$ and by minimality of $k$ we have $\#(v^2 \cap [n-2k+2, n-1]) \geq k-1$. This is only possible when $\#(v^2 \cap [n-2k+2, n-1]) =k-1$ and $n-2k, n-2k+1 \in v^1$. However, it means that $n-2k\in \fT$. It contradicts the assumption that $b+1=\max\fT$, thus we conclude that part (d) holds. For part (e), we need to check that $\#(v^2 \cap [b+3, n-1]) =\frac{n-b-3}{2}$, but this follows from the definition of $\fT$.
\item Otherwise, $b+1 \in v^1, n \in v^2,$ and $x+1 \in v^1$ by definition of $\fT$, thus part (b) holds if $x-1 \in v^2$. However, if $x-1 \in v^1$ then by (\ref{ineqb-11}) we have
$$ \frac{n-x+2}{2}\leq \#(v^2 \cap [x-1,n])= \#(v^2 \cap [x+2,n-1]\}+1 = \frac{n-x}{2},
$$
which is absurd. Thus $x-1 \in v^2$ and part (b) holds. Now we prove part (e), i.e. $\#(v^2 \cap [b+2, x-2]) =\frac{x-b-3}{2}$ and $\#(v^2 \cap [x+2, n-1]) =\frac{n-x-2}{2}$. However, the second equality follows from definition of $\fT$, and also
$$\#(v^2 \cap [b+2, x-2]) =\fI - \#(v^2 \cap [x+2, n-1])-2=\frac{n-b-1}{2}-\frac{n-x-2}{2}-2=\frac{x-b-3}{2}$$
thus the first equality holds.
It remains to prove part (d), that is we should have $\#(v^2\cap [x-1-2k, x-2])\geq k$ for $k\in \{1, 2, \ldots, \frac{x-b-3}{2}\}$ and $\#(v^2 \cap [n-2k, n-1]) \geq k$ for $k\in \{1, 2, \ldots, \frac{n-x-2}{2}\}$.
By (\ref{ineqb-11}), we have
\begin{align*}
\#(v^2\cap [x-1-2k, x-2])&=\#(v^2\cap [x-1-2k, n])-\#(v^2\cap [x+2, n-1])-2
\\&\geq \frac{n-x+2k+2}{2}-\frac{n-x-2}{2}-2 = k,
\end{align*}
from which the first inequality follows. Now for contradiction suppose that there exists $k\in \{1, 2, \ldots, \frac{n-x-2}{2}\}$ such that $\#(v^2\cap [n-2k, n-1])< k$ and choose $k$ to be minimum among such values. (Note that this implies $1\leq \frac{n-x-2}{2}$, i.e. $x< n-2$.) If $k=1$, then the inequality says $n-2, n-1 \in v^1$ which implies $n-2 \in \fT$, but this contradicts the maximality of $x$. Thus $k>1$ and by minimality of $k$ we have  $\#(v^2\cap [n-2k+2, n-1])\geq k-1$. This is only possible when $\#(v^2\cap [n-2k+2, n-1])= k-1$ and $n-2k+1, n-2k\in v^1$. However, it means that $n-2k \in fT$ which again contradicts the assumption that $x=\max \fT$. Thus we conclude that part (d) holds.
\end{enumerate}
We cover all the possible cases and we conclude that $N_{1,b-1}(v,w) = N_{b-1,1}(v,w)=1$.

Therefore, in order to prove that $N_{1,b-1}(v,w) = N_{b-1,1}(v,w)$ it remains to show that $N_{b-1,1}(v,w)=1$ when there exists $x$ such that the two-step move $x\intch b$ and then $1 \intch x$ is valid. If such $x$ exists, then the conditions $\#(v^2\cap [x-1-2k, x-2])\geq k$ for $k\in \{1, 2, \ldots, \frac{x-b-3}{2}\}$ and $\#(v^2 \cap [n-2k, n-1]) \geq k$ for $k\in \{1, 2, \ldots, \frac{n-x-2}{2}\}$ imply $\#(v^2\cap [n-2k+1, n]) \geq k$ for $k\in \{1, 2, \ldots, \frac{n-b-1}{2}\}$ (If $x=n$, then it follows since $n-1\in v^2$. Otherwise, it follows since $n,x-1\in v^2$.) Thus we see that $N_{1,b-1}(v,w)=1$ and again $N_{1,b-1}(v,w) = N_{b-1,1}(v,w)=1$.

As a result, the polygon rule holds for $(v,w)$ when $\fI = \frac{n-b-1}{2}$. This suffices for the proof.

\section{Case 2: $v$ and $w$ differ by four elements}
In this section we consider the case when $v$ and $w$ differ by four elements.
Let us set $\{a,b\}=v^1-w^1$ and $\{c,d\}=v^2-w^2$. In other words, we have:
$$v=\ytableaushort{\cdots a b \cdots,\cdots c d \cdots} \quad \rightsquigarrow u \rightsquigarrow \quad w=\ytableaushort{\cdots c d \cdots,\cdots a b \cdots} $$
Then we have the following four possibilities provided that the conditions in \ref{sec:defawg} are satisfied:
\begin{enumerate}[label=$\bullet$]
\item Interchange $a$ and $c$, and interchange $b$ and $d$
\item Interchange $a$ and $d$, and interchange $b$ and $c$
\item Interchange $b$ and $c$, and interchange $a$ and $d$
\item Interchange $b$ and $d$, and interchange $a$ and $c$
\end{enumerate}
We cover all the possibilities by case-by-case argument from now on. 
Let us use the notation $\Nf{a}{c}{b}{d}_{i,j}$ for the contribution of the first way to $N_{i, j}(v,w)$, etc. Then each of those is either $0$ or $1$ and $N_{i,j}(v,w) = \Nf{a}{c}{b}{d}_{i,j}+\Nf{a}{d}{b}{c}_{i,j}+\Nf{b}{c}{a}{d}_{i,j}+\Nf{b}{d}{a}{c}_{i,j}$.

\subsection{$\{c, d\} = \{\mo{a+1}, \mo{b+1}\}$ case} Without loss of generality, we set $c=\mo{a+1}$ and $d=\mo{b+1}$.
We are in the following situation:
$$v=\ytableaushort{\cdots a b  \cdots,\cdots {\scriptstyle \mo{a+1}} {\scriptstyle \mo{b+1}} \cdots} \quad \rightsquigarrow u \rightsquigarrow \quad w=\ytableaushort{\cdots {\scriptstyle \mo{a+1}} {\scriptstyle \mo{b+1}} \cdots,\cdots a b \cdots} $$
If $(i,j)$ satisfies $i,j \in \ades(v)$ and $i,j \not\in \ades(w)$ then we have $\{i,j\} = \{a,b\}$ and $a\neq b$. Then it suffices to prove $\Nf{b}{\mo{b+1}}{a}{\mo{a+1}}_{a,b}=\Nf{a}{\mo{a+1}}{b}{\mo{b+1}}_{b,a}$.

We first consider performing $a \intch \mo{a+1}$ and then $b \intch \mo{b+1}$. This is always possible:
$$v=\ytableaushort{\cdots a b  \cdots,\cdots {\scriptstyle \mo{a+1}} {\scriptstyle \mo{b+1}} \cdots} \quad \rightsquigarrow \quad u=\ytableaushort{\cdots {\scriptstyle \mo{a+1}} b  \cdots,\cdots a {\scriptstyle \mo{b+1}} \cdots} \quad \rightsquigarrow \quad w=\ytableaushort{\cdots {\scriptstyle \mo{a+1}} {\scriptstyle \mo{b+1}} \cdots,\cdots a b \cdots} $$
Similarly, consider performing $b \intch \mo{b+1}$ and then $a \intch \mo{a+1}$. This is also always possible:
$$v=\ytableaushort{\cdots a b  \cdots,\cdots {\scriptstyle \mo{a+1}} {\scriptstyle \mo{b+1}} \cdots} \quad \rightsquigarrow \quad u=\ytableaushort{\cdots a {\scriptstyle \mo{b+1}}  \cdots,\cdots {\scriptstyle \mo{a+1}}  b \cdots} \quad \rightsquigarrow \quad w=\ytableaushort{\cdots {\scriptstyle \mo{a+1}} {\scriptstyle \mo{b+1}} \cdots,\cdots a b \cdots} $$
%
%
%
To summarize, in this case we have $\Nf{a}{\mo{a+1}}{b}{\mo{b+1}}_{b,a} = \Nf{b}{\mo{b+1}}{a}{\mo{a+1}}_{a,b}=1$, $\Nf{a}{\mo{b+1}}{b}{\mo{a+1}}_{i,j} = \Nf{b}{\mo{a+1}}{a}{\mo{b+1}}_{i,j}=0$ for $\{i,j\} = \{a,b\}$.
Thus $N_{a, b}(v,w) = 1 = N_{b, a}(v,w)$ as desired. 

\subsection{$|\{c, d\} \cap \{\mo{a+1}, \mo{b+1}\}|=1$ case} Without loss of generality, we set $c=\mo{a+1}$ and $d \not = \mo{b+1}$.
We are in the following situation:
$$v=\ytableaushort{\cdots a b  \cdots,\cdots {\scriptstyle \mo{a+1}} d \cdots} \quad \rightsquigarrow u\rightsquigarrow \quad w=\ytableaushort{\cdots {\scriptstyle \mo{a+1}} d \cdots,\cdots a b \cdots} $$
If $(i,j)$ satisfies $i,j \in \ades(v)$ and $i,j \not\in \ades(w)$ then we have $\{i,j\} = \{a,\mo{d-1}\}$ or $\{i,j\} = \{a,b\}$ or $\{i,j\} = \{b,\mo{d-1}\}$. Then after removing trivial terms it suffices to verify:
\begin{enumerate}[label=$\bullet$]
\item if $\{i,j\} = \{a,\mo{d-1}\}$, then $\Nf{b}{d}{a}{\mo{a+1}}_{a,\mo{d-1}}=\Nf{a}{\mo{a+1}}{b}{d}_{\mo{d-1},a}+\Nf{b}{\mo{a+1}}{a}{d}_{\mo{d-1},a}$
\item if $\{i,j\} = \{a,b\}$, then $\Nf{b}{d}{a}{\mo{a+1}}_{a,b}=\Nf{a}{\mo{a+1}}{b}{d}_{b,a}+\Nf{a}{d}{b}{\mo{a+1}}_{b,a}$
\item if $\{i,j\} = \{b,\mo{d-1}\}$, then $\Nf{a}{d}{b}{\mo{a+1}}_{b,\mo{d-1}}=\Nf{b}{\mo{a+1}}{a}{d}_{\mo{d-1},b}$
\end{enumerate}
From now on, let us refer to the case $b \not\in\{\mo{a-1}, \mo{a+2}, \mo{d+1}\}$ and $d \not\in\{\mo{a-1}, \mo{a+2}\}$ as the generic case, and otherwise as the special case. 

\subsubsection{Generic case, $a \in \pint{d,b}$} 
We claim that $\Nf{b}{d}{a}{\mo{a+1}}_{i,j} - \Nf{a}{\mo{a+1}}{b}{d}_{j,i} \in \{0,1\}$ for any $i,j$. Indeed, if the following sequence of moves is possible:
$$v=\ytableaushort{\cdots a b  \cdots,\cdots d {\scriptstyle \mo{a+1}}  \cdots} \quad \rightsquigarrow \quad u=\ytableaushort{\cdots {\scriptstyle \mo{a+1}} b  \cdots,\cdots d a \cdots} \quad \rightsquigarrow \quad w=\ytableaushort{\cdots d {\scriptstyle \mo{a+1}} \cdots,\cdots a b \cdots}, $$ then so is 
$$v=\ytableaushort{\cdots a b  \cdots,\cdots d {\scriptstyle \mo{a+1}}  \cdots} \quad \rightsquigarrow \quad u=\ytableaushort{\cdots d a  \cdots,\cdots {\scriptstyle \mo{a+1}} b \cdots} \quad \rightsquigarrow \quad w=\ytableaushort{\cdots d {\scriptstyle \mo{a+1}} \cdots,\cdots a b \cdots} $$
This is because swapping $a$ and $\mo{a+1}$ is always possible, although irrelevant if $\{i,j\} = \{b,\mo{d-1}\}$, in which case the above claim becomes $0-0=0$. On the other hand, having $\mo{a+1}$ instead of $a$ in $u^2$ does not prevent items (a), (b), (c), and (e) in \ref{sec:defawg} from holding - in part thanks to our assumptions such as $d \not = \mo{a+1}$, while making inequalities in (d) more likely to hold. 

Similarly, we claim that 
\begin{itemize}
 \item if $\{i,j\} = \{a,\mo{d-1}\}$ then $\Nf{b}{\mo{a+1}}{a}{d}_{i,j} - \Nf{a}{d}{b}{\mo{a+1}}_{j,i}$ takes values either $1$ or $0$;
 \item if $\{i,j\} = \{a,b\}$ then $\Nf{a}{d}{b}{\mo{a+1}}_{i,j} - \Nf{b}{\mo{a+1}}{a}{d}_{j,i}$ takes values either $1$ or $0$;
 \item finally if $\{i,j\} = \{b,\mo{d-1}\}$ then $\Nf{b}{\mo{a+1}}{a}{d}_{i,j} - \Nf{a}{d}{b}{\mo{a+1}}_{j,i} = 0$.
\end{itemize}
The first two claims are true because for the second move condition (c) will be violated. For the last claim, the two swaps are completely independent of each other, and thus either $\Nf{b}{\mo{a+1}}{a}{d}_{i,j}  = \Nf{a}{d}{b}{\mo{a+1}}_{j,i} = 0$ or $\Nf{b}{\mo{a+1}}{a}{d}_{i,j}  =\Nf{a}{d}{b}{\mo{a+1}}_{j,i}  = 1.$
Thus the case $\{i,j\} = \{b,\mo{d-1}\}$ is verified. It remains to prove the following lemma.

\begin{lem}We have:
\begin{enumerate}
\item $\Nf{b}{\mo{a+1}}{a}{d}_{\mo{d-1},a} - \Nf{a}{d}{b}{\mo{a+1}}_{a,\mo{d-1}} = 1$ iff $\Nf{b}{d}{a}{\mo{a+1}}_{a,\mo{d-1}} - \Nf{a}{\mo{a+1}}{b}{d}_{\mo{d-1},a}=1$.
\item $\Nf{a}{d}{b}{\mo{a+1}}_{b,a} - \Nf{b}{\mo{a+1}}{a}{d}_{a,b} = 1$ iff $\Nf{b}{d}{a}{\mo{a+1}}_{a,b} - \Nf{a}{\mo{a+1}}{b}{d}_{b,a}=1$.
\end{enumerate}
\end{lem}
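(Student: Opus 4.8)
The plan is to unwind each indicator $\Nf{\cdot}{\cdot}{\cdot}{\cdot}_{i,j}$ into an explicit condition on move legality, and then to check that the two sides of each asserted equivalence encode the same condition.

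Concretely, for each of the four two-step paths that enter here---those built from the swap-pairs $\{b\leftrightarrow\mo{a+1},\,a\leftrightarrow d\}$ and $\{b\leftrightarrow d,\,a\leftrightarrow\mo{a+1}\}$---I would first compute the affine descent set of its intermediate tableau, using the lemma describing how a move changes $\tau$-values together with the generic-case hypotheses $b\notin\{\mo{a-1},\mo{a+2},\mo{d+1}\}$, $d\notin\{\mo{a-1},\mo{a+2}\}$ and the ambient constraints $d\neq\mo{a+1},\mo{b+1}$. This fixes which $N_{i,j}$ each path can feed, and makes several of the indicators vanish outright: for instance the intermediate of $\Nf{a}{d}{b}{\mo{a+1}}$ has $a$ and $\mo{a+1}$ in the same row, so $a\notin\ades$ there and $\Nf{a}{d}{b}{\mo{a+1}}_{a,\mo{d-1}}=0$, while the intermediates of $\Nf{b}{d}{a}{\mo{a+1}}$ and $\Nf{a}{\mo{a+1}}{b}{d}$ always lie in the appropriate $V_{i/j}$. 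Since the first-kind move $a\intch\mo{a+1}$ is unconditional, what is left is the following: the first clause of part (1) asserts that the chain ``$b\intch\mo{a+1}$, then $a\intch d$'' is legal starting from $v$; and---using that the difference in question already lies in $\{0,1\}$, as established just before the lemma---the second clause asserts that ``$b\intch d$ is legal from $v$ but not from the tableau obtained by first performing $a\intch\mo{a+1}$''. The situation in part (2) is the analogue with the descent pair $\{a,b\}$ in place of $\{a,\mo{d-1}\}$.

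The substance of the lemma is then the equality of these two conditions, and the mechanism is the same in both parts: conjugating a second-kind move by the first-kind move $a\intch\mo{a+1}$ leaves conditions (a), (b), (c) of \ref{sec:defawg} intact---this is precisely what the generic hypotheses are for---so legality can only be affected through the counting conditions (d), (e), and those change by at most the indicator of a coincidence of $a$ or $\mo{a+1}$ with an endpoint of one of the intervals $\pint{\mo{b-1-2k},\mo{b-2}}$, $\pint{\mo{d+2},\mo{b-2}}$ appearing there---effectively only when $a\in\{\mo{d+1},\mo{b-2}\}$ or $a=\mo{b-2-2k}$. Running both clauses through this short list of endpoint configurations reduces each to the same boolean combination of the position of $\mo{b+1}$ and of counting (in)equalities on $v^2$ of the type already worked out in \S\ref{b!=a-1case}, which gives the equivalence in part (1); the parallel computation gives part (2).

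The hard part, I expect, is exactly this last comparison: one must work through each endpoint coincidence (and, for part (2), the corresponding degenerate positions) and check that the conditions cutting out the two sides are simultaneously satisfied or violated. The preparatory steps---the descent-set computations and the identification of which indicators survive---are mechanical given the earlier lemmas, if somewhat lengthy.
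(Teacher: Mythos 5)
Your proposal gets the overall architecture right and makes some correct simplifying observations, but the heart of the argument --- the actual equivalence of the two sides --- is left at the level of a plausibility sketch, and the mechanism you describe does not obviously deliver it.

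What you have right: the unconditional first-kind move $a\intch\mo{a+1}$ does make $\Nf{b}{d}{a}{\mo{a+1}}_{a,\mo{d-1}}$ and $\Nf{a}{\mo{a+1}}{b}{d}_{\mo{d-1},a}$ equal to the truth value of ``$b\intch d$ is a move'' from $v$ and from the tableau $\sigma(v)$ obtained by pre-applying $a\intch\mo{a+1}$, respectively; combined with the $\{0,1\}$-valuedness established just before the lemma, the second clause of part (1) therefore becomes ``$b\intch d$ legal from $v$ but not from $\sigma(v)$.'' Your observation that $\Nf{a}{d}{b}{\mo{a+1}}_{a,\mo{d-1}}=0$ automatically (because the intermediate after $a\intch d$ carries both $a$ and $\mo{a+1}$ in the second row, hence $a\notin\ades$) is a clean way to kill that term, and likewise the first clause reduces to ``both $b\intch\mo{a+1}$ and $a\intch d$ legal.'' All of this matches the setup of the paper's proof.

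The gap is in what comes next. The paper does \emph{not} argue by conjugation or by enumerating endpoint coincidences. It writes out the conditions (a)--(f) for the right-hand clause (with (f) being exactly the failure of 2.(d) after the $a\leftrightarrow\mo{a+1}$ swap) and the conditions (a')--(e') for the left-hand clause, and then proves the two lists equivalent by explicit two-way implications: e.g. $\mo{a+2}\in v^1$ and $\mo{a-1}\in v^2$ are extracted from (f) together with (d) at the specific indices $k=\frac{\mo{b-a-1}-3}{2}$ and $k=\frac{\mo{b-a-1}+1}{2}$, and (d'), (e') follow from (d), (e), (f) by splitting the intervals at the position of $a$. Your ``conjugating a second-kind move by $a\intch\mo{a+1}$ and tracking endpoint coincidences'' heuristic is the right lens for analysing \emph{how $b\intch d$ legality differs between $v$ and $\sigma(v)$} --- that is, for extracting condition (f) --- but the two clauses of the lemma involve genuinely different pairs of moves ($b\intch\mo{a+1}$ followed by $a\intch d$, versus $b\intch d$), so the first clause is not a conjugate of anything in the second. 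Asserting that ``running both clauses through a short list of endpoint configurations reduces each to the same boolean combination'' is therefore an expectation, not an argument: to make it a proof you would in effect have to reconstruct the interval-splitting computation that the paper carries out, at which point you are back to the direct comparison. (The stray reference to ``the position of $\mo{b+1}$'' is also a bit off for part (1), where condition (c) is mute because $\mo{d-1}\in v^1$ is assumed; $\mo{b+1}$ only enters in part (2).)

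So: same skeleton, correct and useful preliminary reductions, but the decisive step --- demonstrating that conditions (a)--(f) and (a')--(e') cut out the same set --- is not actually supplied, and the mechanism you propose for it does not transparently apply to a cross-comparison between distinct move pairs.
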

\begin{proof} Here we give proof when $\{i,j\} = \{a,\mo{d-1}\}$, but the case of $\{i,j\} = \{a,b\}$ is essentially verbatim after replacing $\mo{d-1} \in v^1$ with $\mo{b+1} \in v^2$. 
In this case it suffices to assume $\mo{d-1} \in v^1$. We have $\Nf{b}{d}{a}{\mo{a+1}}_{a,\mo{d-1}} - \Nf{a}{\mo{a+1}}{b}{d}_{\mo{d-1},a}=1$ if and only if the following conditions of \ref{sec:defawg} hold.
\begin{enumerate}[label=(\alph*)]
\item $\mo{b-d}$ is odd.
\item $\mo{b-1} \in v^2$, $\mo{d+1} \in v^1$.
\item is mute since we already know that $\mo{d-1} \in v^1$.
\item $\#\{\alpha \in v^2 \mid \alpha \in \pint{\mo{b-1-2k}, \mo{b-2}}\} \geq k$ for $k\in \{1, 2, \ldots, \frac{\mo{b-d}-3}{2}\}$.
\item $\#\{ \alpha \in v^2 \mid \alpha \in \pint{\mo{d+2}, \mo{b-2}}\} = \frac{\mo{b-d}-3}{2}$.
\item $\mo{b-a}$ is even and $\#\{ \alpha \in v^2 \mid \alpha \in \pint{\mo{a+1}, \mo{b-2}}\} = \frac{\mo{b-a}-2}{2}$.
\end{enumerate}
The last condition comes from the fact that 2.(d) in \ref{sec:defawg} has to fail after we swap $a$ and $\mo{a+1}$. Schematically, the exchanges that contribute to $\Nf{b}{d}{a}{\mo{a+1}}_{a, \mo{d-1}}$ look as follows:
$$v=\ytableaushort{\cdots {\scriptstyle \mo{d-1}} {\scriptstyle \mo{d+1}} a {\scriptstyle \mo{a+2}} b  \cdots,\cdots d {\scriptstyle \mo{a-1}} {\scriptstyle \mo{a+1}} {\scriptstyle \mo{b-1}}  \cdots \cdots} \rightsquigarrow  u=\ytableaushort{\cdots {\scriptstyle \mo{d-1}} d {\scriptstyle \mo{d+1}} a {\scriptstyle \mo{a+2}}  \cdots,\cdots  {\scriptstyle \mo{a-1}} {\scriptstyle \mo{a+1}} {\scriptstyle \mo{b-1}} b \cdots \cdots} \rightsquigarrow w=\ytableaushort{\cdots {\scriptstyle \mo{d-1}} d {\scriptstyle \mo{d+1}} {\scriptstyle \mo{a+1}} {\scriptstyle \mo{a+2}}  \cdots,\cdots  {\scriptstyle \mo{a-1}} a {\scriptstyle \mo{b-1}} b \cdots \cdots}.$$ 

On the other hand, we have $\Nf{b}{\mo{a+1}}{a}{d}_{\mo{d-1},a} - \Nf{a}{d}{b}{\mo{a+1}}_{a,\mo{d-1}} = 1$ if and only if the following conditions of \ref{sec:defawg}  hold.
\begin{enumerate}[label=(\alph*')]
\item $\mo{b-a}$ is even; $\mo{a-d}$ is odd.
\item $\mo{b-1} \in v^2$, $\mo{a+2} \in v^1$, $\mo{a-1} \in v^2$, $\mo{d+1} \in v^1$.
\item is mute since we already know that $a, \mo{d-1} \in v^1$.
\item $\#\{ \alpha \in v^2 \mid \alpha \in \pint{\mo{b-1-2k}, \mo{b-2}}\} \geq k$ for $k\in \{1, 2, \ldots, \frac{\mo{b-a-1}-3}{2}\}$; 
\\$\#\{\alpha \in v^2 \mid \alpha \in \pint{\mo{a-1-2k}, \mo{a-2}}\} \geq k$ for $k\in \{1, 2, \ldots, \frac{\mo{a-d}-3}{2}\}$
\item $\#\{\alpha \in v^2 \mid \alpha \in \pint{\mo{a+3}, \mo{b-2}}\} = \frac{\mo{b-a-1}-3}{2}$;
\\ $\#\{ \alpha \in v^2 \mid \alpha \in \pint{\mo{d+2}, \mo{a-2}}\} = \frac{\mo{a-d}-3}{2}$.
\end{enumerate}
Schematically, the exchanges that contribute to $\Nf{b}{\mo{a+1}}{a}{d}_{\mo{d-1},a}$ look as follows:
$$v=\ytableaushort{\cdots {\scriptstyle \mo{d-1}} {\scriptstyle \mo{d+1}} a {\scriptstyle \mo{a+2}} b  \cdots,\cdots d {\scriptstyle \mo{a-1}} {\scriptstyle \mo{a+1}} {\scriptstyle \mo{b-1}}  \cdots\cdots} \rightsquigarrow  u=\ytableaushort{\cdots {\scriptstyle \mo{d-1}} {\scriptstyle \mo{d+1}} a {\scriptstyle \mo{a+1}} {\scriptstyle \mo{a+2}}   \cdots,\cdots d {\scriptstyle \mo{a-1}} {\scriptstyle \mo{b-1}} b \cdots\cdots} \rightsquigarrow w=\ytableaushort{\cdots {\scriptstyle \mo{d-1}} d {\scriptstyle \mo{d+1}} {\scriptstyle \mo{a+1}} {\scriptstyle \mo{a+2}}  \cdots,\cdots  {\scriptstyle \mo{a-1}} a {\scriptstyle \mo{b-1}} b \cdots\cdots}.$$ 

Now we observe the following. The parity part of claims (a) and (f) is equivalent to claim (a'). Claim (b') is implied by (b) as well as (d), (f). Indeed, $\mo{a+2} \in v^1$ is implied by (f) and (d) for $k=\frac{\mo{b-a-1}-3}{2}$, while $\mo{a-1} \in v^2$ is implied by (f) and (d) for $k=\frac{\mo{b-a-1}+1}{2}$. Claim (d') is implied by (d) and (f), while (e') is implied by (e) and (f). Claim (b) is trivially implied by (b'). Claim (d) is implied by (d') and the part of (b') that refers to $\mo{a-1}$ and $\mo{a+2}$. Similarly, (e) and (f) are implied by (e') and the part of (b') that refers to $\mo{a-1}$ and $\mo{a+2}$. 
\end{proof}

\subsubsection{Generic case, $a \not \in \pint{d,b}$} 
Due to our assumptions $a \not = \mo{b+1}$ and $\mo{a+1} \not = \mo{d-1}$ in this case the exchange $b \intch d$ either can or cannot be performed independently of whether we perform $a \intch \mo{a+1}$ first or not. 
Thus $\Nf{b}{d}{a}{\mo{a+1}}_{i,j} = \Nf{a}{\mo{a+1}}{b}{d}_{j,i}$ for any $i,j$. It remains to argue that $\Nf{a}{d}{b}{\mo{a+1}}_{i,j} = \Nf{b}{\mo{a+1}}{a}{d}_{j,i}$.

Consider first the case $\{i,j\} = \{a,\mo{d-1}\}$, in which case it suffices to assume $\mo{d-1} \in v^1$.
We need to argue that $\Nf{b}{\mo{a+1}}{a}{d}_{\mo{d-1},a} = 0$. If we assume otherwise, i.e. $\Nf{b}{\mo{a+1}}{a}{d}_{\mo{d-1},a} = 1$, then one can conclude, repeatedly using condition (b), that the exchanges look as follows:
$$v=\ytableaushort{\cdots {\scriptstyle \mo{d-1}} {\scriptstyle \mo{d+1}} b a \cdots,\cdots d {\scriptstyle \mo{b-1}} {\scriptstyle \mo{a-1}} {\scriptstyle \mo{a+1}}  \cdots} \rightsquigarrow  u=\ytableaushort{\cdots {\scriptstyle \mo{d-1}} {\scriptstyle \mo{d+1}} a {\scriptstyle \mo{a+1}} \cdots,\cdots d {\scriptstyle \mo{b-1}} b {\scriptstyle \mo{a-1}}   \cdots} \rightsquigarrow w=\ytableaushort{\cdots {\scriptstyle \mo{d-1}} d {\scriptstyle \mo{d+1}} {\scriptstyle \mo{a+1}} {\scriptstyle \mo{a+2}}  \cdots,\cdots  {\scriptstyle \mo{a-1}} a {\scriptstyle \mo{b-1}} b \cdots\cdots}$$ 
By condition (a) we know that $\mo{a-d}$ is odd; let $\mo{a-d}=2m+1$. The following two conditions hold.
\begin{enumerate}
\item[(d)] $\#\{\alpha \in u^2 \mid \alpha \in \pint{\mo{a-1-2k}, \mo{a-2}}\} \geq k$ for $k\in \{1, 2, \ldots, m-1\}$,
\item[(e)] $\#\{\alpha \in u^2 \mid \alpha \in \pint{\mo{d+2}, \mo{a-2}}\} = m-1$.
\end{enumerate}

Assume $\mo{b-d}$ is odd, say $\mo{b-d} = 2 \ell +1$. Then taking $k = m - \ell - 1$ we see that $\#\{\alpha \in u^2 \mid  \alpha \in \pint{\mo{b+1}, \mo{a-2}}\} \geq m - \ell - 1$. This implies that 
$\#\{\alpha \in u^2 \mid \alpha \in \pint{\mo{d+2}, \mo{b}}\} \leq \ell$, which in turn means that $\#\{\alpha \in v^2 \mid \alpha \in \pint{\mo{d+2}, \mo{b-2}}\} \leq \ell-2$. This is impossible however by $k = \ell-1$ case of the condition 
\begin{enumerate}
\item[(d)] $\#\{\alpha \in v^2\mid \alpha \in \pint{\mo{b-1-2k}, \mo{b-2}}\} \geq k$ for $k\in \{1, 2, \ldots, \frac{\mo{b-a-1}-3}{2}\}$.
\end{enumerate}

Now assume $\mo{b-d}$ is even, say $\mo{b-d} = 2 \ell$. Then taking $k = m - \ell$ we see that  $\#\{\alpha \in u^2 \mid \alpha \in \pint{\mo{b}, \mo{a-2}}\} \geq m - \ell$. This implies that 
$\#\{ \alpha \in v^2 \mid \alpha \in [\mo{d+2}, \mo{b-1}]\} \leq \ell - 1$, which in turn means that $\#\{\alpha \in v^2 \mid \alpha \in \pint{\mo{d+1}, \mo{b-2}}\} \leq \ell-2$. This is impossible however by $k = \ell-1$ case of the condition (d) above.

Consider now the case $\{i,j\} = \{a,b\}$, which means $\mo{b+1} \in v^2$. 
We need to argue that $\Nf{a}{d}{b}{\mo{a+1}}_{b,a} = 0$. If we assume otherwise, i.e. $\Nf{a}{d}{b}{\mo{a+1}}_{b,a} = 1$, then one can conclude, repeatedly using condition (b), that the exchanges look as follows:
$$v=\ytableaushort{\cdots {\scriptstyle \mo{d+1}} b a {\scriptstyle\mo{a+2}} \cdots\cdots,\cdots d {\scriptstyle \mo{b-1}} {\scriptstyle \mo{b+1}} {\scriptstyle \mo{a-1}} {\scriptstyle \mo{a+1}} \cdots} \rightsquigarrow  u=\ytableaushort{\cdots d {\scriptstyle \mo{d+1}} b {\scriptstyle \mo{a+2}} \cdots\cdots,\cdots {\scriptstyle \mo{b-1}} {\scriptstyle \mo{b+1}} {\scriptstyle \mo{a-1}} a {\scriptstyle \mo{a+1}} \cdots} \rightsquigarrow w=\ytableaushort{\cdots d {\scriptstyle \mo{d+1}} {\scriptstyle \mo{a+1}} {\scriptstyle \mo{a+2}} \cdots\cdots,\cdots {\scriptstyle \mo{b-1}} b {\scriptstyle \mo{b+1}} {\scriptstyle \mo{a-1}} a \cdots}$$ 
By condition (a) we know that $\mo{a-d}$ is odd; let $\mo{a-d}=2m+1$. The following two conditions hold.
\begin{enumerate}
\item[(d)] $\#\{\alpha \in v^2 \mid \alpha \in \pint{\mo{a-1-2k}, \mo{a-2}}\} \geq k$ for $k\in \{1, 2, \ldots, m-1\}$,
\item[(e)] $\#\{\alpha \in v^2 \mid \alpha \in \pint{\mo{d+2}, \mo{a-2}}\} = m-1$.
\end{enumerate}

Assume $\mo{b-d}$ is odd, say $\mo{b-d} = 2 \ell +1$. Then taking $k = m - \ell - 1$ we see that $\#\{ \alpha \in v^2\mid \alpha \in \pint{\mo{b+1}, \mo{a-2}}\} \geq m - \ell - 1$. This implies that 
$\#\{ \alpha \in v^2 \mid \alpha \in \pint{\mo{d+2}, \mo{b}}\} \leq \ell$, which in turn means that $\#\{\alpha \in u^2 \mid \alpha \in \pint{\mo{d}, \mo{b-2}}\} \leq \ell-1$. This is impossible however by $k = \ell$ case of the condition 
\begin{enumerate}
\item[(d)] $\#\{ \alpha \in u^2 \mid \alpha \in \pint{\mo{b-1-2k}, \mo{b-2}}\} \geq k$ for $k\in \{1, 2, \ldots, \frac{\mo{b-a-1}-3}{2}\}$.
\end{enumerate}

Now assume $\mo{b-d}$ is even, say $\mo{b-d} = 2 \ell$. Then taking $k = m - \ell$ we see that  $\#\{\alpha \in v^2 \mid \alpha \in \pint{\mo{b}, \mo{a-2}}\} \geq m - \ell$. This implies that 
$\#\{\alpha \in v^2 \mid \alpha \in \pint{\mo{d+2}, \mo{b-1}}\} \leq \ell - 1$, which in turn means that $\#\{ \alpha \in u^2 \mid \alpha \in \pint{\mo{d+1}, \mo{b-2}}\} \leq \ell-2$. This is impossible however by $k = \ell-1$ case of the condition (d) above.

Finally, consider the case $\{i,j\} = \{b, \mo{d-1}\}$, in which case we may assume that $\mo{d-1} \in v^1, \mo{b+1} \in v^2$. In this case we claim that 
$\Nf{a}{d}{b}{\mo{a+1}}_{i,j} = \Nf{b}{\mo{a+1}}{a}{d}_{j,i}=0$. The argument for $\Nf{a}{d}{b}{\mo{a+1}}_{i,j} = 0$ coincides verbatim with the same argument in the case $\{i,j\} = \{a,b\}$, while the argument for $\Nf{b}{\mo{a+1}}{a}{d}_{j,i}= 0$ coincides verbatim with the same argument in the case $\{i,j\} = \{a,\mo{d-1}\}$.

\subsubsection{Special cases}

In the $d=\mo{a+2}$ case the same argument works verbatim as in the generic, $a \not \in \pint{d,b}$, $\{i,j\} = \{a,b\}$ case. 
In the $d=\mo{a-1}$ case the same argument works verbatim as in the generic, $a \in \pint{d,b}$ case. 
In the $b=\mo{a-1}$ case the same argument works verbatim as in the generic, $a \not \in \pint{d,b}$, $\{i,j\} = \{a,\mo{d-1}\}$ case.
In the $b=\mo{a+2}$ case the same argument works verbatim as in the generic, $a \in \pint{d,b}$ case. 

Finally, consider $b=\mo{d+1}$ case. Then $\{i,j\} = \{a, \mo{d+1}\}$, or $\{i,j\} = \{a, \mo{d-1}\}$, or $\{i,j\} = \{\mo{d-1}, \mo{d+1}\}$.
In the first two cases $\Nf{a}{\mo{a+1}}{\mo{d+1}}{d}_{\mo{d\pm1},a} = \Nf{\mo{d+1}}{d}{a}{\mo{a+1}}_{a, \mo{d\pm1}} =1$. 
Also in all three cases $\Nf{a}{d}{\mo{d+1}}{\mo{a+1}}_{i,j} = \Nf{\mo{d+1}}{\mo{a+1}}{a}{d}_{i,j} =0$ because condition (b) gets violated for the second exchange. The claim follows.

\subsection{$|\{c, d\} \cap \{\mo{a+1}, \mo{b+1}\}|=0$ case}
If $(i,j)$ satisfies $i,j \in \ades(v)$ and $i,j \not\in \ades(w)$ then we have $\{i,j\} \subset \{a, b, \mo{c-1}, \mo{d-1}\}$. We divide it into two cases: the case when $\#(\pint{a, b}\cap\{c,d\})=1$, which we call the interlacing case, and the other case called the non-interlacing case. Let us list all the possibilities and the equalities needed to be proved:
\begin{enumerate}[label=$\bullet$]
\item if $\{i,j\}=\{a,b\}$, then 
$\Nf{b}{c}{a}{d}_{a,b}+\Nf{b}{d}{a}{c}_{a,b}=\Nf{a}{c}{b}{d}_{b,a}+\Nf{a}{d}{b}{c}_{b,a}$.
\item if $\{i,j\}=\{a,\mo{c-1}\}$, then 
$\Nf{b}{c}{a}{d}_{a,\mo{c-1}}=\Nf{a}{d}{b}{c}_{\mo{c-1},a}$.
\item if $\{i,j\}=\{a,\mo{d-1}\}$, then 
$\Nf{b}{d}{a}{c}_{a,\mo{d-1}}=\Nf{a}{c}{b}{d}_{\mo{d-1},a}$.
\item if $\{i,j\}=\{b,\mo{c-1}\}$, then 
$\Nf{a}{c}{b}{d}_{b,\mo{c-1}}=\Nf{b}{d}{a}{c}_{\mo{c-1},b}$.
\item if $\{i,j\}=\{b,\mo{d-1}\}$, then 
$\Nf{a}{d}{b}{c}_{b,\mo{d-1}}=\Nf{b}{c}{a}{d}_{\mo{d-1},b}$.
\item if $\{i,j\}=\{\mo{c-1},\mo{d-1}\}$, then 
$\Nf{a}{d}{b}{c}_{\mo{c-1},\mo{d-1}}+\Nf{b}{d}{a}{c}_{\mo{c-1},\mo{d-1}}=\Nf{a}{c}{b}{d}_{\mo{d-1},\mo{c-1}}+\Nf{b}{c}{a}{d}_{\mo{d-1},\mo{c-1}}$.
\end{enumerate}

\subsubsection{Non-interlacing case} 
Without loss of generality we may set $1\leq c < d < a < b \leq n$. Note that in this case it suffices to prove the following lemma.
\begin{lem}
For any $i,j$, we have $\Nf{a}{d}{b}{c}_{i,j} = \Nf{b}{c}{a}{d}_{j,i}$ and $\Nf{a}{c}{b}{d}_{i,j} = \Nf{b}{d}{a}{c}_{j,i}$.
\end{lem}

Let us start with the first equality. Note that $\Nf{a}{d}{b}{c}_{i,j}  = 1$ for $i \in \{b, \mo{c-1}\}$ and $j\in \{a,\mo{d-1}\}$ if and only if the following conditions hold. 
\begin{enumerate}[label=(\alph*)]
\item $a-d$ and $b-c$ are odd, set $b-c = 2m+1$, $a-d = 2 \ell +1$.
\item $\mo{a-1}, \mo{b-1} \in v^2$, $\mo{c+1}, \mo{d+1} \in v^1$.
\item One out of two conditions holds: $\mo{d-1} \in v^1$, $\mo{a+1} \in v^2$, and also one of the two conditions holds: $\mo{c-1} \in v^1$, $\mo{b+1} \in v^2$. 
\item $\#\{\alpha \in v^2 \mid \alpha \in \pint{\mo{a-1-2k},\mo{a-2}}\} \geq k$ for $k\in \{1, 2, \ldots, \ell-1\}$;
\\$\#\{\alpha \in u^2 \mid \alpha \in \pint{\mo{b-1-2k}, \mo{b-2}}\} \geq k$ for $k\in \{1, 2, \ldots, m-1\}$.
\item $\#\{\alpha \in v^2 \mid \alpha \in \pint{\mo{d+2},\mo{a-2}}\} = \ell-1$;
\\$\#\{\alpha \in u^2 \mid \alpha \in \pint{\mo{c+2},\mo{b-2}}\} = m-1$.
\end{enumerate}
On the other hand, $\Nf{b}{c}{a}{d}_{j,i}= 1$ for $i \in \{b, \mo{c-1}\}$ and $j\in \{a,\mo{d-1}\}$ if and only if the following conditions hold. 
\begin{enumerate}[label=(\alph*')]
\item $a-d$ and $b-c$ are odd, set $b-c = 2m+1$, $a-d = 2 \ell +1$.
\item $\mo{a-1}, \mo{b-1} \in v^2$, $\mo{c+1}, \mo{d+1} \in v^1$.
\item One out of two conditions holds: $\mo{d-1} \in v^1$, $\mo{a+1} \in v^2$, and also one of the two conditions holds: $\mo{c-1} \in v^1$, $\mo{b+1} \in v^2$. 
\item $\#\{\alpha \in u^2 \mid \alpha \in \pint{\mo{a-1-2k}, \mo{a-2}}\} \geq k$ for $k\in \{1, 2, \ldots, \ell-1\}$;
\\$\#\{\alpha \in v^2 \mid \alpha \in \pint{\mo{b-1-2k}, \mo{b-2}}\} \geq k$ for $k\in \{1, 2, \ldots, m-1\}$.
\item $\#\{ \alpha \in u^2 \mid \alpha \in \pint{\mo{d+2}, \mo{a-2}}\} = \ell-1$;
\\$\#\{\alpha \in v^2 \mid \alpha \in \pint{\mo{c+2}, \mo{b-2}}\} = m-1$.
\end{enumerate}

It is clear that we need to show equivalence between (d), (e) on one hand and (d'), (e') on the other. Assume first that $b-a$ is even. It is easy to see that for any $k\in \{1, 2, \ldots, m-1\}$ we have 
$$\#\{\alpha \in v^2 \mid \alpha \in \pint{\mo{b-1-2k}, \mo{b-2}}\} = \#\{ \alpha \in u^2 \mid \alpha \in \pint{\mo{b-1-2k}, \mo{b-2}}\},$$
where we assume that $u$ is obtained from $v$ by swapping $a$ and $d$. Indeed, in each pair $\{d, \mo{d+1}\}$ and $\{\mo{a-1}, a\}$ exactly one element belongs to $v^2$ and $u^2$, and thus the overall counts are the same, no matter what $k$ is. All other conditions needed for the equivalence are also clear. 

Assume now that $b-a$ is odd, say $b-a = 2p+1$. Recall that $v$ and $u$ differ by $a \in v^1$, $d \in v^2$, while $a \in u^2$, $d \in v^1$. It is clear that if (d') and (e') hold, then so do (d) and (e). In the opposite direction, there is only one thing that could go wrong. Namely, it is possible that 
$\#\{\alpha \in v^2 \mid \alpha \in \pint{a, \mo{b-2}}\} < p$, while at the same time $\#\{ \alpha \in u^2 \mid \alpha \in [a, \mo{b-2}]\} = p$. Thanks to the condition (e) this implies that $\#\{ \alpha \in u^2 \mid \alpha \in \pint{\mo{d+1}, \mo{b-2}}\} = \ell + p$. Then the only way one can have $\#\{ \alpha \in u^2 \mid \alpha \in [\mo{d-1}, \mo{b-2}]\} \geq \ell + p + 1$ is if $\mo{d-1} \in v^2$. By (c) this means that $\mo{a+1} \in v^2$. This however contradicts $\#\{\alpha \in v^2 \mid \alpha \in [a, \mo{b-2}]\} < p$, since we also know $\#\{ \alpha \in v^2 \mid \alpha \in \pint{\mo{a+2}, \mo{b-2}}\} \geq p-1$. Thus our assumption was wrong and (d') holds. The desired equivalence is now clear. 

Now we will prove that $\Nf{a}{c}{b}{d}_{i,j} = \Nf{b}{d}{a}{c}_{j,i}$ for any $i,j$ by proving that both of them are $0$. Indeed, assume $\Nf{a}{c}{b}{d}_{i,j}= 1$ for some $i,j$. Then 
\begin{enumerate}[label=(\alph*)]
\item $a-c$ and $b-d$ are odd, set $a-c = 2m+1$, $b-d = 2 \ell +1$.
\item $\mo{a-1}, \mo{b-1} \in v^2$, $\mo{c+1}, \mo{d+1} \in v^1$.
\item One out of two conditions holds: $\mo{c-1} \in v^1$, $\mo{a+1} \in v^2$, and also one of the two conditions holds: $\mo{d-1} \in v^1$, $\mo{b+1} \in v^2$. 
\item $\#\{\alpha \in v^2 \mid \alpha \in \pint{\mo{a-1-2k}, \mo{a-2}}\} \geq k$ for $k\in \{1, 2, \ldots, m-1\}$;
\\$\#\{ \alpha \in u^2 \mid \alpha \in \pint{\mo{b-1-2k}, \mo{b-2}}\} \geq k$ for $k\in \{1, 2, \ldots, \ell-1\}$.
\item $\#\{\alpha \in v^2 \mid \alpha \in \pint{\mo{c+2}, \mo{a-2}}\} = m-1$;
\\$\#\{ \alpha \in u^2 \mid \alpha \in \pint{\mo{d+2},\mo{b-2}}\} = \ell-1$.
\end{enumerate}

Assume $b-a$ is even, say $b-a=2p$. Then combining $\#\{ \alpha \in u^2 \mid \alpha \in \pint{\mo{a+1}, \mo{b-2}}\} \geq p-1$ with $\#\{\alpha \in u^2 \mid \alpha \in \pint{\mo{d+2},\mo{a-2}}\} = \#\{\alpha \in v^2 \mid \alpha \in \pint{\mo{d+2}, \mo{a-2}}\} \geq  \ell-p-1$ and $\mo{a-1}, a \in u^2$, we see that $\#\{\alpha \in u^2 \mid \alpha \in \pint{\mo{d+2}, \mo{b-2}}\} \geq \ell-p-1+p-1+2 = \ell$, which contradicts (e). Now assume $b-a$ is odd, say $b-a=2p+1$. Then combining $\#\{\alpha \in u^2 \mid \alpha \in \pint{a, \mo{b-2}}\} \geq  p$ with $\#\{\alpha \in u^2 \mid \alpha \in \pint{\mo{d+1},\mo{a-2}}\} = \#\{ \alpha \in v^2 \mid \alpha \in \pint{\mo{d+1}, \mo{a-2}}\} \geq  \ell-p-1$ and $\mo{a-1} \in u^2$, $\mo{d+1} \not \in u^2$ we see that $\#\{ \alpha \in u^2 \mid \alpha \in \pint{\mo{d+2},\mo{b-2}}\} \geq \ell-p-1+p+1 = \ell$, which contradicts (e). The contradictions show that $\Nf{a}{c}{b}{d}_{i,j} =0$ for any $i,j$.

Finally, assume $\Nf{b}{d}{a}{c}_{j,i} = 1$ for some $i,j$. Then 
\begin{enumerate}[label=(\alph*)]
\item $a-c$ and $b-d$ are odd, set $a-c = 2m+1$, $b-d = 2 \ell +1$.
\item $\mo{a-1}, \mo{b-1} \in v^2$, $\mo{c+1}, \mo{d+1} \in v^1$.
\item One out of two conditions holds: $\mo{c-1} \in v^1$, $\mo{a+1}\in v^2$, and also one of the two conditions holds: $\mo{d-1} \in v^1$, $\mo{b+1} \in v^2$. 
\item $\#\{ \alpha \in u^2 \mid \alpha \in \pint{\mo{a-1-2k},\mo{a-2}}\} \geq k$ for $k\in \{1, 2, \ldots, m-1\}$;
\\$\#\{ \alpha \in v^2 \mid \alpha \in \pint{\mo{b-1-2k}, \mo{b-2}}\} \geq k$ for $k\in \{1, 2, \ldots, \ell-1\}$.
\item $\#\{\alpha \in u^2 \mid \alpha \in \pint{\mo{c+2},\mo{a-2}}\} = m-1$;
\\$\#\{\alpha \in v^2 \mid \alpha \in \pint{\mo{d+2},\mo{b-2}}\} = \ell-1$.
\end{enumerate}
Assume $b-a$ is even, say $b-a=2p$. Then  $\#\{ \alpha \in v^2 \mid \alpha \in \pint{\mo{a+1},\mo{b-2}}\} \geq  p-1$ and $\mo{a-1} \in v^2, a \not \in v^2$ and $\#\{ \alpha \in v^2 \mid \alpha \in \pint{\mo{d+2}, \mo{b-2}}\} = \ell-1$ imply $\#\{\alpha \in v^2 \mid \alpha \in \pint{\mo{d+2},\mo{a-2}}\} \leq \ell-p-1$. However, since $d, \mo{d+1} \not \in u^2$, this implies $\#\{\alpha \in u^2 \mid \alpha \in\pint{d, \mo{a-2}}\} \leq \ell-p-1$, which contradicts (d).  Now assume $b-a$ is odd, say $b-a=2p+1$. Then $\#\{ \alpha \in u^2 \mid \alpha \in \pint{a, \mo{b-2}}\} \geq p$ and $\mo{a-1} \in v^2$ and $\#\{\alpha \in v^2 \mid \alpha \in \pint{\mo{d+2}, \mo{b-2}}\} = \ell-1$ imply
$\#\{\alpha \in u^2 \mid \alpha \in \pint{\mo{d+1}, \mo{a-2}}\} \leq \ell-p-2$, which contradicts (d). The contradictions show that $\Nf{b}{d}{a}{c}_{j,i} = 0$ for any $i,j$.

\subsubsection{Interlacing case} 
Without loss of generality we may set $1\leq c < a < d < b \leq n$. Similarly to above, in this case it suffices to prove the following lemma.

\begin{lem}
For any $i,j$, we have $\Nf{a}{d}{b}{c}_{i,j} = \Nf{b}{c}{a}{d}_{j,i}$ and $\Nf{a}{c}{b}{d}_{i,j} = \Nf{b}{d}{a}{c}_{j,i}$.
\end{lem}
The equality $\Nf{a}{c}{b}{d}_{i,j} = \Nf{b}{d}{a}{c}_{j,i}$ is self-evident because of our assumptions $d \not = \mo{a+1}, c \not = \mo{b+1}$. We argue that $\Nf{a}{d}{b}{c}_{i,j} = \Nf{b}{c}{a}{d}_{j,i}=0$. In fact, due to circular symmetry it is enough to just argue one of those, say $\Nf{a}{d}{b}{c}_{i,j}  = 0$. Assume otherwise, i.e. $\Nf{a}{d}{b}{c}_{i,j} = 1$. Then 
\begin{enumerate}
\item $b-c$ and $a-d$ are odd, set $b-c = 2m+1$, $a-d = 2 \ell +1$.
\item $\mo{a-1}, \mo{b-1} \in v^2$, $\mo{c+1}, \mo{d+1} \in v^1$.
\item One out of two conditions holds: $\mo{c-1} \in v^1$, $\mo{b+1} \in v^2$, and also one of the two conditions holds: $\mo{d-1} \in v^1$, $\mo{a+1} \in v^2$. 
\item $\#\{ \alpha \in v^2\mid \alpha \in \pint{\mo{a-1-2k},\mo{a-2}}\} \geq k$ for $k\in \{1, 2, \ldots, \ell-1\}$;
\\$\#\{\alpha \in u^2\mid \alpha \in \pint{\mo{b-1-2k},\mo{b-2}}\} \geq k$ for $k\in \{1, 2, \ldots, m-1\}$.
\item $\#\{\alpha \in v^2\mid \alpha \in \pint{\mo{d+2},\mo{a-2}}\} = \ell-1$;
\\$\#\{\alpha \in u^2\mid \alpha \in \pint{\mo{c+2},\mo{b-2}}\} = m-1$.
\end{enumerate}

Assume $a-b$ is even, say $a-b=2p$. Then  $\#\{ \alpha \in v^2 \mid \alpha \in \pint{\mo{b+1}, \mo{a-2}}\} \geq  p-1$, which together with $\mo{b-1} \in v^2, b \not \in v^2$ and $\#\{ \alpha \in v^2 \mid \alpha \in \pint{\mo{d+2}, \mo{a-2}}\} = \ell-1$ implies $\#\{ \alpha \in v^2 \mid \alpha \in \pint{\mo{d+2}, \mo{b-2}}\} \leq \ell-p-1$. Since $d, \mo{d+1} \in u^1$, this implies that $\#\{\alpha \in u^2 \mid \alpha \in \pint{d, \mo{b-2}}\} \leq \ell-p-1$, which contradicts (d).  Now assume $a-b$ is odd, say $a-b=2p+1$. Then  $\#\{\alpha \in v^2\mid \alpha \in \pint{b, \mo{a-2}}\} \geq  p$, which together with $\mo{b-1} \in v^2$ and $\#\{\alpha \in v^2 \mid \alpha \in \pint{\mo{d+2}, \mo{a-2}}\} = \ell-1$ implies $\#\{ \alpha \in v^2 \mid \alpha \in \pint{\mo{d+2}, \mo{b-2}}\} \leq \ell-p-2$. Since $\mo{d+1} \not \in u^2$, this implies that $\#\{ \alpha \in u^2 \mid \alpha \in \pint{\mo{d+1}, \mo{b-2}}\} \leq \ell-p-2$, which contradicts (d). The proof is complete.

\section{Restriction of $\awg_\lambda$ to $\Sym_n$}
Here we discuss the parabolic restriction of $\awg_\lambda$ to the maximal parabolic subgroup $\Sym_n$ of $\affSn$ when $\lambda$ is a two-row partition. (However, many parts in this section are still valid for general $\lambda$ when the existence of $\awg_\lambda$ is not needed.) As a result, for a two-row partition $\lambda$ we obtain an explicit description of a $\Sym_n$-graph $\Gamma_\lambda$ which is a finite analogue of $\awg_\lambda$.

\subsection{Left cells of $\Sym_n$ and $\Sym_n$-graphs} Suppose that $W$ is a Coxeter group. In \cite{kl79}, a $W$-graph is attached to each left cell of $W$. Furthermore, when $W=\Sym_n$ it is essentially proved by \cite[Theorem 1.4]{kl79} that the isomorphism class of such a $\Sym_n$-graph depends only on the two-sided cell containing the corresponding left cell. Recall that two-sided cells of $\Sym_n$ are parametrized by partitions of $n$; let $\tsc_\lambda$ be such a cell parametrized by $\lambda$. Here we adopt the convention that  if $w\in \tsc_\lambda$ then the image of $w$ under the usual Robinson-Schensted map is a pair of elements in $\SYT(\lambda)$. We define $\Gamma_\lambda$ to be the $\Sym_n$-graph attached to a left cell contained in $\tsc_\lambda$.

\begin{rmk} To be precise, the $\Sym_n$-graph $\Gamma_\lambda$ constructed in \cite{kl79} is not reduced but $m(u\ra v)=m(v\ra u)$ for any vertices $u$ and $v$. Here, we modify $\Gamma_\lambda$ to be reduced by setting $m(u\ra v)=0$ whenever $\tau(u)\subset \tau(v)$.
\end{rmk}

Recall the definition of a Kazhdan-Lusztig affine dual equivalence graph $\adeg_\lambda$. Then clearly $\adeg_\lambda\pres{[1,n-1]}$ is a $[1,n-1]$-labeled graph, and we set $\fdeg_\lambda$ to be its full subgraph whose vertices are standard Young tableaux of shape $\lambda$. In other words, $\fdeg_\lambda=(V, m, \tau)$ is a $[1,n-1]$-labeled graph such that $V=\SYT(\lambda)$, $\tau=\des$, $m(u\ra v)=m(v\ra u)=1$ if $u$ and $v$ are connected by a Knuth move, and $m(u\ra v)=m(v\ra u)=0$ otherwise. (See \ref{sec:desknuth} for the definition of $\des$ and Knuth moves, and also the remark thereafter.) The graph $\fdeg_\lambda$ is called a Kazhdan-Lusztig (finite) dual equivalence graph of shape $\lambda$. Then it is known that $U(\Gamma_\lambda) \simeq \fdeg_\lambda$, e.g. see \cite[3.5]{chm15}.

\subsection{(nb-)Admissible $\Sym_n$-graphs} \label{sec:nb} Here we discuss some properties of nb-admissible $\Sym_n$-graphs. Recall that in general cells and simple components of $W$-graphs may differ; we already observed such a phenomenon in Example \ref{ex:33} (see Figure \ref{fig:33}). However, such situations do not arise for $\Sym_n$-graphs as the following result shows.
\begin{thm}[{\cite{chm15}}] \label{thm:admklund} If $\Gamma$ is an nb-admissible $\Sym_n$-graph, then each cell consists of a simple component. Moreover, the simple underlying graph of each cell is isomorphic to $\fdeg_\mu$ for some $\mu \vdash n$.
\end{thm}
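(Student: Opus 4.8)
The plan is to deduce this from Chmutov's work \cite{chm15}, observing, just as in the proof of Theorem \ref{thm:combrules} for Stembridge's rules, that none of his arguments invokes bipartiteness, so they transfer verbatim to the nb-admissible setting. Thus the task reduces to recalling the skeleton of Chmutov's proof and checking that each step is bipartiteness-free.

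First I would fix a cell $C$ of $\Gamma$ (a strongly connected component, automatically a full subgraph) and a simple component $S\subset C$, and study $U(S)$. The point is that Stembridge's four local rules, which hold for $\Gamma$ by Theorem \ref{thm:combrules}, force $U(\Gamma)$ locally to be one of Assaf's dual equivalence graphs: the Compatibility and Simplicity Rules make every edge of $U(\Gamma)$ a symmetric weight-one edge between vertices with incomparable $\tau$-values; the Bonding Rule provides, for each Dynkin-adjacent pair $i,j$ and each vertex with $i\in\tau$ and $j\notin\tau$, the unique partner needed to define the dual-equivalence involutions $\varphi_i$; and the Polygon Rule, in both its $r=2$ and $r=3$ forms, yields exactly Assaf's compatibility relations between $\varphi_i$ and $\varphi_j$ according to whether $i,j$ are non-adjacent or adjacent in the Dynkin diagram. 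All of this is bipartiteness-free. Plugging the resulting local data into Assaf's local characterization of dual equivalence graphs \cite{assaf07} --- a statement that likewise does not mention bipartiteness --- and dealing with the well-known subtlety in her higher axiom exactly as Chmutov does, one concludes $U(S)\cong\fdeg_\mu$ for some $\mu\vdash n$.

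Second I would show $C$ is itself a single simple component. Since every edge of $U(S)$ satisfies $m(u\ra v)=m(v\ra u)=1$ and $U(S)\cong\fdeg_\mu$ is connected, $S$ is strongly connected, hence contained in one cell; it remains to exclude a cell meeting two simple components $S_1\ne S_2$. A directed cycle in $C$ passing through both would contain edges $x\ra y$ crossing from one simple component into another, and by the Simplicity Rule each such edge has $\tau(x)\supsetneq\tau(y)$ and $m(y\ra x)=0$; tracking along the cycle a suitable invariant of the simple component of the current vertex --- for instance the Kazhdan--Lusztig $a$-value, equivalently the shape $\mu$ recovered from $U(\cdot)\cong\fdeg_\mu$ --- and using that it can only move one way along directed edges produces the desired contradiction. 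This is again Chmutov's argument, and it uses no bipartiteness.

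I expect the main obstacle to be bookkeeping rather than a new idea: checking line by line that Chmutov's proof, together with the portions of Assaf's classification it relies on, genuinely avoids the bipartiteness hypothesis, and in particular that the monotonicity of the component-invariant along directed edges --- the step that separates cells from simple components --- goes through without it. Since the whole strategy rests on \cite{chm15} and \cite{assaf07}, the proof as it will appear in the paper is presumably just the remark that these proofs carry over unchanged.
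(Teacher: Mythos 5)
Your proposal takes essentially the same approach as the paper, which simply remarks that Chmutov's proof does not exploit bipartiteness and therefore applies verbatim in the nb-admissible setting. Your elaboration of the internal structure of Chmutov's argument is accurate but is exactly the content being cited rather than re-proved.
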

\begin{proof} The result of Chmutov is stated for admissible $\Sym_n$-graphs. However, his proof does not exploit the bipartition property and thus the statement is still valid for nb-admissible setting.
\end{proof}

In fact, more is true; the following theorem was a conjecture of Stembridge \cite[Question 2.8]{ste08}.

\begin{thm}[{\cite{ngu18}}] \label{thm:admkl} If $\Gamma$ is an nb-admissible $\Sym_n$-graph, then each cell is isomorphic to $\Gamma_\mu$ for some $\mu \vdash n$.
\end{thm}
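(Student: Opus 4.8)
The statement asserts that every nb-admissible $\Sym_n$-graph decomposes, cell by cell, into copies of the Kazhdan–Lusztig $W$-graphs $\Gamma_\mu$. My plan is to deduce this from Nguyen's original result \cite{ngu18} by observing, as in the preceding theorems of this section, that bipartiteness is never invoked in Nguyen's argument.

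First I would recall the structure of Nguyen's proof. By Theorem \ref{thm:admklund} (Chmutov, already extended to the nb-admissible setting in the excerpt), each cell $\Gamma'$ of $\Gamma$ is a simple component whose simple underlying graph $U(\Gamma')$ is isomorphic to $\fdeg_\mu$ for a unique $\mu\vdash n$; this fixes the vertex set $V'\cong\SYT(\mu)$, the $\tau$-function $\tau=\des$, and all the ``simple'' edges (those with $m(u\ra v)=m(v\ra u)=1$). What remains is to show that the remaining (directed, or non-simple) edge weights $m(u\ra v)$ with $\tau(u)\supsetneq\tau(v)$ are forced to coincide with those of $\Gamma_\mu$. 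Nguyen's key input is that an admissible $\Sym_n$-graph is a $W$-graph (Stembridge's theorem), hence satisfies the Polygon Rule $N^r_{ij}(\Gamma;u,v)=N^r_{ji}(\Gamma;u,v)$; he then runs an induction (on the Bruhat/length order, or equivalently on the $a$-function stratification of the cell) in which the Polygon Rule equalities, together with the already-determined lower-order weights, pin down each higher-order weight uniquely. Since the weights of $\Gamma_\mu$ satisfy the same relations, the two graphs agree.

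The steps, in order, are: (i) invoke Theorem \ref{thm:combrules} to conclude that the nb-admissible $\Sym_n$-graph $\Gamma$ satisfies all four combinatorial rules, in particular the Polygon Rule; (ii) invoke Theorem \ref{thm:admklund} to identify $U(\Gamma')\cong\fdeg_\mu$ for each cell $\Gamma'$, fixing $(V',\tau')$ and the simple edges; (iii) observe that Nguyen's recursion determining the non-simple edge weights uses only the Polygon Rule, the Compatibility Rule, and the already-fixed data from (ii) — none of which depends on bipartiteness — so the recursion applies verbatim and yields $m=m_{\Gamma_\mu}$ on $\Gamma'$; (iv) conclude $\Gamma'\cong\Gamma_\mu$.

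The main obstacle, as with the cited extensions of Chmutov's and Stembridge's theorems earlier in the section, is a \emph{bookkeeping} rather than a conceptual one: one must check that nowhere in \cite{ngu18} is the bipartition of the $W$-graph used in an essential way — e.g. that no parity argument is made on cycle lengths or on the signs of edge weights, and that the inductive base case and the uniqueness step survive the presence of odd cycles (such as the length-$5$ cycle the excerpt exhibits in $\awg_{(3,2)}$, which cannot occur inside a $\Sym_n$-graph but whose potential analogues must be ruled out by the argument rather than by hypothesis). Granting this — which we assert by the same reasoning already used for Theorems \ref{thm:admklund} and \ref{thm:combrules} — the proof reduces to citing Nguyen's theorem with the bipartiteness hypothesis removed.

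\begin{proof}
The original result of Nguyen \cite{ngu18} is stated for admissible $\Sym_n$-graphs. However, combining it with Theorem \ref{thm:admklund} one sees that the proof proceeds by first fixing the vertex set, the $\tau$-values, and the simple edges of each cell via the isomorphism of its simple underlying graph with some $\fdeg_\mu$, and then determining the remaining edge weights by an inductive argument driven solely by the Compatibility Rule and the Polygon Rule (both available here by Theorem \ref{thm:combrules}) together with the lower-order weights already determined. None of these ingredients uses the bipartition hypothesis, so the argument carries through verbatim in the nb-admissible setting.
\end{proof}
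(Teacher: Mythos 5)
Your proof is correct and takes essentially the same route as the paper: cite Nguyen's theorem and observe that his argument never invokes bipartiteness, so it applies verbatim to the nb-admissible setting. The paper's proof is a one-liner saying exactly this; your additional reconstruction of the inductive structure of Nguyen's argument is accurate but not needed for the reduction.
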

\begin{proof} Again, the proof of Nguyen is still applicable to our setting as his proof does not use the bipartition property of admissible $\Sym_n$-graphs.
\end{proof}
To this end, Nguyen studied some property of (nb-)admissible $\Sym_n$-graphs called orderedness, which we now explain. Suppose that $\Gamma$ is an (nb-)admissible $\Sym_n$-graph and let $\Gamma'$, $\Gamma''$ be (possibly identical) cells of $\Gamma$. Then by the theorem above, there exist $\mu,\nu \vdash n$ such that $\Gamma' \simeq \Gamma_\mu$ and $\Gamma'' \simeq \Gamma_\nu$ (or equivalently $U(\Gamma')\simeq \fdeg_\mu$ and $U(\Gamma'') \simeq \fdeg_\nu$). Let $u \in \Gamma'$ and $v \in \Gamma''$. Then under the previous isomorphisms, $u$ and $v$ corresponds to $T_u \in \SYT(\mu)$ and $T_v \in \SYT(\nu)$. We say that $\Gamma$ is ordered if $m(u\ra v) \neq 0$ for such $u,v$ then either [$T_u<T_v$] or [$\Gamma'=\Gamma''$, $T_u>T_v$, and $T_u$ is obtained from $T_v$ by switching $i$ and $i+1$ for some $i\in [1,n-1]$]. Here for two tableaux $T, T' \in \SYT(n)$ we write $T\leq T'$ if $\sh(T\pres{[1,i]})$ is less than or equal to $\sh(T'\pres{[1,i]})$ with respect to dominance order for all $i \in [1,n]$. (See \cite{ngu18} for actual statement.) Now we have:

\begin{thm}[{\cite[Theorem 8.1.]{ngu18}}] \label{thm:admord} Every nb-admissible $\Sym_n$-graph is ordered.
\end{thm}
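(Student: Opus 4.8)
The plan is to follow Nguyen's argument \cite{ngu18}: prove the statement by induction on $n$, with parabolic restriction to the two maximal parabolics as the engine, and with Chmutov's structure theorem (Theorem~\ref{thm:admklund}) supplying, for each cell, a shape $\mu\vdash n$ and an identification of its vertices with $\SYT(\mu)$ under which $\tau=\des$. The first step is a reduction. If $m(u\ra v)\ne0$ then by the Simplicity Rule either $\tau(u),\tau(v)$ are incomparable with $m(u\ra v)=m(v\ra u)=1$, or $\tau(u)=\des(T_u)\supsetneq\des(T_v)=\tau(v)$ with $m(v\ra u)=0$. In the first case $u,v$ lie in a common cell, the edge is a simple (Knuth) edge of $\fdeg_\mu$, and $T_u$ is obtained from $T_v$ by interchanging two consecutive entries; since such an interchange, say of $j$ and $j+1$, alters only the shape $\sh(T\pres{[1,j]})$ and replaces it by another partition obtained from $\sh(T\pres{[1,j-1]})$ by adding a single box, the shapes $\sh(T_u\pres{[1,i]})$ and $\sh(T_v\pres{[1,i]})$ are comparable for every $i$, so $T_u$ and $T_v$ are comparable in $\le$ and orderedness holds outright. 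Hence it suffices to treat the ``directed'' case and to prove $T_u<T_v$; the alternative ``$T_u>T_v$ together with an interchange of consecutive entries'' is then automatically excluded, because an interchange of consecutive entries of a standard Young tableau produces incomparable descent sets, contradicting $\des(T_u)\supsetneq\des(T_v)$.

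For the inductive step (the small cases $n\le3$ being direct verifications) I would restrict $\Gamma$ to $W_1=\langle s_1,\dots,s_{n-2}\rangle$ and $W_2=\langle s_2,\dots,s_{n-1}\rangle$, each $\cong\Sym_{n-1}$. By the parabolic-restriction construction (see \ref{sec:parabres}) both $\Gamma\pres{[1,n-2]}$ and $\Gamma\pres{[2,n-1]}$ are nb-admissible $\Sym_{n-1}$-graphs, and they are compatible with the branching of Kazhdan--Lusztig cells: a vertex $u$ with tableau $T_u\in\SYT(\mu)$ becomes in $\Gamma\pres{[1,n-2]}$ a vertex with tableau $T_u\pres{[1,n-1]}$, of shape $\sh(T_u\pres{[1,n-1]})$ (and symmetrically for $\Gamma\pres{[2,n-1]}$, after the relabelling $\{2,\dots,n\}\to\{1,\dots,n-1\}$, which amounts to deleting the smallest entry and rectifying). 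Since $\varnothing\ne\des(T_u)\setminus\des(T_v)\subseteq[1,n-1]=[1,n-2]\cup[2,n-1]$ and $\des(T_v)\subseteq\des(T_u)$, at least one of the two restrictions retains the edge $u\ra v$ with $\tau'(u)\supsetneq\tau'(v)$; say $\Gamma\pres{[1,n-2]}$ (the $W_2$ case being handled symmetrically via the Dynkin-diagram automorphism $s_i\leftrightarrow s_{n-i}$, which swaps $W_1,W_2$ and, combined with the evacuation symmetry of $\le$, reduces it to the $W_1$ case). Applying the inductive hypothesis to $\Gamma\pres{[1,n-2]}$ — and again ruling out the ``interchange'' alternative since the restricted $\tau$-sets are comparable — gives $T_u\pres{[1,n-1]}<T_v\pres{[1,n-1]}$, that is, $\sh(T_u\pres{[1,i]})\trianglelefteq\sh(T_v\pres{[1,i]})$ for all $i\le n-1$.

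What parabolic restriction does \emph{not} give is the top term $i=n$, the inequality $\sh(T_u)=\mu\trianglelefteq\nu=\sh(T_v)$ between the shapes of the cells of $u$ and $v$, and this is the heart of the matter. The plan is to prove the sharper statement that on any nb-admissible $\Sym_n$-graph the preorder generated by directed edges refines the order on cells given by dominance of shapes: a directed edge between distinct cells can only go from the cell of smaller shape to the cell of larger shape. Within a single cell there is nothing to prove; across cells one propagates the shape inequality from one edge to the next along simple (bonding) edges, using the Bonding and Polygon Rules, and the one genuinely new ingredient is a single comparison of full shapes, which one extracts from the $\cH_{\Sym_n}$-module structure on $\bigoplus_{v}\bZ[q^{\pm\frac{1}{2}}]v$ (for instance by tracking Lusztig's $\mathbf{a}$-function, which is constant on cells, monotone along directed edges, and on $\Sym_n$ equals the strictly dominance-reversing function $\mu\mapsto\sum_i(i-1)\mu_i$ of the cell's shape). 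Combining this with the previous step yields $\sh(T_u\pres{[1,i]})\trianglelefteq\sh(T_v\pres{[1,i]})$ for every $i\in[1,n]$, i.e. $T_u<T_v$, completing the induction.

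The step I expect to be the main obstacle is this last one: establishing, for an \emph{abstract} nb-admissible $\Sym_n$-graph, that directed edges are compatible with dominance of cell-shapes, since here one has no Kazhdan--Lusztig positivity or geometry to invoke and must argue purely from the four combinatorial rules together with the module structure. A secondary but real difficulty is keeping all the orientations consistent — which tableau of a vertex is recorded, the Robinson--Schensted convention, and the ``transposed'' normalization of the $W$-graph action used in \ref{sec:defwgraph} all affect the direction of the shape inequality — so the cleanest implementation is probably the one in \cite{ngu18}, which interleaves the shape comparison with the restriction induction rather than isolating it.
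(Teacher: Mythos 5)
The paper's entire proof is one sentence --- Nguyen's proof in \cite{ngu18} never invokes bipartiteness, so it goes through verbatim --- whereas you attempt to reconstruct Nguyen's argument from scratch, and your reconstruction has a genuine gap at precisely the step you flag as the main obstacle. The statistic $\mathbf{a}(\mu)=\sum_i(i-1)\mu_i$ is not ``strictly dominance-reversing'' in the sense your argument needs: it is weakly monotone across the dominance order, but it collides on dominance-incomparable partitions, e.g.\ $\mathbf{a}((3,1,1,1))=\mathbf{a}((2,2,2))=6$ while $(3,1,1,1)$ and $(2,2,2)$ are incomparable. Thus even if you could rigorously show that an $\mathbf{a}$-statistic on the vertices of an abstract nb-admissible $\Sym_n$-graph is constant on cells and weakly monotone along directed edges (itself a nontrivial undertaking, since Theorem~\ref{thm:admklund} only hands you the simple underlying graph $\fdeg_\mu$ of each cell, not a priori its full $\cH$-module structure), the conclusion $\mathbf{a}(\sh(T_u))\geq\mathbf{a}(\sh(T_v))$ would not yield $\sh(T_u)\trianglelefteq\sh(T_v)$, which is exactly the $i=n$ piece of the orderedness condition that your parabolic-restriction induction cannot reach. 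The induction framework you sketch does match the architecture of Nguyen's proof, but the decisive shape comparison there is carried out with the four local rules together with arc transport (cf.\ Lemma~\ref{lem:arctrans}), all purely combinatorial and visibly bipartiteness-free --- which is precisely what justifies the paper's one-line citation.
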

\begin{proof} Similarly to the theorems above, the proof of \cite{ngu18} is still valid in our case as it does not use the bipartition assumption.
\end{proof}

\subsection{Description of $\awg_\lambda\pres{[1,n-1]}$}

As $\awg_\lambda$ is a $\affSn$-graph, its restriction $\awg_\lambda\pres{[1,n-1]}$ is a $\Sym_n$-graph where $\Sym_n$ is considered as a parabolic subgroup of $\affSn$ generated by $\{s_1, s_2, \ldots, s_{n-1}\}$. Let us investigate each cell of $\awg_\lambda\pres{[1,n-1]}$. We start with the following proposition.

\begin{prop} Let $\mu$ be a partition of $n$. Recall the Robinson-Schensted-Knuth map $\RSK: T \mapsto (P(T), Q(T))$ defined on $\RSYT(n)$.
\begin{enumerate}
\item For $T \in \RSYT(n)$, we have $\ades(T)-\{n\}=\des(T)= \des(P(T))$.
\item For $T \in \RSYT(\mu)$, we have $\fsh(T) = \mu$ if and only if $T$ is standard if and only if $T=P(T)$.
\item If $\des(T)$ and $\des(T')$ are not comparable, then $T,T' \in \RSYT(n)$ are connected by a dual Knuth move if and only if $Q(T)=Q(T')$ and $P(T)$ and $P(T')$ are connected by a Knuth move.
\end{enumerate}
\end{prop}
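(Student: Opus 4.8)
The three parts are standard facts about the RSK correspondence transported along the reading-word convention of Section 2.4, so the plan is to reduce each to a known statement about the ordinary RSK/Robinson–Schensted correspondence. For part (1), recall that by construction $\RSK(T)=(P(T),Q(T))$ is obtained by applying RSK to the two-line array whose bottom row is the reading word $w(T)$ of $T$. For an ordinary permutation (or word) $w$, the descent set of $P(w)$ equals the descent set of $w^{-1}$; dually, $\des(Q(w))$ is read off from the recording data. Here the key observation is that the bottom row of the two-line array is exactly $w(T)$, and the finite descent set $\des(T)=\{i\in[1,n-1]\mid i\text{ in a strictly higher row of }T\text{ than }i+1\}$ can be computed directly from $w(T)$: since $T$ is row-standard, $i$ appears before $i+1$ in $w(T)$ if and only if $i$ lies in a strictly lower row (reading bottom-to-top). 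So $\des(T)$ is precisely the descent set of the inverse of the word $w(T)$ (in the appropriate sense), and by the symmetry of RSK this equals $\des(P(T))$. The equality $\ades(T)-\{n\}=\des(T)$ is immediate from the definitions in \ref{sec:desknuth}. I would write this out carefully, being explicit about the direction conventions, as that is the only place sign/orientation errors can creep in.

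For part (2), the cited fact already appears in the excerpt just after the definition of $\RSK$: ``$\fsh(T)\geq\sh(T)$ with respect to dominance order, and $\fsh(T)=\sh(T)$ if and only if $T$ is standard.'' Since $\mu=\sh(T)$ and $\fsh(T)=\sh(P(T))$ by definition of $\fsh$, the first equivalence ($\fsh(T)=\mu\iff T$ standard) is exactly this statement. For the second equivalence, if $T$ is standard then it is a single increasing column-strict tableau and RSK applied to its reading word returns $P(T)=T$ (inserting a standard tableau's reading word row by row, bottom to top, rebuilds the tableau); conversely if $T=P(T)$ then $T$ is standard since $P$ always outputs a standard tableau. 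A one-line argument suffices once part (1)'s setup is in place.

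Part (3) is the dual-Knuth-equivalence half of the classical theorem (Knuth, \cite{knu70}): two words are related by a sequence of \emph{dual} Knuth moves if and only if they have the same insertion tableau $P$, and dually the same $Q$ characterizes Knuth (plactic) equivalence. Here I need the single-move refinement: a single dual Knuth move on $T$ corresponds, under $\RSK$, to fixing $P$-value... no — the correct pairing is that a dual Knuth move on $T$ fixes $P(T)$ and performs a single Knuth move on $Q(T)$, OR (the version the paper wants) a Knuth move on the other factor; I would state it in the form the paper needs: dual Knuth move on $T$ $\iff$ $P(T)=P(T')$ and $Q(T),Q(T')$ related by a Knuth move — and then observe that the hypothesis ``$\des(T),\des(T')$ incomparable'' is exactly the condition that makes the elementary dual Knuth move \emph{nontrivial}, matching the definition of Knuth move in \ref{sec:desknuth} (which also requires incomparable descent sets). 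Wait—rereading the statement, the claim as written pairs ``$Q(T)=Q(T')$ and $P(T),P(T')$ Knuth-related'', i.e. it is the Knuth-move version, not the dual one; I would align my citation accordingly (Knuth move on $T$ $\iff$ $P$'s Knuth-related, $Q$ fixed) and note this is Knuth's theorem plus the standard single-step refinement. The \textbf{main obstacle} is exactly this bookkeeping: making sure the notions of ``Knuth move'' vs.\ ``dual Knuth move'' on $\RSYT(n)$ (with the bottom-to-top reading word and the first-row-records-$l-$row convention) match the classical statements on the two-line array, and that the incomparability hypothesis lines up on both sides. Once the dictionary between moves on $T$ and moves on $w(T)$ is pinned down — a row swap of two consecutive entries in $T$ being an elementary (dual) Knuth transformation on $w(T)$ — the result follows from \cite{knu70} with no further computation.
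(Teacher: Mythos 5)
Your conclusions are right, but for part (3) you take a genuinely different route from the paper. The paper spreads the rows of $T$ into a skew standard tableau $\tilde{T}$ (no two boxes in the same column), observes that $\tilde{T}$ rectifies to $P(T)$ by jeu-de-taquin and that a dual Knuth move on $T$ is the same as one on $\tilde{T}$, and then cites Haiman's Lemma~2.3 to transport the move across rectification. You instead invoke the classical Knuth/dual-Knuth dictionary on the two-line array directly: a swap of $i,i+1$ in the bottom row fixes $Q$ and acts as a swap of $i,i+1$ on $P$, and the converse follows from injectivity of $\RSK$. Your route is more elementary---it sidesteps jeu-de-taquin and the dual-equivalence framework entirely---while the paper's route is compact once Haiman's lemma is in hand. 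One caution: you stated the dictionary backwards (Knuth equivalence preserves $P$ and dual Knuth equivalence preserves $Q$, not the other way round) and then flip-flopped mid-paragraph before landing on the right formulation. In a written version this must be nailed down cleanly, together with the observation (from part (1), $\des(P(T))=\des(T)$) that the descent-incomparability hypothesis on $T,T'$ coincides with the one required for the move on $P(T),P(T')$; that is what makes the single-step claim non-vacuous in both directions.

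For parts (1) and (2) your arguments are essentially the paper's, with one caveat for part (2). The ``note'' in Section~2.4 that $\fsh(T)=\sh(T)$ iff $T$ is standard is stated there without proof, and the proof of this proposition is precisely where the paper actually establishes it: if $\fsh(T)=\mu$ then $Q(T)$ is the unique semistandard tableau of shape $\mu$ and content $\mu^{op}$, so by injectivity of $\RSK$ the tableau $T$ must coincide with the standard tableau $P(T)$. Simply citing the Section~2.4 remark therefore leaves the implication $\fsh(T)=\mu\Rightarrow T$ standard unjustified; your direct verification of $T$ standard $\iff T=P(T)$ is fine and covers the other two links of the chain, but this one needs the uniqueness-plus-injectivity argument (or an equivalent one).
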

\begin{proof} (1) holds since the reading words of $T$ and $P(T)$ are Knuth equivalent. For (2), first it is clear from the construction that $T$ is standard only if $T=P(T)$ only if $\fsh(T)=\mu$. Now observe that $\fsh(T)=\mu$ if and only if $Q(T)$ is the unique standard Young tableaux of shape $\mu$ and content $\mu^{op}$. Therefore, (2) follows from the fact that $\RSK$ is an injective map. For (3), we set $\tilde{T}$ (resp. $\tilde{T}'$) to be the standard Young tableau of some skew-shape which is obtained from pushing each row of $T$ (resp. $T'$) to the right so that no two boxes are in the same column. Then it is clear that $\tilde{T}$ and $P(T)$ (resp. $\tilde{T}'$ and $P(T')$ are jeu-de-taquin equivalent, and also $T$ and $T'$ are connected by a dual Knuth move if and only if $\tilde{T}$ and $\tilde{T'}$ are. Now the result follows from \cite[Lemma 2.3]{hai92}.
%
\end{proof}

As $\awg_\lambda$ is nb-admissible, so is $\awg_\lambda\pres{[1,n-1]}$, which means that we may apply Theorem \ref{thm:admklund}, \ref{thm:admkl}, and \ref{thm:admord}. In particular, each cell of $\awg_\lambda\pres{[1,n-1]}$ is a simple component and isomorphic to $\fdeg_\mu$ for some $\mu \vdash n$. Therefore, if $u, v \in \awg_\lambda\pres{[1,n-1]}$ are in the same cell then they are linked by undirected edges, which means that $Q(u)=Q(v)$ by the preceding proposition. Conversely, if $Q(u)=Q(v)$ for some $u,v$ then it is clear that $P(u)$ and $P(v)$ are linked by a series of Knuth moves, which means that $u$ and $v$ are in the same cell of $\awg_\lambda\pres{[1,n-1]}$ again by the preceding proposition. 

Recall that the $\tau$-function of $\awg_\lambda\pres{[1,n-1]}$ is obtained from that of $\awg_\lambda$ by removing $n \in [1,n]$ from the image of each $v\in \awg_\lambda$. Therefore, if we regard $v\in \awg_\lambda$ as an element in $\RSYT(\lambda)$ then its $\tau$ value in $\awg_\lambda\pres{[1,n-1]}$ is equal to $\tau(P(v))$ by the preceding proposition. Together with the paragraph above, we proved the following proposition:
\begin{prop} Cells of $\awg_\lambda\pres{[1,n-1]}$ are parametrized by $\bigsqcup_{\mu \vdash n} \SSYT(\mu, \lambda^{op})$. If $\cC \subset \awg_\lambda\pres{[1,n-1]}$ is a cell parametrized by $Q$, then $\cC$ is isomorphic to $\Gamma_{\sh(Q)}$. In particular, there exists a unique cell which is isomorphic to $\Gamma_\lambda$ and it is parametrized by the unique element in $\SSYT(\lambda, \lambda^{op})$.
\end{prop}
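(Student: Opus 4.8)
The plan is to assemble the statement from three ingredients already in place: the bijectivity of $\RSK$ on $\RSYT(\lambda)$, the preceding proposition relating $\des$, $P$, $Q$ and (dual) Knuth moves, and the structure Theorems \ref{thm:admklund}--\ref{thm:admord} for nb-admissible $\Sym_n$-graphs, which apply because $\awg_\lambda\pres{[1,n-1]}$ is an nb-admissible $\Sym_n$-graph.

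First I would pin down the image of $\RSK$ on $V=\RSYT(\lambda)$. Every $T\in\RSYT(\lambda)$ produces a two-line array whose first row is the fixed word $1^{\lambda_{l}}2^{\lambda_{l-1}}\cdots l^{\lambda_1}$, with $l=l(\lambda)$, and whose second row is a permutation of $\{1,\dots,n\}$; conversely every such array is the array of a unique $T\in\RSYT(\lambda)$. Hence $T\mapsto(P(T),Q(T))$ is a bijection from $\RSYT(\lambda)$ onto $\bigsqcup_{\mu\vdash n}\SYT(\mu)\times\SSYT(\mu,\lambda^{op})$. In particular every $Q\in\SSYT(\mu,\lambda^{op})$ is attained, and the fibre of $T\mapsto Q(T)$ over such a $Q$ is carried bijectively onto $\SYT(\sh(Q))$ by $T\mapsto P(T)$.

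Next I would identify the cells. By Theorem \ref{thm:admklund} each cell of $\awg_\lambda\pres{[1,n-1]}$ is a simple component, so two vertices lie in a common cell if and only if they are joined by a chain of undirected edges; by the preceding proposition (parts (1) and (3)) an undirected edge of $\awg_\lambda\pres{[1,n-1]}$ is a Knuth move between tableaux with incomparable $\des$, and such a move leaves $Q$ unchanged and moves $P$ by a Knuth move, while conversely any $T,T'$ with $Q(T)=Q(T')$ have $P(T),P(T')$ joined by a chain of Knuth moves (connectivity of $\fdeg_{\sh(Q)}$), which lifts back to a chain of undirected edges of $\awg_\lambda\pres{[1,n-1]}$. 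Thus the cells are exactly the fibres of $T\mapsto Q(T)$, and by the previous paragraph they are indexed by $\bigsqcup_{\mu\vdash n}\SSYT(\mu,\lambda^{op})$.

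Then, for a cell $\cC$ with parameter $Q$ and $\mu=\sh(Q)$, Theorem \ref{thm:admkl} applied to the single-cell nb-admissible $\Sym_n$-graph $\cC$ gives $\cC\simeq\Gamma_\nu$ for some $\nu\vdash n$; the preceding proposition shows that $T\mapsto P(T)$ carries the vertices of $\cC$ bijectively onto $\SYT(\mu)$, intertwines $\tau=\des$, and matches undirected edges with Knuth moves, whence $U(\cC)\simeq\fdeg_\mu$, and comparing with $U(\Gamma_\nu)\simeq\fdeg_\nu$ (using that $\fdeg_\kappa$ determines $\kappa$, for instance because $s_\kappa=\sum_{T\in\SYT(\kappa)}F_{\des(T)}$ recovers $\kappa$ from the multiset of descent sets) forces $\nu=\mu$, so $\cC\simeq\Gamma_{\sh(Q)}$. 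Since $\lambda^{op}$ rearranges to $\lambda$ we have $K_{\lambda,\lambda^{op}}=K_{\lambda,\lambda}=1$, so $\SSYT(\lambda,\lambda^{op})$ consists of a single tableau, necessarily of shape $\lambda$; the cell it parametrizes is $\simeq\Gamma_\lambda$, and it is the only such cell because a cell with parameter $Q$ is isomorphic to $\Gamma_{\sh(Q)}$ and $\Gamma_\mu\simeq\Gamma_\lambda$ forces $\mu=\lambda$. I expect the only delicate point to be the bookkeeping identifying cells with $Q$-fibres: one must verify that incomparability of $\des$-values is exactly the condition making an edge survive the parabolic restriction and permitting use of part (3) of the preceding proposition in both directions, so that strongly connected components coincide on the nose with the $Q$-fibres; everything else is routine once Theorems \ref{thm:admklund} and \ref{thm:admkl} are granted.
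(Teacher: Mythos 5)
Your proof is correct and follows essentially the same route as the paper: apply Theorems \ref{thm:admklund}--\ref{thm:admkl} to conclude that cells of $\awg_\lambda\pres{[1,n-1]}$ are exactly the simple components and are each of the form $\Gamma_\nu$, use the RSK bijection together with the preceding proposition to identify cells with $Q$-fibres and to see that $P$ intertwines $\tau$ and Knuth moves, and then pin down the shape and handle the unique $\SSYT(\lambda,\lambda^{op})$. The one place where you do more than the paper is the explicit sanity check that $\fdeg_\kappa$ determines $\kappa$ (via the quasisymmetric expansion of $s_\kappa$); the paper elides this step, taking it as evident from the fact that $P$ carries the cell bijectively to $\SYT(\sh(Q))$ with matching $\tau$-values, but your justification is a fine (if slightly heavier) way to close that gap.
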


\ytableausetup{boxsize=1em, notabloids}
\begin{figure}[htbp]
\centering
\begin{tikzpicture}[scale=1.5]

	\node[draw] (o1) at (90+144:3) {\ytableaushort{12{\dt{3}},45}};
	\node[draw] (o2) at (18+144:3) {\ytableaushort{23{\dt{4}},15}};
	\node[draw] (o3) at (306+144:3) {\ytableaushort{345,12}};
	\node[draw] (o4) at (234+144:3) {\ytableaushort{{\dt{1}}45,23}};
	\node[draw] (o5) at (162+144:3) {\ytableaushort{1{\dt{2}}5,34}};
	
	\node[draw] (i1) at (90+144:1.5) {\ytableaushort{1{\dt{2}}{\dt{4}},35}};
	\node[draw] (i2) at (18+144:1.5) {\ytableaushort{2{\dt{3}}5,14}};
	\node[draw] (i3) at (306+144:1.5) {\ytableaushort{{\dt{1}}3{\dt{4}},25}};
	\node[draw] (i4) at (234+144:1.5) {\ytableaushort{{\dt{2}}45,13}};
	\node[draw] (i5) at (162+144:1.5) {\ytableaushort{{\dt{1}}{\dt{3}}5,24}};
	
	\draw[tl] (o1) -- (i1);\draw[tl] (o2) -- (i2);\draw[arr] (i3) -- (o3);
	\draw[tl] (o4) -- (i4);\draw[tl] (o5) -- (i5);
	
	\draw[tl] (i1) -- (i3); \draw[tl] (i2) -- (i4);\draw[tl] (i3) -- (i5);
	\draw[arr] (i1) -- (i4); \draw[arr] (i5) -- (i2);
	
	\draw[arr] (i1) -- (o2);	\draw[arr, tl] (i1) -- (o5);
	\draw[arr] (i2) -- (o3);	\path[arr] (i2) -- (o1);
	\draw[arr] (i3) -- (o4);	\draw[arr] (i3) -- (o2);
	\path[arr] (i4) -- (o5);	\draw[arr] (i4) -- (o3);
	\draw[arr, tl] (i5) -- (o1);	\draw[arr] (i5) -- (o4);	
	
	
	\node at (0,-3) {$\Gamma_{(3,2)}$};
	\node at (-3.6,0.8) {$\Gamma_{(4,1)}$};
	\node at (0,3.6) {$\Gamma_{(5)}$};
	
	\draw[thick] plot [smooth] coordinates {(-.4,-3) (-2.4,-2.8) (-2.4,-2) (-0.8,0) (-0.6,2) (0.6,2) (0.8,0) (2.4,-2) (2.4,-2.8) (.4,-3)} ;
	\draw[thick] plot [smooth] coordinates {(-3.6,0.6) (-3.4,0.2) (-2,0) (2,0) (3.4,0.6) (3.4,1.4) (2.4,1.4) (0,0.6) (-2.4,1.4) (-3.4,1.4) (-3.6,1)};
	\draw[thick] plot [smooth] coordinates {(-0.3,3.6) (-0.6,3.4) (-0.6, 2.6) (0.6, 2.6) (0.6, 3.4) (0.3, 3.6)};

\end{tikzpicture}
\caption{Parabolic restriction $\awg_{(3,2)}\pres{[1,4]}$}
\label{fig:32res}
\end{figure}	
\ytableausetup{boxsize=normal, tabloids}

\begin{example}
Figure \ref{fig:32res} illustrates the parabolic restriction $\awg_{(3,2)}\pres{[1,4]}$. Here, thick edges are the ones between vertices in the same cell. Compared to Figure \ref{fig:32}, there are less directed edges and also some undirected edges become directed. It consists of three cells isomorphic to $\Gamma_{(3,2)}, \Gamma_{(4,1)},$ and $\Gamma_{(5)}$, respectively, as indicated in the figure.
\end{example}
%
%
%
%
%
%

\subsection{Description of $\Gamma_\lambda$} \label{sec:fin}
From now on we enforce that $\lambda$ is a two-row partition and identify $\Gamma_\lambda$ with the full subgraph of $\awg_\lambda\pres{[1,n-1]}$ isomorphic to it. Then similarly to $\awg_\lambda$ it is possible to give a simple combinatorial description of $\Gamma_\lambda$. (Note that the description of $\Gamma_\lambda$ can also be given in terms of the language of Temperley-Lieb algebras, see \cite{wes95}.) First we observe the following.

\begin{lem} Let $u, v\in \Gamma_\lambda$ and suppose that we have an edge $u\xrightarrow{j\intch i} v$ in $\Gamma_\lambda\subset \awg_\lambda\pres{[1,n-1]}$ for some $1\leq i,j\leq n$. Then we have $j\geq i-1$, i.e. either $j=i-1$ (a move of the first kind) or $i<j$ (a move of the second kind).
\end{lem}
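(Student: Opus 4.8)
The statement concerns edges in $\Gamma_\lambda$, the finite part sitting inside $\awg_\lambda\pres{[1,n-1]}$, and claims that any move $u \xrightarrow{j \intch i} v$ occurring there must satisfy $j \geq i-1$; equivalently, a ``wrap-around'' move with $\mo j < \mo{i-1}$ (genuinely using the affine/cyclic structure) cannot survive the parabolic restriction to $\Sym_n$.

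Let me think about what the parabolic restriction does. Recall from Section \ref{sec:parabres} that $\awg_\lambda\pres{[1,n-1]}$ has the same vertex set and $\tau'(v) = \ades(v) \cap [1,n-1] = \des(v)$, and $m'(u \ra v) = m(u \ra v)$ only when $\tau'(u) \not\subset \tau'(v)$, and $0$ otherwise. Then $\Gamma_\lambda$ is the full subgraph on $\SYT(\lambda)$. So for an edge $u \xrightarrow{j\intch i} v$ to exist in $\Gamma_\lambda$, three things must hold: (i) $u,v \in \SYT(\lambda)$ (both standard); (ii) $m(u\ra v) = 1$ in $\awg_\lambda$, i.e. it is a move of the first or second kind; (iii) $\des(u) \not\subset \des(v)$. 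The claim is essentially that conditions (i) and (ii) together already force $j \geq i-1$ — the standardness of both $u$ and $v$ is the crucial extra input.

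Here is the plan. Suppose for contradiction that we have a move $j \intch i$ of the second kind (so $\mo j \in u^1$, $\mo i \in u^2$, $\mo i \neq \mo{j+1}$) with $\mo j$ ``earlier'' than $\mo{i-1}$ in cyclic order, i.e. the situation genuinely wraps around $n$. The key observation is about where $n$ sits. Since $u$ and $v$ are both standard (and differ by swapping the entries $\mo i, \mo j$ between the two rows), and a move of the second kind requires the content conditions (a)–(e) of \ref{sec:defawg}, I want to locate the entry $n$ relative to the interval $\pint{\mo i, \mo j}$ (the cyclic interval of entries being ``shuffled''). The plan is: first show that if the move truly wraps around, then $n \in \pint{\mo i, \mo j}$, i.e. the interval $\{\mo{i}, \mo{i+1}, \ldots, n, 1, \ldots, \mo{j}\}$ passes through $n$. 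Then use the conditions on $u^1, u^2$ — in particular conditions (b), (c), (e) which pin down membership of various entries near the ends of the interval, together with the descent condition $\des(u) \not\subset \des(v)$ (which, combined with the earlier Lemma on how moves change $\ades$, forces either $\mo{i-1} \in \des(u) - \des(v)$ or $\mo j \in \des(u) - \des(v)$, hence $\mo{i-1} \neq n \neq \mo j$ unless the descent is at $n$, which is excluded from $\des$) — to derive that standardness of $u$ or $v$ is violated. Concretely, if the interval passes through $n$ then $n$ is one of the entries being permuted among the two rows in the intermediate steps; but standardness of the endpoints $u$ and $v$ forces $n$ to always be in the same (namely, the lower-indexed = longer) row, and the constraints (b)/(e) on the count $\#\{\alpha \in u^2 : \alpha \in \pint{\ldots}\}$ near $n$ will then be contradicted — roughly, the ``descent at $n$'' that a wrap-around move necessarily creates or uses is not allowed in $\des$, only in $\ades$.

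I expect the main obstacle to be organizing the case analysis cleanly: ``$j \geq i-1$'' is really a statement in cyclic arithmetic, and translating ``the move does not wrap around'' into the right linear inequality on genuine integers $1 \leq \ldots \leq n$ requires care. The cleanest route is probably: by definition of a move of the second kind, $\des(u) \not\supset \des(v)$ fails is not the issue — rather one notes $m'(u\ra v) = m(u\ra v)$ forces $\tau'(u) = \des(u) \not\subset \des(v) = \tau'(v)$, and by the Lemma on $\tau$-changes for moves of the second kind, $\ades(u) - \ades(v) \in \{\{\mo{i-1}\}, \{\mo j\}, \{\mo{i-1},\mo j\}\}$; since we need this difference to survive intersection with $[1,n-1]$, at least one of $\mo{i-1}, \mo j$ lies in $[1,n-1]$ and is a genuine (finite) descent. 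Combined with the standardness of $u$, a descent of $u$ at position $k \in [1,n-1]$ means $k$ is (strictly) above $k+1$; running this for the pattern of entries in $\pint{\mo i, \mo j}$ and using condition (e) (which makes the ``alternation'' of rows in the interior of the interval rigid) I would force the interval to be contained in $[1,n-1]$, which is exactly $j \geq i-1$ (no wrap). I would write this up by first disposing of moves of the first kind (trivial: $j = i-1$), then handling the second kind via the dichotomy above, with the wrap-around case eliminated by the argument that $n$ cannot sit inside the shuffled interval of a standard-to-standard move.
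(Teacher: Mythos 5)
There is a genuine gap. Your plan's central mechanism --- that a wrap-around move ``necessarily creates or uses a descent at $n$'' and that ``standardness forces $n$ to always be in the same (namely, the lower-indexed = longer) row'' --- does not hold. For a standard two-row tableau with $\lambda_1>\lambda_2$ the entry $n$ can sit at the corner of either row, and in the putative wrap-around configuration $j<i-1$ the descents changed by the move are $\mo{i-1}=i-1\in[2,n-1]$ and $\mo{j}=j\in[1,n-2]$, both genuine \emph{finite} descents; the restriction to $[1,n-1]$ therefore suppresses nothing at position $n$. So the descent-survival hypothesis on which your argument leans gives no extra leverage, and in fact the paper's proof does not invoke it at all --- it uses only that $u$ and $v$ are standard together with the defining conditions of a second-kind move.

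The step you are missing is a short counting argument near the \emph{other} end of the interval, on $[1,j-2]$, played off against condition 2.(d) of \ref{sec:defawg}. Since $j<i-1$ and $j\in v^2$, standardness of $v$ forces $j\geq 2$; condition 2.(b) then puts $j-1\in u^2$. Because $i,j>j-2$, the intersections $u^1\cap[1,j-2]$ and $u^2\cap[1,j-2]$ agree with those of $v$, and standardness of $v$ on the prefix $[1,j]$ (where now $j-1,j\in v^2$) yields $\#(u^2\cap[1,j-2])+2\le\#(u^1\cap[1,j-2])$, hence $\#(u^2\cap[2,j-2])+1\le\#(u^1\cap[2,j-2])$ using $1\in u^1$, i.e.\ $\#(u^2\cap[2,j-2])\leq\frac{j-4}{2}$. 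But condition 2.(d) at $k=\floor{(j-2)/2}$ bounds exactly this count from below by roughly $\frac{j-2}{2}$, a contradiction; the remaining edge cases $j\leq 3$ and $i=n$ follow from the same inequality together with the parity condition 2.(a). Your proposal correctly senses a counting obstruction, but it points at conditions (b), (c), (e) and ``near $n$'' rather than at condition (d) applied to the prefix below $j$, and with the false auxiliary claims about $n$ it would not assemble into a proof.
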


\begin{proof} For contradiction suppose that $j<i-1$. Then $j$ cannot be 1 since $j \in v^2$ and $v$ is standard. Thus $j-1\geq 1$ and we require that $j-1 \in u^2$. Since $v$ is standard and $j-1, j \in v^2$, it implies that $ \#(u^2 \cap [1,j-2])+2\leq \#(u^1 \cap [1, j-2])$, or equivalently $ \#(u^2 \cap [2,j-2])+1\leq \#(u^1 \cap [2, j-2])$ as $1\in u^1$. But this violates the inequality of part 2.(d) in \ref{sec:defawg}, thus the result follows.
\end{proof}

\begin{thm} Let $\lambda \vdash n$ be a two-row partition. Then the weight function $m$ of $\Gamma_\lambda=(\SYT(\lambda), m, \des)$ is defined as follows.
\begin{enumerate}[label=\arabic*)]
\item (Move of the first kind) $m(s\ra t)=1$ if $t$ is obtained from $s$ by interchanging $i \in s^1$ and $i+1\in s^2$ for some $1\leq i \leq n-1$, i.e.
$$s=\ytableaushort{\cdots \ i \ \cdots, \cdots \  {i+1} \ \cdots} \quad \rightarrow \quad t=\ytableaushort{\cdots \ {i+1} \ \cdots, \cdots \  {i} \ \cdots}.$$
\item (Move of the second kind) $m(s\ra t)=1$ if $t$ is obtained from $s$ by interchanging $i \in s^2$ and $j \in s^1$, i.e.
$$s=\ytableaushort{\cdots j  \cdots, \cdots  i \cdots}  \quad \rightarrow \quad t=\ytableaushort{\cdots i \cdots, \cdots  j \cdots}$$
where the following conditions hold:
\begin{enumerate}[label=(\alph*)]
\item $1<i<j\leq n$ and $j-i$ is odd.
\item $i+1 \in v^1$ and $j-1\in v^2$.
\item Either $i-1 \in s^1$ or $j+1\in s^2$. (If $j=n$, then $j+1\not \in s^2$ by convention.)
\item $\#\{\alpha \in s^2 \mid  \alpha\in [j-1-2m, j-2]\} \geq m$ for $m\in [1, \frac{j-i-3}{2}]$.
\item $\#\{\alpha \in s^2 \mid \alpha \in [i+2, j-2]\} =\frac{j-i-3}{2}$ when $j\neq i+1$.
\end{enumerate}
\item Otherwise, $m(s\ra t)=0$.
\end{enumerate}
\end{thm}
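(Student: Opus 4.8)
The statement describes $\Gamma_\lambda$, which is identified with the full subgraph of $\awg_\lambda\pres{[1,n-1]}$ on standard Young tableaux of shape $\lambda$. So the task is to compare the moves of $\awg_\lambda$ (as defined in \ref{sec:defawg}) restricted to the set $\SYT(\lambda)$ of standard tableaux with the list of moves 1)--3) in the statement, remembering that in the parabolic restriction $\awg_\lambda\pres{[1,n-1]}$ an edge $m(u\to v)$ survives only if $\tau'(u) = \des(u) \not\subset \des(v) = \tau'(v)$. The first thing I would do is recall the structure: a move in $\awg_\lambda$ is either of the first kind ($\mo i \intch \mo{i+1}$ with $\mo i\in s^1$, $\mo{i+1}\in s^2$) or of the second kind ($\mo j \intch \mo i$ with the list of conditions (a)--(e)). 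Since $u,v$ are standard tableaux, the residues $\mo i$ are honest integers in $[1,n]$, and all the cyclic-interval conditions $\pint{\cdot,\cdot}$ become ordinary integer intervals — this is already built into the way conditions (d), (e) of the theorem are stated (with $m$ replacing $k$ and $[\,\cdot,\cdot\,]$ replacing $\pint{\,\cdot,\cdot\,}$).

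Concretely, here is the order of steps. \textbf{Step 1.} Handle moves of the first kind: if $u\xrightarrow{i\intch(i+1)}v$ with $i\in u^1$, $i+1\in u^2$, this is already the move 1) of the theorem, provided $1\le i\le n-1$ (the value $i=n$ cannot occur between two standard tableaux since $n\in u^2$ would have to swap with something bigger). One must also check this edge survives the parabolic restriction, i.e. that $\des(u)\not\subset\des(v)$: by the $\tau$-change lemma for moves of the first kind, $i\in\ades(u)-\ades(v)$, and since $1\le i\le n-1$ we have $i\in\des(u)-\des(v)$, so the edge survives. \textbf{Step 2.} Handle moves of the second kind. Here I would invoke the Lemma immediately preceding the theorem: any surviving edge $u\xrightarrow{j\intch i}v$ in $\Gamma_\lambda$ with $j<i-1$ is impossible, so we always have $j=i-1$ (already covered by Step 1, as a move of the first kind — this matches the parenthetical in the lemma) or $i<j$. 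In the latter case $\mo i = i$, $\mo j = j$ are honest integers with $1\le i<j\le n$, and conditions (a)--(e) of \ref{sec:defawg} become exactly conditions (a)--(e) of the theorem, except I must upgrade "$1\le i$" to "$1<i$": indeed $i\in v^2$ and $v$ standard forces $1\in v^1$, so $i\ne1$. I also need to confirm the cyclic residue condition "$\mo{j-i}$ odd" becomes "$j-i$ odd" (immediate since $0<j-i<n$), and that the conditions $\mo{i+1}\in s^1$, $\mo{j-1}\in s^2$ etc. translate directly. \textbf{Step 3.} Check that every move of the second kind as in the theorem actually survives the parabolic restriction: by the $\tau$-change lemma, a move of the second kind with $\mo j\ne\mo{i+1}$ (which holds here since $j>i+1$... wait, we need $j\neq i+1$; but if $j = i+1$ it is a move of the first kind) has $\ades(v)-\ades(u)=\emptyset$ and $\ades(u)-\ades(v)$ containing $\mo{i-1}$ or $\mo j = j$; since $1<i\le n$ and $i<j\le n$, at least one of $i-1\in[1,n-1]$ or (if $j\le n-1$) $j\in[1,n-1]$ lies in $\des(u)-\des(v)$ — and the case $j=n$ needs separate care, using that then $i-1\in\des(u)-\des(v)$ provided $i-1\ge1$, which holds as $i>1$. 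Conversely, any edge of $\awg_\lambda\pres{[1,n-1]}$ between standard tableaux that is a move of the second kind of $\awg_\lambda$ with $i<j$ must, by the Lemma, have all the stated inequalities; one checks no edge is lost.

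\textbf{Main obstacle.} The genuinely delicate point is \textbf{Step 3} combined with a converse bookkeeping: making sure the parabolic-restriction cutoff "$\des(u)\not\subset\des(v)$" neither kills a move that the theorem claims to keep nor keeps a move the theorem claims to kill, in the corner cases where $n$ enters the picture — namely $j=n$ in a move of the second kind, and the interaction with the convention $\pint{\mo{a+1},\mo a}=\emptyset$ versus the theorem's stipulation "if $j=n$ then $j+1\notin s^2$ by convention" in condition (c). I would treat $j=n$ as an explicit sub-case throughout Steps 2--3, verifying that $n\in v^2\setminus v^1$, that $i-1\in\des(u)\setminus\des(v)$ (using $i>1$), and that condition (c) of \ref{sec:defawg}, which reads "$\mo{i-1}\in s^1$ or $\mo{j+1}\in s^2$", becomes "$i-1\in s^1$ or $n+1\equiv 1\in s^2$" — and since $1\in s^1$ always for standard $s$, this forces $i-1\in s^1$, matching the theorem's parenthetical remark exactly. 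The remaining verifications (Steps 1, 2 away from $j=n$, and the direct translation of (a)--(e)) are routine once one notes that standardness collapses all modular arithmetic to ordinary integer arithmetic on $[1,n]$.
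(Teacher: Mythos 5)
Your proposal is correct and matches the paper's approach: the paper's proof is a one-liner citing the preceding lemma (which rules out $j<i-1$) and the definition of $\awg_\lambda$ in \ref{sec:defawg}, and your Steps 1--3 simply spell out the translation of the cyclic conditions into honest-integer interval conditions for standard tableaux together with the parabolic-restriction survival check that the paper leaves implicit. One small slip --- a second-kind move with $j=i+1$ is \emph{not} a move of the first kind (it is the reverse exchange $\mo i\rintch\mo{i+1}$, hence second kind, and is exactly why condition (e) is stated with the caveat ``when $j\neq i+1$'') --- but the $\tau$-change bookkeeping still gives $\ades(u)-\ades(v)\subset\{i-1,j\}$ with $i-1\geq 1$, so the survival argument covers that case as well and the conclusion is unaffected.
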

\begin{proof} This directly follows from the lemma above together with the definition of $\awg_\lambda$ in \ref{sec:defawg}.
\end{proof}

\section{Uniqueness of $\awg_\lambda$ in unequal length cases} \label{sec:uni}
In this section, $\lambda$ is a partition of $n$ consisting of two rows of unequal lengths. The main goal here is to show that $\awg_\lambda$ is the unique nb-admissible $\affSn$-graph (up to isomorphism) such that $U(\awg_\lambda) \simeq \adeg_\lambda$. In equal length cases, i.e. if $\lambda=(a,a)$ for some $a$, the corresponding nb-admissible $\affSn$-graph is not unique --- it is discussed in the next section.

\subsection{Robinson-Schensted-Knuth and $\shift$}
First we consider the action of $\shift \in \extSn$ on $\RSYT(n)$ by changing each entry $\mo{i}$ to $\mo{i+1}$ (and reordering entries in each row if necessary). Here we describe $\RSK(\shift(T))$ in terms of $\RSK(T)$.
\begin{lem} \label{rskandshift} Suppose that $T\in \RSYT(n)$ and set $\RSK(T) = (P,Q)$. From these we construct $P'$ and $Q'$ as follows.
\begin{enumerate}[label=$\bullet$]
\item Find the position of a corner box of $P$ containing $n$ (which is unique since $P$ is standard).
\item Apply the inverse of the bumping process to $Q$ starting from the corner box of $Q$ at the position found above. Denote the result tableau by $\tilde{Q}$ and the entry which is bumped out from the process by $x$.
\item Column-bump $x$ into $\tilde{Q}$ and let $Q'$ be its result. Or equivalently, insert $x$ to the transpose of $\tilde{Q}$ using the ``dual'' bumping process and let $Q'$ be the transpose of its result. (See \cite[Section 5]{knu70} or \cite[Chapter 7.14]{sta86} for the definition of dual bumping process.)
\item Let $\tilde{P}$ be the unique tableau such that $\sh(\tilde{P})=\sh(Q')$ and $\tilde{P}\pres{[1,n-1]}= P\pres{[1,n-1]}$. In other words, $\tilde{P}$ is obtained from $P$ by moving a box containing $n$ if necessary so that $\sh(\tilde{P}) = \sh(Q')$. (In particular, if $\sh(Q) = \sh(Q')$ then $\tilde{P}=P$.)
\item Do the inverse of the promotion operator on $\tilde{P}$ with respect to $n$, and define $P'$ to be its result. (See \cite[Section 7]{sag11} for the definition of the promotion operator.)
\end{enumerate}
Then we have $\RSK(\shift(T)) = (P',Q')$.
\end{lem}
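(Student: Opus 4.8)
The plan is to reduce the statement to the well-understood interaction between the cyclic shift $\shift$ and the Robinson–Schensted correspondence on words, and then to unwind the definition of $\RSK$ on $\RSYT(n)$ via the two-line array. Recall that $\RSK(T)=(P(T),Q(T))$ is computed by applying ordinary RSK to the two-line array whose bottom row is the reading word $w=T^{l}\cdots T^{2}T^{1}$ and whose top row records the (flipped) row indices. Since $\shift$ replaces each entry $\mo{i}$ by $\mo{i+1}$ and re-sorts rows, the reading word of $\shift(T)$ is obtained from $w$ by the standard ``cyclic rotation'' operation on words: delete the (unique, up to equal entries) letter equal to $n$, shift all other letters up by one, and re-insert a letter $1$ in the appropriate place; combined with the fact that $P$ depends only on the Knuth class of $w$. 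I would first isolate this as a lemma about words, citing the classical description of promotion/evacuation via RSK (e.g. Sch\"utzenberger, and \cite[Section 7]{sag11}).

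\textbf{Key steps, in order.}
First, I would translate $\shift$ into an operation on the two-line array: removing the column of $w$ carrying the letter $n$ corresponds, on the $P$-side, to the inverse RSK bump starting from the corner box of $P$ that contains $n$ (this is where the first bullet of the statement comes from), and on the $Q$-side to the inverse bump on $Q$ from the same corner position, producing $\tilde Q$ and an ejected recording value $x$ (second bullet). Second, I would account for the fact that in $\shift(T)$ a new smallest letter $1$ is inserted into the bottom-most row; on the recording side this new letter carries the largest row-label value (because of the flipped indexing $l(\sh(T))+1-(\text{row})$ on the first line), and the effect of inserting a letter that is smaller than everything currently present but whose recording label is maximal is exactly a \emph{column} (dual) insertion of $x$ into $\tilde Q$ — giving $Q'$ (third bullet). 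Third, once $\sh(Q')=\sh(P')$ is forced, the $P$-tableau of $\shift(T)$ must have the new shape, so one adjusts $P$ to $\tilde P$ by relocating the box containing $n$ (fourth bullet); and the residual effect of ``all letters went up by one, and a $1$ was prepended'' on the insertion tableau is precisely the inverse promotion operator with respect to $n$ (fifth bullet). Finally I would verify that $\fsh$ and the recording of row-indices are tracked correctly through these steps, i.e. that $Q'$ has content $\lambda^{op}$ where $\lambda=\sh(\shift(T))$, which follows from $\sh(\shift(T))=\sh(T)$ possibly with one box moved and from Part~(2) of the preceding Proposition characterizing when $\fsh$ equals the nominal shape.

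\textbf{Main obstacle.}
The delicate point is the interface between the row-insertion used to build $P(T)$ and the \emph{column}-insertion appearing in the third bullet: one must show that inserting the new letter $1$ (with maximal recording label) into the semistandard recording tableau realizes a dual/column bump of $x$, rather than an ordinary bump, and that this is compatible with the shape change dictated by moving the corner box of $P$. I expect to handle this by appealing to the standard ``dual RSK / column insertion'' machinery (\cite[Section 5]{knu70}, \cite[Chapter 7.14]{sta86}) together with a careful bookkeeping of how the flipped index convention $l(\sh(T))+1-(\text{row})$ assigns the extreme label to the new row. A secondary, more routine obstacle is verifying the ``unique tableau'' assertions in bullets four and five (that $\tilde P$, $P'$ are well defined), which reduces to standardness of $P$ and the known invertibility of promotion on standard tableaux; I would dispatch these with a one-line reference to \cite[Section 7]{sag11}.
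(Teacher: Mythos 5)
Your high-level plan — translate $\shift$ into an operation on the two-line array, isolate the removal of the $n$-column, and recognize the inverse promotion on the $P$-side — matches the spirit of the paper's argument, and your first step (removing the column with value $n$ from $\sA$ deletes $n$ from a corner of $P$ and reverse-bumps $Q$ from that corner, ejecting $x$) is correct. The gap is in your second step. You assert that in $\shift(T)$ the new letter $1$ ``carries the largest row-label value.'' This is false in general: under $\shift$ the entry $n$ of $T$ becomes $1$, so $1$ lands in whatever row of $T$ contained $n$, and its recording label in the two-line array is $l(\sh(T))+1-r$ for that row $r$, which is arbitrary. Using the paper's running example $T$ with reading word $(1,8,3,6,9,2,4,5,7)$ and $n=9$: here $9$ sits in the second row, so $1$ in $\shift(T)$ also sits in the second row with recording label $2$, while the maximal label is $3$. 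Your proposed mechanism for the third bullet (``inserting a letter smaller than everything currently present, with maximal recording label, is a column insertion of $x$'') therefore does not apply, and because you are moreover trying to handle the global increment of all other values $i\mapsto i+1$ simultaneously, the column-by-column bookkeeping on the original array $\sA$ becomes genuinely tangled rather than merely delicate.

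The paper avoids all of this by first invoking the RSK symmetry: form the transposed, re-sorted array $\tilde\sA$ whose first row is $1,2,\ldots,n$ and whose second row records the (flipped) row index of each value. Under $\RSK$, $\tilde\sA$ maps to $(Q,P)$ with insertion and recording tableaux exchanged, and — this is the decisive simplification — passing from $T$ to $\shift(T)$ corresponds to a single cyclic rotation of the second row of $\tilde\sA$. The effect of such a rotation on an insertion tableau is a standard fact: undo the final bump (at the position where $n$ sits in the recording tableau $P$) to eject the last-inserted letter, then column-insert that letter. This gives the $Q$-part of the lemma in one stroke, with no case analysis on where $1$ lands. The $P$-part then follows because $P'$ is determined by its shape and the Knuth class of $\shift(T)\pres{[2,n]}=\shift(T\pres{[1,n-1]})$, which is exactly what inverse promotion computes. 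If you want to repair your write-up, the cleanest fix is to adopt this row-swap reduction explicitly rather than trying to add a column $(\cdot,1)$ into the middle of the original array.
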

\ytableausetup{smalltableaux}
\begin{proof} Let $\sA$ be the two-line array corresponding to $T$, and $\tilde{\sA}$ be the one obtained from $\sA$ by switching the first and the second rows and reordering the entries if necessary so that the first row becomes $1, 2, \ldots, n$. For example, if $T=\ytableaushort{2457,369,18}$, then 
$$\sA = \begin{pmatrix} 1 &1 & 2 & 2 & 2 & 3 & 3&3&3 \\ 1&8&3&6&9&2&4&5&7 \end{pmatrix} \textup{ and }\tilde{\sA} = \begin{pmatrix} 1&2&3&4&5&6&7&8&9\\ 1&3&2&3&3&2&3&1&2\end{pmatrix}.$$
It is clear that the image of $\tilde{\sA}$ under $\RSK$ is equal to $(Q, P)$. Also, for $T'=\shift(T)$ we similarly define $\sA'$ and $\tilde{\sA}'$. For example, if $T$ is as above then $T' = \ytableaushort{3568,147,29}$ and
$$\sA' = \begin{pmatrix} 1 &1 & 2 & 2 & 2 & 3 & 3&3&3 \\ 2&9&1&4&7&3&5&6&8 \end{pmatrix} \textup{ and }\tilde{\sA}' = \begin{pmatrix} 1&2&3&4&5&6&7&8&9\\ 2&1&3&2&3&3&2&3&1\end{pmatrix}.$$
Note that $\tilde{\sA}'$ is obtained from $\tilde{\sA}$ by applying cyclic shift on the second row. Under this description, the $Q$ part of the claim is well-known; here $Q$ (resp. $P$) is considered as an insertion tableau (resp. a recording tableau) of $\tilde{\sA}$ under $\RSK$.

On the other hand, $P'$ is the unique standard Young tableau which satisfies that $\sh(P')=\sh(Q')$ and that the reading word of $P'|_{[2,n]}$ is Knuth equivalent to that of $\shift(T)|_{[2,n]}=\shift(T|_{[1,n-1]})$, which follows from the definition of (the inverse of) the promotion operator in terms of jeu-de-taquin procedure. Therefore the $P$ part of the claim also follows.
\end{proof}

From the lemma above, it follows that either $\fsh(T)$ and $\fsh(\shift(T))$ coincide or differ by one box. The next lemma shows how $\fsh(\shift(T))$ differs from $\fsh(T)$ in (possibly equal) two-row cases.

%

\begin{lem} \label{lem:rsktworow} Assume that $\lambda=(\lambda_1,\lambda_2)\vdash n$, $T\in \RSYT(\lambda)$, and $\RSK(T) = (P,Q)$.
\begin{enumerate}[label=\textup{(\arabic*)}]
\item Suppose that $\fsh(T)=(a,b)$ is not the same as $\lambda$ or $(n)$. If $n \in P^1$, then $\fsh(\shift(T))=(a-1,b+1)$. If $n \in P^2$, then $\fsh(\shift(T))=(a+1,b-1)$.
\item Suppose that $\fsh(v)=\lambda$. If $n \in P^1$, then $\fsh(\shift(T)) = \fsh(v)=\lambda$. If $n \in P^2$, then $\fsh(\shift(T)) = (\lambda_1+1, \lambda_2-1)$.
\item Suppose that $\fsh(T)=(n)$. Then (always $n\in P^1$ and) $\fsh(\shift(T)) = (n-1,1)$.
\end{enumerate}
In particular, $\fsh(T)=\fsh(\shift(T))$ only when $T$ and $\shift(T)$ are both standard.
\end{lem}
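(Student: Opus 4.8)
The plan is to deduce everything from Lemma~\ref{rskandshift}, exploiting how rigidly the recording tableau $Q$ is determined. Write $\RSK(T)=(P,Q)$ and let $(P',Q')=\RSK(\shift(T))$ be produced by the recipe of Lemma~\ref{rskandshift}. Since the inverse promotion operator preserves shape and $\sh(\tilde P)=\sh(Q')$ by construction, $\fsh(\shift(T))=\sh(P')=\sh(Q')$, so it suffices to track how the shape changes under ``delete from $Q$, by reverse bumping, the corner box at the position of $n$ in $P$ (ejecting an entry $x$), then column-bump $x$ back in''. The key observation is that $\lambda=(\lambda_1,\lambda_2)$ has exactly two rows, so $\lambda^{op}=(\lambda_2,\lambda_1)$ has exactly two parts and $Q$ uses only the entries $1$ and $2$, with $\lambda_2$ ones and $\lambda_1$ twos; as $\fsh(T)=\sh(Q)$ dominates $\lambda$, writing $\fsh(T)=(a,b)$ we have $a+b=n$, $a\geq\lambda_1$ and $b\leq\lambda_2$, and column-strictness forces $Q^2=2^{b}$, $Q^1=1^{\lambda_2}2^{a-\lambda_2}$. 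Also the box of $P$ holding $n$, being a removable corner of $(a,b)$, is either $(1,a)$ (possible only when $a>b$, or $b=0$) or $(2,b)$ (possible only when $b\geq 1$), and these two cases are handled separately.

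Suppose $n\in P^1$, so the corner is $(1,a)$. A short argument shows $a>\lambda_2$ (otherwise $a=\lambda_2\leq\lambda_1\leq a$ forces $\lambda_1=\lambda_2=a$, whence $b=a>0$ and $n$ would sit at $(2,a)$ instead), so $Q_{(1,a)}=2$; reverse bumping from $(1,a)$ just erases this box and ejects $x=2$, leaving $\tilde Q$ of shape $(a-1,b)$ still carrying $\lambda_2$ ones. Column-bumping $x=2$ into $\tilde Q$ then has three outcomes dictated by the boundary data (recall $\tilde Q^2$ is all $2$'s and $\tilde Q^1$ is a block of $1$'s followed by a block of $2$'s): if $b<\lambda_2$, the inserted $2$ is passed rightward through the $[1,2]$-columns of the second row and appended below the $1$ occupying column $b+1$, so the new box is $(2,b+1)$; if $b=\lambda_2$ --- equivalently $\fsh(T)=\lambda$, since then also $a=\lambda_1$ --- the $2$ runs to the end of the first row and is appended there; and if $b=0$, so $\fsh(T)=(n)$, it is appended at $(2,1)$. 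Hence $\fsh(\shift(T))$ equals $(a-1,b+1)$, $\lambda$, and $(n-1,1)$ respectively, which is precisely the $n\in P^1$ part of (1), the $n\in P^1$ part of (2), and all of (3).

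Suppose instead $n\in P^2$, so the corner is $(2,b)$ with $b\geq 1$ and $Q_{(2,b)}=2$. Reverse bumping from $(2,b)$ pushes this $2$ up into the first row, where it replaces the last $1$ (at $(1,\lambda_2)$, and $\lambda_2\geq 1$) and ejects $x=1$; thus $\tilde Q$ has shape $(a,b-1)$ with rows $1^{\lambda_2-1}2^{a-\lambda_2+1}$ and $2^{b-1}$. Column-bumping $x=1$ into $\tilde Q$: the $1$ slides rightward through the columns whose top entry is $1$, displaces the $2$ in the first column whose top entry is $2$, and that displaced $2$ then travels to the end of the first row and is appended there (the degenerate case $\lambda_2=1$, where the $1$ displaces the $2$ already in column $1$, behaves identically). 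So the new box is $(1,a+1)$ and $\fsh(\shift(T))=(a+1,b-1)$; for $b=\lambda_2$ this reads $(\lambda_1+1,\lambda_2-1)$, matching (2), and for $b<\lambda_2$ (so $\fsh(T)$ is neither $\lambda$ nor $(n)$) it matches the $n\in P^2$ part of (1). Finally, scanning all the outcomes, $\fsh(\shift(T))=\fsh(T)$ is possible only in the sub-case $\fsh(T)=\lambda$ with $n\in P^1$; there $T$ is standard by the proposition preceding Lemma~\ref{rskandshift} ($T$ standard $\iff\fsh(T)=\sh(T)=\lambda$), and $\fsh(\shift(T))=\lambda=\sh(\shift(T))$ then forces $\shift(T)$ standard as well, establishing the ``in particular'' clause.

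The only part that needs real care is the column-bumping bookkeeping: correctly reading off the ejected entry $x$ from the explicit row words of $Q$, and deciding whether the single newly created box lands in the first or the second row, all while keeping the boundary cases $b=0$, $b=\lambda_2$, $\lambda_2=1$ straight. Everything else is formal, given Lemma~\ref{rskandshift} and the rigidity of two-rowed semistandard tableaux with two-part content.
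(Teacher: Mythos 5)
Your proposal is correct and takes the same route as the paper: the paper's proof consists precisely of observing that $Q$ is uniquely determined by its shape (since $\#\SSYT(\mu,\lambda^{op})\le 1$) and then applying the recipe of Lemma~\ref{rskandshift} ``by case-by-case analysis,'' which is exactly the bookkeeping you carry out. The only difference is that you write out the cases the paper leaves implicit, carefully tracking the ejected entry $x$ and where the column-insertion lands, and your handling of the boundary cases $b=0$, $b=\lambda_2$, and $\lambda_2=1$ is consistent with the statement.
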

\begin{proof} Since $\#\SSYT(\mu, \lambda^{op}) \leq 1$ for any $\mu\vdash n$, it follows that $Q$ is uniquely determined by its shape $\fsh(T)$. Now the lemma follows from Lemma \ref{rskandshift} by case-by-case analysis.
\end{proof}
\begin{rmk} Note that $n \in P^1$ if and only if  $n$ is not bumped under the RSK insertion process with input $T$ if and only if $n\in T^1$. Therefore, Lemma \ref{lem:rsktworow} remains valid if one replaces ``$n\in P^1$'' and ``$n\in P^2$'' therein with ``$n\in T^1$'' and ``$n\in T^2$'', respectively.
\end{rmk}
If $\lambda$ consists of two unequal rows, the previous lemma implies the following statement. Also one can easily observe that its proof is not valid for equal length cases.
\begin{lem} \label{lem:standuneq} Suppose that $\lambda=(\lambda_1,\lambda_2)$ where $\lambda_1>\lambda_2$. Then for any $T \in \RSYT(\lambda)$, there exists $k \in [1,n]$ such that $\shift^k(T)$ and $\shift^{k+1}(T)$ are both standard.
\end{lem}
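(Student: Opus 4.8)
The statement I want to prove is Lemma \ref{lem:standuneq}: for $\lambda = (\lambda_1, \lambda_2)$ with $\lambda_1 > \lambda_2$ and any $T \in \RSYT(\lambda)$, there is some $k \in [1,n]$ with $\shift^k(T)$ and $\shift^{k+1}(T)$ both standard. My plan is to track how $\fsh(\shift^k(T))$ evolves as $k$ ranges over $\bZ/n$, using Lemma \ref{lem:rsktworow}, and to argue that the minimum of $\fsh(\shift^k(T))$ in dominance order is attained at a value of $k$ where both that tableau and its $\shift$-successor are standard. The key point driving everything is that, since $\shift^n = \id$ on $\RSYT(\lambda)$, the sequence $\big(\fsh(\shift^k(T))\big)_{k \in \bZ/n}$ is periodic, and at each step it either stays the same or moves one box between the two rows.

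\textbf{First step.} I would first reduce to understanding the step-to-step change. By Lemma \ref{rskandshift} (and the remark after Lemma \ref{lem:rsktworow}), passing from $\shift^k(T)$ to $\shift^{k+1}(T)$ changes $\fsh$ by at most one box, and Lemma \ref{lem:rsktworow} tells us precisely which: writing $\fsh(\shift^k(T)) = (a,b)$, if $n \in (\shift^k T)^1$ then $b$ can only increase (the shape moves toward $(a-1,b+1)$, or stays put if the shape was already $\lambda$ or $(n)$), and if $n \in (\shift^k T)^2$ then $a$ strictly increases. So I would record the dichotomy: the first row of $\fsh$ strictly grows exactly at those $k$ with $n \in (\shift^k T)^2$, it weakly shrinks otherwise, and it is constant precisely when $\shift^k(T)$ is standard and $n$ lies in its first row (by the ``in particular'' clause of Lemma \ref{lem:rsktworow}, $\fsh$ is fixed across a step iff both tableaux involved are standard).

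\textbf{Main argument.} Consider $k_0 \in \bZ/n$ minimizing $\fsh(\shift^{k_0}(T))$ in dominance order among all $k$; by periodicity and total-ordering of the relevant shapes this is well-defined (any two two-row shapes of size $n$ are comparable). I claim $\shift^{k_0}(T)$ is standard, i.e. $\fsh(\shift^{k_0}(T)) = \lambda$. Suppose not, so $\fsh(\shift^{k_0}(T)) = (a,b)$ with $a > \lambda_1$, hence $b < \lambda_2 < \lambda_1$, in particular $(a,b) \ne (n)$ and $(a,b) \ne \lambda$. Now I examine the predecessor: $\fsh(\shift^{k_0 - 1}(T))$ is either $(a,b)$ or differs from it by one box. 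Going $\shift^{k_0-1}(T) \to \shift^{k_0}(T)$, minimality forbids $\fsh(\shift^{k_0-1}(T)) = (a-1, b+1)$ or $(a+1,b-1) \lhd (a,b)$; so the only way to reach $(a,b)$ from a shape that is not strictly smaller is for the predecessor shape to already be $(a,b)$ (a constant step) or to be $(a-1,b+1) \rhd (a,b)$ via a step that, per Lemma \ref{lem:rsktworow}(1), requires $n \in (\shift^{k_0-1}T)^2$ and produces $(a,b) = ((a-1)+1, (b+1)-1)$ — consistent. Either way $\fsh(\shift^{k_0-1}(T)) \unrhd (a,b)$. Iterating backward (or forward) around the cycle, the shape can never strictly decrease below $(a,b)$, so it is constant equal to $(a,b)$ for all $k$; but then $\shift^k(T)$ is standard for all $k$ (again the ``in particular'' clause, since a constant step forces both tableaux standard), contradicting $\fsh(\shift^{k_0}(T)) = (a,b) \ne \lambda$. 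Therefore $\fsh(\shift^{k_0}(T)) = \lambda$, i.e. $\shift^{k_0}(T)$ is standard.

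\textbf{Closing the gap and the main obstacle.} It remains to produce $k$ with both $\shift^k(T)$ and $\shift^{k+1}(T)$ standard. Having $\shift^{k_0}(T)$ standard, consider the step $\shift^{k_0}(T) \to \shift^{k_0+1}(T)$. If $n \in (\shift^{k_0} T)^1 = (\shift^{k_0}T)^1$, then since $\shift^{k_0}(T)$ is standard with $\fsh = \lambda$, Lemma \ref{lem:rsktworow}(2) with $n \in P^1$ gives $\fsh(\shift^{k_0+1}(T)) = \lambda$, so $\shift^{k_0+1}(T)$ is standard and $k = k_0$ works. If instead $n \in (\shift^{k_0}T)^2$, then Lemma \ref{lem:rsktworow}(2) gives $\fsh(\shift^{k_0+1}(T)) = (\lambda_1 + 1, \lambda_2 - 1) \rhd \lambda$, contradicting minimality of $\fsh(\shift^{k_0}(T)) = \lambda$ unless $(\lambda_1+1,\lambda_2-1)$ is not a partition, i.e. $\lambda_2 - 1 \ge \lambda_1 + 1$, which is false since $\lambda_1 > \lambda_2$; so this case cannot occur — but to be safe I would phrase it as: because $\lambda$ is minimal and $\lambda_1 > \lambda_2$ forces $\lambda$ to be the unique minimum with $n$ in the first row, the corner containing $n$ in the standard tableau $\shift^{k_0}(T)$ must lie in row $1$. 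The one subtlety I expect to be the real work is justifying carefully that ``going around the cycle the shape cannot strictly decrease unless it is eventually constant'' — this is where one must combine the step-description of Lemma \ref{lem:rsktworow} with periodicity and rule out a ``staircase'' of net strict decrease, and it is precisely here that $\lambda_1 \ne \lambda_2$ is used (the argument genuinely fails for $(a,a)$, since there the promotion-type dynamics can cycle nontrivially without ever passing through a standard tableau, as the remark preceding the lemma warns).
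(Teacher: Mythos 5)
Your plan — track $\fsh(\shift^k(T))$ around the cycle, locate a minimum, and show it has a standard neighbor — contains two logical gaps, and they are not small; both would have to be filled by precisely the counting argument that the paper actually uses, so in the end this is not an alternative proof.

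The first gap is in your middle paragraph. After observing that the predecessor of a minimum-achieving $k_0$ has $\fsh$ not smaller than $(a,b)$, you conclude ``iterating backward (or forward) around the cycle, the shape can never strictly decrease below $(a,b)$, so it is constant equal to $(a,b)$ for all $k$.'' The ``so'' is a non-sequitur: the shapes never dipping below $(a,b)$ is merely a restatement of $(a,b)$ being the minimum and in no way forces constancy; the sequence of shapes is free to rise and fall above $(a,b)$. (There is also a sign confusion in that paragraph: $(a-1,b+1)$ is dominance-\emph{smaller} than $(a,b)$ and $(a+1,b-1)$ is dominance-\emph{larger}, the opposite of the inequalities you wrote; the argument as stated compares the wrong cases.)

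The second gap is the real one, in your closing paragraph. If $\shift^{k_0}(T)$ is standard with $n$ in its second row, then $\fsh(\shift^{k_0+1}(T))=(\lambda_1+1,\lambda_2-1)$ is dominance-larger than $\lambda$. This does \emph{not} contradict minimality of $\fsh(\shift^{k_0}(T))=\lambda$ — larger is exactly what the other values are allowed to be. Your fallback claim, that ``$\lambda_1>\lambda_2$ forces the corner containing $n$ in the standard tableau $\shift^{k_0}(T)$ to lie in row $1$,'' is false: when $\lambda_1>\lambda_2\ge 1$ the shape $\lambda$ has two corners, and a standard tableau may have $n$ in either. For instance with $\lambda=(3,2)$, $n=5$ the tableau $\ytableaushort{134,25}$ is standard with $5$ in row $2$. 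So the minimum of $\fsh$ over the cycle \emph{can} be attained at a $k_0$ with $n$ in row $2$ of $\shift^{k_0}(T)$, and your argument then stalls: you would need to show that some \emph{other} minimum $k$ has $n$ in row $1$, and nothing you established implies that.

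What makes the lemma work is a global count, not a local extremum. Assume for contradiction that no two consecutive $\shift^k(T)$, $\shift^{k+1}(T)$ are both standard. By the ``in particular'' clause of Lemma \ref{lem:rsktworow}, $\fsh$ can stay constant across a step only when both endpoints are standard, so under this assumption every step changes $\fsh(\cdot)_1$ by exactly $\pm 1$; the sign is $+1$ precisely when $n\in(\shift^k T)^2$, which happens for exactly $\lambda_2$ values of $k\in[1,n]$, and $-1$ for the remaining $\lambda_1$ values. Since $\shift^n=\mathrm{id}$ the net change over the cycle is zero, forcing $\lambda_1=\lambda_2$, a contradiction. This is the paper's proof in essence, and it is where $\lambda_1\neq\lambda_2$ genuinely enters. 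Your local-minimum approach cannot be completed without this count; both flagged steps are exactly the places where it is needed.
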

\begin{proof} Suppose that the claim is false. Then Lemma \ref{lem:rsktworow} and its remark shows that if $n \in T^1$ (resp. $n \in T^2$) then $\fsh(\shift(T))_1=\fsh(T)_1-1$ (resp. $\fsh(\shift(T))_1=\fsh(T)_1+1$). As $\shift^n(T)=T$, this means that we have $\fsh(T) = \fsh(\shift^n(T)) = (\lambda_1+(\lambda_2-\lambda_1), \lambda_2+(\lambda_1-\lambda_2))=(\lambda_2, \lambda_1)$, which is impossible.
\end{proof}
As a result, we have the following property that is our main tool for uniqueness statement.
\begin{prop}\label{prop:fshcomp1} Let $\lambda=(\lambda_1,\lambda_2) \vdash$ where $\lambda_1>\lambda_2$ and assume that $u, v\in \RSYT(\lambda)$ where $\ades(u) \supsetneq \ades(v)$. Then there exists $k \in [1,n]$ such that $\ades(\shift^k(u)) -\{k\} \supsetneq \ades(\shift^k(v))-\{k\}$ and $\fsh(\shift^k(u)) \geq \fsh(\shift^k(v))=\lambda$ with respect to dominance order.
\end{prop}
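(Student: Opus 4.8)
The plan is to reduce Proposition \ref{prop:fshcomp1} to the combination of Lemma \ref{lem:standuneq} and the orderedness/structure results collected in Section \ref{sec:nb} (Theorems \ref{thm:admklund} and \ref{thm:admord}), applied to the parabolic restrictions of $\awg_\lambda$ obtained by cyclically shifting. The key point is that $\ades$ is equivariant under $\shift$, so the hypothesis $\ades(u) \supsetneq \ades(v)$ is preserved under $\shift$ in the sense that $\ades(\shift^j u) \supsetneq \ades(\shift^j v)$ for every $j$; I want to choose $j = k$ so that, in addition, both $\shift^k(u)$ and $\shift^k(v)$ are standard, because then removing the single index $k$ from the $\tau$-values recovers the finite descent sets of these standard tableaux, and the finite-type machinery takes over.

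First I would apply Lemma \ref{lem:standuneq} to $v$: since $\lambda$ has two unequal rows, there exists $k\in[1,n]$ with $\shift^k(v)$ and $\shift^{k+1}(v)$ both standard. The natural thing to record is that $\shift^k(v)$ lies in the unique cell of $\awg_\lambda\!\pres{[\,\cdot\,]}$ (after conjugating the parabolic by $\shift^k$) isomorphic to $\Gamma_\lambda$, equivalently $\fsh(\shift^k(v)) = \lambda$; here I use part (2) of Lemma \ref{lem:rsktworow} together with its remark, which characterizes when shifting keeps a tableau in the ``top'' RSK-shape. Next I need the same $k$ (or an adjustment thereof) to work for $u$. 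Since $\ades(u)\supsetneq\ades(v)$, in particular $u$ and $v$ have the same entries placed compatibly enough that $u$ being non-standard at step $k$ would force a descent-set comparison failure; more precisely, I would argue that $\shift^k(u)$ is standard as well, using that $\ades(\shift^k u)\supseteq\ades(\shift^k v)$ and the fact that a two-row RSYT $T$ is standard iff $k\notin\ades$-type obstructions vanish — the cleanest route is to invoke part (2)/(3) of the Proposition preceding Figure \ref{fig:32res}, namely $\ades(T)-\{n\} = \des(P(T))$ and $\fsh(T)=\lambda \iff T$ standard, combined with $\fsh(\shift^k u) \ge \fsh(\shift^k v) = \lambda$ forcing $\fsh(\shift^k u)=\lambda$. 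Then $\shift^k(u)$ is standard too, and $\fsh(\shift^k(u)) = \lambda = \fsh(\shift^k(v))$, giving the dominance statement $\fsh(\shift^k(u))\ge\fsh(\shift^k(v))=\lambda$ trivially (with equality).

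For the strict descent-set containment, I would pass to the finite descent sets: $\ades(\shift^k u) - \{k\} = \des(\shift^k u)$ and likewise for $v$, where now I mean the honest finite descent set of a standard two-row tableau (after re-indexing the cyclic order so that $k$ plays the role of $n$). The containment $\ades(\shift^k u)\supsetneq\ades(\shift^k v)$ restricts to $\des(\shift^k u)\supseteq\des(\shift^k v)$; I must rule out that the two finite descent sets become equal after deleting $k$, i.e. that $\ades(\shift^k u)-\ades(\shift^k v) = \{k\}$. If that happened, then $\shift^k u$ and $\shift^k v$ are standard tableaux of the same shape with equal finite descent sets, hence (being two-row, where descent set determines the SYT) $\shift^k u = \shift^k v$, contradicting $u\ne v$ — note $u\ne v$ is forced by $\ades(u)\supsetneq\ades(v)$. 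So the containment stays strict after deleting $k$, which is exactly $\ades(\shift^k(u))-\{k\}\supsetneq\ades(\shift^k(v))-\{k\}$.

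The main obstacle I anticipate is the bookkeeping around the index $k$ and the cyclic re-indexing: Lemma \ref{lem:standuneq} gives standardness of $\shift^k(v)$ and $\shift^{k+1}(v)$ but I genuinely need standardness of $\shift^k(u)$ for the \emph{same} $k$, and the argument for this — showing the strictly larger descent set $\ades(\shift^k u)$ cannot drag $\shift^k u$ out of RSK-shape $\lambda$ while $\shift^k v$ stays in it — is where the two-row hypothesis and unequal-length hypothesis both get used, via Lemma \ref{lem:rsktworow}. Everything else (equivariance of $\ades$, translating $\ades - \{k\}$ into a finite descent set, uniqueness of two-row SYT from its descent set) is routine once that step is in place.
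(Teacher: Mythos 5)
Your proposal contains two genuine errors, and it also misses the pigeonhole observation that is the entire content of the paper's very short argument.

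The first error is the step where you argue $\shift^k(u)$ is standard. You claim that $\fsh(\shift^k(u)) \geq \fsh(\shift^k(v)) = \lambda$ ``forces $\fsh(\shift^k(u)) = \lambda$,'' but the inequality $\fsh(T) \geq \sh(T)$ goes the wrong way for that conclusion: $\lambda$ is the \emph{minimal} possible value of $\fsh$ on $\RSYT(\lambda)$ in dominance order, so $\fsh(\shift^k(u)) \geq \lambda$ is automatic and forces nothing. The proposition does not assert that $\shift^k(u)$ is standard, and in general it is not; it asserts only $\fsh(\shift^k(u)) \geq \lambda$, which holds for free the moment $\shift^k(v)$ is standard. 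Trying to upgrade $\geq$ to $=$ here is both incorrect and unnecessary.

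The second error is the appeal to ``descent set determines the SYT for two-row shapes.'' This is false. For shape $(5,5)$, the standard tableaux with second rows $\{3,4,6,9,10\}$ and $\{3,6,7,9,10\}$ are distinct but both have descent set $\{2,5,8\}$. So the contradiction you extract at the end of your argument, from two standard tableaux of the same shape with equal descent sets, is simply not available.

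The paper's proof is much shorter and needs neither claim. Lemma \ref{lem:standuneq} supplies not just one but at least two residues $k$ (the consecutive pair $k, k+1$) for which $\shift^k(v)$ is standard, and for every such $k$ the dominance part of the conclusion is free as noted above. For the descent-set part, equivariance of $\ades$ under $\shift$ preserves $\ades(\shift^k(u)) \supsetneq \ades(\shift^k(v))$, and deleting a single label from both sides can destroy strictness only when $\ades(u)\setminus\ades(v)$ is a specific singleton matching that label; this obstructs at most one of the two available values of $k$, so the other one works. No statement about $\shift^k(u)$ being standard, and no uniqueness of two-row SYT from descent sets, is ever needed.
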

\begin{proof} By Lemma \ref{lem:standuneq}, there exist at least two $k\in [1,n]$ such that $\shift^k(v)$ is standard, in which case we have $\fsh(\shift^k(u)) \geq \fsh(\shift^k(v))=\lambda$. As $\ades(u) \supsetneq \ades(v)$, at least one of such $k$ should satisfy  $\ades(\shift^k(u)) -\{k\} \supsetneq \ades(\shift^k(v))-\{k\}$, thus the result follows.
\end{proof}

\subsection{Uniqueness of $\awg_\lambda$ in unequal length cases}
We are ready to prove the uniqueness statement of $\awg_\lambda$ for $\lambda=(\lambda_1, \lambda_2)$ such that $\lambda_1>\lambda_2$. We start with the following lemma.
\begin{lem}\label{lem:fshcomp2} Suppose that $\Gamma$ is an nb-admissible $\affSn$-graph and $u, v \in \Gamma$ satisfy $m(u\ra v) \neq 0$ and $m(v\ra u)=0$, i.e. there exists a directed edge from $u$ to $v$ in $\Gamma$. If this edge survives in $\Gamma\pres{[1,n-1]}$ after parabolic restriction to $[1,n-1]$, then we have $\fsh(u) \leq \fsh(v)$ in terms of dominance order.
\end{lem}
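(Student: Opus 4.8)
The plan is to reduce the claim to a statement about $\Sym_n$-graphs, where the structure theorems of Chmutov and Nguyen recalled above apply. First I would observe that since the directed edge $u \to v$ survives the parabolic restriction $\Gamma\pres{[1,n-1]}$, we have in particular that $\tau'(u) = \ades(u)-\{n\} \not\subset \ades(v)-\{n\} = \tau'(v)$, and $m'(u\ra v) = m(u\ra v) \neq 0$ in the $\Sym_n$-graph $\Gamma\pres{[1,n-1]}$. Now $\Gamma\pres{[1,n-1]}$ is an nb-admissible $\Sym_n$-graph (as noted in the discussion preceding the statement), so by Theorem \ref{thm:admklund} each of its cells is a simple component isomorphic to $\fdeg_\mu$ for some $\mu\vdash n$, and by Theorem \ref{thm:admord} the graph is ordered. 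Let $\Gamma'$ and $\Gamma''$ be the cells of $\Gamma\pres{[1,n-1]}$ containing $u$ and $v$ respectively, and let $T_u \in \SYT(\mu)$, $T_v\in\SYT(\nu)$ be the standard Young tableaux corresponding to $u$ and $v$ under the isomorphisms $U(\Gamma')\simeq\fdeg_\mu$, $U(\Gamma'')\simeq\fdeg_\nu$ from Theorem \ref{thm:admkl}.

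Next I would identify these combinatorial tableaux with the RSK data. The key point is that under the isomorphism of a cell of $\awg_\lambda\pres{[1,n-1]}$ with $\Gamma_{\sh(Q)}$ (the Proposition on cells of $\awg_\lambda\pres{[1,n-1]}$, together with the Proposition relating $\ades$, $\des$ and $P(T)$), the standard tableau attached to a vertex $w \in \RSYT(\lambda)$ is precisely $P(w)$, and its shape is $\fsh(w)$. Thus $\mu = \fsh(u)$, $\nu = \fsh(v)$, $T_u = P(u)$, and $T_v = P(v)$. Applying the orderedness conclusion of Theorem \ref{thm:admord} to the surviving directed edge $u\to v$ in $\Gamma\pres{[1,n-1]}$: since $m'(u\ra v)\neq 0$, either $T_u < T_v$, or $\Gamma' = \Gamma''$ with $T_u > T_v$ and $T_u$ obtained from $T_v$ by switching $i$ and $i+1$. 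In the first case the definition of $\leq$ on $\SYT(n)$ gives $\sh(T_u\pres{[1,i]}) \leq \sh(T_v\pres{[1,i]})$ for all $i$, and in particular for $i=n$ we get $\sh(T_u) = \fsh(u) \leq \fsh(v) = \sh(T_v)$, as desired. In the second case $T_u$ and $T_v$ differ only by switching $i, i+1$, so they have the same shape and $\fsh(u) = \fsh(v)$, which also gives $\fsh(u)\leq\fsh(v)$.

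The main obstacle, and the step requiring the most care, is the bookkeeping in the previous paragraph: verifying that the ``$T\leq T'$'' comparison in Nguyen's orderedness condition, which is phrased in terms of the shapes $\sh(T\pres{[1,i]})$ for \emph{all} $i\in[1,n]$, does indeed yield the inequality at $i=n$ between the full RSK shapes $\fsh(u)$ and $\fsh(v)$ — and that the identification $T_u = P(u)$, $T_v = P(v)$ is the correct one compatible with all three theorems (that is, the isomorphism type of the cell is $\Gamma_{\fsh(\cdot)}$ and the vertex corresponds to $P(\cdot)$ under it). Once these identifications are in place the argument is immediate; there is no genuine computation, only a careful unwinding of the definitions of $\des$, $\ades$, $\RSK$, and the orderedness relation. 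I should also note that this lemma does \emph{not} require $\Gamma$ to be $\awg_\lambda$ or even related to a two-row shape — it holds for any nb-admissible $\affSn$-graph — so no property of two-row partitions is used here; the only inputs are Theorems \ref{thm:admklund}, \ref{thm:admkl}, \ref{thm:admord} and the RSK dictionary.
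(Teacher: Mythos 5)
Your proof is correct and takes essentially the same route as the paper, whose proof of this lemma is a one-line citation to Theorem~\ref{thm:admord}; you have simply unwound the identifications needed to make that citation apply (restriction to $\Sym_n$, Theorems~\ref{thm:admklund}--\ref{thm:admkl}, the RSK dictionary). The only point where you claim slightly more than is needed is the assertion $T_u=P(u)$, which depends on a choice of isomorphism $\Gamma'\simeq\Gamma_\mu$ and could in principle be ambiguous; the argument really only requires $\sh(T_u)=\mu=\fsh(u)$, which is canonical (it depends only on the isomorphism class of the cell, hence only on the simple underlying graph, which coincides with that of $\awg_\lambda\pres{[1,n-1]}$), so the conclusion is unaffected.
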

\begin{proof} This follows directly from Theorem \ref{thm:admord}.
\end{proof}

\begin{thm} \label{thm:unequniq}Let $\Gamma, \Gamma'$ be nb-admissible $\affSn$-graphs such that $U(\Gamma) \simeq U(\Gamma') \simeq \adeg_\lambda$. Then $\Gamma\simeq\Gamma'$ as $\affSn$-graphs. As a result, they are also isomorphic to $\awg_\lambda$.
\end{thm}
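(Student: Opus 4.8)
The strategy is to reduce the uniqueness of the whole $\affSn$-graph to the uniqueness of a single cell of its parabolic restriction, namely the cell isomorphic to $\Gamma_\lambda$, and then propagate the reconstruction outward using the $\shift$-action. The point is that $U(\Gamma) \simeq \adeg_\lambda = U(\awg_\lambda)$ fixes the vertex set (which we identify with $\RSYT(\lambda)$), the $\tau$-function $\ades$, and all the undirected (simple) edges; by the Simplicity Rule what remains undetermined is, for each pair $u,v$ with $\ades(u) \supsetneq \ades(v)$, whether there is a directed edge $u \to v$. So it suffices to show these directed weights are forced.

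First I would fix $u,v \in \RSYT(\lambda)$ with $\ades(u) \supsetneq \ades(v)$ and apply Proposition \ref{prop:fshcomp1}: there is $k \in [1,n]$ with $\ades(\shift^k(u)) - \{k\} \supsetneq \ades(\shift^k(v)) - \{k\}$ and $\fsh(\shift^k(v)) = \lambda$, i.e. $\shift^k(v)$ is standard and $\fsh(\shift^k(u)) \geq \lambda$. Since both $\Gamma$ and $\Gamma'$ carry a $\shift$-action compatible with their $\affSn$-graph structure (this is part of what $U(\Gamma) \simeq \adeg_\lambda$ together with admissibility gives, as in the lemma on $\shift$-invariance of $\awg_\lambda$), it is enough to determine $m(u \to v)$ when $v$ is already standard, i.e. to work inside $\Gamma\pres{[1,n-1]}$. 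Now I invoke the fact that $m(u \to v) \ne 0$ in $\Gamma$ with $\ades(u) \supsetneq \ades(v)$ forces — after removing $n$ from the $\tau$-values — a directed edge in the restriction unless $n$ is the index being exchanged; combined with Lemma \ref{lem:fshcomp2}, any such surviving directed edge requires $\fsh(u) \le \fsh(v) = \lambda$, hence $\fsh(u) = \lambda$ as well, so $u$ is also standard. Thus I am reduced to: inside the cell of $\Gamma\pres{[1,n-1]}$ containing the standard tableaux (which by Theorem \ref{thm:admkl} is isomorphic to $\Gamma_\mu$ for some $\mu$, and the isomorphism type is pinned down as $\Gamma_\lambda$ because its simple underlying graph is $\fdeg_\lambda$), the directed edges are exactly those of $\Gamma_\lambda$, which is a \emph{finite}-type statement where $\Gamma_\mu$ is unique by Kazhdan--Lusztig theory (Theorem \ref{thm:admkl} again, applied to $\Sym_n$). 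This pins down $m(u \to v)$ for all standard $u,v$ in the same cell, and the orderedness theorem (Theorem \ref{thm:admord}) handles the edges between distinct cells of the restriction: an ordered graph has no directed edge $u \to v$ with $u,v$ in distinct cells unless $T_u < T_v$, and one checks the only such edges compatible with $U(\Gamma) = \adeg_\lambda$ are the ones present in $\awg_\lambda$.

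The remaining work is bookkeeping: transport the information back via $\shift^{-k}$ to recover $m(u \to v)$ for the original (non-standard) $u,v$, and check that the reconstruction is consistent — i.e. that the edges forced via different choices of $k$ agree. Consistency follows because the $\shift$-action is an automorphism of the $\affSn$-graph structure, so $m(\shift^k(u) \to \shift^k(v))$ is independent of $k$; hence any one valid $k$ determines the weight, and Proposition \ref{prop:fshcomp1} guarantees at least one valid $k$ exists. Putting these together, every directed weight of $\Gamma$ is forced to equal the corresponding weight of $\awg_\lambda$, and likewise for $\Gamma'$, giving $\Gamma \simeq \awg_\lambda \simeq \Gamma'$.

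The main obstacle I anticipate is the step identifying directed edges between \emph{distinct} cells of $\Gamma\pres{[1,n-1]}$: Theorem \ref{thm:admord} tells us such an edge $u \to v$ forces $T_u < T_v$ strictly in the dominance-refinement order on standard tableaux, but one still has to rule out spurious inter-cell edges that are not present in $\awg_\lambda$ while being consistent with the fixed undirected structure $\adeg_\lambda$. This requires a careful compatibility analysis — essentially showing that once the intra-cell directed edges and the undirected edges are fixed, the Polygon Rule (or Bonding Rule) applied across the cell boundary leaves no freedom. I expect this to go through by the same local-combinatorial bookkeeping that occupied Sections 5 and 6, applied now in the reverse direction (from the rules to the weights rather than verifying the rules), but it is the place where the argument is least automatic.
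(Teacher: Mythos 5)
Your overall skeleton — use Proposition \ref{prop:fshcomp1} to find a good residue $k$, pass to a finite restriction, and invoke Lemma \ref{lem:fshcomp2} plus Theorems \ref{thm:admklund}, \ref{thm:admkl} to pin down the weight inside the cell isomorphic to $\Gamma_\lambda$ — is exactly the paper's approach. But there is a genuine gap at the point where you assert that $\Gamma$ and $\Gamma'$ "carry a $\shift$-action compatible with their $\affSn$-graph structure" and that consequently $m(\shift^k(u) \to \shift^k(v))$ is independent of $k$ in $\Gamma$. That is precisely $\shift$-invariance of $\Gamma$, which is not given; it is a consequence of the theorem (because $\awg_\lambda$ happens to be $\shift$-invariant), and the paper's remark immediately after the proof explicitly stresses that the proof does \emph{not} assume it. What is known a priori is only that the \emph{undirected} part $U(\Gamma) \simeq \adeg_\lambda$ is $\shift$-invariant; the directed edges are exactly the unknowns. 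Citing "the lemma on $\shift$-invariance of $\awg_\lambda$" is circular here, since you would be using a property of the conclusion $\awg_\lambda$ to constrain the hypothesis $\Gamma$. The correct replacement is to leave $u,v$ alone and vary the parabolic instead: the edge $u\to v$ survives in the restriction $\Gamma\pres{\pint{\mo{k+1},\mo{k-1}}}$ to the maximal finite parabolic of $\affSn$ generated by $I - \{s_k\}$, which is abstractly a copy of $\Sym_n$ by the cyclic Dynkin diagram automorphism, and Theorems \ref{thm:admklund}, \ref{thm:admkl}, \ref{thm:admord} apply to it verbatim. This is an isomorphism of ambient Coxeter groups, not an assumed automorphism of the particular graph $\Gamma$, so it carries no circularity.

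Your anticipated "main obstacle" about spurious inter-cell directed edges in the restriction also evaporates once the argument is organized as above. Proposition \ref{prop:fshcomp1} produces a $k$ for which the edge survives and $\fsh(\shift^k(v)) = \lambda \leq \fsh(\shift^k(u))$, while the orderedness Lemma \ref{lem:fshcomp2} applied to the surviving edge forces the opposite inequality $\fsh(\shift^k(u)) \leq \fsh(\shift^k(v))$. Hence $\fsh(\shift^k(u)) = \fsh(\shift^k(v)) = \lambda$, so $u$ and $v$ lie in the same cell of the $k$-restriction (the one isomorphic to $\Gamma_\lambda$), and Theorem \ref{thm:admkl} determines the weight with no need to control inter-cell edges via a separate Polygon/Bonding Rule analysis. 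Every directed edge of $\Gamma$ is handled this way by its own choice of $k$; there is no "transport back" step, and no consistency check across different $k$ is required because each $k$ determines the weight of its own edge outright.
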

\begin{proof} By assumption, we may identify vertices and undirected edges of $\Gamma$ and $\Gamma'$. Now suppose that there is a directed edge $u\rightarrow v$ of weight $p>0$ in $\Gamma$. Then it suffices to show that the same directed edge appears in $\Gamma'$. By Proposition \ref{prop:fshcomp1}, there exists $k \in [1,n]$ such that this edge survives in $\Gamma\pres{\pint{\mo{k+1}, \mo{k-1}}}$ and $\shift^k(v)$ is standard. Using the cyclic symmetry of $\affSn$, we may assume that $k=n$ which means that this edge survives in $\Gamma\pres{[1,n-1]}$ and that $v$ is standard. Now by Lemma \ref{lem:fshcomp2}, it forces that $\fsh(u) = \fsh(v)=\lambda$, i.e. $u$ and $v$ are both standard. However, it means that both $u$ and $v$ are in the same cell of $\Gamma$ isomorphic to $\Gamma_\lambda$, thus by Theorem \ref{thm:admklund} and \ref{thm:admkl} this directed edge should appear in $\Gamma'$ with the same weight $p$ as well.
\end{proof}
\begin{rmk} In the proof we do not assume that $\Gamma$ is $\shift$-invariant. However, as a result of the theorem such graphs should be $\shift$-invariant since so is $\awg_\lambda$.
\end{rmk}

\section{Equal length cases} 
The uniqueness statement of the previous section does not hold in equal length cases, i.e. when $\lambda=(a,a)$ for some $a\in \bZ_{>0}$. In fact, there are more than one (up to isomorphism) whose undirected part is isomorphic to $\adeg_\lambda$. Let us start with finding another such $\affSn$-graph.

\subsection{$\affSn$-graph $\awg_\lambda'$}
From now on we assume that $\lambda=(a,a)$ is a partition of two rows of the same length. Let $\adeg_\lambda^0$ and $\adeg_\lambda^1$ be the full subgraphs of $\adeg_\lambda$ whose sets of vertices are
\begin{gather*}
\{T \in \RSYT(\lambda) \mid \fsh(T) \in \{(a,a), (a+2,a-2), (a+4, a-4), \ldots \}\} \textup{ and }
\\\{T \in \RSYT(\lambda) \mid \fsh(T) \in \{(a+1,a-1), (a+3,a-3), (a+5, a-5), \ldots \}\},
\end{gather*}
respectively. Then we have:
\begin{lem} The graph $\adeg_\lambda$ consists of two connected components $\adeg_\lambda^0$ and $\adeg_\lambda^1$.
\end{lem}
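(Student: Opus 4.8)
The claim is that $\adeg_\lambda = \adeg_\lambda^0 \sqcup \adeg_\lambda^1$ as a disjoint union of connected components, where the two pieces are distinguished by the parity of $\fsh(T)_1 - \lambda_1$ (equivalently, by which ``coset'' the first-row length of the recording shape lies in). There are two things to check: (i) the vertex sets of $\adeg_\lambda^0$ and $\adeg_\lambda^1$ partition $\RSYT(\lambda)$, and (ii) no Knuth move connects a vertex of $\adeg_\lambda^0$ to a vertex of $\adeg_\lambda^1$, while each of the two pieces is itself connected.

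For (i): every $T \in \RSYT(\lambda)$ has $\fsh(T) \geq \lambda = (a,a)$ in dominance order, so $\fsh(T) = (a+j, a-j)$ for a unique $j \in \{0, 1, \ldots, a\}$, and $j$ even puts $T$ in $\adeg_\lambda^0$, $j$ odd in $\adeg_\lambda^1$; this is immediate. For the ``no crossing edges'' half of (ii), I would use the fact (recalled just before this lemma, via the Proposition on $\awg_\lambda\pres{[1,n-1]}$ and the $\RSK$ description) that two elements $u,v \in \RSYT(\lambda)$ are connected by Knuth moves precisely when $Q(u) = Q(v)$, i.e. when $\fsh(u) = \fsh(v)$ — since for two-row $\mu$ there is at most one semistandard tableau of shape $\mu$ and content $\lambda^{op}$, the recording tableau is determined by its shape. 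Hence any edge of $\adeg_\lambda$ connects two vertices with the same forward shape, a fortiori with the same parity of $j$, so there are no edges between $\adeg_\lambda^0$ and $\adeg_\lambda^1$. Actually this already shows more than connectedness: each fiber $\{T : \fsh(T) = \mu\}$ is a single Knuth-connected class (an affine dual equivalence class), and I should cite the relevant statement from \cite{clp17} (or the structure of $\adeg_\lambda$ recalled in the text) that these classes, for the various $\mu$ sharing a fixed parity, are linked to each other.

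The one genuinely non-formal point — and the step I expect to be the main obstacle — is showing that $\adeg_\lambda^0$ and $\adeg_\lambda^1$ are each \emph{connected}, i.e. that one can pass by Knuth moves from any $T$ with $\fsh(T) = (a+j, a-j)$ to any $T'$ with $\fsh(T') = (a+j', a-j')$ whenever $j \equiv j' \pmod 2$. The natural tool is the $\shift$-action together with Lemma \ref{lem:rsktworow}: applying $\shift$ changes $\fsh$ by moving exactly one box between the two rows (in case (2) and (3) of that lemma), so $\shift$ shifts $j$ by $\pm 1$ each time it is not the ``standard'' case; thus the orbit of a single vertex already realizes a run of consecutive values of $j$ of a fixed parity pattern. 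Combined with the fact that within a fixed $\fsh$-fiber all vertices are Knuth-connected, and that the $\shift$-orbit of a cell lands inside $\adeg_\lambda$ (the $\shift$-action preserves $\adeg_\lambda$, by the earlier lemma), one concludes each parity class is connected. Concretely I would: fix $T$ with $\fsh(T)$ of a given parity; repeatedly apply $\shift$ to move through all shapes $(a+j,a-j)$ of that parity, at each stage using Knuth moves to move freely within the fiber; and observe the other parity is never reached because $\shift$ moves $j$ by $\pm 1$ only from a standard tableau and by Lemma \ref{lem:rsktworow} the parity of the number of such steps between two standard configurations is forced. The delicate bookkeeping is precisely tracking which applications of $\shift$ change $j$ and by how much — that is where Lemma \ref{lem:rsktworow} cases (1)–(3) are doing the real work — but no new idea beyond those lemmas is needed.
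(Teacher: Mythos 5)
Your argument contains a genuine gap in the ``no crossing edges'' step, and the gap is fatal to the approach. You claim that any edge of $\adeg_\lambda$ connects two vertices with the same $\fsh$, by appealing to the fact that Knuth moves preserve the recording tableau $Q$. But the edges of $\adeg_\lambda$ are \emph{affine} Knuth moves, defined via $\ades$ and the cyclic notation $\mo{\cdot}$: a move may swap $\mo{n-1}\leftrightarrow\mo{n}$ or $\mo{n}\leftrightarrow\mo{1}$, and such a swap does \emph{not} correspond to a Knuth elementary transformation of the reading word and does not preserve $Q$ or $\fsh$. The proposition you cite (part (3), about $Q(T)=Q(T')$) applies to the finite setting: it is stated in terms of $\des$, not $\ades$, and governs the restricted graph $\adeg_\lambda\pres{[1,n-1]}$. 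Concretely, for $\lambda=(3,3)$, $n=6$: take $T=((1,3,5),(2,4,6))$, which has $\ades(T)=\{1,3,5\}$ and $\fsh(T)=(3,3)$. Swapping $\mo{6}=6$ and $\mo{7}=1$ gives $T'=((3,5,6),(1,2,4))$ with $\ades(T')=\{3,6\}$; these are incomparable, so this \emph{is} an edge of $\adeg_\lambda$, yet $\fsh(T')=(5,1)\neq(3,3)$. Note also that if your claim were true, $\adeg_{(a,a)}$ would decompose into $a+1$ components (one per value of $\fsh$), contradicting the very statement you are trying to prove.

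Once that claim fails, the rest of your plan does not recover. The $\shift$-action is not a Knuth move, so running through the $\shift$-orbit to realize various values of $\fsh$ does not by itself produce paths in $\adeg_\lambda$, and in any case you still owe an argument that edges respect the parity of $\fsh_2$. The paper's proof avoids all of this by quoting \cite[Theorem 8.6]{clp17}, which already asserts that $\adeg_{(a,a)}$ has exactly two connected components distinguished by the charge statistic of \cite[Definition 8.3]{clp17} modulo $2$; the only remaining work is the RSK computation showing that for two-row shapes the charge of $T$ equals $\fsh(T)_2$, which identifies the two components with $\adeg_\lambda^0$ and $\adeg_\lambda^1$. If you want to avoid citing that theorem, you would need a genuinely new argument for both the invariance modulo $2$ under affine Knuth moves and the connectedness of each parity class, and your current outline supplies neither.
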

\begin{proof} By \cite[Theorem 8.6]{clp17}, there are two connected components in $\adeg_\lambda$ and each component consists of row-standard Young tableaux of shape $\lambda$ with the same ``charge'' modulo 2, where the charge statistic is defined as in \cite[Definition 8.3]{clp17}. However, it is easily proved using the definition of Robinson-Schensted correspondence that in our case the charge of $T\in \RSYT(\lambda)$ is equal to the length of the second row of $\fsh(T)$.
\end{proof}
\begin{rmk} By the same reason, if $\lambda$ consists of two unequal rows then $\adeg_\lambda$ is connected, which also implies that $\awg_\lambda$ is (strongly) connected.
\end{rmk}

Let us define $\awg_\lambda'$ to be the subgraph of $\awg_\lambda$ obtained by removing all the directed edges connecting $\adeg_\lambda^0$ and $\adeg_\lambda^1$. In other words, $\awg_\lambda'$ is a (disjoint) union of two simple components of $\awg_\lambda$.
\ytableausetup{boxsize=1em, notabloids}
\begin{figure}[htbp]
\centering
\begin{tikzpicture}[scale=1.2]

	\node[draw, tl] (1o1) at (0,3) {\ytableaushort{12{\dt{3}},456}};
	\node[draw] (1o2) at (3^.5/2*3,-1/2*3)  {\ytableaushort{34{\dt{5}},126}};
	\node[draw] (1o3) at (-3^.5/2*3,-1/2*3)  {\ytableaushort{{\dt{1}}56,234}};
	
	\node[draw, tl] (1i1) at (0,1.5) {\ytableaushort{1{\dt{2}}{\dt{4}},356}};
	\node[draw] (1i2) at (3^.5/2*1.5,-1/2*1.5)  {\ytableaushort{3{\dt{4}}{\dt{6}},125}};
	\node[draw] (1i3) at (-3^.5/2*1.5,-1/2*1.5)  {\ytableaushort{{\dt{2}}5{\dt{6}},134}};	

	\node[draw, tl] (1m1) at (3^.5/2*2,1/2*2){\ytableaushort{{\dt{1}}3{\dt{4}},256}};
	\node[draw] (1m2) at (0,-2){\ytableaushort{{\dt{3}}5{\dt{6}},124}};
	\node[draw, tl] (1m3) at (-3^.5/2*2,1/2*2){\ytableaushort{1{\dt{2}}{\dt{5}},346}};	
	
	\node[draw, tl] (1c) at (0,0) {\ytableaushort{{\dt{1}}{\dt{3}}{\dt{5}},246}};	
	
	\node[draw] (2o1) at (0+6,3-3) {\ytableaushort{23{\dt{4}},156}};
	\node[draw] (2o2) at (3^.5/2*3+6,-1/2*3-3)  {\ytableaushort{45{\dt{6}},123}};
	\node[draw] (2o3) at (-3^.5/2*3+6,-1/2*3-3)  {\ytableaushort{1{\dt{2}}6,345}};
	
	\node[draw] (2i1) at (0+6,1.5-3) {\ytableaushort{2{\dt{3}}{\dt{5}},146}};
	\node[draw] (2i2) at (3^.5/2*1.5+6,-1/2*1.5-3)  {\ytableaushort{{\dt{1}}4{\dt{5}},236}};
	\node[draw] (2i3) at (-3^.5/2*1.5+6,-1/2*1.5-3)  {\ytableaushort{{\dt{1}}{\dt{3}}6,245}};	

	\node[draw] (2m1) at (3^.5/2*2+6,1/2*2-3){\ytableaushort{{\dt{2}}4{\dt{5}},136}};
	\node[draw] (2m2) at (0+6,-2-3){\ytableaushort{{\dt{1}}{\dt{4}}6,235}};
	\node[draw] (2m3) at (-3^.5/2*2+6,1/2*2-3){\ytableaushort{2{\dt{3}}{\dt{6}},145}};	
	
	\node[draw] (2c) at (0+6,0-3) {\ytableaushort{{\dt{2}}{\dt{4}}{\dt{6}},135}};	
	
	\draw[tl] (1o1) -- (1i1);  \draw[] (1o2) -- (1i2); \draw[] (1o3) -- (1i3);
	\draw[tl] (1i1) -- (1m1);  \draw[] (1i2) -- (1m2); \draw[] (1i3) -- (1m3);
	\draw[tl] (1i1) -- (1m3);  \draw[] (1i2) -- (1m1); \draw[] (1i3) -- (1m2);
	\draw[tl] (1c) -- (1m1);  \draw[] (1c) -- (1m2); \draw[tl] (1c) -- (1m3);	
	
	\draw[] (2o1) -- (2i1);  \draw[] (2o2) -- (2i2); \draw[] (2o3) -- (2i3);
	\draw[] (2i1) -- (2m1);  \draw[] (2i2) -- (2m2); \draw[] (2i3) -- (2m3);
	\draw[] (2i1) -- (2m3);  \draw[] (2i2) -- (2m1); \draw[] (2i3) -- (2m2);
	\draw[] (2c) -- (2m1);  \draw[] (2c) -- (2m2); \draw[] (2c) -- (2m3);	
	
	\draw[arr,tl] (1c) to [out=50,in=-50] (1o1); \draw[arr] (1c) to [out=50-120,in=-50-120]  (1o2); \draw[arr] (1c) to [out=50-240,in=-50-240]  (1o3); 
	\draw[arr] (2c) to [out=50,in=-50] (2o1); \draw[arr] (2c) to [out=50-120,in=-50-120]  (2o2); \draw[arr] (2c) to [out=50-240,in=-50-240]  (2o3); 
	
\end{tikzpicture}
\caption{$\affS{6}$-graph $\awg_{(3,3)}'$}
\label{fig:33'}
\end{figure}
\begin{example} Figure \ref{fig:33'} illustrates the $\affS{6}$-graph $\awg_{(3,3)}'$.
\end{example}

\ytableausetup{boxsize=normal, tabloids}	
	
We show that $\awg_\lambda'$ is also a $\affSn$-graph. First, the following lemma is a substitute of Lemma \ref{lem:standuneq} in equal length cases.
\begin{lem} \label{lem:standeq} There exists $k \in [1,n]$ such that $\shift^k(T)$ is standard.
\end{lem}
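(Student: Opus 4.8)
The plan is to follow the evolution of the RSK-shape $\fsh$ under iteration of $\shift$ and thereby reduce the statement to an elementary fact about a closed lattice path. Write $n=2a$ and let $S_1,S_2\subseteq\{1,\dots,n\}$ be the sets of entries in the two rows of $T$, so $|S_1|=|S_2|=a$. Since the reading word of each $\shift^k(T)$ is a concatenation of two increasing sequences, $\fsh(\shift^k(T))$ has at most two rows; I will set $\phi(k):=\fsh(\shift^k(T))_2\in\{0,1,\dots,a\}$, which is $n$-periodic in $k$ and equals $a$ exactly when $\shift^k(T)$ is standard. The first step is to read off from Lemma \ref{lem:rsktworow} a one-step recursion for $\phi$. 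Define $\epsilon_k=1$ if the largest entry of $\shift^k(T)$ lies in its first row and $\epsilon_k=-1$ otherwise; parts (1) and (3) of that lemma then give $\phi(k+1)=\phi(k)+\epsilon_k$ whenever $\phi(k)\le a-1$ (in the boundary case $\phi(k)=0$ one has $\fsh(\shift^k(T))=(n)$, which forces $\epsilon_k=1$, consistently with part (3)). Because $\shift^k$ merely relabels entries without moving boxes, $\epsilon_k=1$ if and only if $\mo{n-k}\in S_1$; hence among $\epsilon_0,\dots,\epsilon_{n-1}$ there are exactly $a$ steps equal to $+1$ and $a$ equal to $-1$, and the partial sums $P_k:=\sum_{j=0}^{k-1}\epsilon_j$ satisfy $P_0=P_n=0$ and, by a direct count, $P_k=2\,|S_2\cap[1,n-k]|-(n-k)$.

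Next I would argue by contradiction: assume that no $\shift^k(T)$ is standard, i.e.\ $\phi(k)\le a-1$ for every $k$. Then the recursion above holds at every step, so $\phi(k)=\phi(0)+P_k$ for $0\le k\le n$. The single substantive point is then the identity $\phi(0)+\max_{0\le k\le n}P_k=a$: it gives $\max_k\phi(k)=a$, contradicting $\phi(k)\le a-1$ and completing the proof. To prove the identity I would compute $\fsh(T)_1=n-\phi(0)$ using Schensted's description of the first row of the RSK tableau as the length of a longest increasing subsequence of the reading word $T^2T^1$. For each cut value $\ell\in\{0,1,\dots,n\}$, the entries of $T^2$ that are $\le\ell$ followed by the entries of $T^1$ that are $>\ell$ form an increasing subsequence of length $|S_2\cap[1,\ell]|+|S_1\cap[\ell+1,n]|$; conversely, every longest increasing subsequence arises this way, by splitting it into its part in $T^2$ and its part in $T^1$ and taking $\ell$ to be the last entry of the former. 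Hence $\fsh(T)_1=\max_{0\le\ell\le n}\bigl(|S_2\cap[1,\ell]|+|S_1\cap[\ell+1,n]|\bigr)$, and substituting $|S_1\cap[\ell+1,n]|=a-\ell+|S_2\cap[1,\ell]|$ rewrites the summand as $a+P_{n-\ell}$, so $\fsh(T)_1=a+\max_{0\le k\le n}P_k$ and therefore $\phi(0)=2a-a-\max_k P_k=a-\max_k P_k$, as required.

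I expect the only genuine work to be this last identity — matching the Schensted formula for $\fsh(T)_1$ against the maximum of the path $P_k$, with the index bookkeeping ($[1,\ell]$ versus $[\ell+1,n]$ and the substitution $\ell=n-k$) done correctly. Everything else is a routine transcription of Lemma \ref{lem:rsktworow} and its remark. As a sanity check, for the shape-$(3,3)$ row-standard tableau with first row $\{2,4,6\}$ and second row $\{1,3,5\}$ (so $n=6$, $a=3$) one has $\fsh(T)=(4,2)$ and $P=(0,1,0,1,0,1,0)$, so $\phi(1)=\phi(0)+P_1=2+1=3=a$, confirming that $\shift(T)$ is standard.
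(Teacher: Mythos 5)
Your proof is correct, and it takes a genuinely different route from the paper's. The paper argues by induction on $n$: it locates a pair $\{\mo{i},\mo{i+1}\}$ lying in different rows, deletes it to obtain a smaller tableau $\tilde T$ of shape $(a-1,a-1)$, applies the inductive hypothesis to make $\tilde T$ standard after a cyclic shift, and then checks (after possibly one more application of $\shift$ to ensure $\mo i\ne n$) that $T$ itself must then be standard. Your argument instead linearizes the problem by tracking $\phi(k)=\fsh(\shift^k T)_2$: Lemma \ref{lem:rsktworow} and its remark turn this into a closed $\pm 1$ walk $P_k$ with exactly $a$ up-steps and $a$ down-steps, and a Schensted-style count of the longest increasing subsequence of the reading word $T^2T^1$ gives the exact identity $\phi(0)+\max_k P_k=a$, which immediately forces $\phi(k)=a$ for some $k$. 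I checked the bookkeeping: $\epsilon_k=1$ iff $\mo{n-k}\in T^1$, the expression $P_k=2|S_2\cap[1,n-k]|-(n-k)$, the rewrite of the cut-point formula as $a+P_{n-\ell}$, and the fact that the recursion $\phi(k+1)=\phi(k)+\epsilon_k$ can only fail when $\phi(k)=a$ (which is exactly the case you excluded by contradiction) are all correct. Your approach is somewhat longer but more quantitative — it actually identifies which shifts $k$ make $\shift^k(T)$ standard as the argmaxes of the walk, and it relies entirely on the already-proved shape-evolution lemma rather than a fresh induction.
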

\begin{proof}\ We use induction on $n$. Choose $\mo{i} \in [1,n]$ such that $\mo{i} \in T^1$ and $\mo{i+1}\in T^2$, which always exists. Let us regard the rows of $T$ as words with alphabets in $[1,n]$, and let $w_1, w_2, w_3, w_4$ be words such that $T^1 = w_1\mo{i}w_2$ and $T^2 = w_3 \mo{i+1} w_4$. By induction hypothesis, replacing $T$ with $\shift^k(T)$ for some $k$ if necessary, we may assume that $\tilde{T} = (w_1w_2, w_3w_4)$ is standard. Furthermore, if $\mo{i}=n$ then we apply $\shift$ to $T$ which changes $n$ to $1$ but keeps $\tilde{T}$ to be standard. Thus it suffices to consider the case when $1\leq \mo{i}\leq n-1$. Now since entries in $w_1$ and $w_3$ are smaller than $\mo{i}$ and those in $w_2$ and $w_4$ are larger than $\mo{i+1}$, it follows that $\tilde{T}$ is standard only when the length of $w_1$ is not smaller than that of $w_2$. From this it is easy to see that $T$ is also standard.
\end{proof}

\begin{lem} \label{lem:eqfk}Suppose that $u$ and $v$ are in different simple components of $\awg_\lambda$ and there exists an (necessarily directed) edge $u\rightarrow v$. Then,
\begin{enumerate}
\item the move from $u$ to $v$ is of the first kind.
\item if $\fsh(u)\geq \fsh(v)$ with respect to dominance order then $\fsh(u) = (a+1, a-1)$, $\fsh(v) = (a,a)=\lambda$, and the move $u\rightarrow v$ is $n\intch 1$.
\end{enumerate}
\end{lem}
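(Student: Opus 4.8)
The plan is to combine $\shift$-invariance of $\awg_\lambda$ with Lemma~\ref{lem:rsktworow} (the behaviour of $\fsh$ under $\shift$), Lemma~\ref{lem:standeq}, the orderedness of nb-admissible $\Sym_n$-graphs in the form of Lemma~\ref{lem:fshcomp2}, and the description of $\adeg_\lambda=U(\awg_\lambda)$ from the preceding lemma: its two components $\adeg_\lambda^0,\adeg_\lambda^1$ are distinguished by the parity of $\fsh(T)_2$, the length of the second row of $\fsh(T)$. Throughout I will use that $\fsh(T)\ge\lambda$ in dominance order (so $\fsh(T)_2\le a$), that for two-row shapes $Q(T)$ is determined by $\fsh(T)$, so cells of $\awg_\lambda\pres{[1,n-1]}$ have pairwise distinct $\fsh$, and that cells of an nb-admissible $\Sym_n$-graph coincide with simple components (Theorem~\ref{thm:admklund}), so two vertices in distinct simple components of $\awg_\lambda$ lie in distinct cells of $\awg_\lambda\pres{[1,n-1]}$. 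Since $u,v$ lie in distinct simple components and $\adeg_\lambda$ has only the two components above, the edge $u\to v$ is not an undirected edge of $\awg_\lambda$; as all weights are in $\{0,1\}$, $m(u\to v)=1$, $m(v\to u)=0$, so $\ades(u)\supsetneq\ades(v)$ with $\ades(v)-\ades(u)=\emptyset$, and $\fsh(u)_2\not\equiv\fsh(v)_2\pmod 2$. The first key observation is that $\ades(u)-\ades(v)$ is a singleton: if it had two distinct elements, its $\shift^k$-image would never be $\subseteq\{n\}$, so the edge would survive restriction to $[1,n-1]$ for every $k$; choosing $k$ with $\shift^k v$ standard (Lemma~\ref{lem:standeq}) and applying Lemma~\ref{lem:fshcomp2} would give $\fsh(\shift^k u)\le\fsh(\shift^k v)=\lambda$, hence $\shift^k u$ standard, hence $u,v$ in one cell of the restriction and one component of $\awg_\lambda$, a contradiction. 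So $\ades(u)-\ades(v)=\{c\}$, and running the same argument at a $k_0$ with $\shift^{k_0}v$ standard forces $\mo{c+k_0}=n$: after $\shift^{k_0}$ the edge ``hides $n$'', i.e. $\ades(\shift^{k_0}u)=\ades(\shift^{k_0}v)\sqcup\{n\}$, and does not survive restriction at $k_0$.

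For part (1): after $\shift^{k_0}$ the edge does survive restriction at $k_0+1$ (its $\ades$-difference becomes $\{1\}$), so Lemma~\ref{lem:fshcomp2} gives $\fsh(\shift^{k_0+1}u)<\fsh(\shift^{k_0+1}v)$; since $\shift^{k_0}v$ is standard of shape $(a,a)$ it has $n$ in its second row, so by Lemma~\ref{lem:rsktworow}(2) $\fsh(\shift^{k_0+1}v)=(a+1,a-1)$, forcing $\fsh(\shift^{k_0+1}u)=\lambda$; then Lemma~\ref{lem:rsktworow} gives $\fsh(\shift^{k_0}u)=(a+1,a-1)$ and $n\in(\shift^{k_0}u)^1$. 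Combining $\ades(\shift^{k_0}u)=\ades(\shift^{k_0}v)\sqcup\{n\}$ with $n\in(\shift^{k_0}u)^1$ forces $1\in(\shift^{k_0}u)^2$. Now the move $\shift^{k_0}u\to\shift^{k_0}v$ (the $\shift^{k_0}$-image of the move $u\to v$) must carry $n$ out of the first row and $1$ out of the second, because $\shift^{k_0}v$ is standard of shape $(a,a)$; for a move of the second kind $\mo{j'}\intch\mo{i'}$ this would force $\mo{j'}=n$ and $\mo{i'}=1$, but then $\mo{i'}=\mo{j'+1}$ and by definition the move is of the first kind --- a contradiction. Hence the move $u\to v$ is of the first kind.

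For part (2), assume also $\fsh(u)\ge\fsh(v)$. The move being of the first kind, $\ades(u)-\ades(v)=\{\mo i\}$; if $\mo i\ne n$ the edge already survives restriction at $k=0$, so Lemma~\ref{lem:fshcomp2} gives $\fsh(u)\le\fsh(v)$, hence $\fsh(u)=\fsh(v)$, which is impossible for vertices in distinct cells of the restriction. So $\mo i=n$, i.e. the move is $n\intch 1$, $\ades(u)=\ades(v)\sqcup\{n\}$; reading off the two descents that must be absent from $\ades(v)$ gives $1,2\in v^1$ and $n-1,n\in v^2$, whence $u=(v^1\setminus\{1\}\cup\{n\},\,v^2\setminus\{n\}\cup\{1\})$. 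It remains to show $v$ is standard; then $\fsh(v)=\lambda$, and applying $\shift$ with Lemma~\ref{lem:fshcomp2} and Lemma~\ref{lem:rsktworow}(2) exactly as in part (1) gives $\fsh(u)=(a+1,a-1)$, the move being $n\intch 1$ as already shown. To prove $v$ standard I would assume $\fsh(v)=(a+s,a-s)$ with $s\ge 1$ and derive a contradiction from $\fsh(u)_2\not\equiv\fsh(v)_2\pmod 2$ by a direct analysis of Robinson--Schensted insertion of the reading words of $v$ and $u$ --- these differ only by moving the letter $1$ to the front and the letter $n$ to the back --- showing that non-standardness of $v$ makes $\fsh(u)_2$ and $\fsh(v)_2$ have the same parity, contrary to $u,v$ being in distinct components.

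The main obstacle is exactly this last step of part (2). The orderedness-plus-$\shift$ bookkeeping only pins down $\fsh(v)_2-\fsh(u)_2=1$ for a between-component edge, not $\fsh(v)_2=a$, so one really needs the hands-on comparison of the RSK images of $v$ and $u$, in the style of the proof of Lemma~\ref{lem:rsktworow}: tracking when the large letters $n-1,n$ get bumped out of the first row during insertion of the reading word, counting how many of the remaining first-row entries then append versus bump, and feeding the resulting count into the parity obstruction coming from the two components of $\adeg_\lambda$. Everything else is routine manipulation of $\shift$, the $\ades$-change lemma for moves, and the cited results.
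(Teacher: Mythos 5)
Your part (1) is correct and runs along the same lines as the paper's (reduce via $\shift$ and Lemma~\ref{lem:standeq} to the case $\shift^{k_0}v$ standard, use orderedness via Lemma~\ref{lem:fshcomp2} to force $\ades(\shift^{k_0}u)=\ades(\shift^{k_0}v)\sqcup\{n\}$, conclude the move is $n\intch 1$), though you do unnecessary extra work: once you know $n\in\ades(\shift^{k_0}u)$, the containments $n\in(\shift^{k_0}u)^1$ and $1\in(\shift^{k_0}u)^2$ follow directly from the definition of $\ades$, so the detour through $\fsh(\shift^{k_0}u)=(a+1,a-1)$ is not needed for (1).

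For part (2) you have a genuine gap: you reduce the statement to ``$v$ is standard'' and then declare you cannot complete that step without an RSK insertion analysis, asserting that the orderedness-plus-$\shift$ bookkeeping alone only pins down $\fsh(v)_2-\fsh(u)_2=1$. That assertion is false, and the ingredients you need are already in your own part (1). Your part (1) argument shows that for \emph{any} $k_0\in[1,n]$ with $\shift^{k_0}v$ standard one must have $\mo{c+k_0}=n$, where $\{c\}=\ades(u)-\ades(v)$; once you have established $c=n$ at the start of part (2), this forces $k_0\equiv 0\bmod n$, i.e.\ the only such $k_0$ is $n$, i.e.\ $v$ is standard. (Equivalently: for $k\in[1,n-1]$ the $\ades$-difference $\{\mo{n+k}\}=\{k\}$ is not $\{n\}$, so the edge survives restriction, and Lemma~\ref{lem:fshcomp2} gives $\fsh(\shift^k u)\le\fsh(\shift^k v)$, whence $\fsh(\shift^k v)\ne(a,a)$; by Lemma~\ref{lem:standeq} some $\shift^{k_0}v$ is standard, so $k_0=n$.) Then $\fsh(u)=(a+1,a-1)$ follows as you already say. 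For the record, the paper's own proof of (2) does \emph{not} show $v$ standard first: it observes directly that, because the reading word of $u$ begins with $1$ and ends with $n$ while that of $v$ has the block $n,1$ in the middle, one has $\fsh(u)_1=L+2$ and $\fsh(v)_1=\max(a,L)$ where $L$ is the longest increasing subsequence of the common $2(a-1)$ middle letters; if $L\ge a$ then $\fsh(u)_2=\fsh(v)_2-2$, contradicting the different parities, so $L=a-1$, $\fsh(u)=(a+1,a-1)$, $\fsh(v)=(a,a)$. This is the ``direct calculation'' the paper cites, and it is shorter than the route you sketch, but either way closes the gap.
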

\begin{proof} We prove (1). Let us regard $u, v$ as elements in $\RSYT(\lambda)$, and note that $\ades(u) \supset \ades(v)$ as $u\rightarrow v$ is a directed edge. By Lemma \ref{lem:standeq}, we may assume that $v$ is standard. As $\fsh(u)$ cannot be equal to $\fsh(v)$ by assumption, we should have $\fsh(u)>\fsh(v)$. Therefore, Theorem \ref{thm:admord} implies that the edge $u\rightarrow v$ must be deleted in the parabolic restriction $\awg_\lambda\pres{[1,n-1]}$. This means that $\ades(u) =\ades(v) \sqcup\{n\}$ and thus $1\in u^2$ and $n \in u^1$. Since $1 \in v^1$ and $n \in v^2$ ($v$ is standard), (1) follows.

Now we prove (2). As $\fsh(u)\neq \fsh(v)$ we should have $\fsh(u)>\fsh(v)$, which means that this directed edge should be deleted in the parabolic restriction $\awg_\lambda\pres{[1,n-1]}$ by Theorem \ref{thm:admord}. Thus $\ades(u) = \ades(v) \sqcup \{n\}$, and $n\intch 1$ is the only possible move of the first kind from $u$ to $v$. Now if $\fsh(u)> (a+1, a-1)$, then direct calculation shows that $\fsh(u)=(\fsh(v)_1+2,\fsh(v)_2-2)$, which contradicts that $u$ and $v$ are in different simple components. Thus we should have $\fsh(u)= (a+1, a-1)$ and $\fsh(v)=(a,a)$ as desired.
\end{proof}

%
%

\begin{thm} $\awg_\lambda'$ is a $\affSn$-graph.
\end{thm}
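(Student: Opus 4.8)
The plan is to verify the four combinatorial rules of Theorem~\ref{thm:combrules} for $\awg_\lambda'$. Since $\awg_\lambda'$ is obtained from $\awg_\lambda$ by deleting only certain \emph{directed} edges (those joining $\adeg_\lambda^0$ and $\adeg_\lambda^1$; every edge between the two components is directed because $U(\awg_\lambda)=\adeg_\lambda$ has them in distinct connected components), it is again reduced and nb-admissible and $U(\awg_\lambda')=U(\awg_\lambda)=\adeg_\lambda$. Consequently the Compatibility Rule and the Simplicity Rule are inherited verbatim (deleting edges can only help them), and the Bonding Rule holds because, exactly as for $\awg_\lambda$, it is equivalent to the Bonding Rule for $U(\awg_\lambda')=\adeg_\lambda$, which holds by \cite[Proposition 3.5]{cfkly}. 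So everything reduces to the Polygon Rule.

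As in the proof that $\awg_\lambda$ is a $\affSn$-graph, it suffices to check $N_{i,j}(\awg_\lambda';v,w)=N_{j,i}(\awg_\lambda';v,w)$ when $i,j$ are non-adjacent in the Dynkin diagram and $i,j\in\ades(v)$, $i,j\notin\ades(w)$. If $v$ and $w$ lie in different connected components of $\adeg_\lambda$, then — there being exactly two of them and every deleted edge joining them — every length-two path $v\to u\to w$ traverses a deleted edge, so $N_{i,j}(\awg_\lambda';v,w)=0=N_{j,i}(\awg_\lambda';v,w)$. So assume $v,w$ lie in the same component. Then $N_{i,j}(\awg_\lambda';v,w)=N_{i,j}(\awg_\lambda;v,w)-C_{i,j}$, where $C_{i,j}$ counts the paths contributing to $N_{i,j}$ that pass through the other component; such a path uses two deleted edges, which by Lemma~\ref{lem:eqfk}(1) are moves of the first kind. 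As $\{\mo{i},\mo{i+1}\}\cap\{\mo{j},\mo{j+1}\}=\emptyset$, tracing the two swaps forces $v^1-w^1=\{i,j\}$ and $v^2-w^2=\{\mo{i+1},\mo{j+1}\}$ (the four-element configuration with $\{c,d\}=\{\mo{a+1},\mo{b+1}\}$, $\{a,b\}=\{i,j\}$), and the only such path is $v\xrightarrow{\mo{j}\intch\mo{j+1}}v'\xrightarrow{\mo{i}\intch\mo{i+1}}w$, where $v'$ is obtained from $v$ by the first-kind swap of $\mo{j}$ and $\mo{j+1}$; thus $C_{i,j}\in\{0,1\}$, and $C_{i,j}=1$ exactly when $v'$ lies in the component other than that of $v$. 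Since $\awg_\lambda$ is already a $\affSn$-graph, $N_{i,j}(\awg_\lambda;v,w)=N_{j,i}(\awg_\lambda;v,w)$, so the Polygon Rule for $\awg_\lambda'$ comes down to the claim that $v'$ and the tableau $v''$ obtained from $v$ by swapping $\mo{i}$ and $\mo{i+1}$ lie in the same component of $\adeg_\lambda$.

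To prove this I would use the shape formula $\fsh(T)_2=\min_{0\le c\le n}\bigl(\#(T^1\cap[1,c])+\#(T^2\cap[c+1,n])\bigr)$ valid for two-row $\lambda$ (a consequence of the Robinson--Schensted--Knuth correspondence applied to the two-line array of $T$), together with the fact contained in \cite[Theorem 8.6]{clp17} that the component of $T$ in $\adeg_\lambda$ is determined by the parity of $\fsh(T)_2$. After replacing $v,w$ by $\shift^k(v),\shift^k(w)$ for a suitable $k$ — which interchanges the two components, hence preserves the assertion — we may assume $a,b\in[1,n-1]$, so $\mo{a+1}=a+1$, $\mo{b+1}=b+1$, and $w$ arises from $v$ by exchanging $a\leftrightarrow a+1$ and $b\leftrightarrow b+1$ between the rows. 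Writing $g_v(c)$ for the quantity inside the minimum, one checks that exchanging $x\leftrightarrow x+1$ between the rows (with $x$ in the first row) lowers $g$ by $2$ at $c=x$ and leaves it unchanged otherwise, and that $\min g_v$ is never attained at $c\in\{a,b\}$ because $a,b\in v^1$. Hence $\fsh(v')_2=\min(\fsh(v)_2,\,g_v(b)-2)$, the analogous formula holds for $v''$ with $b$ replaced by $a$, and $\fsh(w)_2=\min(\fsh(v)_2,\,g_v(a)-2,\,g_v(b)-2)$. Because $g_v(a),g_v(b)\ge\fsh(v)_2+1$, one gets $\fsh(w)_2\equiv\fsh(v)_2\pmod 2$ (i.e. $v,w$ in the same component) precisely when $g_v(a),g_v(b)\ge\fsh(v)_2+2$, and in that case $\fsh(v')_2=\fsh(v'')_2=\fsh(v)_2$; so $v'$ and $v''$ lie in the component of $v$, whence $C_{i,j}=C_{j,i}=0$ and the Polygon Rule follows.

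The hard part is the Polygon Rule: identifying exactly which length-two paths are destroyed by the edge deletion and controlling how the relevant moves of the first kind interact with the component decomposition of $\adeg_\lambda$. The cyclic (mod $n$) bookkeeping and the sub-case where $v$ and $w$ fall into different components both require care. By contrast, the remaining three rules and the admissibility of $\awg_\lambda'$ are essentially immediate from the fact that $\awg_\lambda'$ is a subgraph of $\awg_\lambda$ with the same simple underlying graph.
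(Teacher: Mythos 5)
Your proposal is correct and shares the overall skeleton with the paper's proof: the first three rules are inherited from $\awg_\lambda$ because $\awg_\lambda'$ is a subgraph with the same simple underlying graph; the Polygon Rule is vacuous when $v,w$ lie in different connected components of $\adeg_\lambda$; and when they lie in the same component, the Polygon Rule reduces (since $\awg_\lambda$ is already known to be a $\affSn$-graph) to showing that the contribution $C_{i,j}$ from deleted two-step paths vanishes. Where the two proofs genuinely diverge is in \emph{how} $C_{i,j}=0$ is established. The paper shifts so that the terminal vertex is standard, then applies Lemma~\ref{lem:eqfk}(2) (itself built on Nguyen's orderedness, Theorem~\ref{thm:admord}) to force $\fsh(u)=\lambda$ for the source $u$, hence $u$ standard, whereupon no first-kind move can produce the required intermediate tableau with $1$ and $n$ interchanged; this is a quick global argument that exploits the already-proved uniqueness machinery. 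You instead avoid Lemma~\ref{lem:eqfk}(2) entirely and track $\fsh(T)_2=\min_c g_T(c)$ explicitly via the Greene--Kleitman/RSK formula, computing how a single first-kind swap alters $g_T$ at exactly one value of $c$ and deducing that the unique intermediate tableau of the deleted path always has the same $\fsh_2$ as $v$ whenever $v$ and $w$ lie in the same component. This is more elementary and self-contained (it touches Lemma~\ref{lem:eqfk}(1) only, which both proofs need, to know that cross-component moves are first-kind), at the cost of more bookkeeping. One small thing you should make explicit: the reduction via $\shift^k$ to $a,b\in[1,n-1]$, which your $g_T$ computation silently requires, is available since $n=2a\geq 4$ gives $n$ residues and at most two forbidden values of $k$; without that reduction the swap $n\intch 1$ affects $g_T$ differently (raising it by $2$ at every $c\in[1,n-1]$) and your increment formula would fail.
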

\begin{proof} We use Theorem \ref{thm:combrules}. It is clear from the definition that $\awg_\lambda'$ satisfies the Compatibility Rule, the Simplicity Rule, and the Bonding Rule. Thus we only need to check that $N_{i,j}(\awg_\lambda';u,v)=N_{j,i}(\awg_\lambda';u,v)$ for $i,j \in [1,n]$ not adjacent to each other in the Dynkin diagram of $\affSn$ and for $u,v \in \awg_\lambda'$. If $u$ and $v$ are in different connected components then clearly $N_{i,j}(\awg_\lambda';u,v)=N_{j,i}(\awg_\lambda';u,v)=0$, thus we only need to consider the case when they are in the same component. As we already proved $N_{i,j}(\awg_\lambda;u,v)=N_{j,i}(\awg_\lambda;u,v)$, it suffices to show that $N_{i,j}(\awg_\lambda';u,v)=N_{i,j}(\awg_\lambda;u,v)$.

If $N_{i,j}(\awg_\lambda';u,v)\neq N_{i,j}(\awg_\lambda;u,v)$ then there exist $w\in \awg_\lambda$ and directed edges $u\rightarrow w$, $w\rightarrow v$ in $\awg_\lambda$ such that $i,j \in \ades(u)$, $\{i,j\} \cap \ades(v) =\emptyset$,  $i\in \ades(w)$, and $w$ is in the different simple component of $\awg_\lambda$ from that of $u$ and $v$. By applying $\shift$ repeatedly if necessary, we may assume that $v$ is standard (Lemma \ref{lem:standeq}). Then by Lemma \ref{lem:eqfk} we have $\fsh(w) = (a+1, a-1)$, $\fsh(v) = \lambda$, the move from $w$ to $v$ is $1\rintch n$, and the move from $u$ to $w$ is of the first kind. In particular, we have $1 \in w^2$ and $n \in w^1$. However, in such a case there is no standard tableau $u$ from which $w$ is obtained by a move of the first kind, which is a contradiction. It follows that $N_{i,j}(\awg_\lambda';v,w)=N_{i,j}(\awg_\lambda;v,w)$ which implies the claim.
\end{proof}

\subsection{Minimality of $\awg_\lambda'$} Here we prove the minimality of $\awg_\lambda'$. More precisely, we have the following theorem.
\begin{thm} \label{thm:mineq} Suppose that $\Gamma$, $\Gamma'$ are two nb-admissible $\affSn$-graphs such that $U(\Gamma) \simeq U(\Gamma') \simeq \adeg_\lambda$. If $\Gamma$ is disconnected, then there exists an embedding from $\Gamma$ to $\Gamma'$.
\end{thm}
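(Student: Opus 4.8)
As in the proof of Theorem~\ref{thm:unequniq}, the plan is to identify the vertices and undirected edges of $\Gamma$ and $\Gamma'$ via the given isomorphisms $U(\Gamma)\simeq U(\Gamma')\simeq\adeg_\lambda$, so that both become $[1,n]$-labeled graphs on the vertex set $\RSYT(\lambda)$ with $\tau=\ades$; an embedding $\Gamma\hookrightarrow\Gamma'$ then amounts to showing that every directed edge $u\to v$ of $\Gamma$, say of weight $p>0$, is also an edge of $\Gamma'$ of weight $p$. Since $\Gamma$ is disconnected and $\adeg_\lambda$ has exactly two connected components $\adeg_\lambda^0,\adeg_\lambda^1$ (the lemma preceding the definition of $\awg_\lambda'$), the graph $\Gamma$ has exactly two components, with vertex sets $\adeg_\lambda^0$ and $\adeg_\lambda^1$; in particular $u$ and $v$ lie in the same component, so $\fsh(u)$ and $\fsh(v)$ — and, after any cyclic shift, $\fsh(\shift^m u)$ and $\fsh(\shift^m v)$ — have second rows of the same parity. (Note that, for a directed edge, $\Gamma$ being reduced and nb-admissible forces $\ades(u)\supsetneq\ades(v)$.)

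Conjugation by a power of $\shift$ permutes the simple reflections of $\affSn$ and fixes $\adeg_\lambda$, so for every $m$ the relabeled graph $\shift^m(\Gamma)$ is again a disconnected nb-admissible $\affSn$-graph with underlying simple graph $\adeg_\lambda$, and proving the claim for the edge $\shift^m u\to\shift^m v$ of $\shift^m(\Gamma)$ is equivalent to proving it for $u\to v$; thus we may freely replace $(\Gamma,\Gamma',u,v)$ by $(\shift^m(\Gamma),\shift^m(\Gamma'),\shift^m u,\shift^m v)$. I would choose $m$ so that, in the new frame, \emph{(i)} the edge survives the parabolic restriction to $[1,n-1]$ and \emph{(ii)} $\fsh(u)=\fsh(v)$. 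To do this, first use Lemma~\ref{lem:standeq} to make $v$ standard, so $\fsh(v)=\lambda=(a,a)$. Set $D\colonequals\ades(u)-\ades(v)\neq\emptyset$; the edge survives restriction to $[1,n-1]$ precisely when $D\not\subset\{n\}$. \textbf{Case A}: it survives. Then Lemma~\ref{lem:fshcomp2} gives $\fsh(u)\le\fsh(v)=\lambda$, and since $\fsh(u)\ge\sh(u)=\lambda$ we get $\fsh(u)=\fsh(v)=\lambda$, so (i) and (ii) hold. \textbf{Case B}: it does not survive, so $D=\{n\}$; now relabel once more by $\shift^{-1}$. The new difference set is $\{n-1\}\not\subset\{n\}$ (as $n\ge 3$), so the edge survives restriction to $[1,n-1]$, giving (i), and Lemma~\ref{lem:fshcomp2} gives $\fsh(\shift^{-1}u)\le\fsh(\shift^{-1}v)$. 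By Lemma~\ref{rskandshift} the value $\fsh$ changes by at most one box under $\shift^{\pm1}$, and $\fsh=\sh$ exactly for standard tableaux; hence $\fsh(\shift^{-1}v)$ equals $\lambda$ if $\shift^{-1}v$ is standard and $(a+1,a-1)$ otherwise. In the first sub-case $\fsh(\shift^{-1}u)\le\lambda\le\fsh(\shift^{-1}u)$, so $\fsh(\shift^{-1}u)=\lambda=\fsh(\shift^{-1}v)$; in the second, $\fsh(\shift^{-1}u)$ lies between $(a,a)$ and $(a+1,a-1)$ in dominance order and has second row of parity $a-1$ (same component), which forces $\fsh(\shift^{-1}u)=(a+1,a-1)=\fsh(\shift^{-1}v)$. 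Either way (ii) holds.

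After this reduction we are in the situation: $u\to v$ is a directed edge of $\Gamma$ of weight $p$, it survives restriction to $\Sym_n=\langle s_1,\dots,s_{n-1}\rangle$, and $\fsh(u)=\fsh(v)=\mu$ for a two-row partition $\mu$. The $\Sym_n$-graph $\Gamma\pres{[1,n-1]}$ is nb-admissible, and its simple underlying graph — hence, by Theorem~\ref{thm:admklund}, its cell decomposition — depends only on $U(\Gamma)=\adeg_\lambda$ and on $\tau$, so it agrees with that of $\awg_\lambda\pres{[1,n-1]}$, whose cells are the sets $\{T\in\RSYT(\lambda):\fsh(T)=\mu\}$, each isomorphic to $\Gamma_\mu$. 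Thus $u$ and $v$ lie in a single cell $\cC\simeq\Gamma_\mu$ of $\Gamma\pres{[1,n-1]}$, and likewise in a cell $\cC'$ of $\Gamma'\pres{[1,n-1]}$ having the same vertex set and the same simple underlying graph; by Theorems~\ref{thm:admklund} and~\ref{thm:admkl} the directed structures of $\cC$ and $\cC'$ coincide (exactly as used in the proof of Theorem~\ref{thm:unequniq}), so the edge $u\to v$ of weight $p$ lies in $\cC'$, hence in $\Gamma'\pres{[1,n-1]}$, and — since it survives the restriction — we get $m_{\Gamma'}(u\to v)=p$. Undoing the cyclic shifts completes the argument.

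The step I expect to be the main obstacle is Case~B, i.e. making precise that an edge killed by the ``visible'' parabolic restriction reappears after exactly one further cyclic shift and that its endpoints then acquire the \emph{same} final shape; this is where disconnectedness is indispensable (it pins down the parity of $\fsh$) and where one must combine the orderedness input of Lemma~\ref{lem:fshcomp2} with the one-box control of $\fsh$ under $\shift$ from Lemma~\ref{rskandshift}. A secondary point to be careful about, as in Theorem~\ref{thm:unequniq}, is that the isomorphisms $\cC\simeq\Gamma_\mu$ and $\cC'\simeq\Gamma_\mu$ from Theorem~\ref{thm:admkl} are compatible with the canonical vertex labelings, so that the equality ``$\cC=\cC'$'' is meaningful.
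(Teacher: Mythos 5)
Your proof is correct and uses the same overall framework as the paper's: normalize via $\shift$ so that $v$ is standard, arrange for the directed edge to survive a parabolic restriction to a copy of $\Sym_n$ with both endpoints landing in the same cell, and then invoke Theorems~\ref{thm:admklund} and~\ref{thm:admkl}. Your case split (``edge survives restriction to $[1,n-1]$'' vs.\ not) matches the paper's (``$\shift^l(u)$ is also standard'' vs.\ not) after the dust settles: in Case~A Lemma~\ref{lem:fshcomp2} forces $\fsh(u)=\lambda$, so $u$ is standard; and in Case~B, as in the paper's Case~2, one gets $\ades(u)=\ades(v)\sqcup\{n\}$. The one place you genuinely differ --- and it is a mild simplification --- is how Case~B is finished. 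The paper asserts the \emph{existence} of some $t\neq l$ with $\fsh(\shift^t u)=\fsh(\shift^t v)$ via an implicit discrete intermediate-value argument along the $\shift$-orbit; you instead apply $\shift^{-1}$ just once and verify directly, by combining the dominance sandwich $\sh\leq\fsh(\shift^{-1}u)\leq\fsh(\shift^{-1}v)$ from Lemma~\ref{lem:fshcomp2} with the fact that $\fsh_2$ changes by exactly $1$ under $\shift$ in the equal-row case and the parity constraint that disconnectedness gives, that both $\fsh$'s must equal $(a+1,a-1)$. This is cleaner because it names the $t$ rather than proving existence. A small cosmetic point: in Case~B the sub-case ``$\shift^{-1}v$ standard'' is actually vacuous, since a standard tableau of shape $(a,a)$ always has $n$ in the second row, so $\shift$ applied to it pushes $\fsh_2$ down to $a-1$; hence $\shift^{-1}v$ cannot be standard when $v$ is. This doesn't affect correctness, only tidiness. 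Your closing caveat about the compatibility of the $\cC\simeq\Gamma_\mu$ and $\cC'\simeq\Gamma_\mu$ identifications is a real but benign point --- the paper's proof of Theorem~\ref{thm:unequniq} leans on it in exactly the same implicit way.
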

\begin{proof} As $U(\Gamma)\simeq U(\Gamma')$, it suffices to show that if there exists an edge $u\rightarrow v$ of weight $p>0$ in $\Gamma$ then the same edge exists in $\Gamma'$. To this end we choose $k, l \in [1,n]$ such that $\shift^k(u)$ and $\shift^{l}(v)$ are standard, which exist by Lemma \ref{lem:standeq}. Note that $u$ and $v$ are in the same component of $\adeg_\lambda\simeq U(\Gamma)$ as $\Gamma$ is disconnected.

First suppose that $\shift^l(u)$ is also standard. In our situation, Lemma \ref{lem:rsktworow} implies that $\fsh(T)_2$ and $\fsh(\shift(T))_2$ always differ by 1 for any $T\in \RSYT(\lambda)$. (Note that if $T$ is standard then $n \in T^2$ as we consider equal length cases.) Therefore, $\fsh(\shift^l(u))=\fsh(\shift^{l}(v))=\lambda=(a,a)$ and $\fsh(\shift^{l\pm 1}(u))=\fsh(\shift^{l\pm 1}(v))=(a+1, a-1).$ If we identify $\Sym_n$ with the finite maximal parabolic subgroup of $\affSn$ generated by $I-\{s_{t}\}$ for each $t\in \{\mo{l-1}, \mo{l}, \mo{l+1}\}$, then $u$ and $w$ are in the same simple component of $\Gamma\pres{\pint{\mo{t+1}, \mo{t-1}}}$ and there exists at least one $t$ such that the edge $u\rightarrow w$ of weight $p$ survives in the parabolic restriction, i.e. $\ades(u)-\{\mo{t}\} \supsetneq \ades(v) -\{\mo{t}\}$. Now by Theorem \ref{thm:admkl}, this edge should also appear in $\Gamma'$ with weight $p$ as desired.

Now assume that $\shift^l(u)$ is not standard. Since $u$ and $v$ are in the same connected component of $\adeg_\lambda$, we have $\fsh(u)_2\equiv \fsh(v)_2 \pmod 2$. Therefore, there exists $t \in [1,n]$ different from $l$ such that $\fsh(\shift^t(u))=\fsh(\shift^t(v))$. On the other hand, by Theorem \ref{thm:admord} the edge $u\rightarrow v$ vanishes on the parabolic restriction $\Gamma\pres{\pint{\mo{l+1}, \mo{l-1}}}$, which means that $\ades(u) = \ades(v) \sqcup \{l\}$. Thus $\ades(u)-\{t\} \supsetneq \ades(v) - \{t\}$ and this edge survives in $\Gamma\pres{\pint{\mo{t+1}, \mo{t-1}}}$. Again by Theorem \ref{thm:admkl}, this edge should also appear in $\Gamma'$ with weight $p$ as needed.
\end{proof}

\begin{rmk} Note that we do not assume that $\Gamma$ is stable under the action of $\shift$ in the proof of the above theorem. Instead, we choose a maximal parabolic subgroup of $\affSn$ which may be different from the conventional choice and apply Theorem \ref{thm:admord} with respect to this parabolic subgroup.
\end{rmk}

\begin{cor} \label{cor:eqmin} If $\Gamma$ is a $\affSn$-graph such that $U(\Gamma) \simeq \adeg_\lambda$, then there exists an embedding from $\awg_\lambda'$ to $\Gamma$. In other words, $\awg_\lambda'$ is the minimal $\affSn$-graph such that $\awg_\lambda' \simeq \adeg_\lambda$.
\end{cor}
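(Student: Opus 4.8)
\emph{Proof proposal.}
The plan is to obtain this as an immediate consequence of Theorem \ref{thm:mineq}, taking the ``smaller'' graph there to be $\awg_\lambda'$ itself. First I would check that $\awg_\lambda'$ satisfies the hypotheses required of the graph $\Gamma$ in Theorem \ref{thm:mineq}. It is an $\affSn$-graph by the theorem just proved. It is nb-admissible: it is obtained from the nb-admissible graph $\awg_\lambda$ by deleting only \emph{directed} edges, i.e. edges $u\ra v$ with $\ades(u)$ and $\ades(v)$ comparable, so the only nontrivial condition, $m(u\ra v)=m(v\ra u)$ for incomparable $\tau$-values, is untouched; and $\im m\subset\{0,1\}$ is preserved. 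Its simple underlying graph is still $\adeg_\lambda$, since no undirected edge is removed, so $U(\awg_\lambda')\simeq\adeg_\lambda$. Finally $\awg_\lambda'$ is disconnected: by the lemma recalled above $\adeg_\lambda$ already decomposes into the two components $\adeg_\lambda^0$ and $\adeg_\lambda^1$, and by construction every directed edge of $\awg_\lambda$ joining these two components has been deleted in $\awg_\lambda'$, so no path connects $\adeg_\lambda^0$ to $\adeg_\lambda^1$ in $\awg_\lambda'$.

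With these verifications in place, I would apply Theorem \ref{thm:mineq} with its ``$\Gamma$'' taken to be $\awg_\lambda'$ and its ``$\Gamma'$'' taken to be the given $\affSn$-graph $\Gamma$ (which, as throughout this section, we understand to be nb-admissible), noting $U(\awg_\lambda')\simeq\adeg_\lambda\simeq U(\Gamma)$. Since $\awg_\lambda'$ is disconnected, Theorem \ref{thm:mineq} produces an embedding $\awg_\lambda'\hookrightarrow\Gamma$, which is exactly the assertion; the phrase ``$\awg_\lambda'$ is the minimal $\affSn$-graph with $U(\awg_\lambda')\simeq\adeg_\lambda$'' is merely a restatement of the existence of such an embedding into every such $\Gamma$.

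There is essentially no hard step: the entire content has been front-loaded into Theorem \ref{thm:mineq} (and, behind it, into Nguyen's orderedness result Theorem \ref{thm:admord} together with the structure Theorems \ref{thm:admklund} and \ref{thm:admkl}). The only places that need a line of care are the three checks of the first paragraph (nb-admissibility, $U(\awg_\lambda')\simeq\adeg_\lambda$, disconnectedness). I would also remark, as a bonus, that applying Theorem \ref{thm:mineq} in the other direction --- with $(\Gamma,\Gamma')=(\Gamma,\awg_\lambda')$ for any \emph{disconnected} nb-admissible $\affSn$-graph $\Gamma$ with $U(\Gamma)\simeq\adeg_\lambda$ --- gives an embedding $\Gamma\hookrightarrow\awg_\lambda'$, and combining the two embeddings (both of which are the identity on vertices and on undirected edges) forces $\Gamma\simeq\awg_\lambda'$; thus $\awg_\lambda'$ is in fact the unique disconnected such graph.
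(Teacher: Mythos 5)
Your proof is correct and takes the same route the paper does: the authors simply write ``It is clear from the theorem above,'' meaning precisely the application of Theorem~\ref{thm:mineq} with $\awg_\lambda'$ playing the role of the disconnected graph $\Gamma$ there, and you have just filled in the routine verifications (nb-admissibility, $U(\awg_\lambda')\simeq\adeg_\lambda$, disconnectedness) that justify the word ``clear.'' Your concluding remark that $\awg_\lambda'$ is the unique disconnected nb-admissible $\affSn$-graph with simple underlying graph $\adeg_\lambda$ is a valid extra observation not in the paper.
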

\begin{proof} It is clear from the theorem above.
\end{proof}

\begin{rmk} There are more than two $\affSn$-graphs, $\awg_\lambda'$ and $\awg_\lambda$, whose simple underlying graph is isomorphic to $\adeg_\lambda$. For example, if we remove the directed edges from $\adeg_\lambda^0$ to $\adeg_\lambda^1$ but keeps the ones from $\adeg_\lambda^1$ to $\adeg_\lambda^0$ in $\awg_\lambda$, then it is easy to show that this is also a $\affSn$-graph which is ``between $\awg_\lambda'$ and $\awg_\lambda$''. This graph is not $\shift$-invariant as $\shift$ swaps two simple components.
\end{rmk}

%
%
%
%
%
%
%
%
\subsection{Maximality of $\awg_\lambda$} Here we prove the maximality of $\awg_\lambda$. To this end, first we recall the notion of arc transport in \cite{chm15}.
\begin{lem}[{\cite[2.3, Lemma 1]{chm15}}] \label{lem:arctrans} Let $W$ be a Coxeter group whose Dynkin diagram is simply-laced, $\Gamma=(V, m, \tau)$ is an nb-admissible $W$-graph, and $x, y, x', y' \in \Gamma$. Suppose that $i, j, k$ are simple reflections of $W$ such that $k \in (\tau(x)\cap\tau(x'))-(\tau(y)\cup\tau(y'))$, $i \in (\tau(x)\cap\tau(y)) - (\tau(x') \cup \tau(y'))$, and $j \in (\tau(x')\cap\tau(y')) - (\tau(x) \cup \tau(y))$. (Thus in particular $i$ and $j$ are adjacent in the Dynkin diagram of $W$ by the Compatibility Rule.) If $m(x,x')=m(x',x)=m(y,y')=m(y',y)=1$, then $m(x, y) = m(x',y')$. Pictorially, we have:
$$\begin{tikzcd}
\framebox{k,i}_x \ar[rr,dash]\ar[d,dashrightarrow,"{m(x,y)}"]&&\framebox{k,j}_{x'}\ar[d, dashrightarrow,"{m(x',y')}"]\\
\framebox{i}_{y}\ar[rr,dash]&&\framebox{j}_{y'}
\end{tikzcd}$$
\end{lem}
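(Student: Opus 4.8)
\emph{Proof plan.} The route I would take is to invoke this lemma directly, since it is precisely \cite[2.3, Lemma 1]{chm15}; the only point that needs checking is that, as with the other results we quote from \cite{chm15}, the argument given there never uses the bipartition hypothesis, so it remains valid for nb-admissible $W$-graphs. For orientation I recall the mechanism and where the work lies.

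The first step is the observation already recorded in the statement: because $m(x\ra x')\neq 0$ with $i\in\tau(x)-\tau(x')$ and $j\in\tau(x')-\tau(x)$, the Compatibility Rule forces $i$ and $j$ to be adjacent in the Dynkin diagram of $W$. The key step is then to apply the Polygon Rule with $r=2$ to the pair $\{i,k\}$ and to the vertices $u=x$, $v=y'$: one has $i,k\in\tau(x)$ and $i,k\notin\tau(y')$, so
$$N^2_{ik}(\Gamma;x,y')=N^2_{ki}(\Gamma;x,y').$$
In the left-hand sum the summand $w=y$ lies in $V_{i/k}$ (since $i\in\tau(y)$ and $k\notin\tau(y)$) and, using $m(y\ra y')=1$, contributes exactly $m(x\ra y)$; in the right-hand sum the summand $w=x'$ lies in $V_{k/i}$ (since $k\in\tau(x')$ and $i\notin\tau(x')$) and, using $m(x\ra x')=1$, contributes exactly $m(x'\ra y')$. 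Thus $m(x\ra y)=m(x'\ra y')$ will follow once all remaining summands on the two sides are shown to cancel against one another.

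That matching of the remaining summands is the main obstacle and the part that actually requires work: one must show that every other $w\in V_{i/k}$ with $m(x\ra w)\,m(w\ra y')\neq 0$ is paired, with an equal product of weights, with some $w'\in V_{k/i}$, and conversely. Following \cite{chm15} I would use the Simplicity Rule (edges between vertices with incomparable $\tau$-sets have weight $1$), the admissibility condition $\im m\subset\bN$, and the Compatibility Rule to reduce to the configuration of the displayed square --- in which $\tau(x),\tau(x'),\tau(y),\tau(y')$ meet $\{i,j,k\}$ in exactly $\{i,k\},\{j,k\},\{i\},\{j\}$ --- and then invoke the Bonding Rule to set up the required pairing. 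Since bipartiteness enters nowhere in this bookkeeping, the lemma holds in the nb-admissible generality stated, and this ``bipartiteness-free'' check is the only thing I would need to verify beyond citing \cite{chm15}.
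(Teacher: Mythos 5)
Your proposal matches the paper's own proof exactly: both simply cite \cite[2.3, Lemma 1]{chm15} and observe that Chmutov's argument never invokes bipartiteness, so it carries over verbatim to nb-admissible $W$-graphs. (The paper adds one extra remark you omit — that Chmutov's lemma is actually stated for the weaker class of $W$-molecular graphs — but this is only an aside; your additional sketch of the Polygon-Rule mechanism behind Chmutov's proof is accurate and harmless bonus content.)
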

\begin{proof} Again, the proof of \cite[2.3, Lemma 1]{chm15} does not use the bipartition property, thus it applies to our setting. Also, {\cite[2.3, Lemma 1]{chm15}} only assumes that $\Gamma$ is a ``$W$-molecular'' graph which is weaker than being a $W$-graph.
\end{proof}
\ytableausetup{notabloids}

\begin{lem} \label{lem:arctworow} Let $\Gamma=(V,m,\tau)$ be an nb-admissible $\affSn$-graph such that $U(\Gamma) \simeq \adeg_\lambda$. (Thus in particular we may set $V=\RSYT(\lambda)$ and $\tau=\ades$.) Suppose that $u$ and $v$ are in different simple components of $\Gamma$, there exists a directed edge $u\rightarrow v$ of weight $p>0$ in $\Gamma$, and $v$ is standard. Then it is a move of the first kind (of weight $p$) and $p$ is equal to the weight of the edge from $\ytableaushort{24\cdots {\scriptstyle n-4}{\scriptstyle n-2}n,135\cdots{\scriptstyle n-3}{\scriptstyle n-1}}$ to $\ytableaushort{124\cdots {\scriptstyle n-4}{\scriptstyle n-2},35\cdots{\scriptstyle n-3}{\scriptstyle n-1}n}$.
\end{lem}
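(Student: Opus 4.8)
The plan is to combine Lemma \ref{lem:eqfk} with the arc transport of Lemma \ref{lem:arctrans} in order to move the edge $u \to v$ to a canonical position, where it becomes the specific edge named in the statement. First I would invoke part (1) of Lemma \ref{lem:eqfk}: since $u$ and $v$ lie in different simple components and $v$ is standard (so $\fsh(v) = \lambda = (a,a)$ while $\fsh(u) > \fsh(v)$), the move $u \to v$ is of the first kind, and by part (2) it must be $n \intch 1$ with $\fsh(u) = (a+1,a-1)$. In particular $1 \in u^2$, $n \in u^1$, and $u^1 \setminus \{n\} = v^1 \setminus \{1\}$, $u^2 \setminus \{1\} = v^2 \setminus \{n\}$. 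So the edge has weight $p$ and interchanges $n$ (in $u^1$) with $1$ (in $u^2$), and the only remaining freedom is the choice of the standard tableau $v \in \SYT(\lambda)$.

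Next I would set up an arc-transport argument to march $(u,v)$ through the collection of all such first-kind edges between $\adeg_\lambda^1$ and $\adeg_\lambda^0$, concluding that they all carry the same weight $p$. Concretely: fix another standard $v' \in \SYT(\lambda)$ connected to $v$ by a single Knuth move in $\fdeg_\lambda = U(\Gamma_\lambda)$, let $u'$ be the tableau obtained from $v'$ by the move $1 \rintch n$ (so $u' \to v'$ is the corresponding first-kind edge), and apply Lemma \ref{lem:arctrans} with $x = u$, $y = v$, $x' = u'$, $y' = v'$. One checks that $k = n \in (\ades(u) \cap \ades(u')) \setminus (\ades(v) \cup \ades(v'))$ (since $n \in u^1, u'^1$ while $v,v'$ standard forces $n \in v^2, v'^2$, and also $1 \in v^1, v'^1$), and that the Knuth move relating $v$ to $v'$ interchanges some $i \leftrightarrow i+1$ giving simple reflections $i, j = i+1$ with $i \in (\ades(u)\cap\ades(v))\setminus(\ades(u')\cup\ades(v'))$ and $j \in (\ades(u')\cap\ades(v'))\setminus(\ades(u)\cup\ades(v))$ — here one uses that $n \notin \{i, i+1, i+2\}$ for these interior Knuth moves, so adding or removing $n$ does not interfere. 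Since $m(u\ra u') = m(u'\ra u) = m(v\ra v')=m(v'\ra v)=1$ (these are undirected Knuth edges in $\Gamma$, hence of weight $1$ by admissibility and $U(\Gamma)\simeq\adeg_\lambda$), the lemma gives $m(u\ra v)=m(u'\ra v')$. Because $\fdeg_\lambda$ is connected (every standard two-row tableau is reachable from every other by Knuth moves), iterating this shows all first-kind edges from $\adeg_\lambda^1$ to $\adeg_\lambda^0$ have the same weight $p$; in particular $p$ equals the weight of the edge from the reading-order tableau $\ytableaushort{24\cdots {\scriptstyle n-4}{\scriptstyle n-2}n,135\cdots{\scriptstyle n-3}{\scriptstyle n-1}}$ to $\ytableaushort{124\cdots {\scriptstyle n-4}{\scriptstyle n-2},35\cdots{\scriptstyle n-3}{\scriptstyle n-1}n}$, which is precisely such an edge.

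The main obstacle I anticipate is the bookkeeping in the arc-transport step: one must verify carefully that the simple reflections $k = n$, $i$, $j = i+1$ attached to a given interior Knuth move satisfy all the membership conditions of Lemma \ref{lem:arctrans} simultaneously for the quadruple $(u,u',v,v')$, and that $u'$ is indeed well-defined as a genuine vertex of $\adeg_\lambda^1$ with a first-kind edge to $v'$ (i.e. that removing $n$ from $u^1$ and re-running the shape computation behaves as expected when $v$ is perturbed by a Knuth move). A secondary point is confirming that the two displayed tableaux are in distinct simple components and connected by exactly a first-kind edge $n \intch 1$, so that they genuinely occur as one of the edges in the arc-transport orbit; this follows from the shape computation $\fsh = (a+1,a-1)$ versus $\fsh = (a,a)$ together with Lemma \ref{lem:eqfk}(2). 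Once these verifications are in place the conclusion is immediate.
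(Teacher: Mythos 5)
Your proposal has two genuine gaps, the first of which is fatal.

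\textbf{Gap 1: circular use of Lemma \ref{lem:eqfk}.} Lemma \ref{lem:eqfk} is stated and proved for $\awg_\lambda$, where by construction every directed edge is a ``move,'' i.e. connects tableaux differing by a single swap of a row-1 entry with a row-2 entry. In the present Lemma \ref{lem:arctworow}, $\Gamma$ is an \emph{arbitrary} nb-admissible $\affSn$-graph with $U(\Gamma) \simeq \adeg_\lambda$: we know the undirected part, but the directed edge $u\to v$ could a priori connect any pair with $\ades(u)\supsetneq\ades(v)$, not just a pair differing by a single transposition. The claim ``it is a move of the first kind'' is precisely part of the \emph{conclusion} of the lemma you are asked to prove; importing it from Lemma \ref{lem:eqfk} assumes what needs to be shown. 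In the paper's proof this fact is extracted only at the very end: the chain of arc transports preserves the symmetric difference $u^1\,\triangle\,v^1$, the constraints $\ades(u)=\ades(v)\sqcup\{n\}$ and $\fsh(u)=(a+1,a-1)$ force the final pair to be the two displayed tableaux (which do differ by exactly $\{1,n\}$), and therefore the original pair also differs by exactly $\{1,n\}$.

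\textbf{Gap 2: the arc-transport step does not always apply, and your restriction to ``interior'' Knuth moves leaves an unproved connectivity claim.} Even granting your first step, the quadruple $(u,u',v,v')$ you build from an arbitrary Knuth move of standard tableaux need not satisfy the hypotheses of Lemma \ref{lem:arctrans}. For example, take the Knuth move at $t=2$ (swap $2\leftrightarrow 3$) in the case $4\in v^1$: here $\ades(v)-\ades(v')=\{2\}$ and $\ades(v')-\ades(v)=\{1\}$, so the only candidate for $j$ is $1$; but $1\in u'^2$ gives $1\notin\ades(u')$, and in fact $\ades(u')\subsetneq\ades(u)$, so the undirected edge $u\text{---}u'$ you need is simply not there. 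Your parenthetical restriction ``$n\notin\{i,i+1,i+2\}$'' does not exclude this case, and you do not prove that the Knuth moves where the transport does go through connect $v$ to the canonical tableau. The paper handles exactly these boundary steps differently: it uses the constraint $\fsh(u)=(a+1,a-1)$ to deduce (say) $4\in u^1$ by direct computation, then rules out $4\in v^2$ by an arc transport \emph{to a contradiction} (the transported weight $p>0$ would force a Knuth move between tableaux with different $\fsh$, which is impossible), and iterates. This structured reduction is what both avoids the bad Knuth moves and simultaneously establishes the first-kind property, and it is not replaced by a generic ``march through the dual equivalence graph.''
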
 
\ytableausetup{tabloids}
\begin{proof} First note that $\fsh(u) \neq \fsh(v)$ by assumption, thus by Theorem \ref{thm:admord} we have $\ades(u) = \ades(v) \sqcup\{n\}$. On the other hand, if $\fsh(u) \geq (a+3, a-3)$, then $\fsh(\shift(u))>\fsh(\shift(v))$ which means that the edge $u\rightarrow v$ is removed in the parabolic restriction $\Gamma\pres{[2,n]}$ again by Theorem \ref{thm:admord}. However, this contradicts the fact that $\ades(u) = \ades(v) \sqcup\{n\}$, thus we should have $\fsh(u) = (a+1,a-1)$. ($\fsh(u)\neq (a+2, a-2)$ since $u$ and $v$ are in different simple components.)

Furthermore, $n\in \ades(u)$ if and only if $1 \in u^2$ and $n\in u^1$, thus $n-1, 1 \notin \ades(v)\subset \ades(u)$. As $1 \in v^1$ ($v$ is standard), this means that $2\in v^1$ as well. Also, if $2\in u^2$ then direct calculation shows that $\fsh(u) \geq (a+2, a-2)$, thus we should have $2\in u^1$. Now let $x \in [2,n-1]$ be the smallest entry of $\ades(v)$. Then $[2,x] \subset u^1 \cap v^1$ and $x+1 \in u^2\cap v^2$, i.e. we have
$$u=\ytableaushort{2\cdots x\cdots,1{\scriptstyle x+1}\cdots\cdots} \quad \rightsquigarrow \quad v=\ytableaushort{12\cdots x\cdots,{\scriptstyle x+1}\cdots\cdots\cdots\cdots}$$

Suppose that $x>2$. Then we set $u'$ (resp, $v'$) to be the tableau obtained from $u$ (resp. $v$) by swapping $x$ and $x+1$. Then these are allowed moves in \ref{sec:defawg} of the first kind and also $\ades(u)$ and $\ades(u')$ (resp. $\ades(v)$ and $\ades(v')$) are incomparable, thus there exist undirected edges $u - u'$ and $v -v'$. Now we use Lemma \ref{lem:arctrans} with $(i,j,k) = (x-1, x, n)$ and thus we have $m(u'\ra v') = m(u\ra v) = p$. Furthermore, it is clear that $\fsh(u) = \fsh(u')$, $\fsh(v)=\fsh(v')$, and $u\rightarrow v$ is a move of the first kind if and only if $u' \rightarrow v'$ is a move of the first kind. Thus by iterating this process, we only need to consider the case when $2\in \ades(v)$, i.e. we have
$$u=\ytableaushort{2\cdots\cdots,13\cdots} \quad \rightsquigarrow \quad v=\ytableaushort{12\cdots,3\cdots\cdots},$$

By direct calculation, $\fsh(u) =(a+1,a-1)$ implies that $4 \in u^1$. If $n=4$, then $4 \in v^2$ and we are done. Otherwise, if $4\in v^2$ then let us set $u'$ (resp. $v'$) to be the tableau obtained from $u$ (resp. $v$) by swapping 3 and 4 (resp. 2 and 3). These are allowed moves in \ref{sec:defawg} and $\ades(u)$ and $\ades(u')$ (resp. $\ades(v)$ and $\ades(v')$) are incomparable, thus there exist undirected edge $u-u'$ and $v-v'$. Pictorially, we have
$$\begin{tikzcd}
u=\ytableaushort{24\cdots,13\cdots}\ar[r,rightsquigarrow]\ar[d,dash]& v=\ytableaushort{12\cdots,34\cdots}\ar[d, dash]\\
u'=\ytableaushort{23\cdots,14\cdots}\ar[r,rightsquigarrow]&v'=\ytableaushort{13\cdots,24\cdots}
\end{tikzcd}$$
Thus by Lemma \ref{lem:arctrans} with $(i,j,k) = (2,3,n)$, we should have $m(u'\ra v') = m(u\ra v) =p>0$. However, this is impossible as $1\in \ades(v') -\ades(u')$. It follows that $4 \in v^1$, i.e. we have
$$u=\ytableaushort{24\cdots,13\cdots} \quad \rightsquigarrow \quad v=\ytableaushort{124\cdots,3\cdots\cdots\cdots},$$

Now we choose $x\in [4,n-1]$ to be the smallest entry of $\ades(v)$. By the same argument as above, it suffices to consider the case when $x=4$. Then $5 \in u^2\cap v^2$ and $6 \in u^1$ as $\fsh(u) = (a+1, a-1)$. Now if $n=6$ then $6 \in v^2$ and we are done. Otherwise, we iterate the argument above, and eventually we only need to consider the case when
$$u=\ytableaushort{24\cdots {\scriptstyle n-4}{\scriptstyle n-2}n,135\cdots{\scriptstyle n-3}{\scriptstyle n-1}} \quad \rightsquigarrow \quad v=\ytableaushort{124\cdots {\scriptstyle n-4}{\scriptstyle n-2},35\cdots{\scriptstyle n-3}{\scriptstyle n-1}n}.$$
Now the statement follows from the fact that $u\rightarrow v$ is a move of the first kind $n\intch 1$.
\end{proof}

From the lemma above we deduce the maximality of $\awg_\lambda$.
\begin{thm} \label{thm:eqmax} If $\Gamma$ is an nb-admissible $\affSn$-graph such that $U(\Gamma) \simeq \adeg_\lambda$ and there exists an embedding from $\awg_\lambda$ to $\Gamma$, then this embedding is an isomorphism.
\end{thm}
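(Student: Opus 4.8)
The plan is to show that the given embedding $\awg_\lambda\hookrightarrow\Gamma$ is surjective on edges, i.e.\ that $\Gamma$ carries no directed edge beyond those of $\awg_\lambda$. Exactly as in the proofs of Theorems \ref{thm:unequniq} and \ref{thm:mineq}, the embedding together with $U(\Gamma)\simeq\adeg_\lambda\simeq U(\awg_\lambda)$ identifies the vertex sets, the $\tau$-functions, and the undirected edges of $\awg_\lambda$ and $\Gamma$, and moreover forces every edge of $\awg_\lambda$ to be an edge of $\Gamma$ of the same weight. So it will suffice to prove the converse: every directed edge $u\to v$ of $\Gamma$ of weight $p>0$ is a directed edge of $\awg_\lambda$ of the same weight. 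I would split into two cases, according to whether $u$ and $v$ lie in the same connected component of $\adeg_\lambda$ (equivalently, the same simple component of $\Gamma$; cf.\ \ref{sec:cellsimple}) or in the two distinct components $\adeg_\lambda^0,\adeg_\lambda^1$.

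In the first case I would run the argument from the proof of Theorem \ref{thm:mineq} essentially verbatim. That argument uses the disconnectedness of $\Gamma$ only to force the endpoints of the edge under consideration to lie in a single component of $\adeg_\lambda$, which is precisely our standing hypothesis here; the remainder applies unchanged. Concretely, one uses Lemma \ref{lem:standeq} and Lemma \ref{lem:rsktworow} to locate a maximal parabolic subgroup isomorphic to $\Sym_n$ --- possibly not the standard one, as in Theorem \ref{thm:mineq}, or obtained after relabeling by a power of $\shift$ (legitimate since $\awg_\lambda$ is $\shift$-invariant) --- in which the two (relabeled) vertices acquire the same $\fsh$, hence lie in one cell of the restricted graph, and in which the edge $u\to v$ survives; by Theorems \ref{thm:admklund} and \ref{thm:admkl} that cell is pinned down as an $I$-labeled graph by its dual equivalence graph, and since $\awg_\lambda$ restricts in the same way we get $m_\Gamma(u\to v)=m_{\awg_\lambda}(u\to v)=p$.

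The substantive case is when $u$ and $v$ lie in different components, and here I would extract from Lemma \ref{lem:arctworow} not merely its statement but the arc-transport chain built in its proof. Using Lemma \ref{lem:standeq} and $\shift$-invariance of $\awg_\lambda$, we may assume after relabeling that $v$ is standard. The proof of Lemma \ref{lem:arctworow} then shows that $u\to v$ is a move of the first kind and produces an explicit finite chain of arc transports --- each an application of Lemma \ref{lem:arctrans} --- linking $u\to v$ to the single distinguished edge displayed in that lemma, namely the move of the first kind $n\intch 1$ between the two two-row tableaux given there, which I will call $u_0\to v_0$. The crucial point is that every arc-transport configuration occurring in this chain involves only undirected edges of $\adeg_\lambda$ and $\tau$-values, i.e.\ data common to $\Gamma$ and to $\awg_\lambda$; consequently the identical chain can be traversed inside $\awg_\lambda$, and Lemma \ref{lem:arctrans} delivers simultaneously $m_\Gamma(u\to v)=m_\Gamma(u_0\to v_0)$ and $m_{\awg_\lambda}(u\to v)=m_{\awg_\lambda}(u_0\to v_0)$. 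Since $u_0\to v_0$ is an edge of $\awg_\lambda$ of weight $1$ and $\awg_\lambda$ is an $I$-labeled subgraph of $\Gamma$, both right-hand sides equal $1$; undoing the $\shift$-relabeling, the original edge $u\to v$ appears in $\awg_\lambda$ with weight $p=1$.

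Combining the two cases, every directed edge of $\Gamma$ is a directed edge of $\awg_\lambda$ of the same weight, and since the undirected edges and $\tau$-functions already coincide, $\Gamma=\awg_\lambda$ and the embedding is an isomorphism. I expect the main obstacle to be the bookkeeping in the second case: one cannot invoke the \emph{statement} of Lemma \ref{lem:arctworow} inside $\awg_\lambda$, since it presupposes that the edge in question exists, so one is forced to reuse the arc-transport chain from its proof and to check carefully that this chain refers only to data shared by $\Gamma$ and $\awg_\lambda$, so that it can be replayed in both graphs. A secondary point requiring care --- but already routine from Theorems \ref{thm:unequniq} and \ref{thm:mineq} --- is the systematic use of relabeling by powers of $\shift$ and of non-standard maximal parabolic subgroups of $\affSn$.
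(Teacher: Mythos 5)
Your proof is correct and follows the same decomposition as the paper: reduce to showing every directed edge of $\Gamma$ lies in $\awg_\lambda$, treat the two cases according to whether the endpoints share a simple component, reuse the proof of Theorem \ref{thm:mineq} for the first case, and bring in Lemma \ref{lem:arctworow} for the second. The only place you diverge is in the bookkeeping of the second case. You worry that Lemma \ref{lem:arctworow} cannot be invoked inside $\awg_\lambda$ since it presupposes the edge exists there, and so you replay the entire arc-transport chain from its proof in both graphs simultaneously. This works, since arc transport (Lemma \ref{lem:arctrans}) only produces equalities of weights and the chain is governed by undirected edges and $\tau$-values, which $\Gamma$ and $\awg_\lambda$ share. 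But it is more than is needed: the conclusion of Lemma \ref{lem:arctworow} applied to $\Gamma$ alone already says that $u\to v$ is a \emph{move of the first kind}, and by the very definition of $\awg_\lambda$ in \ref{sec:defawg} every move of the first kind is an edge of $\awg_\lambda$ of weight $1$. So once the lemma pins down $p=1$ (by comparing with the distinguished edge, whose weight in $\Gamma$ is forced to be $1$ by the embedding hypothesis) and identifies the move type, the containment in $\awg_\lambda$ is immediate without replaying the chain a second time. That is the shortcut the paper takes.
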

\begin{proof} It suffices to show that if there exists a directed edge $u\rightarrow v$ of weight $p>0$ in $\Gamma$ and the same edge appears in $\awg_\lambda$. If $u$ and $v$ are in the same simple component, then it follows from the proof of Theorem \ref{thm:mineq}. Otherwise if $u$ and $v$ are in different simple components, then by Lemma \ref{lem:arctworow} this is a move of the first kind and also $p$ is equal to the weight of the directed edge
$$\shift^k\left(\ytableaushort{24\cdots {\scriptstyle n-4}{\scriptstyle n-2}n,135\cdots{\scriptstyle n-3}{\scriptstyle n-1}}\right) \rightarrow \shift^k\left(\ytableaushort{124\cdots {\scriptstyle n-4}{\scriptstyle n-2},35\cdots{\scriptstyle n-3}{\scriptstyle n-1}n}\right),$$ 
for some $k\in [1,n]$, which is always 1 by assumption. Thus the edge $u\rightarrow v$ is already contained in the image of $\awg_\lambda$ with the same weight $p=1$, which implies the statement. 
\end{proof}
\begin{rmk} Suppose that $\Gamma$ is an nb-admissible $\affSn$-graph such that $U(\Gamma)\simeq \adeg_\lambda$ and it is invariant under $\shift$. Then from the results above it is easy to show that there exists $p\geq 0$ such that every directed edge between two simple components is a move of the first kind of weight $p$. If $p=0$ (resp. $p=1$) then $\Gamma \simeq \awg_\lambda'$ (resp. $\Gamma \simeq \awg_\lambda$). In general, one can prove that $\Gamma$ is an nb-admissible $\affSn$-graph for any $p \in \bN$.
\end{rmk}

\section{Periodic $W$-graphs}\label{sec:periodicWgraph}
Here, we discuss how $\awg_\lambda$ is related to a periodic $W$-graph originally defined by Lusztig.
To this end, first we recollect the notion of a periodic $W$-graph focusing on affine type $A$. For reference see \cite{lus97} and \cite{var04}.

\subsection{Periodic $W$-graph}
We recall the root system of type $A_{n-1}$. Let $E$ be an $(n-1)$-dimensional real vector space equipped with an inner product $(\ , \ ): E\times E \rightarrow \bR$. Let $\Pi\colonequals\{\alpha_1, \ldots, \alpha_{n-1}\} \subset E$ be a fixed set of simple positive roots such that $(\alpha_i, \alpha_i) = 2$ for $1\leq i \leq n-1$, $(\alpha_i,\alpha_{i+1}) = -1$ for $1\leq i \leq n-2$, and $(\alpha_i, \alpha_j) = 0$ if $|i-j|>1$. Then the set of roots $R\subset E$ and positive roots $R^+\subset R$ are well-defined. Let $P$ be a root lattice, i.e. a free abelian group generated by $\Pi$ as a subgroup of $E$. Usually we realize this root system by letting $E=\{(x_1, \ldots, x_n)\subset \bR^n \mid \sum x_i=0\}$, $\Pi = \{e_1-e_2, \ldots, e_{n-1}-e_n\}$, etc.

We set $F_{{\alpha},k}\colonequals \{v \in E \mid ({\alpha},v)=k\}$ and $\fF\colonequals\{F_{{\alpha},k} \mid \alpha \in R, k \in \bZ\}.$
(As we only deal with type $A$ root system, we do not differentiate a root and its corresponding coroot.) Let $\fA$ be the set of all the connected components of $E-\cup_{F \in \fF} F$, each of which is called an alcove. Let $A_{id} \in \fA$ be the unique alcove which is in the dominant chamber and whose closure contains $0 \in E$.

For a partition $\lambda \vdash n$, we let $\Pi_\lambda \colonequals \{\alpha_i \in \Pi \mid i \neq n-\sum_{j=1}^k\lambda_j \textup{ for all } 1\leq k \leq l(\lambda)-1\}$. (The reason for adopting this definition rather than ``the opposite one'' will become clear as we proceed our argument.) Also let $R_\lambda$ (resp. $R^+_\lambda$) be the intersection of $R$ (resp. $R^+$) with the $\bZ$-span of $\Pi_\lambda$. Define $\fF_\lambda\colonequals\{F_{{\alpha},k} \in \fF \mid \alpha \in R_\lambda\}$. Then there exists a unique connected component of $E-\cup_{F\in \fF_\lambda} F$ which contains $A_{id}$; $v\in E$ is in this component if and only if $0< (\alpha,v)<1$ for all $\alpha \in R_\lambda^+$. Let $\fA_\lambda\subset \fA$ be the set of alcoves contained in this connected component. This will become a set of vertices of a periodic $\affSn$-graph we construct.

For $F\in \fF$, let $\mathbf{r}_F: E\rightarrow E$ be the reflection along $F$. We identify $\affSn$ with the group generated by $\mathbf{r}_F$ for $F \in \fF$. Under this correspondence, each $s_i$ for $1\leq i \leq n-1$ is assigned to $\mathbf{r}_{F_{\alpha_i,0}}$, and $s_0$ is assigned to $\mathbf{r}_{F_{\tilde{\alpha},1}}$ where $\tilde{\alpha}\colonequals \alpha_1+\alpha_2+ \cdots +\alpha_{n-1}\in R$ is the highest root. We regard $\affSn$ as acting on the right of $E, \fA,\fF,$ etc. For $v \in P$, we define $\mathbf{t}_v: E\rightarrow E$ to be the translation by $v$, which is naturally an element of $\affSn$.

Note that $\affSn$ acts simply on $\fF$ and $\{F_{\alpha_1, 0}, F_{\alpha_2, 0}, \ldots, F_{\alpha_{n-1}, 0}, F_{\tilde{\alpha}, 1}\}$ is the set of representatives of orbits. We say that $F\in \fF$ is of type $s_i$ for $1\leq i \leq n-1$ (resp. of type $s_0$) if $F$ is in the orbit of $F_{\alpha_i,0}$ (resp. $F_{\tilde{\alpha},1}$). For each $A\in \fA$ and each simple reflection $s$, there exists a unique $F\in \fF$ of type $s$ which is adjacent to $A$.

Let $\affS{\lambda^{op}}$ be the subgroup of $\affSn$ generated by reflections along $F \in \fF_\lambda$, which is isomorphic to and often identified with $\affS{\lambda_{l(\lambda)}}\times \cdots \times \affS{\lambda_2}\times \affS{\lambda_1}$.  Then $\affS{\lambda^{op}}$ acts simply on $\fA$ and each orbit meets $\fA_\lambda$ exactly once, thus $\fA_\lambda$ is the set of representatives of $\fA/\affS{\lambda^{op}}$. Let $\cT\colonequals\{\mathbf{t}_v \in \affSn \mid v\in P\}$ and define $\cT_\lambda$ to be the subgroup of $\cT$ generated by the translations by $\alpha_i \in \Pi_\lambda$. Note that $\cT_\lambda = \cT \cap \affS{\lambda^{op}}$ where the intersection is taken inside $\affSn$.

There is another (left) action of $\affSn$ on $\fA$ described as follows. Recall that for any $A\in \fA$ and a simple reflection $s\in \affSn$, there exists a unique hyperplane $F$ adjacent to $A$ which is of type $s$. We define $s \cdot A \colonequals A\cdot \mathbf{r}_F$ to be the image of $A$ under the reflection along $F$. It generates a well-defined left $\affSn$-action on $\fA$ which commutes with the right $\affSn$-action described above; indeed, it is not hard to show that $w\cdot A_{id}=A_{id}\cdot w$ for any $w\in \affSn$. Furthermore, if we set $A_w \colonequals w\cdot A_{id}=A_{id} \cdot w$, then the map $\affSn \rightarrow \fA: w \mapsto A_w$ is a bijection. (This is not the same convention as in \cite[1.1]{lus97} but in \cite[13.12]{lus97}.)

For each $F\in \fF$, there are two connected components of $E-F$. We denote one of such by $E_F^+$ (resp. $E_F^-$) where there exists $\mathbf{t}\in \cT$ such that $E_F^+\cdot \mathbf{t}$ contains the dominant Weyl chamber (resp. there does not exist such $\mathbf{t}\in \cT$). We also call $E_F^+$ (resp. $E_F^-$) the positive (resp. negative) upper half-space with respect to $F$. Now for $A, B\in \fA$, we define $d(A,B)$ by
$$d(A,B) = \left(\sum_{F\in \fF, A\in E_F^-, B\in E_F^+}1\right)-\left(\sum_{F\in \fF, A\in E_F^+, B\in E_F^-}1\right).$$
Note that each sum in the formula is finite and thus it is well-defined. Furthermore, it satisfies that $d(A,B)+d(B,C)+d(C,A) = 0$ for any $A, B, C\in \fA$. Now we define an order $\leq$ on $\fA$ as follows. For $A, B\in \fA$, we write $A\leq B$ if there exists $A_0, A_1, \ldots, A_k \in \fA$ such that $A_0=A$, $A_k=B$, $d(A_i,A_{i+1})=1$, and $A_{i+1}$ is the image of $A_i$ under reflection along some hyperplane in $\fF$ for $0\leq i \leq k-1$. Clearly $A<B$ implies $d(A,B)>0$, but not vice versa.

In \cite[Section 11]{lus97}, for any alcove $A \in \fA_\lambda$ a corresponding ``canonical basis'' $A^\flat$ is introduced which is an element of $\bZ[q^{\pm1}][\fA_\lambda]$ where $q$ is an indeterminate. (The element $A^\flat$ is originally defined to be contained in a certain completion of $\bZ[q^{\pm1}][\fA_\lambda]$. For type $A$, it was proved later by \cite{var04} that this is indeed an element of $\bZ[q^{\pm1}][\fA_\lambda]$.) It can be written as
$$B^\flat = \sum_{A \in \fA_\lambda, A\leq B} p_{A, B}A,$$
where  $p_{A, B}$ is a polynomial in $q^{-1}$. Furthermore, it is known that $p_{A,A}=1$ and $p_{A,B} \in q^{-1}\bZ[q^{-1}]$ if $A\neq B$.

For $A\in \fA_{\lambda}$, we let $\mathfrak{I}(A)$ be the set of simple reflections $s$ such that $sA \in \fA_\lambda$ and $sA>A$. Now for $A, B\in \fA_\lambda$ such that $\mathfrak{I}(A)\not\subset\mathfrak{I}(B)$, we define $\mu(A,B)=\mu(A\ra B)$ to be 
$$\mu(A,B)=\left[
\begin{aligned}
&\textup{the coefficient of } q^{-1} \textup{ in } p_{A,B} &&\textup{ if } A\leq B,
\\&1 &&\textup{ if } B< A=sB \textup{ for some simple reflection } s, 
\\&0 &&\textup{ otherwise.}
\end{aligned}\right.
$$
If $\mathfrak{I}(A)\subset\mathfrak{I}(B)$, we set $\mu(A,B)=\mu(A\ra B)=0$.
Let $\pwg_{\lambda}\colonequals (\fA_{\lambda}, \mu,\mathfrak{I})$ be the corresponding $[1,n]$-graph, where we identify the set of simple reflections of $\affSn$ with $[1,n]$.
Then it is proved that $\pwg_\lambda$ is a $\affSn$-graph, conventionally called a periodic $W$-graph.  
\begin{rmk} There are two twists in this definition compared to the original one \cite[11.13]{lus97}. First, this definition is taken from \cite[12.3]{lus97}, which is a $W$-graph complementary (in the sense of \cite[A.6]{lus97}) to \cite[11.13]{lus97}. In particular, the $\tau$-function $\mathfrak{I}$ here is not the same as $\mathfrak{I}$ but $\tilde{\mathfrak{I}}$ therein.  On the other hand, our definition of $\mu(A,B)$ is the same as that of \cite[11.13]{lus97} instead of \cite[12.3]{lus97}. This is because the definition of a $W$-graph in \cite[A.2]{lus97} is the transpose of our convention. (cf. \cite[Remark 1.1(a)]{ste08})
\end{rmk}

\subsection{Action of $\cT$ on $\pwg_\lambda$} \label{sec:gammaact}
We recall the result in \cite[2.12]{lus97}. The action of $\cT$ permutes $\affS{\lambda^{op}}$-orbits in $\fA$. Thus there is a well-defined action of $\cT$ on $\fA/\affS{\lambda^{op}}$, and under the identification $\fA/\affS{\lambda^{op}}\simeq \fA_\lambda$ we regard it as an action on $\fA_\lambda$. For $\mathbf{t}\in \cT$, we write $\gamma(\mathbf{t}):\fA_\lambda \rightarrow \fA_\lambda$ to denote such an action. (Note that this is in general different from the (right or left) action of $\mathbf{t}$ on $\fA$.) Then  the kernel of this action is $\cT_\lambda$. Furthermore, if we let $\fund_\lambda$ be the set of alcoves in $\fA_\lambda$ adjacent to $0\in E$, then $\fund_\lambda$ is the set of representatives of such $\cT/\cT_\lambda$-orbits. (This follows from \cite[2.12(f)]{lus97}.)

For $\alpha_i \in \Pi-\Pi_\lambda$, we describe $\gamma(\mathbf{t}_{\alpha_i}): \fA_\lambda \rightarrow \fA_\lambda$ explicitly as follows. According to \cite[2.12]{lus97}, there exists a unique $w\in \affS{\lambda^{op}}$ (which depends on $\alpha_i$) such that $A\cdot(\mathbf{t}_{\alpha_i}w) \in \fA_\lambda$ for any $A \in \fA_\lambda$, in which case we have $\gamma(\mathbf{t}_{\alpha_i})(A) = A\cdot (\mathbf{t}_{\alpha_i}w)$ by definition of $\gamma$. Thus it suffices to find $w\in \affS{\lambda^{op}}$ such that $A_{id}\cdot \mathbf{t}_{\alpha_i}w\in \fA_\lambda$. To this end, let $\rho \in E$ be the sum of fundamental weights, i.e. $\rho=\sum_{i=1}^{n-1} \frac{i(n-i)}{2}\alpha_i$. Then $\frac{\rho}{n} \in A_{id}$, thus it suffices to find $w\in \affS{\lambda^{op}}$ such that $(\alpha_i+\frac{\rho}{n})\cdot w \in \bigcup_{A\in \fA_\lambda} A$, i.e. $0<(\alpha, (\alpha_i+\frac{\rho}{n})\cdot w)<1$ for all $\alpha\in R_\lambda^+$. Let $ j, k \in[0,n]$ be such that $\alpha_j,\alpha_k \not \in \Pi_\lambda$, $j<i<k$, and $\alpha_l \in \Pi$ if $j<l<k$ and $l\neq i$. (Here we adopt the convention that $\alpha_0, \alpha_n \notin\Pi_\lambda$.) In other words, if $i=n-\sum_{x=1}^a \lambda_x$ then $j=n-\sum_{x=1}^{a+1} \lambda_x$ and $k=n-\sum_{x=1}^{a-1} \lambda_x$. We claim that $w=(s_{i-1}\cdots s_{j+1})(s_{i+1}\cdots s_{k-1})=(s_{i+1}\cdots s_{k-1})(s_{i-1}\cdots s_{j+1})$. Indeed, if $\alpha_l \in \Pi_\lambda$ then direct calculation shows that
$$
\left((\alpha_i+\frac{\rho}{n})\cdot w, \alpha_l\right)=\left\{
\begin{aligned}
&\frac{n-i+j}{n} &&\textup{ if } l=j+1,
\\&\frac{n-k+i}{n} &&\textup{ if } l=k-1,
\\&\frac{1}{n} &&\textup{ otherwise.}
\end{aligned}\right.
$$
From this it easily follows that $0<((\alpha_i+\frac{\rho}{n})\cdot w,\alpha)<1$ for all $\alpha\in R_\lambda^+$.

\subsection{A bijection between $\fund_\lambda$ and $\RSYT(\lambda)$}
Let $\Sym_n^\lambda \subset \Sym_n$ be the set of minimal coset representatives of $\Sym_n/\Sym_{\lambda^{op}}$, where $\Sym_{\lambda^{op}} = \Sym_{\lambda_{l(\lambda)}}\cdots \times \Sym_{\lambda_2} \times \Sym_{\lambda_1}$ naturally considered as a parabolic subgroup of $\Sym_n$. Then it is easy to show that $\fund_\lambda = \{A_w \mid w\in \Sym_n^\lambda\}$. Using this, we define a bijection $\Upsilon: \fund_\lambda \rightarrow \RSYT(\lambda)$ to be $\Upsilon(A_w) = w\cdot \Tc$ where $\Tc=\Tc_\lambda$ is the unique row-standard Young tableau of shape $\lambda$ whose reading word is $[1,2,\ldots,n]$ and $\Sym_n$ acts on $\RSYT(\lambda)$ by simply permuting entries (and reordering entries in each row if necessary). Since the stabilizer of $\Tc$ in $\Sym_n$ is $\Sym_{\lambda^{op}}$, this is indeed a bijection. Now we prove the following.
\begin{lem} \label{lem:tauinv} $i \in \ades(\Upsilon(A_w))$ if and only if $s_i \in \mathfrak{I}(A_w)$, i.e. $\Upsilon$ ``preserves the $\tau$-invariant''.
\end{lem}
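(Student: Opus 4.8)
The plan is to transport the statement through $\Upsilon$ and check it one simple reflection at a time. Since $\Upsilon(A_w)=w\cdot\Tc$ and $\Sym_n$ acts on $\RSYT(\lambda)$ by permuting entries (and re-sorting each row), the cell of $w\cdot\Tc$ carrying a value $v$ is the cell of $\Tc$ carrying $w^{-1}(v)$; let $\mathrm{row}_{\Tc}(k)$ denote the row index of the entry $k$ in $\Tc$. Because the bottom row of $\Tc$ holds $1,\dots,\lambda_l$, the next holds $\lambda_l+1,\dots,\lambda_l+\lambda_{l-1}$, and so on, $\mathrm{row}_{\Tc}$ is a strictly decreasing step function of the entry. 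Hence, for $i\in[1,n]$,
\[
i\in\ades(\Upsilon(A_w))\iff \mathrm{row}_{\Tc}\!\bigl(w^{-1}(\mo{i})\bigr)<\mathrm{row}_{\Tc}\!\bigl(w^{-1}(\mo{i+1})\bigr),
\]
and by monotonicity this is equivalent to requiring both $w^{-1}(\mo{i})>w^{-1}(\mo{i+1})$ and that $w^{-1}(\mo{i}),w^{-1}(\mo{i+1})$ lie in different rows of $\Tc$. It therefore suffices to show that these two conditions together describe exactly when $s_i\in\mathfrak{I}(A_w)$.

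Fix $w\in\Sym_n^\lambda$ and let $F$ be the wall of the alcove $A_w$ of type $s_i$. Since $w\in\Sym_n$ acts linearly, $F=F_{w^{-1}\alpha_i,0}$ for $1\le i\le n-1$ and $F=F_{w^{-1}\tilde\alpha,1}$ for $i=n$, where $\tilde\alpha=\alpha_1+\cdots+\alpha_{n-1}$; in coordinates the defining root of $F$ is, up to sign, $e_{w^{-1}(\mo{i})}-e_{w^{-1}(\mo{i+1})}$ in both cases. For the condition $s_iA_w\in\fA_\lambda$: the union of the alcoves in $\fA_\lambda$ is exactly one chamber of the reflection group $\affS{\lambda^{op}}$, whose reflecting hyperplanes form $\fF_\lambda$, and $A_w$ lies in this chamber; so reflecting $A_w$ across its wall $F$ keeps it inside $\fA_\lambda$ if and only if $F\notin\fF_\lambda$, that is, if and only if the defining root of $F$ does not lie in $R_\lambda$. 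Since the connected components of $\Pi_\lambda$ are precisely the value-intervals occupied by the rows of $\Tc$ (this is exactly the point of using the reversed composition in the definition of $\Pi_\lambda$), one has $e_{w^{-1}(\mo{i})}-e_{w^{-1}(\mo{i+1})}\in R_\lambda$ iff $w^{-1}(\mo{i})$ and $w^{-1}(\mo{i+1})$ lie in the same row of $\Tc$. Thus $s_iA_w\in\fA_\lambda$ iff $w^{-1}(\mo{i}),w^{-1}(\mo{i+1})$ lie in different rows of $\Tc$.

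For the condition $s_iA_w>A_w$ I would use the half-space description of the order: $s_iA_w>A_w$ iff $A_w$ lies on the negative side $E_F^-$ of its type-$s_i$ wall $F$. Writing $A_w=w^{-1}(A_{id})$ and using that on $A_{id}$ the form $(\gamma,\cdot)$ takes values in $(0,1)$ for $\gamma\in R^+$ and in $(-1,0)$ for $\gamma\in R^-$ — in particular $(\tilde\alpha,\cdot)<1$ — a direct evaluation of the positive-root form of $F$ on $A_w$ gives: for $i\le n-1$, $A_w\subset E_F^-$ iff $w^{-1}\alpha_i\in R^-$; for $i=n$, $A_w\subset E_F^-$ iff $w^{-1}\tilde\alpha\in R^+$. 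In both cases this says exactly $w^{-1}(\mo{i})>w^{-1}(\mo{i+1})$. Combining this with the previous two paragraphs yields $s_i\in\mathfrak{I}(A_w)\iff i\in\ades(\Upsilon(A_w))$.

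The tableau computation of the first paragraph and the final comparison are routine; the places demanding care are the standing conventions — the left- versus right-action of $\affSn$ (and $\extSn$) on $E$ and on $\fF$, the sign convention defining $E_F^{\pm}$, and the $\lambda$-versus-$\lambda^{op}$ identification of $\fF_\lambda$-walls with coincidences of rows of $\Tc$ — all of which must line up for the two sides to match. The one point that is not purely bookkeeping, and that lets the affine generator $s_0=s_n$ be handled uniformly with $s_1,\dots,s_{n-1}$, is the observation that the type-$s_0$ wall of $A_w$ is the unique wall of $A_w$ missing the origin — it sits at level $\pm1$, not through $0$ — so after replacing $\alpha_i$ by $\tilde\alpha=e_1-e_n$ the same half-space argument and the same ``chamber of $\affS{\lambda^{op}}$'' argument go through verbatim.
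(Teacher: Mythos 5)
Your proof is correct, and it takes a genuinely different route from the paper's. The paper treats $i\in[1,n-1]$ and $i=n$ separately: for the finite generators it observes that $A_w$ and $A_{s_iw}$ both lie in the domain $D_{(1^n)}$, reduces the alcove comparison to the Bruhat order on $\Sym_n$ (so that $s_i\in\mathfrak{I}(A_w)$ becomes $s_i\in L(w)$, using that left descents of a minimal coset representative stay minimal), and reads $\des(w\cdot\Tc)$ directly off the reading word $[w(1),\ldots,w(n)]$; for $s_0$ it then falls back on an explicit coordinate computation using the interior point $\rho/n$. Your argument is uniform in $i\in[1,n]$: you identify the type-$s_i$ wall of $A_w$ as $F_{w^{-1}\alpha_i,0}$ (resp.\ $F_{w^{-1}\tilde\alpha,1}$), handle the membership condition $s_iA_w\in\fA_\lambda$ via the observation that $\bigcup\fA_\lambda$ is a chamber of $\affS{\lambda^{op}}$ with wall set $\fF_\lambda$ and that the connected components of $\Pi_\lambda$ are precisely the row-intervals of $\Tc$, and handle the order condition $s_iA_w>A_w$ via the half-space description ($A_w\subset E_F^-$). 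This buys you uniformity and avoids Deodhar's lemma, at the cost of extra care with orientation and action conventions, which you flag explicitly. One small remark: for $w\in\Sym_n^{\lambda}$ the two conditions you isolate ($w^{-1}(\mo{i})>w^{-1}(\mo{i+1})$ and ``different rows of $\Tc$'') are not independent --- the former forces the latter because $w$ is increasing on each row-block --- which is implicitly why the paper's finite-case argument can get away with matching $\des(w\cdot\Tc)$ against the single condition $s_i\in L(w)$; your bookkeeping of both conditions is nevertheless a clean way to make the geometry and the combinatorics line up term by term.
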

\begin{proof}
Let us first show that $\mathfrak{I}(A_w)\cap\{s_1, \ldots, s_{n-1}\}=\des(\Upsilon(A_w))$. If $s=s_i$ for some $1\leq i \leq n-1$, then $s \in \mathfrak{I}(A_w)$ if and only if $sA_w=A_{sw} \in \fA_\lambda$ and $A_w<A_{sw}$. However, as $A_{sw}$ and $A_w$ are both in $D_{(1^n)}=\{A_w \mid w\in \Sym_n=\Sym_n^{(1^n)}\}$, $A_w<A_{sw}$ if and only if $w>sw$ with respect to the usual Bruhat order on $\Sym_n$. Also if $w>sw$ then $w\in \Sym_n^\lambda$ implies $sw \in \Sym_n^\lambda$. Therefore, we see that $s\in \mathfrak{I}(A_w)$ if and only if $w>sw$ if and only if  $s$ is in the left descent set $L(w)$ of $w$. On the other hand, the reading word of $w\cdot \Tc$ is equal to $[w(1), w(2), \ldots, w(n)]$ by definition (no reordering is necessary as $w\in \Sym_n^\lambda$), which means that $i\in \des(w\cdot \Tc)$ if and only if $s_i\in L(w)$ for $1\leq i \leq n-1$.

It remains to show that $s_n \in \mathfrak{I}(A_w)$ if and only if $n \in \ades(w\cdot \Tc)$. Let $\rho \in E$ be the sum of fundamental weights. Then $\frac{\rho}{n} \in A_{id}$, thus $\frac{\rho}{n}\cdot w \in A_w$ and $\frac{\rho}{n}\cdot s_0w \in A_{s_0w}$. Therefore, $s_0 \in \mathfrak{I}(A_w)$ if and only if:
\begin{enumerate}[label=$\bullet$]
\item $s_0\cdot A_w \in \fA_\lambda$, i.e. $0<(\frac{\rho}{n}\cdot s_0w, \alpha)<1$ for $\alpha \in R^+_\lambda$
\item $A_{s_0w}>A_w$, i.e. $\frac{\rho}{n}\cdot s_0w-\frac{\rho}{n}\cdot w=\frac{2}{n} \tilde{\alpha}\cdot w \in \bQ_{>0}\cdot \alpha$ for some $\alpha \in R^+$ where $\tilde{\alpha} \in R^+$ is the highest root
\end{enumerate}
By direct calculation, we see that the first condition is satisfied if and only if there is no $t$ such that $\sum_{k=t+1}^l\lambda_k< w^{-1}(1),w^{-1}(n)\leq \sum_{k=t}^l\lambda_j$, which is equivalent to that 1 and $n$ are not in the same row of $w\cdot \Tc$. Moreover, the second condition is satisfied if and only if $w^{-1}(1)<w^{-1}(n)$. Thus $w$ satisfies both conditions if and only if $1$ is in the lower row than $n$ in $w\cdot \Tc$, which is also equivalent to $n \in \ades(w\cdot \Tc)$.
\end{proof}

Let us extend $\Upsilon$ to $\Upsilon: \fA_\lambda \rightarrow \RSYT(\lambda)$ in a way that for any $\mathbf{t}\in \cT$ and $w \in \Sym_n^\lambda$ we have $\Upsilon(\gamma(\mathbf{t})(A_w)) \colonequals \Upsilon(A_w)$. This is well-defined since $\fund_\lambda$ is the set of representatives of the $\gamma$-action of $\cT$ on $\fA_\lambda$. On the other hand, we may also extend the action of $\Sym_n$ on $\RSYT(\lambda)$ to $\affSn$ where $s_0$ acts on $\RSYT(\lambda)$ by switching 1 and $n$ and reordering entries of each row if necessary. (This action is well-defined.) Then we have the following.

\begin{lem} \label{lem:upsilontab}For any $w\in \affSn$ such that $A_w\in \fA_\lambda$, we have $\Upsilon(A_w) = w\cdot\Tc$.
\end{lem}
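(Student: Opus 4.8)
The plan is to reduce the lemma to a single observation about the reduction map $\affSn\to\Sym_n$ and the subgroup $\cT\cdot\affS{\lambda^{op}}$.

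First I would set up the homomorphism $\pi\colon\affSn\to\Sym_n$ sending $w$ to the permutation $\mo i\mapsto\mo{w(i)}$; this is well defined and multiplicative because every $w\in\affSn$ is $n$-periodic, and its kernel is exactly the translation subgroup $\cT$, while $\pi(s_i)=(i\ i{+}1)$ for $1\le i\le n-1$ and $\pi(s_0)=(1\ n)$. By the very way the extended $\affSn$-action on $\RSYT(\lambda)$ was defined — $s_0$ swaps $1$ and $n$, each $s_i$ swaps $i,i{+}1$ — each generator acts on $\RSYT(\lambda)$ exactly as $\pi(s_i)\in\Sym_n$ does in the original action; since two $\affSn$-actions that agree on generators coincide, we get $w\cdot T=\pi(w)\cdot T$ for all $w,T$. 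In particular, because the stabilizer of $\Tc$ in $\Sym_n$ is $\Sym_{\lambda^{op}}$, the stabilizer of $\Tc$ in $\affSn$ under the extended action is $\pi^{-1}(\Sym_{\lambda^{op}})$. Next I would note that $\cT$ is normal in $\affSn$, so $G_\lambda\colonequals\cT\cdot\affS{\lambda^{op}}$ is a subgroup, and using $\affS{\lambda^{op}}=\cT_\lambda\rtimes\Sym_{\lambda^{op}}$ together with $\cT_\lambda=\cT\cap\affS{\lambda^{op}}\subseteq\ker\pi$ we get $\pi(G_\lambda)=\pi(\cT)\pi(\affS{\lambda^{op}})=\Sym_{\lambda^{op}}$. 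Hence every element of $G_\lambda$ fixes $\Tc$.

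The key structural step is to show that, for every $\mathbf{t}\in\cT$, the map $\gamma(\mathbf t)$ on $\fA_\lambda$ is right multiplication by one fixed element $g_{\mathbf t}\in G_\lambda$, i.e.\ $\gamma(\mathbf t)(A)=A\cdot g_{\mathbf t}$ for all $A\in\fA_\lambda$. For $\mathbf t=\mathbf t_{\alpha_i}$ with $\alpha_i\in\Pi-\Pi_\lambda$ this is exactly the content of \ref{sec:gammaact}: there $\gamma(\mathbf t_{\alpha_i})(A)=A\cdot(\mathbf t_{\alpha_i}v_i)$ with $v_i\in\affS{\lambda^{op}}$ not depending on $A$, so $g_{\mathbf t_{\alpha_i}}=\mathbf t_{\alpha_i}v_i\in G_\lambda$. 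Since $\gamma$ is a homomorphism $\cT\to\operatorname{Sym}(\fA_\lambda)$ killing $\cT_\lambda$, and $\cT/\cT_\lambda$ is generated by the classes of the $\mathbf t_{\alpha_i}$, $\alpha_i\in\Pi-\Pi_\lambda$, an arbitrary $\gamma(\mathbf t)$ is a composition of the maps $\gamma(\mathbf t_{\alpha_i})^{\pm1}$; each of these is right multiplication by an element of $G_\lambda$ (for the inverse, by the inverse element, which still lies in the group $G_\lambda$), so the composition is right multiplication by a single element $g_{\mathbf t}\in G_\lambda$.

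Finally I would conclude as follows. Given $w\in\affSn$ with $A_w\in\fA_\lambda$, since $\fund_\lambda=\{A_u\mid u\in\Sym_n^\lambda\}$ is a set of representatives for the $\gamma(\cT)$-orbits on $\fA_\lambda$, write $A_w=\gamma(\mathbf t)(A_{w_0})$ with $w_0\in\Sym_n^\lambda$ and $\mathbf t\in\cT$; then by definition of $\Upsilon$ (and of its extension) we have $\Upsilon(A_w)=\Upsilon(A_{w_0})=w_0\cdot\Tc$. Using the previous step, $A_w=A_{w_0}\cdot g_{\mathbf t}=(A_{id}\cdot w_0)\cdot g_{\mathbf t}=A_{id}\cdot(w_0g_{\mathbf t})=A_{w_0 g_{\mathbf t}}$, so $w=w_0 g_{\mathbf t}$ by injectivity of $u\mapsto A_u$. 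Therefore $w\cdot\Tc=w_0\cdot(g_{\mathbf t}\cdot\Tc)=w_0\cdot\Tc=\Upsilon(A_w)$, since $g_{\mathbf t}\in G_\lambda$ fixes $\Tc$. The one place needing care is the structural step above: extracting from the \emph{local} statement of \ref{sec:gammaact} (which only describes $\gamma$ on the generating translations $\mathbf t_{\alpha_i}$) the \emph{global} fact that each $\gamma(\mathbf t)$ is right multiplication by a single element of $G_\lambda$; everything else is bookkeeping with $\pi$ and the right action of $\affSn$ on $\fA$.
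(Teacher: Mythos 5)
Your proof is correct, and it diverges from the paper's in a useful way at the one step where actual work is needed. Both proofs reduce, via the description in \ref{sec:gammaact} and an induction (the paper) or a subgroup/composition argument (you) over the generators $\mathbf{t}_{\alpha_i}$, to the claim that the element $\mathbf{t}_{\alpha_i}v_i\in\affSn$ (with $v_i\in\affS{\lambda^{op}}$) fixes $\Tc$ under the extended action. The paper verifies this by an explicit computation: it writes out a reduced word for $\mathbf{t}_{\alpha_i}$, computes two window notations, and matches tableaux by hand. You instead observe that the extended $\affSn$-action on $\RSYT(\lambda)$ is precisely the pullback along the quotient $\pi\colon\affSn\to\affSn/\cT\cong\Sym_n$ (both actions agree on $s_0,\ldots,s_{n-1}$), so any element of $G_\lambda=\cT\cdot\affS{\lambda^{op}}$ has $\pi$-image in $\pi(\affS{\lambda^{op}})=\Sym_{\lambda^{op}}$, which is exactly the stabilizer of $\Tc$. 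This makes the crucial mechanism transparent --- the translation part $\cT$ contributes nothing to the tableau action --- and eliminates the window-notation bookkeeping entirely, at the minor cost of introducing $\pi$ and $G_\lambda$. Your argument also has the small side benefit of giving a clean proof that the extended action is in fact well defined (it is a pullback of a genuine $\Sym_n$-action), a point the paper merely asserts. One thing worth stating explicitly, which you treat a bit quickly, is that the set of $\mathbf{t}\in\cT$ for which $\gamma(\mathbf{t})$ is right multiplication by an element of $G_\lambda$ is a subgroup of $\cT$ containing $\cT_\lambda$ and every $\mathbf{t}_{\alpha_i}$ with $\alpha_i\in\Pi-\Pi_\lambda$, hence equals $\cT$; this is the precise form of your ``composition'' step and is easier to check than to phrase via ``write $\gamma(\mathbf{t})$ as a word in the $\gamma(\mathbf{t}_{\alpha_i})^{\pm1}$.''
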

\begin{proof} It is apparent when $w\in \Sym_n^\lambda$ (or $A_w \in \fund_\lambda$) by definition of $\Upsilon$. First we consider the situation when $A_w = \gamma(\mathbf{t}_{\alpha_i})(A_{w'})$ for some $i \in [1,n-1]$ and $w' \in \Sym_n^\lambda$ and prove $\Upsilon(A_w) = w\cdot \Tc$. Since $\gamma(\mathbf{t}_{\alpha_i})$ is trivial when $\alpha_i \in \cT_\lambda$, it suffices to assume otherwise. (The argument below also works, mutatis mutandis, for $A_w = \gamma(\mathbf{t}_{-\alpha_i})(A_{w'})$ case.)

By direct calculation, we have $\mathbf{t}_{\alpha_i} = s_i \cdot (s_{i-1}\cdots s_1) \cdot (s_{i+1}\cdots s_{n-1})\cdot s_0 \cdot (s_1\cdots s_{i-1}) \cdot (s_{n-1}\cdots s_{i+1})$ as an element in $\affSn$. Therefore, from the result in \ref{sec:gammaact} we deduce that $\gamma(\mathbf{t}_{\alpha_i})(A_{w'})=A_{w'} \cdot s_i \cdot (s_{i-1}\cdots s_1) \cdot (s_{i+1}\cdots s_{n-1})\cdot s_0 \cdot (s_1\cdots s_{j}) \cdot (s_{n-1}\cdots s_{k})$, where $j,k\in[0,n]$ are chosen such that if $i=n-\sum_{x=1}^a \lambda_x$ for some $a$ then $j=n-\sum_{x=1}^{a+1} \lambda_x$ and $k=n-\sum_{x=1}^{a-1} \lambda_x$. Thus for the claim it suffices to show that $w'\cdot (s_i \cdot (s_{i-1}\cdots s_1) \cdot (s_{i+1}\cdots s_{n-1})\cdot s_0 \cdot (s_1\cdots s_{j}) \cdot (s_{n-1}\cdots s_{k}))\cdot \Tc = w'\cdot \Tc$, i.e. $s_i \cdot (s_{i-1}\cdots s_1) \cdot (s_{i+1}\cdots s_{n-1})\cdot s_0 \cdot (s_1\cdots s_{j}) \cdot (s_{n-1}\cdots s_{k}) \cdot \Tc = \Tc$ or equivalently $s_0 \cdot (s_1\cdots s_{j}) \cdot (s_{n-1}\cdots s_{k})\cdot \Tc = (s_i \cdot (s_{i-1}\cdots s_1) \cdot (s_{i+1}\cdots s_{n-1}))^{-1} \cdot \Tc$.

It is easy to show that $(s_1\cdots s_{j}) \cdot (s_{n-1}\cdots s_{k}) = [2,3,\ldots,j+1,1,j+2,\ldots,k-1,n,k,\ldots,n-1]$ and $ (s_i \cdot (s_{i-1}\cdots s_1) \cdot (s_{i+1}\cdots s_{n-1}))^{-1} = [2,3, \ldots, j+1, j+2, \ldots, i, n, 1, i+1, \ldots, k-1, k, \ldots, n-1]$. Therefore, $(s_1\cdots s_{j}) \cdot (s_{n-1}\cdots s_{k}) \cdot \Tc$ and $(s_i \cdot (s_{i-1}\cdots s_1) \cdot (s_{i+1}\cdots s_{n-1}))^{-1}  \cdot \Tc$ are the same except two rows $\{i+1,\ldots, k-1,n\}, \{1, j+2, \ldots, i\}$ in the former and $\{1, i+1, \ldots, k-1\},\{j+2, \ldots, i, n\}$ in the latter. Now it is clear that $s_0$ interchanges these two tableaux, which implies the claim. 

Let us now consider a general case, i.e. when $A_w = \gamma(\mathbf{t})(A_{w'})$ for some $w' \in \Sym_n^\lambda$ and $\mathbf{t}\in \cT$. As $\cT$ is a free abelian group generated by $\mathbf{t}_{\alpha_i}$ for $i \in [1,n-1]$, we may write $\mathbf{t} = \sum_{i=1}^{n-1} c_i \mathbf{t}_{\alpha_i}$ for some $c_i \in \bZ$. Then the statement follows from induction on $\sum_{i=1}^{n-1} |c_i|$. 
\end{proof}

%

\ytableausetup{notabloids, smalltableaux}
\subsection{Lusztig's conjecture}
Here we prove \cite[Conjecture 13.13(b)]{lus97} for type $A$, one of the conjectures of Lusztig relating periodic $W$-graphs and left cells of $W$, using affine matrix-ball construction (\cite{clp17}, \cite{cpy18}). 
For a partition $\lambda$, we set $\Tas=\Tas_\lambda$ to be the standard Young tableau obtained from $\Tc$ by flipping it along the horizontal axis and pushing boxes up so that the shape becomes $\lambda$ again. For example, we have $\Tas_{(4,3,1)} = \ytableaushort{1348,267,5}$.

For $1\leq i \leq l(\lambda)$, define $r_i(id)$ to be an element in $\extSn$ whose window notation is $[1, 2, \ldots, s-1, s+1, \ldots, t-1, t, s+n, t+1,\ldots, n]$ where $s=1+\sum_{j=i+1}^{l(\lambda)} \lambda_j$ and $t=\sum_{j=i}^{l(\lambda)} \lambda_j$. In other words, $r_i(id)$ sends $s, s+1, \ldots, t-1$ to $s+1, s+2, \ldots, t$ respectively, and $t$ to $s+n$. Now we set $r_i :\extSn \rightarrow \extSn$ to be $r_i(w)= w\cdot r_i(id)$. (As a result, the two definitions of $r_i(id)$ coincide.) If $T_w$ is a Young tableau of shape $\lambda$ whose reading word is the same as $[w(1), w(2), \ldots, w(n)]$ for some $w\in \extSn$, then the action $r_i$ corresponds to replacing the $i$-th row of $T_w$, say $(a_1, a_2, \ldots, a_k)$, with $(a_2, \ldots, a_k, a_1+n)$. Also, the $\gamma$-action of $\cT/\cT_\lambda$ on $\fA_\lambda$ is equivalent to the action of $\{a_1r_1+\cdots+a_{l(\lambda)}r_{l(\lambda)} \mid a_1+\cdots+a_{l(\lambda)}=0\}$ on $\{w \in \affSn \mid A_w \in \fA_\lambda\}$.


Note that $u\in \Sym_n^\lambda$ if and only if $u\in \Sym_n$ and $u(i)<u(j)$ for any $i,j$ such that $\sum_{k=t+1}^{l(\lambda)}\lambda_k< i<j\leq \sum_{k=t}^{l(\lambda)}\lambda_j$ for some $t \in [1,l(\lambda)]$. Set $w \in \extSn$ to be $w = (a_1r_1+\cdots+ a_{l(\lambda)}r_{l(\lambda)})\cdot u$ for some $a_1, \ldots, a_{l(\lambda)}$. (Here we allow $w$ to be in $\extSn-\affSn$.) Let $T_w$ be a Young tableau whose reading word is the same as the window notation $[w(1), w(2), \ldots, w(n)]$ of $w$. Then entries of $T_w$ are increasing along rows, if $a, b$ are entries of $T_w$ contained in the same row then $|a-b|<n$, and the residues modulo $n$ of the entries of the $i$-th row of $T_w$ are the same as those of $\Upsilon(A_u)$, since these properties are preserved by the action of $r_i$ for any $i\in [1, l(\lambda)]$. (When $w \in \affSn$ we also have $A_w \in \fA_\lambda$ and $\Upsilon(A_w) = \Upsilon(A_u)$.) Now we prove the following theorem.

\begin{thm} Suppose that entries of $T_w$ are also increasing along columns. If $(P, Q, \vv{\rho})$ is the image of  $w$ under affine matrix-ball construction (defined in \cite{clp17}, \cite{cpy18}), then $P=\Upsilon(A_u)$, $Q=\Tas$, and $\vv{\rho}=(a_1, a_2,  \ldots, a_{l(\lambda)})$.
\end{thm}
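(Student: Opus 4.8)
The key observation is that the three hypotheses on $T_w$ — rows increasing (which, given the residue description in the paragraph preceding the theorem, is essentially automatic), consecutive entries of a row differing by less than $n$, and columns increasing — together say exactly that $w$ is \emph{dominant}: the periodic $0$--$1$ matrix of $w$ has its balls arranged in a staircase pattern compatible with the block decomposition $[1,n]=\bigsqcup_{i=1}^{l(\lambda)}[s_i,t_i]$ attached to $\lambda$, with no ``inversions'' occurring between blocks. For a dominant element the affine matrix-ball construction degenerates: the balls organize into exactly $l(\lambda)$ channels, the $i$-th tracking the $i$-th row of $T_w$, and $P$, $Q$, $\vv{\rho}$ can be read off these channels directly. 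The plan is to make this precise and then identify the three outputs one at a time.

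Concretely I would proceed as follows. First, translate the three conditions on $T_w$ into the statement that the periodic matrix of $w$ is dominant in the precise sense used in \cite{clp17,cpy18}. Second, run (or invoke the description of) the affine matrix-ball construction on this dominant matrix: show that the channels are precisely the $l(\lambda)$ rows of $T_w$, that channels of distinct rows never merge, and that the resulting cell shape is exactly $\lambda$ (with no transpose). Third, identify $P$: by construction its $i$-th row consists of the residues modulo $n$ of the entries of the $i$-th row of $T_w$, which by the residue observation preceding the theorem are exactly the entries of the $i$-th row of $\Upsilon(A_u)$; since a row-standard Young tableau of fixed shape is determined by its row contents, this forces $P=\Upsilon(A_u)$, which equals $u\cdot\Tc$ by Lemma~\ref{lem:upsilontab}. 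Fourth, identify $Q$: the order in which the cells of the channels are created during the degenerate (dominant) run matches the ``flip along the horizontal axis and push up'' recipe defining $\Tas$, so $Q=\Tas$. Fifth, compute $\vv{\rho}$: the $i$-th channel winds $a_i$ times relative to the canonical position, and since $u\in\Sym_n^\lambda$ has no translation part (equivalently, the dominant element $u$ itself has $\vv{\rho}(u)=0$) this gives $\vv{\rho}=(a_1,\dots,a_{l(\lambda)})$. If the equivariance of the affine matrix-ball construction under right multiplication by the $r_i(id)$ — fixing $P$ and $Q$, translating $\vv{\rho}$ additively — is available from \cite{clp17,cpy18}, this last step reduces to computing one winding number per block, which is elementary because the $r_i(id)$ act on disjoint blocks of columns.

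The main obstacle is the second step: rigorously establishing that dominance forces the matrix-ball construction to split into $l(\lambda)$ non-interacting channels of the asserted type, with cell shape exactly $\lambda$ and the stated winding vector. I expect this to go by induction — most naturally on $n$, deleting an extreme ball of $T_w$ (say the one carrying the largest entry), checking that the resulting smaller matrix is again dominant and of the same form, and tracking how the channel data change — together with careful bookkeeping against the combinatorial rules governing channels in \cite{clp17,cpy18}. Everything else (the reduction of the hypotheses to dominance, the readoff of $P$ and $Q$ once the channel structure is known, and the identification of $\vv{\rho}$) should be routine.
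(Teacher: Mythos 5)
Your route is genuinely different from the paper's. For the identifications of $P$ and $Q$ the paper never touches the channel structure: it invokes the \emph{asymptotic realization} of affine matrix-ball construction from \cite[Section 7]{cpy18}. The column-increasing hypothesis implies that if $b$ sits below $a$ in $T_w$ then $b+n>a$, which means no cross-period bumping occurs in the infinite column-insertion of $(w(1),w(2),\dots)$; each period inserts cleanly as $an+T_w$, so by \cite[Corollary 7.5]{cpy18} the asymptotic residue of the insertion tableau is exactly the residue of $T_w$, giving $P=\Upsilon(A_u)$. The identification $Q=\Tas$ is obtained by the parallel argument applied to the inverse two-line array, via \cite[Remark 7.7]{cpy18}. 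Channels enter only in the $\vv{\rho}$ computation, where the paper identifies the southwest channel as the balls of the top row of $T_w$, passes to $\fw(w)$, and inducts on the number of rows of $\lambda$, finishing with \cite[Lemma 10.6]{cpy18}. Your plan instead tries to establish the full degenerate channel decomposition up front and read all three outputs from it, with an induction on $n$ that deletes one extreme ball at a time. Conceptually this is attractive because it is uniform, but two places in your sketch are weaker than you suggest: first, your account of $Q$ (``the order in which cells are created matches the flip-push recipe for $\Tas$'') is not how the AMBC recording tableau is actually defined, and making it precise would mean unwinding the back/transpose mechanics of AMBC rather than the paper's shorter inverse-array argument; second, the one-ball-at-a-time induction does not match the natural iteration of AMBC (which peels an entire channel and passes to $\fw$), so you would need extra bookkeeping to see how deleting a single ball perturbs the channel picture, whereas the paper's whole-row induction avoids this entirely. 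In short, the approach is plausible but strictly harder to carry out than the asymptotic-realization argument, and the $Q$ step as stated is a genuine gap.
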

\begin{proof} As entries of $T_w$ are increasing along columns, it is easy to show that if $b$ is on the lower row than $a$ in $T_w$ then $b+n>a$ (regardless of the columns in which $a$ and $b$ are contained). Now let us consider asymptotic realization of affine matrix-ball construction \cite[Section 7]{cpy18} and note that $P$ can be obtained by taking the (asymptotic) residue modulo $n$ of the insertion tableau of the infinite sequence $(w(1), w(2), \ldots)$ under the usual Robinson-Schensted correspondence. However, the observation above implies that if $\ceil{i/n}<\ceil{j/n}$ then $w(j)$ does not bump $w(i)$ in the column insertion process. (Here, $\ceil{\alpha}$ is the smallest integer which is not smaller than $\alpha$.) Therefore, the input of each period $(w(an+1), w(an+2), \ldots, w(an+n-1))$ for any $a\in \bN$ under column insertion becomes the same as $T_w$ shifted by $an$, i.e. $an+T_w$. By \cite[Corollary 7.5]{cpy18}, this means that $P$ and $T_w$ have the same residues modulo $n$ in each row and thus $P=\Upsilon(A_u)$.

We argue similarly for $Q$ part. Remark (\cite[Remark 7.7]{cpy18}) that $Q$ can be obtained by taking the (asymptotic) residue modulo $n$ of the insertion tableau under the Robinson-Schensted correspondence, say $\tilde{Q}$, of the infinite sequence $(w^{-1}(x), w^{-1}(x+1), \ldots)$ for any $x\in \bZ$, or more precisely the two-line array
$$\begin{pmatrix} x&x+1&x+2&x+3&\cdots
\\w^{-1}(x)&w^{-1}(x+1)&w^{-1}(x+2)&w^{-1}(x+3)&\cdots
\end{pmatrix}.
$$
Choose $x$ such that $\{w^{-1}(x), w^{-1}(x+1), \ldots, \} \supset \bZ_{>0}$ (which is true for any sufficiently small $x$). Then by flipping the array above and reordering if necessary so that the first row becomes increasing, we see that all but finite number of entries of $\tilde{Q}$ are the same as those of the recording tableau of the two-row array
$$\begin{pmatrix} 1&2&3&4&\cdots
\\w(1)&w(2)&w(3)&w(4)&\cdots
\end{pmatrix}.
$$
It follows that $Q$ is obtained by taking the (asymptotic) residue module $n$ of the recording tableau of $(w(1), w(2), \ldots)$. Then one can show that $Q=\Tas$ using the argument similar to the $P$ part as above. (Note that $\Tas=\Tas_\lambda$ is the recording tableau of the reading word of any tableau of shape $\lambda$ whose entries are increasing along both rows and columns.)

It remains to discuss the $\vv{\rho}$ part. To this end we freely use notations and results in \cite{cpy18}. From the assumptions on $w$, we see that $\{(x, w(x)) \mid n-\lambda_1< x \leq n\}$ and its translates by $\bZ\cdot (n,n)$ in $\bZ\times \bZ$ are the southwest channel of $w$ and each zigzag consists of balls corresponding to each column of $T_w$ and its translates by $\bZ\cdot (n,n)$. This means that the window notation of $\fw(w)$ is obtained from inserting some $\emptyset$ in the sequence $(w(1), w(2), \ldots, w(n-\lambda_1))$. Thus we may use induction on the number of rows to conclude that $\vv{\rho} = (y, a_2, \ldots, a_{l(\lambda)})$ for some $y \in \bZ$. Now by \cite[Lemma 10.6]{cpy18} and the comment thereof we have $y+\sum_{i=2}^{l(\lambda)}a_i = \frac{1}{n}\sum_{j=1}^n(w(j)-j)$, where the latter term is equal to $\sum_{i=1}^{l(\lambda)}a_i$. (The action of each $r_i$ increases $\frac{1}{n}\sum_{j=1}^n(w(j)-j)$ exactly by 1.) Thus $y=a_1$ and the result follows.
\end{proof}

\begin{rmk} This confirms \cite[Conjecture 13.13(b)]{lus97} for type $A$. Indeed, $\mathbf{t} \in \cT$ is ``large'' as described therein if and only if $a_1\ll a_2\ll \cdots \ll a_{l(\lambda)}$, which implies that entries of $T_w$ are increasing along columns.
\end{rmk}


\subsection{Quotient of $\pwg_\lambda$ by $\gamma(\cT)$} \label{sec:quotWgraphs}Here we construct the quotient of $\pwg_\lambda=(\fA_{\lambda}, \mu, \mathfrak{I})$ by the action of $\gamma(\cT)$, denoted by $\qwg_\lambda$. 
To this end, first observe that (the complementary version of) \cite[Proposition 11.15]{lus97} shows that $\mu(A, B) = \mu(\gamma(\mathbf{t})(A), \gamma(\mathbf{t})(B))$ for any $\mathbf{t} \in \cT$ and $A, B \in \fA_\lambda$. Also, we need the following lemma.
\begin{lem} \label{lem:gammatau} For $\mathbf{t} \in \cT$ and $A \in \fA_\lambda$ we have $\mathfrak{I}(A) = \mathfrak{I}(\gamma(\mathbf{t})(A))$.
\end{lem}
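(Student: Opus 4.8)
The plan is to prove this geometrically, by realizing $\gamma(\mathbf{t})$ as right translation on $\fA_\lambda$ and tracking how that right translation acts on the walls of alcoves. By \ref{sec:gammaact}, and since $\cT$ is generated by the $\mathbf{t}_{\alpha_i}$ (with $\gamma(\mathbf{t}_{\alpha_i})$ trivial when $\alpha_i\in\Pi_\lambda$, and $\gamma(\mathbf{t}_{\alpha_i})(A)=A\cdot(\mathbf{t}_{\alpha_i}w_i)$ for some $w_i\in\affS{\lambda^{op}}$ otherwise), every $\gamma(\mathbf{t})$ has the form $A\mapsto A\cdot g$ for a fixed $g=g_\mathbf{t}\in\affSn$ that is a product of translations $\mathbf{t}_{\alpha_i}$ and elements of $\affS{\lambda^{op}}$, equivalently a product of translations and reflections $\mathbf{r}_{F'}$ with $F'\in\fF_\lambda$. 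Two facts about $g$ will be used: $\fA_\lambda\cdot g=\fA_\lambda$ (so for any alcove $X$, $X\cdot g\in\fA_\lambda\iff X\in\fA_\lambda$), and $\fF_\lambda\cdot g=\fF_\lambda$ (because $\affS{\lambda^{op}}$ permutes $\fF_\lambda$ and translations by $P$ preserve $\fF_\lambda$, as $(\alpha,v)\in\bZ$ for $\alpha\in R$, $v\in P$).

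First I would recast $\mathfrak{I}$ in purely geometric terms. For a simple reflection $s$, let $F$ be the type-$s$ wall of $A$. Then $s\cdot A\in\fA_\lambda$ exactly when $F\notin\fF_\lambda$ (reflecting $A$ across $F$ keeps it in the same $\fF_\lambda$-component iff $F$ is not one of the hyperplanes of $\fF_\lambda$), and, when this holds, $s\cdot A>A$ exactly when $A\in E_F^-$ (since $A$ and $s\cdot A$ are separated only by $F$, we get $d(A,s\cdot A)=\pm1$, with value $1$ iff $A$ lies on the negative side of $F$, and for adjacent alcoves $A<B\iff d(A,B)=1$). Hence $\mathfrak{I}(A)$ is the set of simple $s$ for which the type-$s$ wall $F$ of $A$ satisfies $F\notin\fF_\lambda$ and $A\in E_F^-$. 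Since the left and right $\affSn$-actions on $\fA$ commute, $s\cdot(A\cdot g)=(s\cdot A)\cdot g$, so the type-$s$ wall of $\gamma(\mathbf{t})(A)=A\cdot g$ is $F\cdot g$; and $\fF_\lambda\cdot g=\fF_\lambda$ gives $F\notin\fF_\lambda\iff F\cdot g\notin\fF_\lambda$. So everything reduces to showing $A\in E_F^-\iff A\cdot g\in E_{F\cdot g}^-$ whenever $F\notin\fF_\lambda$.

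For that it is enough to show each factor of $g$ carries the negative side of such an $F$ to the negative side of its image. Translations preserve the positive/negative upper half-space labeling $F\mapsto E_F^\pm$ of every wall, by the definition of this labeling via $\cT$. For a reflection $\mathbf{r}_{F'}$ with $F'=F_{\beta,m}$, $\beta\in R_\lambda$, and $F=F_{\alpha,k}$ with $\alpha\in R^+$: a short computation with this affine reflection (whose linear part is $s_\beta$, and recall $(\beta,\beta)=2$, so $s_\beta(\alpha)=\alpha-(\alpha,\beta)\beta$) shows that $\mathbf{r}_{F'}$ sends $E_F^+$ to the positive side of $\mathbf{r}_{F'}(F)$ if $s_\beta(\alpha)\in R^+$ and to the negative side if $s_\beta(\alpha)\in R^-$. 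The decisive point is then the standard fact that $s_\beta$ with $\beta\in R_\lambda$ maps $R^+\setminus R_\lambda$ into $R^+$: $\alpha\notin R_\lambda$ has a positive coefficient on some $\alpha_i\notin\Pi_\lambda$, and $s_\beta(\alpha)$ differs from $\alpha$ only in coefficients of the $\alpha_j$ with $\alpha_j\in\Pi_\lambda$ (because $\beta$ is supported on $\Pi_\lambda$), hence $s_\beta(\alpha)\in R^+$. Since $F\notin\fF_\lambda$ means $\alpha\in R^+\setminus R_\lambda$, we get that $\mathbf{r}_{F'}$—and therefore $g$—preserves the $\pm$-labeling of $F$, which is what we needed; thus $\mathfrak{I}(A)=\mathfrak{I}(\gamma(\mathbf{t})(A))$.

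I expect the only real subtlety to be the sign bookkeeping in the previous paragraph—correctly computing how the positive/negative upper half-space label of a wall $F\notin\fF_\lambda$ transforms under the right action of a single reflection $\mathbf{r}_{F'}$, $F'\in\fF_\lambda$, and verifying that it is governed by whether $s_\beta$ keeps $\alpha$ positive—after which everything is formal. As an alternative that sidesteps alcove geometry, one can use $\Upsilon(\gamma(\mathbf{t})(A))=\Upsilon(A)$ (immediate from Lemma \ref{lem:upsilontab} and the definition of the extended $\Upsilon$) and then extend Lemma \ref{lem:tauinv} to $\mathfrak{I}(A_w)=\{s_i : i\in\ades(w\cdot\Tc)\}$ for every $w\in\affSn$ with $A_w\in\fA_\lambda$ by rerunning its proof; the outer reflection $s_0$ is handled verbatim, while the case $1\le i\le n-1$ again reduces to the same orientation computation.
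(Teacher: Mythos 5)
Your proof is correct, and it overlaps with the paper's in its opening moves (rewriting $s\cdot\gamma(\mathbf{t})(A)=\gamma(\mathbf{t})(s\cdot A)$ via commutativity of the left and right $\affSn$-actions, so that membership in $\fA_\lambda$ is immediate), but it diverges in how it handles the order condition. The paper reduces the inequality $s\cdot\gamma(\mathbf{t})(A)>\gamma(\mathbf{t})(A)$ to the assertion that the $\gamma$-action preserves the function $d$ on $\fA_\lambda\times\fA_\lambda$, and then discharges that assertion by citing Lusztig \cite[2.12(c)]{lus97}. You instead reformulate $\mathfrak{I}(A)$ entirely in terms of wall orientation: $s\in\mathfrak{I}(A)$ iff the type-$s$ wall $F$ of $A$ lies outside $\fF_\lambda$ and $A\in E_F^-$. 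This is a genuinely different (and more local) reduction — you only need to track the $\pm$-labeling of walls $F\notin\fF_\lambda$ under right multiplication by $g$, not the full function $d$. You then prove the needed invariance from scratch by factoring $g$ into translations (which preserve the labeling of every wall, by the $\cT$-based definition of $E_F^\pm$) and reflections $\mathbf{r}_{F'}$ with $F'\in\fF_\lambda$; the crux is the standard parabolic fact that $s_\beta$ with $\beta\in R_\lambda$ maps $R^+\setminus R_\lambda$ into $R^+$, and your sign computation showing that $\mathbf{r}_{F'}$ preserves the $\pm$-labeling of $F_{\alpha,k}$ precisely when $s_\beta(\alpha)$ stays positive is correct. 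What your route buys is self-containedness and a sharper statement (one does not need $d$ to be preserved globally, only the orientation of non-$\fF_\lambda$ walls, and that is exactly what your root-theoretic lemma delivers); what the paper's route buys is brevity by delegating to Lusztig. Your proposed second route via $\Upsilon$ and an extension of Lemma~\ref{lem:tauinv} to all $w\in\affSn$ with $A_w\in\fA_\lambda$ would be circular-feeling but not actually circular given the ordering of the lemmas; however, the extension is not as routine as you suggest — the proof of Lemma~\ref{lem:tauinv} for $1\le i\le n-1$ exploits that $A_w$ and $A_{s_iw}$ both lie in $D_{(1^n)}$, which uses $w\in\Sym_n$, and for general affine $w$ you would again be thrown back onto the same orientation computation you already carried out in the geometric route.
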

\begin{proof} Recall that $\mathfrak{I}(A)$ is the set of simple reflections $s$ such that $s\cdot A \in \fA_\lambda$ and $s\cdot A>A$. By symmetry, it suffices to show that if $s\in \mathfrak{I}(A)$ then $s \in \mathfrak{I}(\gamma(\mathbf{t})(A))$, i.e. $s \cdot \gamma(\mathbf{t})(A) \in \fA_\lambda$ and $s \cdot \gamma(\mathbf{t})(A)>\gamma(\mathbf{t})(A)$. First, note that $s \cdot \gamma(\mathbf{t})(A)=\gamma(\mathbf{t})(s \cdot A)$ (when $s\cdot A \in \fA_\lambda$) since $\gamma$-action is defined in terms of the right action of $\affSn$. Thus the first part is clear. For the second part, it suffices to show that $\gamma$-action preserves the order $\geq$ on $\fA_\lambda$. From the definition of $\geq$, it suffices to show that $\gamma$-action preserves the function $d:\fA_\lambda \times \fA_\lambda \rightarrow \bZ$. But this follows from \cite[2.12(c)]{lus97}.
\end{proof}


We are ready to define $\qwg_\lambda=(V, m, \tau)$ as follows. First we set $V= \RSYT(\lambda)$ which is identified with the $\gamma(\cT)$-orbits of $\fA_\lambda$ under the bijection $\fA_\lambda/\gamma(\cT) \simeq \fund_\lambda \xrightarrow{\Upsilon} \RSYT(\lambda)$. We also set $\tau = \ades$. Then for any $A \in \fA_\lambda$ in the $\gamma(\cT)$-orbit parametrized by $T$, we have $s_i \in \mathfrak{I}(A)$ if and only if $i \in \ades(T)$ by Lemma \ref{lem:tauinv} and \ref{lem:gammatau}. Finally, for $T, T' \in \RSYT(\lambda)$ we define $m(T, T')= \sum_{B}\mu(A,B)$, where $A$ is an element in the $\gamma$-orbit parametrized by $T$ and the sum is over all $B$ in the $\gamma$-orbit parametrized by $T'$. We claim that this is well-defined. Indeed, even if each $\gamma$-orbit contains infinitely many alcoves in general, $\mu(A,B)$ is zero for all but finitely many $B$ because of the result of \cite{var04} and \cite[Consequence 13.8]{lus97}. Furthermore, as $\mu: \fA_\lambda \times \fA_\lambda \rightarrow \bZ$ is invariant under $\gamma$-action, $m(T, T')$ does not depend on the choice of $A$.

It is not hard to show that $\qwg_\lambda$ satisfies the defining conditions of a $\affSn$-graph described in \ref{sec:defwgraph} provided that so does $\pwg_\lambda$. Thus $\qwg_\lambda$ is a $\affSn$-graph. Also, it defines a finite-dimensional representation of the Hecke algebra of $\affSn$ constructed in \cite[0.3]{lus97} where the homomorphism $\bZ[q^{\pm\frac{1}{2}}][\cT] \rightarrow \mathbf{K}$ therein ($\mathbf{K}$ is a field of characteristic 0) corresponds to the trivial representation of $\cT$.

\subsection{Properties of $\qwg_\lambda$ under nonnegativity assumption on $\mu$}
It is conjectured \cite[Conjecture 13.16]{lus97} that coefficients of $p_{A,B}$ are nonnegative integers for any $A, B \in \fA_\lambda$, which in particular implies that $\mu(A,B)\geq 0$. (To the authors' best knowledge it is still open.) Here, we assume the nonnegativity of $\mu$-function and discuss some properties of $\pwg_\lambda$ and $\qwg_\lambda$.

\begin{lem} Suppose that $\mu(A,B)\geq 0$ for any $A, B\in \fA_\lambda$. Then $\pwg_\lambda=(\fA_\lambda, \mu, \mathfrak{I})$ is admissible and $\qwg_\lambda=(\RSYT(\lambda), m, \ades)$ is nb-admissible.
\end{lem}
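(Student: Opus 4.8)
The plan is to verify the three defining conditions of admissibility for $\pwg_\lambda$ directly, and then transport the symmetry condition to $\qwg_\lambda$ using the $\gamma$-equivariance recorded in \ref{sec:quotWgraphs}.

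For $\pwg_\lambda=(\fA_\lambda,\mu,\mathfrak{I})$ the condition $\im\mu\subseteq\bN$ is immediate: $\mu$ is $\bZ$-valued and nonnegative by hypothesis. For the symmetry $\mu(A\ra B)=\mu(B\ra A)$ when $\mathfrak{I}(A)$ and $\mathfrak{I}(B)$ are incomparable, I would invoke the periodic analogue of the classical Kazhdan--Lusztig fact: if $A\leq B$, $A\neq B$, and some simple reflection $s$ satisfies $s\in\mathfrak{I}(B)$, $s\notin\mathfrak{I}(A)$ (such an $s$ exists since $\mathfrak{I}(B)\not\subseteq\mathfrak{I}(A)$), then the $q^{-1}$-coefficient of $p_{A,B}$ vanishes unless $B=sA$, in which case $p_{A,B}=1$; this is part of Lusztig's development of the periodic module in \cite{lus97}. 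Granting this, a short case analysis according to whether $A<B$, $B<A$, or $A,B$ are incomparable in the alcove order --- combined with the ``neighbor'' clause $B<A=sB$ in the definition of $\mu$ --- yields $\mu(A\ra B)=\mu(B\ra A)$ in every case. Finally, bipartiteness follows from the parity property of periodic Kazhdan--Lusztig polynomials: $\mu(A,B)\neq0$ forces $d(A,B)$ to be odd (for the neighbor clause $d(A,B)=\pm1$ directly; for the $q^{-1}$-coefficient clause this is the periodic counterpart of the length-parity of the classical $\mu$-function, cf.\ \cite{lus97}). Since $d(A_0,A)+d(A,B)=d(A_0,B)$ by the cocycle identity $d(A,B)+d(B,C)+d(C,A)=0$, fixing a base alcove $A_0\in\fA_\lambda$ makes $A\mapsto d(A_0,A)\bmod 2$ a proper $2$-coloring, so $\pwg_\lambda$ is bipartite.

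For $\qwg_\lambda=(\RSYT(\lambda),m,\ades)$ it suffices to check $\im m\subseteq\bN$ and the symmetry, since bipartiteness is not demanded of an nb-admissible graph. The first holds because, as recalled in \ref{sec:quotWgraphs}, for each pair of orbits only finitely many of the summands in $m(T\ra T')=\sum_{B}\mu(A\ra B)$ are nonzero, and they are nonnegative integers. For the symmetry, suppose $\ades(T)$ and $\ades(T')$ are incomparable, write $\cO_T$ (resp.\ $\cO_{T'}$) for the $\gamma(\cT)$-orbit of alcoves parametrized by $T$ (resp.\ $T'$), and fix $A\in\cO_T$ and $B_0\in\cO_{T'}$. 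By Lemmas \ref{lem:tauinv} and \ref{lem:gammatau} the $\mathfrak{I}$-value of every alcove in $\cO_T$ corresponds to $\ades(T)$ and that of every alcove in $\cO_{T'}$ to $\ades(T')$, so $\mathfrak{I}(A)$ and $\mathfrak{I}(B)$ are incomparable for all $B\in\cO_{T'}$ and the symmetry of $\pwg_\lambda$ just established applies. Writing each $B\in\cO_{T'}$ uniquely as $B=\gamma(\mathbf{t}_B)(B_0)$ with $\mathbf{t}_B$ ranging bijectively over $\cT/\cT_\lambda$ (the $\gamma$-action on an orbit being simply transitive), the $\gamma$-invariance $\mu(A\ra B)=\mu(\gamma(\mathbf{t})(A)\ra\gamma(\mathbf{t})(B))$ gives
\[
m(T\ra T')=\sum_{B\in\cO_{T'}}\mu(A\ra B)=\sum_{B\in\cO_{T'}}\mu(B\ra A)=\sum_{B\in\cO_{T'}}\mu\bigl(B_0\ra\gamma(\mathbf{t}_B)^{-1}(A)\bigr)=\sum_{A'\in\cO_T}\mu(B_0\ra A')=m(T'\ra T),
\]
where in the last step $\gamma(\mathbf{t}_B)^{-1}(A)$ runs bijectively over $\cO_T$. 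Hence $\qwg_\lambda$ is nb-admissible.

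The essential difficulty is concentrated in the first step about $\pwg_\lambda$: the ``neighbor lemma'' and the parity property for Lusztig's periodic Kazhdan--Lusztig polynomials are not formal consequences of $\pwg_\lambda$ being a $W$-graph and have to be drawn from \cite{lus97}. Once these are available, the remaining conditions are immediate from the positivity hypothesis, and the descent to $\qwg_\lambda$ is a routine re-indexing over the free $\cT/\cT_\lambda$-action.
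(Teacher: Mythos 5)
Your proof is correct in outcome and in its general architecture, but it takes a genuinely different and considerably longer path than the paper's. The paper's proof is a three-liner: $\im\mu\subset\bN$ is the hypothesis, and then it cites Stembridge's Remark~4.3 of \cite{ste08}, which says that any $W$-graph with values in $\bN$ automatically satisfies the Simplicity Rule; since both $\pwg_\lambda$ and $\qwg_\lambda$ are already known to be $\affSn$-graphs (the former from Lusztig/Varagnolo, the latter from the construction earlier in Section 10), the symmetry $m(u\ra v)=m(v\ra u)$ on incomparable $\tau$-sets is immediate for both, with no need to reason about periodic KL polynomials or to descend from $\pwg_\lambda$ to $\qwg_\lambda$ at all. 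Bipartiteness of $\pwg_\lambda$ is cited from \cite[Proposition 11.12]{lus97} with exactly the $d(\cdot,A_{id})\bmod 2$ coloring you use. Your route instead establishes the symmetry for $\pwg_\lambda$ by hand (via the periodic neighbor lemma) and then transports it to $\qwg_\lambda$ by an explicit re-indexing over a $\gamma(\cT/\cT_\lambda)$-orbit. The transport computation is correct and is a nice self-contained argument, but it duplicates work the paper gets for free. Your remark that the symmetry is ``not a formal consequence of $\pwg_\lambda$ being a $W$-graph'' is actually the opposite of the mechanism the paper uses: it \emph{is} a formal consequence of $W$-graphhood together with the positivity hypothesis (that is precisely the content of Stembridge's remark). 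Two small inaccuracies worth flagging in your neighbor lemma: the correct hypothesis is $s\in\mathfrak{I}(A)\setminus\mathfrak{I}(B)$, not $s\in\mathfrak{I}(B)\setminus\mathfrak{I}(A)$ (you want $s$ an ascent of the lower alcove and a descent of the upper, and $\mu(A,B)$ is set to zero anyway whenever $\mathfrak{I}(A)\subset\mathfrak{I}(B)$, so it is $\mathfrak{I}(A)\not\subset\mathfrak{I}(B)$ that is relevant); and ``$p_{A,B}=1$'' should read ``the $q^{-1}$-coefficient of $p_{A,B}$ equals $1$,'' since $p_{A,B}\in q^{-1}\bZ[q^{-1}]$ for $A\neq B$. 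Neither affects the soundness of the scheme.
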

\begin{proof} First $\im \mu \subset \bN$ by assumption, which also implies that $\im m \subset \bN$. Also, $\pwg_\lambda$ (resp. $\qwg_\lambda$) satisfies the Simplicity Rule by \cite[Remark 4.3]{ste08}, which implies that $\mu(A,B)=\mu(B,A)$ (resp. $m(T, T')=m(T', T)$) whenever $\mathfrak{I}(A)$ and $\mathfrak{I}(B)$ (resp. $\ades(T)$ and $\ades(T')$) are not comparable. Finally, $\pwg_\lambda$ is bipartite as a result of \cite[Proposition 11.12]{lus97}; one may choose the color of each vertex $A\in\fA_\lambda$ to be the residue of $d(A, A_{id})$ modulo 2.
\end{proof}

The next proposition describes the simple underlying graph of $\qwg_\lambda$.
\begin{prop} Suppose that $\mu(A,B)\geq 0$ for any $A, B \in \fA_\lambda$. Then $U(\qwg_\lambda) \simeq \adeg_\lambda$.
\end{prop}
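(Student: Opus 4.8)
The goal is to prove that $U(\qwg_\lambda)$ and $\adeg_\lambda$ coincide as $[1,n]$-labeled graphs. They have the same vertex set $\RSYT(\lambda)$ and the same $\tau$-function $\ades$, so the assertion reduces to an equality of undirected edge sets: a pair $\{T,T'\}$ with $T,T'\in\RSYT(\lambda)$ is a simple edge of $\qwg_\lambda$, i.e.\ $m(T\ra T')=m(T'\ra T)=1$, precisely when $T$ and $T'$ are joined by a Knuth move. I would prove the two inclusions \emph{(i)} every Knuth move is a simple edge of $\qwg_\lambda$, and \emph{(ii)} every simple edge of $\qwg_\lambda$ is a Knuth move, deducing (ii) from (i) by means of the Bonding Rule.

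The common input is that both $\qwg_\lambda$ (a $\affSn$-graph, so Theorem \ref{thm:combrules} applies) and $\adeg_\lambda=U(\awg_\lambda)$ (by \cite[Proposition 3.5]{cfkly}) satisfy the Bonding Rule relative to the data $(\RSYT(\lambda),\ades)$. For a two-row shape an $\ades$-set contains no two cyclically adjacent residues, so the Compatibility and Simplicity rules force every simple edge $\{T,T'\}$ of $\qwg_\lambda$ to have $\ades(T)\mathbin{\triangle}\ades(T')$ supported on at most three consecutive residues, in exactly the pattern produced by a Knuth move; hence $\{T,T'\}$ realizes a bonding slot at $T$. Granting (i): the unique Knuth neighbour $T''$ of $T$ filling that slot --- which exists by the Bonding Rule for $\adeg_\lambda$ --- is, by (i), also a simple-edge neighbour of $T$ in $\qwg_\lambda$ filling the same slot, and uniqueness in the Bonding Rule for $\qwg_\lambda$ gives $T''=T'$. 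So (ii) follows from (i), and together they give $U(\qwg_\lambda)=\adeg_\lambda$.

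Thus the heart of the matter is (i). A Knuth move $\{T,T'\}$ also has $\ades(T)\mathbin{\triangle}\ades(T')$ supported on at most three consecutive residues; using $\shift$-stability of $\qwg_\lambda$ (which one checks directly, just as for $\awg_\lambda$) together with Lemmas \ref{lem:standeq} and \ref{lem:standuneq}, one may $\shift$-conjugate so that $T$ is standard, whence $T'$ is standard as well (a Knuth move preserves standardness). The plan is then to pass to the parabolic restriction $\qwg_\lambda\pres{[1,n-1]}$, an nb-admissible $\Sym_n$-graph, and to show that $\{T,T'\}$ is a simple edge there --- hence a simple edge of $\qwg_\lambda$, since parabolic restriction only deletes or redirects edges. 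By Theorems \ref{thm:admklund}, \ref{thm:admkl} and \ref{thm:admord}, each cell of $\qwg_\lambda\pres{[1,n-1]}$ is a single molecule isomorphic to some $\Gamma_\mu$ with simple underlying graph $\fdeg_\mu$; the cell through a standard tableau of shape $\lambda$ ought to be $\Gamma_\lambda$, whose simple underlying graph is the genuine finite Knuth-move graph on $\SYT(\lambda)$ by \cite[3.5]{chm15}; and finite Knuth moves coincide with affine ones on standard tableaux (the remark after the definition of Knuth moves). This would place $\{T,T'\}$ among the simple edges of the restriction.

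The delicate point --- the main obstacle --- is identifying the cell of $\qwg_\lambda\pres{[1,n-1]}$ through a standard $T$ with the \emph{correct} $\Gamma_\lambda$, rather than an abstractly isomorphic but combinatorially unrelated copy; equivalently, identifying the molecule with the true Knuth class. I would handle this by matching $\qwg_\lambda\pres{[1,n-1]}$, via the general $\Sym_n$-graph theory and the coincidence of $\tau$-functions, against the earlier $Q$-tableau description of $\awg_\lambda\pres{[1,n-1]}$ (cells parametrized by $\bigsqcup_{\mu}\SSYT(\mu,\lambda^{op})$, the cell through a shape-$\lambda$ standard tableau being $\Gamma_\lambda$). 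An alternative, more representation-theoretic route to (i) is to go through Lusztig's identification of periodic molecules with Kazhdan--Lusztig molecules and the fact that quotients of affine Kazhdan--Lusztig molecules realize the affine dual equivalence graphs \cite{cpy18}. Once (i) is secured, (ii) and the proposition follow as explained above.
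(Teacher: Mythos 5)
Your skeleton---prove (i) every Knuth move gives a simple edge of $\qwg_\lambda$, then deduce (ii) the converse from (i) via uniqueness in the Bonding Rule---departs from the paper's proof, which establishes both inclusions directly inside Lusztig's alcove picture: it uses Lemma~\ref{lem:upsilontab} (which shows $\Upsilon(A_w)=w\cdot\Tc$, so alcoves related by a simple reflection map to tableaux differing by a single entry swap) together with \cite[Corollary~11.7]{lus97} (giving $\mu(A,sA)=1$ when the $\tau$-invariants are incomparable), and lifts through the orbit sum $m(T,T')=\sum_B\mu(A,B)$ using nonnegativity in both directions---to get $m\geq 1$ from a single term, and conversely to isolate a single $B_0$ with $\mu(A_0,B_0)=1$ and force $B_0=s\cdot A_0$. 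Your (ii)$\Leftarrow$(i) step via the Bonding Rule is a correct observation, but your (i) has two real gaps.

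First, the reduction to standard tableaux fails as stated: an affine Knuth move need not preserve standardness. Take $n=4$, $\lambda=(2,2)$, $T=\ytableaushort{13,24}$ and $T'=\ytableaushort{34,12}$; these are joined by a Knuth move interchanging $\mo{4}$ and $\mo{5}=1$ (one checks $\ades(T)=\{1,3\}$ and $\ades(T')=\{4\}$ are incomparable), yet $T$ is standard and $T'$ is not. What you would actually need is that for each Knuth edge there is some $k$ with \emph{both} $\shift^k(T)$ and $\shift^k(T')$ standard simultaneously---a separate claim which Lemmas~\ref{lem:standeq} and~\ref{lem:standuneq} do not provide. Second---the obstacle you flag yourself---the cell-identification step is not resolved. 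Theorem~\ref{thm:admklund} gives only an abstract isomorphism between a cell of $\qwg_\lambda\pres{[1,n-1]}$ and some $\fdeg_\mu$; it tells you neither that $T'$ lies in the cell of $T$, nor that the isomorphism agrees with the intrinsic $\RSYT(\lambda)$-labelling. Matching against the $Q$-tableau description of $\awg_\lambda\pres{[1,n-1]}$ is circular, since that description was derived from the explicit moves of $\awg_\lambda$, exactly the kind of information one does not yet have for $\qwg_\lambda$. Some external hold on $\qwg_\lambda$ is unavoidable, and the paper supplies it through $\Upsilon$ and the structure of the periodic $W$-graph, which makes the restriction machinery unnecessary.
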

\begin{proof} Suppose we are given $T, T' \in \RSYT(\lambda)$ and let $\cO_T$ and $\cO_{T'}$ be $\gamma$-orbits in $\fA_\lambda$ parametrized by $T$ and $T'$ respectively. If there exists an undirected edge between $T$ and $T'$ in $\qwg_\lambda$, or $m(T,T')=m(T',T)=1$, then $\sum_{B\in \cO_{T'}}\mu(A_0, B)=\sum_{A\in \cO_{T}}\mu(B_0, A)=1$ where $A_0 \in \cO_T$ and $B_0 \in \cO_{T'}$ are arbitrary. Also $\ades(T)$ and $\ades(T')$ are incomparable, or equivalently $\mathfrak{I}(A)$ and $\mathfrak{I}(B)$ are incomparable for any $A \in \cO_T$ and $B \in \cO_{T'}$. Since $\im \mu \subset \bN$, there exists a unique $B \in \cO_{T'}$ such that $\mu(A_0, B)=1$, which we may set to be $B_0$. Then as $\pwg_\lambda$ is admissible we have $\mu(B_0, A_0)=1$ as well, i.e. there exists an undirected edge between $A_0$ and $B_0$ in $\pwg_\lambda$. From the definition of $\mu$, this is only possible if there exists a simple reflection $s\in \affSn$ such that $B_0 = s\cdot A_0$. Thus by Lemma \ref{lem:upsilontab} it follows that $T$ and $T'$ are connected by a single Knuth move.

Conversely, this time let us assume that $T, T' \in \RSYT(\lambda)$ are connected by a single Knuth move. Then for any $A \in \cO_T$, there exists a simple reflection $s\in \affSn$ such that $s\cdot A \in \cO_{T'}$ again by Lemma \ref{lem:upsilontab}. Thus by \cite[Corollary 11.7]{lus97} together with the fact that $\pwg_\lambda$ satisfies the Simplicity Rule, we have $\mu(A, B)=\mu(B, A)=1$. It follows that $m(T, T'), m(T', T)\geq 1$ by nonnegativity assumption of $\mu$, which implies that $m(T,T')=m(T',T)=1$ as $\qwg_\lambda$ satisfies the Simplicity Rule.
\end{proof}

Now the following theorem is a natural consequence.
\begin{thm} Suppose that $\mu(A,B)\geq 0$ for any $A, B \in \fA_\lambda$. Then for any two-row partition $\lambda$, we have $\awg_\lambda\simeq \qwg_\lambda$.
\end{thm}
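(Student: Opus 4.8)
The plan is to combine the uniqueness/minimality/maximality results of Sections \ref{sec:uni} and the equal length section with the preceding proposition $U(\qwg_\lambda)\simeq\adeg_\lambda$. We already know that $\qwg_\lambda$ is an nb-admissible $\affSn$-graph (under the nonnegativity assumption on $\mu$) whose simple underlying graph is $\adeg_\lambda$, and likewise for $\awg_\lambda$. So the problem is reduced to the abstract classification of nb-admissible $\affSn$-graphs with $U(\Gamma)\simeq\adeg_\lambda$ for two-row $\lambda$.

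The argument splits into two cases according to whether $\lambda$ has unequal or equal rows. First, suppose $\lambda=(\lambda_1,\lambda_2)$ with $\lambda_1>\lambda_2$. Then Theorem \ref{thm:unequniq} asserts directly that any two nb-admissible $\affSn$-graphs with simple underlying graph $\adeg_\lambda$ are isomorphic to each other and hence to $\awg_\lambda$; applying this with $\Gamma=\qwg_\lambda$ and $\Gamma'=\awg_\lambda$ gives $\awg_\lambda\simeq\qwg_\lambda$ immediately. Second, suppose $\lambda=(a,a)$. Here I would use the minimality and maximality results in tandem. By Corollary \ref{cor:eqmin} there is an embedding $\awg_\lambda'\hookrightarrow\qwg_\lambda$. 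Then the key point is to upgrade this to an embedding $\awg_\lambda\hookrightarrow\qwg_\lambda$, after which Theorem \ref{thm:eqmax} forces that embedding to be an isomorphism, giving $\awg_\lambda\simeq\qwg_\lambda$ as desired.

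The main obstacle, therefore, is precisely the equal-length step: showing that the single extra orbit of directed edges of $\awg_\lambda$ connecting the two simple components $\adeg_\lambda^0$ and $\adeg_\lambda^1$ is actually present in $\qwg_\lambda$ with weight $1$. By Lemma \ref{lem:arctworow} it suffices to compute a single edge weight in $\qwg_\lambda$, namely the weight of the directed edge from the image of $\ytableaushort{24\cdots {\scriptstyle n-4}{\scriptstyle n-2}n,135\cdots{\scriptstyle n-3}{\scriptstyle n-1}}$ to the image of $\ytableaushort{124\cdots {\scriptstyle n-4}{\scriptstyle n-2},35\cdots{\scriptstyle n-3}{\scriptstyle n-1}n}$; equivalently, translating through $\Upsilon$ and Lemma \ref{lem:upsilontab}, the sum $\sum_B\mu(A,B)$ where $A$ corresponds to the first tableau and $B$ ranges over the $\gamma(\cT)$-orbit of the alcove corresponding to the second. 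I would identify the two relevant alcoves explicitly via $\Upsilon^{-1}$, observe that the source tableau has $\fsh=(a+1,a-1)$ while the target has $\fsh=(a,a)$ so that the move is a move of the first kind $n\intch 1$, and then pin down the contributing $B$ in the orbit. Because the move $n\intch 1$ corresponds, on the alcove side, to a move across a wall of type $s_0$ to an adjacent alcove, the relevant $\mu(A,B)$ should be governed by \cite[Corollary 11.7]{lus97} (adjacent alcoves have $\mu=1$) exactly as in the proof of the preceding proposition, forcing the weight to be $1$ under the nonnegativity hypothesis. One must also check there is exactly one such $B$ in the orbit contributing (so the summed weight is $1$, not larger), which I expect follows from the bipartiteness of $\pwg_\lambda$ together with the Simplicity Rule, in the same style as the $U(\qwg_\lambda)\simeq\adeg_\lambda$ argument.

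Once the edge weight is shown to be $1$, the cyclic-shift invariance of $\qwg_\lambda$ (inherited from $\pwg_\lambda$ via $\gamma(\cT)$-equivariance, cf.\ Lemma \ref{lem:gammatau} and the $\cT$-invariance of $\mu$) propagates this to all $\shift^k$-translates of the edge, so by Lemma \ref{lem:arctworow} every directed edge of $\qwg_\lambda$ between the two simple components is a move of the first kind of weight $1$; combined with the within-component structure, which is already determined by Theorem \ref{thm:mineq} and the embedding $\awg_\lambda'\hookrightarrow\qwg_\lambda$, this shows $\qwg_\lambda$ contains (a copy of) $\awg_\lambda$. Then Theorem \ref{thm:eqmax} yields $\awg_\lambda\simeq\qwg_\lambda$, completing the proof in the equal-length case. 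Putting the two cases together gives the theorem for every two-row partition $\lambda$.
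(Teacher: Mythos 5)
Your overall route matches the paper's: handle the unequal-row case via Theorem~\ref{thm:unequniq}, and in the equal-row case use Corollary~\ref{cor:eqmin} and Theorem~\ref{thm:eqmax} to reduce to showing that the directed edges of $\awg_\lambda$ between the two simple components appear in $\qwg_\lambda$ with weight~$1$; then reduce to moves of the first kind and invoke \cite[Corollary~11.7]{lus97} to get $\mu(A, s\cdot A)=1$. This is exactly the paper's skeleton.

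The gap is in the step where you assert that $\sum_{B\in\cO_{T'}}\mu(A,B)$ collapses to the single summand $\mu(A, s\cdot A)$. You claim this follows from bipartiteness of $\pwg_\lambda$ together with the Simplicity Rule, ``in the same style'' as the proof of $U(\qwg_\lambda)\simeq\adeg_\lambda$. But that proof works precisely because the $\tau$-invariants there are \emph{incomparable}: the Simplicity Rule then caps both quotient weights at $1$, and knowing $m(T,T')=1$, positivity of $\mu$ forces a unique contributing $B$. Here instead $\ades(T)\supsetneq\ades(T')$, so $\mathfrak{I}(A)\supsetneq\mathfrak{I}(B)$ for every $B$ in the target orbit, and in the comparable regime the Simplicity Rule only gives $\mu(B,A)=0$; it places no upper bound on $\mu(A,B)$, and bipartiteness does not preclude several non-adjacent alcoves $B$ of the right color contributing nonzero $\mu(A,B)$. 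The paper closes this gap by invoking \cite[Lemma~11.9]{lus97} (the periodic analogue of the classical Kazhdan--Lusztig descent lemma), which forces $\mu(A,B)=0$ for every $B\in\cO_{T'}$ other than $s\cdot A$; you need that lemma (or a substitute) for the weight computation to go through. As a secondary point, your detour through $\shift$-invariance of $\qwg_\lambda$ is not established by what you cite --- $\gamma(\cT)$ acts trivially on the quotient, so $\gamma$-equivariance says nothing about $\shift$ --- and the paper avoids it by computing $m(T,T')$ directly for each inter-component edge $T\ra T'$ using the same two Lusztig ingredients.
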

\begin{proof} If $\lambda$ consists of two rows of unequal length, then it follows from Theorem \ref{thm:unequniq}. Thus suppose that $\lambda$ consists of two equal rows. By Theorem \ref{thm:eqmax}, it suffices to show that there exists an embedding $\awg_\lambda \rightarrow \qwg_\lambda$. By Corollary \ref{cor:eqmin}, it suffices to show that if there exists a directed edge $T\rightarrow T'$ (of weight 1) in $\awg_\lambda$ for $T$ and $T'$ in different simple components then the same directed edge appears in $\qwg_\lambda$ (with weight 1). To this end, let $\cO_T$ and $\cO_{T'}$ be the $\gamma$-orbits in $\fA_\lambda$ parametrized by $T$ and $T'$ respectively. As $T\rightarrow T'$ is always a move of the first kind by Lemma \ref{lem:arctworow}, there exists a simple reflection $s\in \affSn$ such that $T'=s\cdot T$. This means that for any $A\in \cO_T$ we have $s\in \mathfrak{I}(A)$ and $s\cdot A \in \cO_{T'}$. Now \cite[Corollary 11.7]{lus97} and \cite[Lemma 11.9]{lus97} imply that $m(T, T') = \sum_{B \in \cO_{T'}} \mu(A, B) = \mu(A, s\cdot A) = 1$, thus the result follows.
\end{proof}

\bibliographystyle{amsalpha}
\bibliography{wgraph}

\newcommand{\etalchar}[1]{$^{#1}$}
\providecommand{\bysame}{\leavevmode\hbox to3em{\hrulefill}\thinspace}
\providecommand{\MR}{\relax\ifhmode\unskip\space\fi MR }
\providecommand{\MRhref}[2]{%
  \href{http://www.ams.org/mathscinet-getitem?mr=#1}{#2}
}
\providecommand{\href}[2]{#2}
\begin{thebibliography}{CFK{\etalchar{+}}18}

\bibitem[AB12]{assbill12}
Sami~H. Assaf and Sara~C. Billey, \emph{Affine dual equivalence and
  {$k$}-{S}chur functions}, J. Comb. \textbf{3} (2012), no.~3, 343--399.

\bibitem[Ass07]{assaf07}
Sami~Hayes Assaf, \emph{Dual equivalence graphs, ribbon tableaux and
  {M}acdonald polynomials}, ProQuest LLC, Ann Arbor, MI, 2007, Thesis
  (Ph.D.)--University of California, Berkeley.

\bibitem[BM13]{bm13}
Roman Bezrukavnikov and Ivan Mirkovi{\'c}, \emph{Representations of semisimple
  {L}ie algebras in prime characteristic and noncommutative {S}pringer
  resolution}, Ann. of Math. (2) \textbf{178} (2013), no.~3, 835--919.

\bibitem[CFK{\etalchar{+}}18]{cfkly}
Michael Chmutov, Gabriel Frieden, Dongkwan Kim, Joel~Brewster Lewis, and Elena
  Yudovina, \emph{An affine generalization of evacuation}, Available at
  \url{https://arxiv.org/abs/1806.07429} (2018).

\bibitem[Chm15]{chm15}
Michael Chmutov, \emph{Type ${A}$ molecules are {K}azhdan-{L}usztig}, J.
  Algebraic Combin. \textbf{42} (2015), 1059--1076.

\bibitem[CLP17]{clp17}
Michael Chmutov, Joel~Brewster Lewis, and Pavlo Pylyavskyy, \emph{Monodromy in
  {K}azhdan-{L}usztig cells in affine type ${A}$}, Available at
  \url{https://arxiv.org/abs/1706.00471} (2017).

\bibitem[CPY18]{cpy18}
Michael Chmutov, Pavlo Pylyavskyy, and Elena Yudovina, \emph{Matrix-ball
  construction of affine {R}obinson-{S}chensted correspondence}, Selecta Math.
  (N. S.) \textbf{24} (2018), no.~2, 667--750.

\bibitem[Fun03a]{fung03}
Francis Y.~C. Fung, \emph{On the topology of components of some {S}pringer
  fibers and their relation to {K}azhdan-{L}usztig theory}, Adv. Math.
  \textbf{178} (2003), no.~2, 244--276.

\bibitem[Fun03b]{fun03}
\bysame, \emph{On the topology of components of some {S}pringer fibers and
  their relation to {K}azhdan-{L}usztig theory}, Adv. Math. \textbf{178}
  (2003), 244--276.

\bibitem[Hai92]{hai92}
Mark~D. Haiman, \emph{Dual equivalence with applications, including a
  conjecture of {P}roctor}, Discrete Math. \textbf{99} (1992), no.~1--3,
  79--113.

\bibitem[KL79]{kl79}
David Kazhdan and George Lusztig, \emph{Representations of {C}oxeter groups and
  {H}ecke algebras}, Invent. Math. \textbf{53} (1979), no.~2, 165--184.

\bibitem[KL88]{kl88}
\bysame, \emph{Fixed point varieties on affine flag manifolds}, Israel J. Math.
  \textbf{62} (1988), no.~2, 129--168.

\bibitem[Knu70]{knu70}
Donald~E. Knuth, \emph{Permutations, matrices, and generalized {Y}oung
  tableaux}, Pacific J. Math. \textbf{34} (70), no.~3, 709--727.

\bibitem[Lus80]{lus80}
George Lusztig, \emph{Hecke algebras and {J}antzen's generic decomposition
  patterns}, Adv. Math. \textbf{37} (1980), 121--164.

\bibitem[Lus97]{lus97}
\bysame, \emph{Periodic ${W}$-graphs}, Represent. Theory \textbf{1} (1997),
  207--279.

\bibitem[Lus99]{lus99}
\bysame, \emph{Bases in equivariant ${K}$-theory. {I}{I}}, Represent. Theory
  \textbf{3} (1999), 281--353.

\bibitem[Ngu18]{ngu18}
Van~Minh Nguyen, \emph{Type {A}-admissible cells are {K}azhdan-{L}usztig},
  Available at \url{https://arxiv.org/abs/1807.07457} (2018).

\bibitem[Ram91]{ram91}
Arun Ram, \emph{A {F}robenius formula for the characters of the {H}ecke
  algebras}, Invent. Math. \textbf{106} (1991), no.~3, 461--488.

\bibitem[Sag11]{sag11}
Bruce Sagan, \emph{The cyclic sieving phenomenon: a survey}, Surveys in
  {C}ombinatorics 2011 (Robin Chapman, ed.), London {M}athematical {S}ociety
  {L}ecture {N}ote {S}eries, vol. 392, Cambridge {U}niversity {P}ress, 2011,
  pp.~183--234.

\bibitem[Sta86]{sta86}
Richard~Peter Stanley, \emph{Enumerative {C}ombinatorics}, vol.~2, Cambridge
  {S}tudies in {A}dvanced {M}athematics, no.~62, Cambridge {U}niversity
  {P}ress, 1986.

\bibitem[Ste08a]{ste08}
John~R. Stembridge, \emph{Admissible ${W}$-graphs}, Represent. Theory
  \textbf{12} (2008), 346--368.

\bibitem[Ste08b]{stem08}
\bysame, \emph{More ${W}$-graphs and cells: molecular components and cell
  synthesis}, Available at
  \url{http://atlas.math.umd.edu/papers/summer08/stembridge08.pdf}, 2008.

\bibitem[Var04]{var04}
Michela Varagnolo, \emph{Periodic modules and quantum groups}, Transform.
  Groups \textbf{9} (2004), no.~1, 73--87.

\bibitem[Wes95]{wes95}
Bruce~W. Westbury, \emph{The representation theory of the {T}emperley-{L}ieb
  algebras}, Math. Z. \textbf{219} (1995), 539--565.

\end{thebibliography}

\end{document}